\numberwithin{equation}{section}          
\newtheorem{thm}{Theorem}
\newtheorem*{thm*}{Theorem}
\numberwithin{thm}{section}
\newcommand{\rubrik}{}
\newtheorem{prop}[thm]{Proposition}
\newtheorem{cor}[thm]{Corollary}
\newtheorem{lem}[thm]{Lemma}
\theoremstyle{definition}
\newtheorem{defn}[thm]{Definition}
\newtheorem{ex}[thm]{Example}
\theoremstyle{remark}
\newtheorem{rem}[thm]{Remark}     
\theoremstyle{plain}
\DeclareRobustCommand{\SkipTocEntry}[5]{}
\newcommand{\dd} {{\mathrm{d}}}
\DeclareMathOperator{\codim} {codim}
\DeclareMathOperator{\rk} {rk}
\newcommand{\Fu} {{\mathcal{F}}}
\newcommand{\red} {{\mathrm{red}}}
\newcommand{\pr}{{\mathrm{pr}}}
\newcommand{\jap}[1] {\langle{#1}\rangle}
\newcommand{\id}{\mathrm{id}}
\newcommand{\wt}[1]{\widetilde{#1}}
\newcommand{\cO}{{\mathcal{O}}}
\newcommand{\BB}{{\mathbb{B}}}
\newcommand{\BBd}{{\BB^d}}
\newcommand{\BBs}{{\BB^s}}
\newcommand{\NNz}{{\mathbb{N}}}
\newcommand{\NN}{{\NNz_0}}
\newcommand{\RR}{{\mathbb{R}}}
\newcommand{\RRd}{{\RR^d}}
\newcommand{\RRs}{{\RR^s}}
\newcommand{\RRdz}{{\RRd\setminus\{0\}}}
\newcommand{\SSS}{{\mathbb{S}}}
\newcommand{\SSSd}{{\SSS^{d-1}}}
\newcommand{\ap}{{\mathfrak{a}}} 
\newcommand{\wtap}{{\widetilde{\mathfrak{a}}}} 
\newcommand{\wpp}{{\mathfrak{w}_{\varphi}}} 
\newcommand{\wtpp}{{\mathfrak{w}_{\wtp}}} 
\newcommand{\wredp}{{\mathfrak{w}_{\pred}}} 
\newcommand{\bp}{{\mathfrak{b}}} 
\newcommand{\cl}{{\mathrm{cl}}}
\newcommand{\SG}{{\mathrm{SG}}}
\newcommand{\SGcl}{\mathrm{SG}_\mathrm{cl}}
\newcommand{\dSmz}{{\dot{\mathscr{C}}^\infty_0}}
\newcommand{\dSmdz}{({\dot{\mathscr{C}}^\infty_0})^\prime}
\newcommand{\Sm}{{\mathscr{C}^\infty}}
\newcommand{\Smc}{{\mathscr{C}^\infty_{c}}}
\newcommand{\cD}{\mathcal{D}} 
\newcommand{\cC}{\mathcal{C}} 
\newcommand{\wtp}{ {\widetilde{\varphi}} } 
\newcommand{\wtP}{ {\widetilde{\Phi}} } 
\newcommand{\wtl}{ {\widetilde{t}} } 
\newcommand{\wty}{ {\widetilde{y}} } 
\newcommand{\wtY}{ {\widetilde{Y}} } 
\newcommand{\wtU}{ {\widetilde{U}} } 
\newcommand{\supp}{\operatorname{supp}}
\newcommand{\Wt}{\mathcal{W}}
\newcommand{\We}{\Wt^e}
\newcommand{\Wp}{\Wt^\psi}
\newcommand{\Wpe}{\Wt^{\psi e}}
\newcommand{\lp}{{\lambda_\varphi}}
\newcommand{\ellp}{{\ell_\varphi}}
\newcommand{\Lp}{\Lambda_\varphi}
\newcommand{\Lpe}{{\Lambda_\varphi^e}}
\newcommand{\Lpp}{{\Lambda_\varphi^\psi}}
\newcommand{\Lppe}{{\Lambda_\varphi^{\psi e}}}
\newcommand{\Cp}{\mathcal{C}_\varphi}
\newcommand{\Cpe}{\mathcal{C}_\varphi^e}
\newcommand{\Cpp}{\mathcal{C}_\varphi^\psi}
\newcommand{\Cppe}{\mathcal{C}_\varphi^{\psi e}}
\newcommand{\B}{\mathcal{B}}
\newcommand{\Be}{\B^e}
\newcommand{\Bp}{\B^\psi}
\newcommand{\Bpe}{\B^{\psi e}}
\newcommand{\fred}{ {f_{\red}} } 
\newcommand{\pred}{ {\varphi_{\red}} } 
\newcommand{\Pred}{ {\Phi_{\red}} } 
\newcommand{\bV}{ {}^{b}\mathcal{V}}
\newcommand{\scat}{{\mathrm{sc}\hspace{-2pt}}}
\newcommand{\sct}{{\mathrm{sc}}}
\newcommand{\scV}{{}^\scat  \,\mathcal{V}}
\newcommand{\scVX}{{}^\scat \,\mathcal{V}^X}
\newcommand{\scVY}{{}^\scat \, \mathcal{V}^Y}
\newcommand{\scd}{{}^\scat \dd}
\newcommand{\scdY}{{}^\scat \dd_Y}
\newcommand{\scdX}{{}^\scat \dd_X}
\newcommand{\scHess}{{}^\scat H}
\newcommand{\scHX}{{}^\scat H^X}
\newcommand{\scHY}{{}^\scat H^Y}
\newcommand{\scOmega}{{}^\scat\, \Omega}
\newcommand{\scTheta}{{}^\scat\, \varTheta}
\newcommand{\scT}{{}^\scat \,T}
\newcommand{\scS}{{}^\scat \,S}
\newcommand{\scOverT}{{}^\scat \,\overline{T}}
\newcommand{\scOverN}{{}^\scat \,\overline{T}}
\newcommand{\WFsc}{\mathrm{WF}_\sct}
\newcommand{\bdf}[1]{{[#1]}}
\newcommand{\bx}{\mathbf{x}}
\newcommand{\by}{\mathbf{y}}
\newcommand{\wby}{ {\widetilde{\by}} } 
\newcommand{\byp}{ {\by^\prime} } 
\newcommand{\ypp}{ {y^{\prime\prime}} } 
\newcommand{\bzeta}{\boldsymbol{\zeta}}
\newcommand{\bxi}{\boldsymbol{\xi}}
\newcommand{\eps}{\varepsilon}
\author[S. Coriasco]{Sandro Coriasco}
\address{Dipartimento di Matematica ``G. Peano''\newline\indent Universit\`a degli Studi di Torino\newline\indent V. C. Alberto, n. 10, I-10126 Torino, Italy}
\email{sandro.coriasco@unito.it}
\author[M. Doll]{Moritz Doll}
\address{Leibniz Universit\"at Hannover\newline\indent Institut f\"ur Analysis\newline\indent Welfengarten 1, D-30167 Hannover, Germany}
\email{doll[AT]math.uni-hannover.de}
\author[R. Schulz]{Ren\'e Schulz}
\address{Leibniz Universit\"at Hannover\newline\indent Institut f\"ur Analysis\newline\indent Welfengarten 1, D-30167 Hannover, Germany}
\email{rschulz[AT]math.uni-hannover.de}
\title[]{Lagrangian distributions on asymptotically Euclidean manifolds}
\date{\today}
\keywords{Lagrangian distribution, Lagrange manifold, scattering calculus, principal symbol}
\subjclass[2010]{35S30, 46F05, 53D12}
\begin{document}

\begin{abstract}
We develop the notion of Lagrangian distribution on scattering manifolds, meaning on the compactified cotangent bundle, which is a manifold with corners equipped with a scattering symplectic structure. In particular, we study the notion of principal symbol of the arising class of distributions.
\end{abstract}

\maketitle

\setcounter{tocdepth}{1}
\tableofcontents


\section{Introduction}
\label{sec:intro}

In this article we develop a theory of Lagrangian distributions on asymptotically Euclidean manifolds.
Lagrangian distributions were defined by H\"ormander \cite{HormanderFIO} as a tool to obtain a global calculus of Fourier integral operators.
The latter are widely applied, e.g. in the study of partial differential equations \cite{DH}, spectral theory \cite{DG}, index theory \cite{BaeStro} and mathematical physics \cite{GuSt}.
Motivating examples for the necessity of studying Lagrangian distributions on asymptotically Euclidean spaces include fundamental solutions to the Klein-Gordon equation, which exhibit Lagrangian behavior ``at infinity'', see \cite{CoSc2}, as well as simple or multi-layers which arise when solving partial differential equations along infinite boundaries or Cauchy hypersurfaces, see \cite{Cordes}. 

In local coordinates, a classical Lagrangian distribution $u$ on a manifold $X$ is given by an oscillatory integral of the form
\begin{equation}
\label{eq:osciintproto}
I_\varphi(a)=\int_{\RRs} e^{i\varphi}a(x,\theta)\,\dd\theta,
\end{equation}
for some symbol $a\in S^m(\RRd\times\RRs)$ and a phase function $\varphi$ on a subset of $\RR^d\times\RR^s$ bounded in $x$. A class of oscillatory integrals on Euclidean spaces, the local model for our theory, was studied in \cite{CoSc}.

The key feature of the theory of Lagrangian distributions is that each such distribution is globally associated to a Lagrangian submanifold $\Lambda\subset T^*X$ and that its leading order behavior can be invariantly described by its principal symbol which is a section in a line bundle on $\Lambda$.

In this article, we prove that the situation on asymptotically Euclidean manifolds is similar, but with a more delicate structure ``at infinity''. To make this precise, we work within the framework of scattering geometry, developed in \cite{Melrose1,MZ}, see also \cite{HV,WZ}. In the article, we provide an extensive introduction to this theory and add to it a class of naturally arising morphisms, the \emph{scattering maps}. We note that the scattering manifolds may also be seen as Lie manifolds, and in this way our theory complements recent advances in the theory of Lagrangian distributions and Fourier integral operators on such singular spaces (via groupoid techniques), see \cite{Lescure}.

The prototype of a scattering geometry is the Euclidean space $\RRd$, identified with a ball under radial compactification. For this setting, a fitting theory of Lagrangian submanifolds on $\RRd$ was developed in \cite{CoSc2}. As a first step, we adapt this to general scattering manifolds with boundary $X=X^o\cup \partial X$, the boundary being viewed as infinity. On such manifolds, the environment for microlocalization is then the compactified scattering cotangent bundle $\scOverT^*X$, a manifold with corners of codimension $2$ and its boundary $\mathcal{W}=\partial\scOverT^*X$. 
This boundary may be seen as a stratified space, and the two boundary faces of $\scOverT^*X$, which intersect in the corner, inherit a type of contact structure.
The geometric objects of study in our theory are then Legendrian submanifolds of the faces $\mathcal{W}$ which intersect in the corner and are the boundary of some Lagrangian submanifold in the interior and smooth (distribution) densities thereon.

The link with Lagrangian distributions is now as follows. We prove that, despite the singular geometry, any Lagrangian submanifold $\Lambda \subset \mathcal{W}$ locally admits a parametrization through some phase function $\varphi$, via a generalization of the map
\[\lp:\Cp\rightarrow\Lp\quad (x,\theta)\mapsto \big(x,\dd_x\varphi(x,\theta)\big),\]
where $\Cp=(\dd_\theta\varphi)^{-1}\{0\}$.
For each such a phase function, a Lagrangian distribution can be expressed locally as an oscillatory integral as in \eqref{eq:osciintproto}.
Up to Maslov factors and some density identifications, the restriction of $a(x,\theta)$ to $C_\varphi$ yields the (principal) symbol $\sigma(u)$ of $u$ and is interpreted as a (density valued) function on $\Lambda$ by identification via $\lp$.
%
Indeed, the main theorem characterizing the principal symbol will be:
\begin{thm*}
    	Let $\Lambda$ be a $\sct$-Lagrangian on $X$. Then there exists a surjective principal symbol map
	\begin{equation*}
        j^\Lambda_{m_e,m_\psi}\colon I^{m_e,m_\psi}(X,\Lambda) \to \Sm(\Lambda, M_\Lambda\otimes\Omega^{1/2}),
	\end{equation*}
    where $M_\Lambda$ is the Maslov bundle and $\Omega^{1/2}$ denotes the half-density bundle over $\Lambda$.
    Moreover, its null space is $I^{m_e-1,m_\psi-1}(X,\Lambda)$ and
	we have the short exact sequence 
	\[
		0 \longrightarrow  
		I^{m_e-1,m_\psi-1}(X,\Lambda) \longrightarrow
		I^{m_e,m_\psi}(X,\Lambda) \xrightarrow{j^\Lambda_{m_e,m_\psi}}
        \Sm(\Lambda,M_\Lambda\otimes\Omega^{1/2})
		\longrightarrow 0.
	\]
    Equivalently,
    \[I^{m_e,m_\psi}(X,\Lambda) / I^{m_e-1,m_\psi-1}(X,\Lambda) \simeq \Sm(\Lambda, M_\Lambda\otimes\Omega^{1/2}).\]
	%
	%
	%
\end{thm*}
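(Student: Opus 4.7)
The plan is to follow the Hörmander blueprint for principal symbols of Lagrangian distributions, adapted to the two-index scattering setting. The statement decomposes into three independent tasks: (i) define $j^\Lambda_{m_e,m_\psi}$ consistently, (ii) show its kernel is $I^{m_e-1,m_\psi-1}(X,\Lambda)$, and (iii) prove surjectivity. Throughout, the key reduction is that the local parametrization result recalled in the introduction, which generalizes the map $\lp\colon\Cp\to\Lp$, writes every $u\in I^{m_e,m_\psi}(X,\Lambda)$ microlocally as a finite sum of oscillatory integrals $I_\varphi(a)$ of the form \eqref{eq:osciintproto} modulo $\Sw$, where each $\varphi$ parametrizes an open piece of $\Lambda\subset\Wt$ and $a$ lies in an appropriate bi-graded symbol class $\Pmmp$ on $\RR^d\times\RR^s$.

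First I would \emph{define the symbol locally} by setting $j^\Lambda_{m_e,m_\psi}(I_\varphi(a))$ to be the restriction $a|_{\Cp}$, transported to $\Lambda$ via $\lp$ and twisted by the canonical half-density factor $\abs{D(x,\theta)/D(\lambda,c)}^{1/2}$ built from the non-degeneracy of $\varphi$, together with the Maslov line obtained from the signature of the Hessian of $\varphi$ in the fiber directions. The main work — and the step I expect to be the main obstacle — is to show this definition is independent of $\varphi$, in other words to prove an \emph{equivalence-of-phase-functions theorem} in the scattering setting. The classical Hörmander proof proceeds by showing that two non-degenerate phase functions parametrizing the same Lagrangian near a point differ, after stabilization with non-degenerate quadratic forms, by a diffeomorphism in the fiber variables; stationary phase then recovers the same section modulo Maslov factors. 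In our framework one must run this argument uniformly up to the two boundary faces of $\scOverT^*X$, checking that both orders $m_e$ and $m_\psi$ are preserved by the fiber changes of variable and by the stabilization/destabilization procedure, and that the Maslov bundle on $\Lambda\subset\Wt$ is well defined across the corner.

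For the kernel, suppose $a|_{\Cp}=0$. By Hadamard's lemma applied in the fiber variables $\theta$, one writes $a=\sum_{j=1}^s (\partial_{\theta_j}\varphi)\, b_j$ for symbols $b_j$. The content I would need to verify is that, in the bi-graded scattering symbol classes, this division can be performed with $b_j\in\mathscr{P}^{m_e-1,m_\psi-1}_{\psi e}$; this rests on the $\psi$-homogeneous and $e$-decaying structure of $\varphi$ and $a$ near the two faces. Then a standard integration by parts identity $\partial_{\theta_j}(e^{i\varphi})=i(\partial_{\theta_j}\varphi)e^{i\varphi}$ rewrites $I_\varphi(a)$ as $I_\varphi(a')$ with $a'\in\mathscr{P}^{m_e-1,m_\psi-1}_{\psi e}$, giving $u\in I^{m_e-1,m_\psi-1}(X,\Lambda)$. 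The converse inclusion is immediate: a symbol of lower bi-order restricts to a section that vanishes to leading order, hence has trivial class modulo $I^{m_e-1,m_\psi-1}$ in both factors.

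Finally, for surjectivity, given $s\in\Sm(\Lambda,M_\Lambda\otimes\Omega^{1/2})$ I would choose a locally finite open cover of $\Lambda$ by charts $U_\alpha$, each parametrized by a non-degenerate phase function $\varphi_\alpha$, together with a subordinate partition of unity $\{\chi_\alpha\}$. On each chart the local inverse of the symbol map (unwinding the half-density and Maslov factors) identifies $\chi_\alpha s$ with the restriction to $\Cp_{\varphi_\alpha}$ of some $a_\alpha\in\mathscr{P}^{m_e,m_\psi}_{\psi e}$, which may be extended arbitrarily off $\Cp_{\varphi_\alpha}$ inside the symbol class. Summing the resulting $I_{\varphi_\alpha}(a_\alpha)$ produces $u\in I^{m_e,m_\psi}(X,\Lambda)$ with $j^\Lambda_{m_e,m_\psi}(u)=s$. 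The obstacle here is ensuring that the cover, the partition of unity and the extensions are all compatible with the stratified boundary of $\Wt$ so that uniform control up to the corner is maintained — which follows from the compactness of $\Lambda$ up to the corner and from the symbolic nature of the extension procedure.
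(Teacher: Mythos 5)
Your outline follows essentially the same route as the paper: the symbol is defined by restricting the density-weighted amplitude to $\Cp$ and pushing forward along $\lambda_\varphi$, its well-definedness rests on the scattering equivalence-of-phase-functions theorem plus the stationary-phase/Maslov bookkeeping for fiber reduction and elimination of excess, and the kernel and surjectivity statements are then obtained exactly as in the classical argument of \cite{Treves} — which is precisely how the paper concludes after the machinery of Sections \ref{sec:exchphase}--\ref{sec:symb}. One minor bookkeeping remark: in the kernel step the drop to bi-order $(m_e-1,m_\psi-1)$ comes from the integration by parts with the scattering vector fields (equivalently, dividing by the components of $\scd_Y\varphi$, of $\SG$-order $(1,0)$, and then gaining the $\theta$-order from the derivative falling on the quotient), not from the Hadamard division alone, but the conclusion is unchanged.
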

Summarizing, our results show that the theory of Lagrangian distributions, classically studied either locally or on compact manifolds, may be generalized to a theory of Lagrangian distributions on Euclidean spaces or manifolds with boundaries, hence a much wider class of geometries. It is formulated in a way that makes it easily transferable to other singular geometries as well as manifolds with corners, see \cite{Melrosemwc}.

The paper is organized as follows.
In Section \ref{sec:prelim} we give an introduction to scattering geometry.
In particular, we discuss the natural class of maps between scattering manifolds, compactification and scattering amplitudes.
In Section \ref{sec:phaseandlag} we define the Lagrangian submanifolds and phase functions that arise in our theory.
In Section \ref{sec:exchphase} we discuss the techniques of classifying phase functions which parametrize the same Lagrangian submanifold.
In Section \ref{sec:Lagdist} we define the Lagrangian distributions in this setting, starting from oscillatory integrals, and study their transformation properties.
Finally, in Section \ref{sec:symb}, we define the principal symbol of Lagrangian distributions and prove its invariance.

\subsection*{Acknowledgements}
The second author was supported by the DFG GRK-1463.
The third author has been partially supported by the University of Turin ``I@UniTO'' project ``Fourier integral operators, symplectic geometry and analysis on noncompact manifolds'' (Resp. S. Coriasco).
We wish to thank J. Wunsch for useful discussions.

\section{Preliminary definitions}\label{sec:prelim}
In the following, we will recall some elements of the geometric theory known as ``scattering geometry'', cf. \cite{Melrose1,Melrose2,MZ,WZ}. To start with, we need to recall some groundwork on the analysis on manifolds with corners, for which we adopt the definition of \cite{MelroseAPS,Melrosemwc}, cf. also \cite{MO} and \cite{Joyce} for a discussion on the different notions of manifolds with corners in the literature.

\subsection{Manifolds with corners and scattering geometry}
\label{sec:scat}

We recall the following extrinsic definition of a (smooth) manifold with (embedded) corners.

\subsubsection*{Manifolds with corners and $\Sm$-functions}
Let $X$ be a paracompact Hausdorff space. As in the case of manifolds without boundary, a manifold with corners is defined in terms of local charts. A $d$-dimensional chart with corners (of codimension $k$) on $X$ is a pair $(U,\phi)$, where $U$ is an open subset of $[0,\infty)^k\times \RR^{d-k}$ for some $0\leq k \leq d$, and $\phi : U \to \phi(U)\subset X$ is a homeomorphism. If $k = 1$ we call $(U,\phi)$ a chart with boundary.
As usual, we define compatibility between charts and an atlas of charts and therefore obtain a definition of manifolds with boundary and manifolds with corner (abbreviated mwb and mwc, respectively, in the following).
For every manifold with corners $X$ of dimension $d$ there exists a $d$-dimen\-sional $\Sm$-manifold $\wt{X}$ without boundary
with $X\subset\tilde{X}$, and the interior $X^o$ of $X$ is open in $\wt{X}$ and non-empty when $d>0$.
We denote by $\Sm(X)$ the space of the restrictions of the elements of $\Sm(\wt{X})$ to $X$. The tangent space $TX$ and differentials of maps $f:X\rightarrow Y$, $Tf:TX\rightarrow TY$, between manifolds with corners $X, Y$, are obtained as restrictions of the corresponding objects on $\wt{X}$ and $\wt{Y}$.

We always assume $X$ to be compact and assume that there is a finite collection of $\Sm$-functions on $\tilde{X}$, $\{\rho_i\}_{i\in I}$, called boundary defining functions (abbreviated bdf), such that $X=\bigcap_{i\in I}\{p\in \tilde{X}, \rho_i(p)\geq 0\}$, and at every point where $\rho_j=0$ for every $j\in J\subset I$, the differentials of these $\rho_j$ are supposed to be linearly independent. In particular, $\dd\rho_j\neq 0$ when $\rho_j=0$. We also always assume to be working in local coordinates of the form 
$\bx\colon p\mapsto (\rho_1,\dots,\rho_k,x_1,\dots,x_{d-k})(p)$, where $k$ is the number of boundary defining functions\footnote{Note that the $\rho_j$ cannot always be chosen as coordinates at interior points, since their differential may vanish in the interior. As it is customary, we disregard this minor technical inconvenience in order to allow for an easier consistent notation and think of the $\rho$ to be replaced by any other admissible coordinate function there.}. %
\begin{rem}\label{rem:joycedef}
    Joyce calls this notion a (compact) \emph{manifold with embedded corners} (cf. Remark 2.11 in \cite{Joyce}).
    By Proposition 2.15 in \cite{Joyce}, we see that, locally, a boundary defining function always exists, and the property that all corners are embedded ensures that a global boundary defining function exists.
    Most of the times the actual choice of boundary defining function is not relevant (cf. Proposition 2.15).
\end{rem}
Let $p\in X$. Then the depth of $p$, $\mathrm{depth}(p)$, is the number of independent boundary defining functions vanishing at $p$, which coincides with the co-dimension of the boundary stratum in which $p$ is contained. We recall that for $j\in\{0,\dots,d\}$ one sets
$\partial_jX=\{p\in X\,|\,\mathrm{depth}(p)= j\}.$
%
%
In particular, $X^o=\partial_0 X$ and $\partial X=\bigcup_{j>0} \partial_j X$. We note that as such, the boundary of a mwc is not a mwc itself, but rather a topological manifold. Nevertheless, it is possible to define smooth functions on $\partial X$ as the set of restrictions smooth functions on $X$ to $\partial X$.

Given a relatively open subset $U$ of a manifold with corner $X$, we say that $U$ is \emph{interior} if $\overline{U}\cap\partial X=\emptyset$. Otherwise, we always assume that $U$ contains all interior points of the boundary $\overline{U}\cap \partial X$ and call $U$ a \emph{boundary neighbourhood}.

We will write $f\in\Sm(U)$ if and only if there is an extension $\wt{f}\in\Sm(X)$ that coincides with $f$ on $U$. The space $\rho_1^{-m_1}\cdots\rho_k^{-m_k}\Sm(U)$ is the space of functions $h\in\Sm(U^o)$ such that $\rho_1^{m_1}\cdots\rho_k^{m_k}h$ extends to an element of $\Sm(U)$.

The class of mwc that interest us is that of (products of) fiber bundles where both the base as well as the fiber are allowed to be a compact manifold with boundary (abbreviated ``mwb''). The archetype of such a mwc is the product of two mwbs. Indeed, if $X$ and $Y$ are mwbs, $B=X\times Y$ is a mwc. We write $\B=\partial B$ and we have (adopting the notation of \cite{CoSc2,ES}) 
\begin{align*}
\B&=\underbrace{(\partial X\times Y^o) \cup (X^o\times \partial Y)}_{=\partial_1 B}\cup \underbrace{(\partial X\times \partial Y)}_{=\partial_2 B}=: \Be \cup \Bp\cup \Bpe.
\end{align*}
We now describe the basics of scattering geometry, cf. \cite{Melrose1,Melrose2,MZ,WZ}. We first recall the guiding example.
\begin{defn}[Radial compactification of $\RRd$]
Pick any diffeomorphism $\iota:\RRd\rightarrow (\BBd)^o$ that, for $|x|>3$, is given by
\[\iota:x\mapsto \frac{x}{|x|}\left(1-\frac{1}{|x|}\right).\]
Then its inverse is given, for $|y|\geq \frac{2}{3}$, by
\[\iota^{-1}:y\mapsto \frac{y}{|y|}(1-|y|)^{-1}.\]
The map $\iota$ is called the \emph{radial compactification map}. We may hence view $\RRd$ as the interior of the mwb $\BBd$ and call $\partial \BBd$ ``infinity''.

Denote by $\bdf{x}$ a smooth function $\RRd\rightarrow (0,\infty)$ that, for $|x|>3$, is given by $x\mapsto |x|$. Then $(\iota^{-1})^*\bdf{x}^{-1}$ is a boundary defining function on $\BBd$ (and we view $\bdf{x}^{-1}$ as a boundary defining function on $\RRd$). Indeed, for $|y|>2/3$ it is given by 
$y\mapsto 1-|y|=\rho_Y$.
\end{defn}
\begin{rem}
\label{rem:compequiv}
In scattering geometry, the explicit choice of compactification of $\RRd$ often differs from ours, see \cite{MZ}. Write $\jap{x}=\sqrt{1+|x|^2}$ for $x\in\RRd$ and define
\[x\mapsto \left(\frac{1}{\jap{x}},\frac{x}{\jap{x}}\right)=:\left(\wt{\rho_Y},\wt{y}\right).\]
This maps $\RRd$ into the interior of the half-sphere with positive first component, and $\wt{\rho_Y}$ and $d-1$ of the $\wt{y} = \wt{\rho_Y}\cdot x$ functions may be chosen as local coordinates. Because of the following computation, both compactifications are equivalent, meaning they yield diffeomorphic manifolds. In fact, for $|x|>3$, we may write
\[\jap{x}^{-1}=\bdf{x}^{-1}\frac{1}{1+\bdf{x}^{-2}}, \qquad \bdf{x}^{-1}=\jap{x}^{-1}\frac{1}{\sqrt{1-\jap{x}^{-2}}}.\]
Hence, $\jap{x}^{-1}$ and $\bdf{x}^{-1}$ yield equivalent boundary defining functions on $\RRd$.
\end{rem}
\begin{defn}[Scattering vector fields on mwbs]
Let $X$ be a mwb with bdf $\rho$. Consider the space $\bV(X)$ of vector fields tangential to $\partial X$.
Then $\scV(X)$ is the space $\rho\, \bV(X)$. 
Near any point with $\rho=0$, the vector fields
$\{\rho^2\partial_\rho,\ \rho\partial_{x_j}\}$
generate $\scV(X)$. In particular, $\scV(X)$ contains vector fields supported in $X^o$.

By the Serre-Swan theorem, there exists a $\Sm$-vector bundle $\scT X$ such that $\scV(X)$ are its $\Sm$-sections.
We have a natural inclusion map $\scT X\hookrightarrow TX$. Note that $\{\rho^2\partial_\rho,\ \rho\partial_{x_j}\}$ are, as elements of $\scT_pX$, non-vanishing at boundary points $p\in \partial X$ despite $\rho=0$.\\
The inclusion reverses for the dual bundles $ T^*X\hookrightarrow \scT^*X$. In coordinates, we denote the dual elements to $\{\rho^2\partial_\rho,\ \rho\partial_{x_j}\}$ by $\left\{\frac{\dd\rho}{\rho^2},\frac{\dd x_j}{\rho}\right\}$, and these span the sections of $\scT^*X$ near the boundary.

We now consider the the \textit{compactified scattering cotangent bundle} $\scOverT^*X$, which is the fiber-wise radial compactification of $\scT^*X$, a compact manifold with corners.
The new-formed fiber boundary may be identified with a rescaling of the cosphere bundle, called $\scS^*X$.
The boundary of the new-formed mwc $W=\scOverT^*X$, which we denote\footnote{This is a slight change of notation compared to \cite{Melrose1} where it is denoted $C_\sct$.} by $\Wt$, splits into three components: the boundary faces
$$\We:=\scT^*_{\partial X}X,\qquad \Wp:=\scS^*_{X^o}X,\qquad \Wpe:=\scS^*_{\partial X}X.$$
This geometric situation (with $X$ identified as the zero section) near the boundary is summarised in Figure \ref{fig:Wt} (cf. \cite{CoSc2,MZ}).
\begin{figure}[htb!]
\begin{center}
\begin{tikzpicture}[scale=1.2]
  \node (A) at (1.5,3.3) {$\Wp$};
  \node (B) at (3.4,1.5) {$\We$};
  \node (C) at (3.3,3.2) {$\Wpe$}; 
  \node (D) at (1.2,1.2) {$X^o$};
  \node (E) at (0.2,2) {$\scOverT^*X$};
  \draw (1,3) -- (3,3);
  \draw[dotted] (0,3) -- (1,3);
  \draw (3,0) -- (3,3);
  \draw[opacity=0.75] (1,1) -- (3,1);
  \draw[opacity=0.75,dotted] (3,0) -- (3,-1);
  \draw[opacity=0.75,dotted] (0,1) -- (1,1);
  \draw[->,thick] (3.5,0.6) -- (3.05,0.9);
  \node at (3.6,0.4) {$\partial X$};
\end{tikzpicture}
\caption{The boundary faces and corner of $\scOverT^*X$}
\label{fig:Wt}
\end{center}
\end{figure}

The exterior derivative $\dd$ lifts to a well-defined scattering differential $\scd$ on the scattering geometric structure.
In coordinates, with $\rho$ a local boundary defining function, we write
\begin{align}
\label{def:scddef}
\scd f=\rho^2\partial_\rho f\,\frac{\dd\rho}{\rho^2}+\sum_{j=1}^{d-1}\rho\partial_{x_j} f\,\frac{\dd x_j}{\rho}.
\end{align}
Note that for $f\in \Sm(X)$, this means that as a section of $\scT^*X$, $\scd f$ necessarily vanishes on the boundary. In fact, we may extend $\scd$ to the space $\rho^{-1}\Sm(X)$ and obtain a map
$$\scd:\rho^{-1}\Sm(X)\longrightarrow \scTheta(X)=\Gamma(\scT^*X).$$
That is, in local coordinates near the boundary,
$$\scd(\rho^{-1}f) = \rho^{-1}\, \scd f - f \frac{\dd\rho}{\rho^2}=(-f+\rho\partial_\rho f)\,\frac{\dd\rho}{\rho^2}+\sum_{j=1}^{d-1}\partial_{x_j} f\,\frac{\dd x_j}{\rho}.$$
%
\end{defn}
\begin{rem}
    We note that $\rho^{-1}\Sm(X)$ and similarly defined spaces are independent of the actual choice of boundary defining function $\rho$ (cf. Remark \ref{rem:joycedef}).
\end{rem}

\begin{ex}
Outside a compact neighbourhood of the origin, polar coordinates provide an isomorphism $\RRd\cong\RR_+\times \SSSd$. The vector fields $\partial_r$ and $\frac{1}{r}\partial_{x_j}$, $x_j$ being coordinates on $\SSSd$, correspond (up to a sign) under radial inversion $\rho=\frac{1}{r}$ to $\rho^2\partial_\rho$ and $\rho\partial_{x_j}$. Hence, scattering vector fields on $\BBd$ arise as the image of the vector fields of bounded length on $\RRd$ under radial compactification.
\end{ex}

\begin{defn}
A \emph{scattering manifold} (also called asymptotically Euclidean manifold) is a compact manifold with boundary $(X, \rho)$ whose interior is equipped with a Riemannian metric $g$ that is supposed to take the form, in a tubular neighbourhood of the boundary,
$$g=\frac{{(\dd \rho)}^{\otimes 2}}{\rho^4}+\frac{g_\partial}{\rho^2}$$
where $g_\partial\in\Sm(X,\mathrm{Sym}^2 T^*X)$ restricts to a metric on $\partial X$.
\end{defn}
Any mwb may be equipped with a scattering metric.
\begin{ex}
In polar coordinates, the metric on $\RRd\setminus\{0\}$ can be written as 
$$g=(\dd r)^{\otimes 2}+r^2 g_{\SSSd}.$$
Pulled back to $\BBd$ using $\iota$, that is $r=(1-|y|)^{-1}=\rho^{-1}$ near the boundary, this becomes
$$g_{\BBd}=\frac{(\dd \rho)^{\otimes 2}}{\rho^4}+\frac{g_{\SSSd}}{\rho^2}.$$
%
\end{ex}

\begin{defn}[Scattering vector fields on product type manifolds]
For a product $B=X\times Y$, with $(X,\rho_X)$ and $(Y,\rho_Y)$ mwbs, we may introduce $\scV(B)$ as $\rho_X\rho_Y  (\bV(B))$.
Near a corner point the resulting bundle $\scT^*B$ is hence generated, if 
$\bx=(\rho_X,x)$ and $\by=(\rho_Y,y)$ are local coordinates on $X$ and $Y$ respectively, by
$$\rho_X^2\rho_Y\partial_{\rho_X},\ \rho_X\rho_Y\partial_{x_j},\ \rho_X\rho_Y^2\partial_{\rho_Y},\ \rho_X\rho_Y\partial_{y_k}.$$
The space $\scV(B)$ splits into horizontal and vertical vector fields\footnote{Consider the projection $\pr_X:B\rightarrow X$. Then $v\in\scV(B)$ satisfies $v\in\scVX(B)$ if $v(\pr_X^*f)=0$ for all $f\in\Sm(X)$. The set $\scVY(B)$ is defined in analogy.},
$\scVX(B)$ and $\scVY(B)$, respectively, and we define
$\scTheta^X(B)$ as the set of (scattering) 1-forms $w \in \scTheta^1(B)$ such that
$w(v) = 0$ for all $v \in \scVY(B)$.

Given complete set of coordinates $\bx=(\rho_X, x)$, 
$\by=(\rho_Y, y)$ on $X$ and $Y$, respectively,
we see that $\scTheta^X(B)$ is the set of sections generated by
\[\frac{\dd\rho_X}{\rho_X^2\rho_Y}, \frac{\dd x_j}{\rho_X\rho_Y}.\]
The underlying vector bundle will be denoted by $\scHX B$.
Similarly, we define $\scTheta^Y(B)$ and $\scHY B$.
It is important to note that we have the following ``rescaling identifications'':
\begin{equation}
\label{eq:rescal}
\begin{aligned}
\scTheta^X(B)\ni \frac{\dd \rho_X}{\rho_X^2 \rho_Y}&\,\longleftrightarrow\, \rho_Y^{-1}\frac{\dd\rho_X}{\rho_X^2}\in \rho_Y^{-1}\Sm(Y,\scTheta(X)),
\\
\scTheta^X(B)\ni \frac{\dd x_j}{\rho_X\rho_Y}&\,\longleftrightarrow\, \rho_Y^{-1}\frac{\dd x_j}{\rho_X}\in \rho_Y^{-1}\Sm(Y,\scTheta(X)).
\end{aligned}
\end{equation}

Again, we may define the scattering exterior differential
$\scd$, induced by the usual exterior differential $\dd$, and extend it to a map 
\[\scd : \rho_X^{-1}\rho_Y^{-1}\Sm(B)\longrightarrow \scTheta(B).\]
In terms of the scattering differentials on $X$ and $Y$ we may decompose $\scd$ as $\scd=\scdX+\scdY$, where
\begin{align*}
    \scdX : \rho_X^{-1}\rho_Y^{-1} \Sm(B) \to \scTheta^X(B),\\
    \scdY : \rho_X^{-1}\rho_Y^{-1} \Sm(B) \to \scTheta^Y(B).
\end{align*}
\end{defn}

\subsection{Amplitudes}

\begin{defn}[Amplitudes of product-type]
Let $B$ be a mwc, $\{\rho_j\}_{j=1\dots k}$ a complete set of bdfs. Then $a$ is called an amplitude of order $m\in\RR^{k}$ if 
$$a\in \rho_1^{-m_1} \cdots \rho_k^{-m_k}\Sm(B).$$ 
For an open subset $U$ of $X$, a \emph{locally defined} amplitude of product type is an element of $\rho_1^{-m_1}\cdots\rho_k^{-m_k}\Sm(\overline{U})$. 
For $p\in\partial X$ we call $a$ \emph{elliptic at $p$} if $\rho^{m_1}_1\cdots\rho_k^{m_k}a(p)\neq 0$.
We write 
$$\dSmz(X):=\bigcap_{m\in\RR^k}\rho_1^{-m_1}\cdots\rho_k^{-m_k}\Sm(B)$$
for the smooth functions vanishing at the boundary of infinite order.

For $p\in\partial B$ we call $a$ \emph{rapidly decaying at $p$} if there exists a neighbourhood $U$ of $p$ such that $a$ vanishes of infinite order on $U\cap \partial B$, that is $a \in \dSmz(\overline U)$.
\end{defn}

We now study the leading boundary behavior of these amplitudes. For simplicity, we only consider $B=X\times Y$ for mwbs $X$ and $Y$.
\begin{defn}
\label{def:princsymbol}
	Let $a \in \rho_X^{-m_e}\rho_Y^{-m_\psi} \Sm(B)$ and write $a = \rho_X^{-m_e}\rho_Y^{-m_\psi} f$ for some $f \in \Sm(B)$.
	Given a coordinate neighbourhood $U$ of a point $p\in\B^\bullet$, we define symbols $\sigma^\bullet(a)$ of $a$ on $U$ by
	\begin{align*}
	\begin{cases}
    	\sigma^e(a)(\bx,\by)=\rho_X^{-m_e}\rho_Y^{-m_\psi}f(0,x,\by), &p\in\B^e\cup\B^{\psi e} \\
    	\sigma^\psi(a)(\bx,\by)=\rho_X^{-m_e}\rho_Y^{-m_\psi}f(\bx,0,y), &p\in\B^\psi\cup\B^{\psi e}\\
    	\sigma^{\psi e}(a)(\bx,\by)=\rho_X^{-m_e}\rho_Y^{-m_\psi}f(0,x,0,y) &p\in\B^{\psi e}.
	\end{cases}
	\end{align*}
	The tuple $(\sigma^\psi(a),\sigma^e(a), \sigma^{\psi e}(a))$ is denoted by $\sigma(a)$ and called the \emph{principal symbol}.
\end{defn}
Fix $\epsilon>0$ so small that $\rho_X$ and $\rho_Y$ can be chosen as coordinates on $B$ respectively whenever $\rho_X<\epsilon$ and $\rho_Y<\epsilon$.
We choose a cut-off function $\chi \in \Sm(\RR)$ such that $\chi(t) = 0$ for $t > \epsilon/2$ and $\chi(t) = 1$ for $t < \epsilon/4$.

\begin{defn}\label{def:princpart}
    For any $a \in \rho_X^{-m_e}\rho_Y^{-m_\psi} \Sm(B)$ the amplitude
    \begin{align*}
        a_p(\bx,\by) = \chi(\rho_X) \sigma^e(a)(\bx,\by) + \chi(\rho_Y) \sigma^\psi(a)(\bx,\by) - \chi(\rho_X)\chi(\rho_Y) \sigma^{\psi e}(a)(\bx,\by)
    \end{align*}
    is called the \emph{principal part} of $a$.
\end{defn}
While $a_p$ does depend on the choice of $\chi$, its leading boundary asymptotic do not. By Taylor expansion of $f$, we obtain:
\begin{lem}
	\label{lem:princpart}
    The principal part $a_p$ of $a$ satisfies $ a - a_p \in \rho_X^{-m_e+1}\rho_Y^{-m_\psi+1} \Sm(B).$
    
\end{lem}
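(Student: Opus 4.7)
The strategy is to combine two-variable Taylor expansion in $(\rho_X,\rho_Y)$ with a bookkeeping decomposition by the cut-off $\chi$. Writing $a=\rho_X^{-m_e}\rho_Y^{-m_\psi}f$ with $f\in\Sm(B)$, the claim reduces to showing
\[
R:=f-\chi(\rho_X)f(0,x,\by)-\chi(\rho_Y)f(\bx,0,y)+\chi(\rho_X)\chi(\rho_Y)f(0,x,0,y)\in\rho_X\rho_Y\,\Sm(B).
\]
I would insert the partition of unity
\[
1=\chi(\rho_X)\chi(\rho_Y)+\chi(\rho_X)[1-\chi(\rho_Y)]+[1-\chi(\rho_X)]\chi(\rho_Y)+[1-\chi(\rho_X)][1-\chi(\rho_Y)]
\]
in front of the leading $f$ and regroup, obtaining
\begin{align*}
R&=\chi(\rho_X)\chi(\rho_Y)\bigl[f(\bx,\by)-f(0,x,\by)-f(\bx,0,y)+f(0,x,0,y)\bigr]\\
&\quad+\chi(\rho_X)[1-\chi(\rho_Y)]\bigl[f(\bx,\by)-f(0,x,\by)\bigr]\\
&\quad+[1-\chi(\rho_X)]\chi(\rho_Y)\bigl[f(\bx,\by)-f(\bx,0,y)\bigr]\\
&\quad+[1-\chi(\rho_X)][1-\chi(\rho_Y)]\,f(\bx,\by).
\end{align*}

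For the first summand I would apply the mixed-remainder Taylor identity
\[
F(s,t)-F(0,t)-F(s,0)+F(0,0)=st\int_0^1\!\int_0^1\partial_s\partial_tF(\lambda s,\mu t)\,d\lambda\,d\mu
\]
to $F(\rho_X,\rho_Y):=f(\rho_X,x,\rho_Y,y)$, producing $\rho_X\rho_Y G_1$ with $G_1\in\Sm(B)$. The second and third summands each reduce via a single Taylor step to $\rho_X$ (resp.\ $\rho_Y$) times a smooth factor; since $\chi\equiv 1$ near $0$, the quotient $(1-\chi(t))/t$ extends to a smooth function on $[0,\infty)$, which supplies the missing power of the remaining boundary defining function. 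The fourth summand is handled analogously by using this smoothness in both variables simultaneously. Summing the four contributions gives $R\in\rho_X\rho_Y\,\Sm(B)$, whence $a-a_p\in\rho_X^{-m_e+1}\rho_Y^{-m_\psi+1}\,\Sm(B)$.

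The main obstacle is essentially combinatorial: one must verify that in each of the four regions determined by the cut-offs, the Taylor remainder together with the smoothness of $(1-\chi(t))/t$ combine to produce exactly the two missing powers of the boundary defining functions. Once the four-fold partition is set up, the analytic content is standard multi-variable Taylor expansion, and the independence of the leading asymptotics from the particular choice of $\chi$ follows because any two admissible cut-offs differ by a function supported away from both boundary faces, which sits in $\dSmz(B)\subset\rho_X\rho_Y\,\Sm(B)$.
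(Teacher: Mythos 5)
Your argument is correct and is essentially the paper's own (unwritten) proof: the paper simply says ``by Taylor expansion of $f$'', and your four-fold cut-off decomposition, the mixed first-order Taylor remainder, and the smoothness of $(1-\chi(t))/t$ are precisely the natural way to spell that out. Only your closing aside on independence of the choice of $\chi$ is imprecisely justified---$\chi_1(\rho_X)-\chi_2(\rho_X)$ is supported away from the $e$-face but not away from the $\psi$-face---though that independence is in any case an immediate consequence of the lemma itself and is not part of the statement being proved.
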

%

\begin{ex}[Classical $\SG$-symbols]
Let $B=\BBd\times\BBs$, where $\BBd$ and $\BBs$ are the radial compactifications of $\RRd$ and $\RRs$.
The space of so-called classical $\SG$-symbols, $\SGcl^{m_e,m_\psi}(\RRd\times\RRs)$, is that of $a\in\Sm(\RRd\times\RRs)$ such that
$(\iota^{-1}\times\iota^{-1})^*a\in \rho_X^{-m_e} \rho_Y^{-m_\psi}\Sm(B)$. These symbols are then precisely those that satisfy the estimates
\begin{equation}
\label{eq:SGest}
\left|\partial_x^{\alpha}\partial_\theta^{\beta} a(x,\theta)\right|\lesssim \jap{x}^{m_e-|\alpha|}\jap{\theta}^{m_\psi-|\beta|}
\end{equation}
and admit a polyhomogeneous expansion, see \cite{ES,Melrose1,WZ} and the principal symbol of $a$ corresponds to its homogeneous coefficients, see \cite[Chap. 8.2]{ES}.
\end{ex}
%
We will need to consider density-valued amplitudes and integrate amplitudes on mwbs. For this, we introduce the space of scattering $\sigma$-density bundles, cf. \cite{Melrose1}, where $\scOmega^{\sigma}(X)=\rho^{-\sigma(d+1)}\Omega^\sigma(X)$ in terms of the usual $\sigma$-density bundle. Note that $\scOmega^{\sigma}$ does not depend on the choice of boundary defining function.

\begin{ex}\label{ex:mugdensity}
    Under the radial compactification, the canonical Lebesgue integration density on $\RR^d$, $\dd x \in \Omega^1(\RR^d)$, is mapped to $\iota_*\dd x \in \scOmega^1(\BB^d)$.
    In particular, we obtain $\iota_*\dd x = \rho^{-(d+1)}\dd\rho\,\dd\SSSd$. More generally, if $(X,g)$ is a scattering manifold, then the metric induces a canonical volume scattering 1-density $\mu_g$.
\end{ex}

Since the density bundle is a line bundle, any choice of scattering density provides a section of it and allows for an identification of scattering densities on $X$ and $\Sm$-functions.

We denote the set of all smooth sections of the bundle $\scOmega^\sigma(X)$ by $\Sm(X,\scOmega^\sigma(X))$, and
the tempered distribution densities $(\dSmz)'(X, \scOmega^\sigma(X))$ are the continuous linear functionals on $\dSmz(X, \scOmega^{1-\sigma}(X))$.
\begin{lem}\label{lem:intdensity}
    Let $X$ be a mwb and $Y$ a manifold without boundary. Then, integration over $Y$ induces a map
    \begin{align*}
        \int_Y : \Smc(X\times Y, \scOmega^1(X\times Y)) \longrightarrow \rho_X^{-\dim Y} \Smc(X,\scOmega^1(X)).
    \end{align*}
\end{lem}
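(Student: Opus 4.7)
The proof is essentially bookkeeping with the defining weights of the scattering density bundles, so I would aim to make this explicit.

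\textbf{Step 1: Unwind the scattering density bundles.} Since $Y$ has no boundary, the only boundary defining function on $X \times Y$ is (the pullback of) $\rho_X$, and $\dim(X \times Y) = d_X + d_Y$ where $d_X = \dim X$, $d_Y = \dim Y$. Hence by the very definition $\scOmega^1(X \times Y) = \rho_X^{-(d_X + d_Y + 1)}\, \Omega^1(X \times Y)$, while $\scOmega^1(X) = \rho_X^{-(d_X + 1)}\,\Omega^1(X)$. The algebraic identity I will exploit is
\[
\rho_X^{-(d_X + d_Y + 1)}\,\Omega^1(X) \;=\; \rho_X^{-d_Y}\cdot\scOmega^1(X).
\]

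\textbf{Step 2: Reduce to ordinary fiber integration.} Any $\omega \in \Smc(X\times Y, \scOmega^1(X\times Y))$ can be written uniquely as $\omega = \rho_X^{-(d_X + d_Y + 1)}\,\tilde\omega$, with $\tilde\omega \in \Smc(X\times Y, \Omega^1(X\times Y))$ of the same compact support as $\omega$. The factor $\rho_X^{-(d_X + d_Y + 1)}$ is pulled back from $X$ (that is, constant along the fibers of $\pi_X\colon X\times Y \to X$), so it commutes with the $Y$-integration. Thus the construction reduces to the classical fiber integration of ordinary $1$-densities.

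\textbf{Step 3: Apply classical fiber integration.} Using the canonical factorization $\Omega^1(X\times Y) \cong \pi_X^*\Omega^1(X)\otimes \pi_Y^*\Omega^1(Y)$ (which holds fiberwise and glues globally for densities), the standard pushforward
\[
(\pi_X)_*\colon \Smc(X\times Y, \Omega^1(X\times Y)) \longrightarrow \Smc(X, \Omega^1(X))
\]
is well-defined: in local coordinates it is literally $\int_Y \tilde f(x,y)\,dy \cdot dx$, and the compactness of $\supp \omega$ (hence of $\supp \tilde\omega$) ensures both the compactness of the resulting support in $X$ and the smooth dependence on $x$ up to and including $\partial X$, by differentiation under the integral sign.

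\textbf{Step 4: Reassemble.} Combining the previous steps,
\[
\int_Y \omega \;=\; \rho_X^{-(d_X + d_Y + 1)}\,(\pi_X)_*\tilde\omega \;\in\; \Smc\bigl(X,\,\rho_X^{-(d_X + d_Y + 1)}\Omega^1(X)\bigr) \;=\; \rho_X^{-d_Y}\,\Smc(X,\scOmega^1(X)),
\]
by the identity of Step 1. This gives the claimed map. There is no genuine obstacle: the only point requiring care is verifying that the weight $\rho_X^{-(d_X+d_Y+1)}$ factors cleanly through the fiber integral, which works because $\rho_X$ depends only on $X$; coordinate invariance is then inherited from that of ordinary fiber integration of densities.
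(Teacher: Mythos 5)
Your proof is correct and is exactly the routine weight bookkeeping the paper has in mind (the lemma is left unproved there, with the subsequent remark on pushforwards indicating precisely this local fiber-integration argument): since $Y$ is boundaryless, $\scOmega^1(X\times Y)=\rho_X^{-(\dim X+\dim Y+1)}\Omega^1(X\times Y)$, the weight is pulled back from $X$ and commutes with the classical fiber integration of compactly supported densities, and $\rho_X^{-(\dim X+\dim Y+1)}\Omega^1(X)=\rho_X^{-\dim Y}\scOmega^1(X)$. No gaps; the compact-support and differentiation-under-the-integral remarks cover smoothness up to $\partial X$ as needed.
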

\begin{rem}\label{rem:pushforward}
    More generally, let $X,Y$ be mwbs and $Z$ a manifold without boundary.
    Consider a differentiable fibration $f : X \to Y$ with typical fiber $Z$.
    For every scattering density $\mu \in \Sm(X,\scOmega^1(X))$ the pushforward 
    \[f_* \mu \in \rho_Y^{-\dim Z} \Smc(Y, \scOmega^1(Y))\]
    is defined locally by integration along the fiber.

    Let $(U, \psi)$ be a trivializing neighborhood of the fiber bundle,
    that is $U \subset Y$ open, $\psi : X \to U \times Z$ smooth and $f|_{f^{-1}(U)} = \pr_M \circ \psi$.
    Assume without loss of generality that $\mu$ is supported on $f^{-1}(U)$.
    Then set
    \[f_* \mu = \int_Z \mu\circ\psi_j.\]
\end{rem}
\subsection{Scattering maps}
We now introduce and characterize the class of maps whose pull-backs preserve amplitudes of product type. They are a special case of interior $b$-maps in the sense of \cite{MelroseAPS}, and humbly mimicking Melrose's naming conventions we call them $\sct$-maps. We first introduce them on manifolds with boundary and then generalize to manifolds with higher corner degeneracy, such as products of mwcs.

\begin{defn}[$\sct$-maps on mwb]
Let $Y$ and $Z$ be mwbs. Suppose $\Psi:Y\rightarrow Z$. Then $\Psi$ is called an $\sct$-map if for any $m\in\RR$ and $a\in \rho_Z^{-m}\Sm(Z)$ it holds that:
\begin{enumerate}
\item $\Psi^*a\in \rho_Y^{-m}\Sm(Y)$;
\item if $p\in \Psi(Y)$ with $p=\Psi(q)$ and $(\rho_Z^{m} a)(p)> 0$, then $(\rho_Y^{m} \Psi^*a)(q)> 0$. 
\end{enumerate}
\end{defn}

\begin{rem}\label{rem:inward}
In particular, $\Psi$ maps the boundary of $Y$ into that of $Z$.
It also follows that $T\Psi$ maps inward pointing vectors at the boundary (meaning vectors with strictly positive $\partial_\rho$-component) to inward pointing vectors at the corresponding points. Indeed, we see that, at the boundary, $\Psi_*\partial_{\rho_Z}=h^{-1}\partial_{\rho_Y}$.
\end{rem}

\begin{rem}\label{rem:SGmapcomp}
It is obvious that the composition of two $\sct$-maps is again a $\sct$-map.
\end{rem}

It is straightforward to adapt this definition to that of a local $\sct$-map by replacing $Y$ and $Z$ with open subsets.

\begin{lem}[$\sct$-maps in coordinates]
\label{lem:SGmapcoord}
Let $Y$ and $Z$ be mwbs, $U\subset Y$ and $V\subset Z$ open subsets. 
A smooth map $\Psi:U\rightarrow V$ is a local $\sct$-map if and only if for the boundary defining functions on $Y$ and $Z$, $\rho_Y$ and $\rho_Z$, respectively, we have 
\begin{equation}
\label{eq:locscmap}
\Psi^*\rho_Z=\rho_Yh\text{ for some }h\in\Sm(Y)\text{ with }h> 0.
\end{equation}
\end{lem}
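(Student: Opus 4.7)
The plan is to prove the two implications separately. The forward direction will reduce to applying the definition of $\sct$-map to a single distinguished amplitude, while the reverse direction will follow by unwinding a multiplicative pullback identity.

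For the forward direction, I would specialize the definition to $m = -1$ and $a = \rho_Z \in \rho_Z\Sm(Z)$. Condition (i) then directly yields $\Psi^*\rho_Z \in \rho_Y\Sm(Y)$, so a factorization $\Psi^*\rho_Z = \rho_Y h$ exists for some $h \in \Sm(Y)$, uniquely defined on the interior where $\rho_Y > 0$ and extended smoothly by continuity. Since $\rho_Z^{-1}\cdot\rho_Z \equiv 1 > 0$ pointwise on $Z$, condition (ii) applied with this same choice of $m$ and $a$ reads
\[
h(q) \;=\; \bigl(\rho_Y^{-1}\Psi^*\rho_Z\bigr)(q) \;>\; 0
\]
at every $q \in U$, establishing the claimed factorization.

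For the reverse direction, assume $\Psi^*\rho_Z = \rho_Y h$ with $h > 0$ in $\Sm(Y)$, and take $a \in \rho_Z^{-m}\Sm(Z)$, written as $a = \rho_Z^{-m} f$ with $f \in \Sm(Z)$. Multiplicativity of the pullback gives
\[
\Psi^* a \;=\; \bigl(\Psi^*\rho_Z\bigr)^{-m}\,(f\circ\Psi) \;=\; \rho_Y^{-m}\,h^{-m}\,(f\circ\Psi).
\]
Strict positivity of $h$ ensures $h^{-m} \in \Sm(Y)$ for every $m \in \RR$, and $f\circ\Psi \in \Sm(Y)$ is automatic from the extension property of $\Sm$. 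This verifies (i); for (ii), the identity $(\rho_Y^m\Psi^*a)(q) = h(q)^{-m} f(p)$ combined with $h^{-m} > 0$ shows that the sign of $(\rho_Y^m\Psi^*a)(q)$ coincides with that of $(\rho_Z^m a)(p) = f(p)$, so positivity transfers.

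The only point requiring any care is the interpretation of the fractional powers $\rho_Z^{-m}$ and $h^{-m}$ for non-integer $m$: they are defined pointwise in the interior by positivity of the bases, and smoothness of $h^{-m}$ up to the boundary of $Y$ hinges on $h$ being strictly positive, not merely nonnegative. Beyond this bookkeeping, I do not foresee any substantive obstacle, since the lemma is essentially a direct unfolding of the definition once the test amplitude $a = \rho_Z$ has been identified as the decisive choice.
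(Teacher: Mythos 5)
Your proof is correct: choosing the test amplitude $a=\rho_Z$ with $m=-1$ makes both conditions of the definition collapse exactly to the factorization \eqref{eq:locscmap}, and the converse is the multiplicative pullback computation you give (with the needed observations that $h>0$ up to the boundary makes $h^{-m}$ smooth and forces $\Psi$ to map interior to interior). The paper states the lemma without proof, and your argument is precisely the direct unwinding of the definition that the authors leave implicit.
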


Hence, any local diffeomorphism of mwbs is a local scattering map. Moreover:

\begin{lem}\label{lem:SGmapproj}
    Let $X, Z$ be mwbs.
    Given any open, bounded set $U \subset \RRd$, define the projection $\pr_Z : Z \times U \to Z, (z,y) \mapsto z$.
    Then $\id_X \times \pr_Z$ is a $\sct$-map.
\end{lem}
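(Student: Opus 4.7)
The plan is to apply the coordinate characterization from \lemref{lem:SGmapcoord}, suitably extended to manifolds with corners as indicated in the text preceding the statement: a smooth map $\Psi\colon Y\to Z$ of mwcs is an $\sct$-map if and only if each boundary defining function of $Z$ pulls back to a positive smooth function times a monomial in the boundary defining functions of $Y$, and the sign condition (ii) is preserved. This product-type characterization follows from the original mwb version by checking one bdf at a time.

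With this in hand, the argument reduces to identifying the boundary data on both sides. Since $U\subset\RRd$ is open and hence boundaryless, the product $X\times Z\times U$ is a manifold with corners whose complete set of boundary defining functions is $\rho_X$ and $\rho_Z$ (pulled back along the evident projections), which coincides with the set of bdfs on $X\times Z$. Writing $\Psi:=\id_X\times\pr_Z$, the pullbacks are
\[\Psi^*\rho_X=\rho_X,\qquad \Psi^*\rho_Z=\rho_Z,\]
so the defining condition \eqref{eq:locscmap} holds trivially in each boundary direction with $h\equiv 1>0$.

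To verify condition (i) of the definition directly, take $a\in\rho_X^{-m_e}\rho_Z^{-m_\psi}\Sm(X\times Z)$ and write $a=\rho_X^{-m_e}\rho_Z^{-m_\psi}f$ with $f\in\Sm(X\times Z)$. Smoothness of $\Psi$ gives $\Psi^*f\in\Sm(X\times Z\times U)$, hence $\Psi^*a\in\rho_X^{-m_e}\rho_Z^{-m_\psi}\Sm(X\times Z\times U)$, and the ellipticity/positivity condition (ii) is inherited pointwise from the fact that $\Psi^*$ does not alter the boundary values of $f$ in the normal directions.

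I do not anticipate a real obstacle: the statement is essentially an observation that tensoring the target with a boundaryless factor leaves the scattering structure in the boundary directions untouched. The only point deserving a sentence of care is writing down the mwc-version of \lemref{lem:SGmapcoord}, after which the verification is by inspection.
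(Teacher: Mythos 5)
Your proof is correct and is essentially the argument the paper has in mind: the lemma is left without proof because $\Psi^*\rho_X=\rho_X$ and $\Psi^*\rho_Z=\rho_Z$ verify the boundary-defining-function criterion of \lemref{lem:SGmapcoord} (and of the mwc definition of $\sct$-maps) with $h\equiv 1$, and product-type amplitudes pull back exactly as you describe. Two immaterial quibbles: the mwc definition of $\sct$-maps actually appears \emph{after} this lemma in the paper, and it asks that each target bdf pull back to the \emph{matching} source bdf times a positive smooth factor (not a general monomial in the bdfs, which would be the weaker $b$-map condition) -- neither affects your verification, since the pullbacks here are literally $\rho_X$ and $\rho_Z$.
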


We now investigate the action of pull-backs by $\sct$-maps on the 
objects introduced above. The following assertions can be verified in local coordinates.

\begin{lem}\label{lem:scdpullback}
	Let $Y$ and $Z$ be mwbs, $U\subset Y$ and $V\subset Z$ open subsets. 
	Let $\Psi:U\rightarrow V$ be a local $\sct$-map. Then, 
	the following properties hold true.
	
	\begin{itemize}
		\item $\Psi^*$ yields a map $\rho_Z^m\,\scTheta^k(V)\rightarrow \rho_Y^m\,\scTheta^k(U)$ for any $m\in \RR$ and $k\in \NNz$. Moreover, for $\theta\in\rho_{Z}^{m} \,\scTheta^k(V)$, we have 
				$\scd (\Psi^*\theta) = \Psi^*(\scd \theta)$.
		\item $\Psi^*$ yields a map $\scOmega^\sigma(V)\rightarrow \scOmega^\sigma(U)$ for any $\sigma\in[0,1]$.
		\item The map $T^*\Psi:T^*V\rightarrow T^*U$ lifts to a map $\scOverT^*\Psi:\scOverT^*V\rightarrow \scOverT^*U$. In local coordinates, away from fiber-infinity, $\scOverT^*\Psi$ is given by
		$$(\Psi(\by),\bzeta)\mapsto \big(\by,\iota(^t(J \Psi)(\iota^{-1}\bzeta))\big),$$ 
		wherein $J\Psi$ is the Jacobian of $\Psi$ at $\by.$
		The extension to fiber-infinity is obtained by taking interior limits $|\zeta|^{-1}\rightarrow 0$.
	\end{itemize}

\end{lem}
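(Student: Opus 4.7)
The plan is to verify all three assertions by reducing them to local coordinate computations, the key ingredient being Lemma~\ref{lem:SGmapcoord}, which provides $\Psi^*\rho_Z = \rho_Y h$ for some strictly positive $h \in \Sm(U)$; in particular every negative power $h^{-\alpha}$ is smooth and positive.

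For the first item, I would begin by computing the pullback of the two local generators of $\scTheta^1(V)$ near the boundary. Using $\dd(\Psi^*\rho_Z) = h\,\dd\rho_Y + \rho_Y\,\dd h$ one obtains
\begin{equation*}
\Psi^*\frac{\dd\rho_Z}{\rho_Z^2} = \left(\frac{1}{h} + \frac{\rho_Y\,\partial_{\rho_Y} h}{h^2}\right)\frac{\dd\rho_Y}{\rho_Y^2} + \sum_k \frac{\partial_{y_k} h}{h^2}\,\frac{\dd y_k}{\rho_Y},
\end{equation*}
and an analogous computation writes $\Psi^*(\dd z_j/\rho_Z)$ as a smooth combination of the $\scT^*U$-generators (the transverse derivative $\rho_Y\partial_{\rho_Y}(\Psi^*z_j)$ produces exactly the extra $\rho_Y$ needed to match $\dd\rho_Y/\rho_Y^2$). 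Since $\Psi^*$ is an algebra homomorphism on forms, this extends multiplicatively to $\scTheta^k$ for all $k$, while the weighting $\rho_Z^m$ is absorbed via $\Psi^*\rho_Z^m = \rho_Y^m h^m$ with $h^m \in \Sm(U)$. The commutation $\scd \circ \Psi^* = \Psi^* \circ \scd$ then follows because $\scd$ is merely the usual exterior differential re-expressed in the scattering basis (compare \eqref{def:scddef}), and $\dd$ commutes with pullback.

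For the second item, decomposing $\mu = \rho_Z^{-\sigma(d+1)}\omega$ with $\omega \in \Omega^\sigma(V)$, I would compute
\begin{equation*}
\Psi^*\mu \;=\; \rho_Y^{-\sigma(d+1)}\,h^{-\sigma(d+1)}\,\Psi^*\omega,
\end{equation*}
and observe that $\Psi^*\omega \in \Omega^\sigma(U)$ by the usual Jacobian pullback of densities, whereas $h^{-\sigma(d+1)} \in \Sm(U)$, so the product lies in $\rho_Y^{-\sigma(d+1)}\Omega^\sigma(U) = \scOmega^\sigma(U)$. For the third item, the ordinary cotangent lift $T^*\Psi$, defined pointwise by $(p,\omega)\mapsto(p,\omega\circ T_p\Psi)$, sends $\scT^*_{\Psi(p)}V$ into $\scT^*_p U$ precisely by (i) applied to the transpose of $T\Psi$; fiberwise radial compactification of the targets then produces the desired map $\scOverT^*\Psi$, whose expression away from fiber-infinity is exactly the formula stated (with $\iota$ identifying the open fiber with its radial compactification). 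The extension to the fiber-infinity face $\scS^*$ follows by continuity, using the homogeneity of $T^*\Psi$ in the fiber variable $\bzeta$.

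The main technical obstacle throughout is the bookkeeping of rescaling factors across the boundary: at each step one has to check that the apparent singular $\rho_Y^{-1}$ prefactor coming from the scattering basis is exactly compensated by the $\rho_Y$ produced when $\dd$ falls on $h$ or on $\Psi^*z_j$, so that all coefficients extend smoothly across $\partial U$. Once this matching is verified on the generators, the extensions to $\scTheta^k$, to $\scOmega^\sigma$ and to the fiber-compactified cotangent bundle are formal consequences.
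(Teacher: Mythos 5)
Your verification is correct and is precisely the argument the paper intends: the paper gives no written proof of this lemma, saying only that the assertions "can be verified in local coordinates", and your computation of the pullbacks of the generators $\frac{\dd\rho_Z}{\rho_Z^2}$ and $\frac{\dd z_j}{\rho_Z}$ via $\Psi^*\rho_Z=\rho_Y h$ from Lemma~\ref{lem:SGmapcoord} is exactly that verification. The remaining steps — absorbing the weight $\rho_Z^m$ through $h^m$, extending multiplicatively to $\scTheta^k$, the factor $h^{-\sigma(d+1)}$ for $\scOmega^\sigma$, and the fiberwise transpose followed by radial compactification (with interior limits at fiber-infinity) for $\scOverT^*\Psi$ — likewise match the intended local-coordinate argument.
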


We observe that $\sct$-maps provide a natural class of maps between scattering manifolds.

\begin{cor}
Suppose $Y$ is a mwb, $(Z,\rho_Z,g)$ a scattering manifold, $\Psi$ 
a $\sct$-map $Y\rightarrow Z$ which is an immersion. Then $(Y,\Psi^*\rho_Z,\Psi^*g)$ is a scattering manifold.
\end{cor}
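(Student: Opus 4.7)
The plan is to verify the two defining conditions of a scattering manifold for the triple $(Y, \Psi^*\rho_Z, \Psi^* g)$: that $\Psi^*\rho_Z$ is a valid boundary defining function on the compact mwb $Y$, and that $\Psi^* g$ is a Riemannian metric on $Y^o$ having the prescribed scattering form in a tubular neighbourhood of $\partial Y$. For the boundary defining function, I would invoke Lemma~\ref{lem:SGmapcoord} to write locally $\Psi^*\rho_Z = \rho_Y h$ with $h\in\Sm(Y)$, $h>0$; this shows that $\Psi^*\rho_Z$ vanishes exactly on $\partial Y$ and that $\dd(\Psi^*\rho_Z)|_{\partial Y} = h\,\dd\rho_Y|_{\partial Y}\neq 0$. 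Moreover, $\rho_Y>0$ forces $\Psi^*\rho_Z>0$, so $\Psi(Y^o)\subset Z^o$, and since $\Psi$ is an immersion, $\Psi^*g$ is automatically a smooth Riemannian metric on $Y^o$.

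For the asymptotic form, I would exploit the fact that pullback commutes with $\dd$ and with tensor operations to obtain, in a tubular neighbourhood of $\partial Y$,
\[
\Psi^* g \;=\; \frac{(\dd \Psi^*\rho_Z)^{\otimes 2}}{(\Psi^*\rho_Z)^{4}} \;+\; \frac{\Psi^* g_\partial}{(\Psi^*\rho_Z)^{2}},
\]
with $\Psi^* g_\partial\in\Sm(Y,\mathrm{Sym}^2 T^*Y)$ by smoothness of $\Psi$. This is already the desired form relative to the boundary defining function $\Psi^*\rho_Z$.

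The only step requiring real argument, which I expect to be the main (though modest) obstacle, is checking that $(\Psi^* g_\partial)|_{\partial Y}$ is a Riemannian metric on $\partial Y$. The plan is to show that $\Psi|_{\partial Y}\colon \partial Y\to\partial Z$ is itself an immersion: by Remark~\ref{rem:inward} the restriction lands in $\partial Z$, and for $v\in T_y\partial Y=\ker\dd\rho_Y$ one computes $\dd\rho_Z(T\Psi(v)) = \dd(\Psi^*\rho_Z)(v) = h\,\dd\rho_Y(v) + \rho_Y\,\dd h(v) = 0$ at $\partial Y$, so $T\Psi$ sends $T\partial Y$ into $T\partial Z$. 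Injectivity of $T\Psi$ then passes to the restriction, and hence $(\Psi^* g_\partial)|_{\partial Y} = (\Psi|_{\partial Y})^*(g_\partial|_{\partial Z})$ is the pullback of a Riemannian metric by an immersion, hence itself Riemannian. This is the only place where the immersion hypothesis on $\Psi$ is essentially used.
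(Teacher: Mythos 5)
Your proposal is correct and follows essentially the same route as the paper: write $\Psi^*\rho_Z=\rho_Y h$ with $h>0$ to see it is a boundary defining function, and use that pullback commutes with $\dd$ and tensor operations to get $\Psi^*g$ in the required form relative to $\Psi^*\rho_Z$. Your additional check that $\Psi|_{\partial Y}$ is an immersion into $\partial Z$, so that $(\Psi^*g_\partial)|_{\partial Y}$ is positive definite, is a correct elaboration of a point the paper leaves implicit in asserting that the pullback "is again a scattering metric."
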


\begin{proof}
We first observe that $\Psi^*\rho_Z$ is a boundary defining function on $Y$. Indeed,
\begin{equation}
\label{eq:scTident}
\dd\Psi^*\rho_Z=h\,\dd\rho_Y+\rho_Y \dd h.
\end{equation}
This implies, at the boundary, $h\,\dd\rho_Y\neq 0$. The scattering metric on $Z$ pulls back to   
$$\Psi^*g=\Psi^*\frac{(\dd \rho_Z)^{\otimes 2}}{\rho_Z^4}+\Psi^*\frac{g_\partial}{\rho_Z^2}=\frac{(\dd \Psi^*\rho_Z)^{\otimes 2}}{(\Psi^*\rho_Z)^4}+\frac{\Psi^*g_\partial}{(\Psi^*\rho_Z)^2},$$
which is again a scattering metric.
\end{proof}

%
\begin{cor}
\label{cor:realizeonball}
Any scattering manifold $Y$ of dimension $s$ is locally diffeomorphic to $\BBs$. Moreover, any scattering density on $Y$ can locally be written as the pull-back by one on $\BBs$.
\end{cor}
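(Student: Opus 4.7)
The plan is to reduce both assertions to a direct computation in coordinates adapted to the boundary, exploiting the explicit form of $\BBs$ near $\SSS^{s-1}$ coming from the radial compactification.

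First I would treat the local diffeomorphism claim. For an interior point $p\in Y^o$ the statement is trivial: $Y^o$ is an ordinary $s$-dimensional manifold and $(\BBs)^o$ is diffeomorphic to $\RRs$, so any small enough interior chart on $Y$ embeds into $(\BBs)^o$. The essential case is therefore $p\in\partial Y$. Near such a $p$ I would pick a local boundary defining function $\rho$ together with $s-1$ coordinates $x_1,\dots,x_{s-1}$ on $\partial Y$, giving a chart $\phi : U \to [0,\epsilon)\times V$ for some $\epsilon>0$ and some open $V\subset\RR^{s-1}$. On $\BBs$, near any boundary point, the radial compactification of $\RRs$ yields a chart of the same shape $(\rho_B,y_1,\dots,y_{s-1}) : W\to [0,\epsilon)\times V'$, with $\rho_B$ a boundary defining function and the $y_j$ local coordinates on $\SSS^{s-1}$. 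After shrinking $U$ and rechoosing the $\BBs$-chart if necessary so that $V\subset V'$, the composition of $\phi$ with the inverse of this $\BBs$-chart provides a diffeomorphism $\Psi : U \to \Psi(U)\subset\BBs$ with $\Psi^*\rho_B = \rho$. By Lemma \ref{lem:SGmapcoord} (applied with $h\equiv 1$), both $\Psi$ and $\Psi^{-1}$ are local $\sct$-maps.

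For the density claim, given $\mu\in\Sm(U,\scOmega^1(U))$ I would set $\tilde\nu := (\Psi^{-1})^*\mu$ on $\Psi(U)$, which is a scattering density by Lemma \ref{lem:scdpullback} applied to the $\sct$-map $\Psi^{-1}$. To promote it to a globally defined density on $\BBs$ I would fix a cutoff $\chi\in\Smc(\Psi(U))$ equal to $1$ on a smaller open neighbourhood $W'$ of $\Psi(p)$ with $\overline{W'}\subset \Psi(U)$, and define $\nu := \chi\,\tilde\nu$, extended by zero outside $\Psi(U)$. Then $\nu\in\Sm(\BBs,\scOmega^1(\BBs))$ and $\Psi^*\nu = \mu$ on $\Psi^{-1}(W')$, yielding the required local identification.

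The main point requiring care is the matching of the boundary defining functions: one must build the chart on $\BBs$ so that $\Psi^*\rho_B$ agrees with $\rho$ up to a positive smooth factor, as demanded by \eqref{eq:locscmap}. This is why I would not use an arbitrary smooth chart into $\BBs$ but rather a polar-type chart coming from the radial compactification, which has $\rho_B$ as one of its coordinate functions, and I would use the freedom to take the boundary defining function $\rho$ itself as the first coordinate on $U$. Once this identification is installed, both halves of the corollary follow without further work.
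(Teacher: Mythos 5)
Your proposal is correct and is essentially the argument the paper leaves implicit: Corollary~\ref{cor:realizeonball} is stated without proof, as a direct consequence of the bdf-adapted coordinates $(\rho,x_1,\dots,x_{s-1})$ assumed throughout and the polar-type chart $(\rho_B,y)$ near $\partial\BBs$ coming from radial compactification, with Lemma~\ref{lem:SGmapcoord} guaranteeing that the resulting identification (and its inverse) is a local $\sct$-map and Lemma~\ref{lem:scdpullback} giving the correspondence of scattering densities. Your cutoff construction producing a globally defined density on $\BBs$ whose pull-back agrees with $\mu$ near $p$ is exactly what is needed; the only (harmless) omission is that the density statement at interior points, which you treat only for the chart, is settled by the same trivial remark.
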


We now extend the notion of $\sct$-map to manifolds with corners.

\begin{defn}[$\sct$-maps on mwc]
Let $Y$ and $Z$ be mwcs. Then, a smooth map $\Psi:Y\rightarrow Z$ is a local $\sct$-map for some complete sets of local bdfs $\{\rho_{Y_i}\}_{i\in I}$ and $\{\rho_{Z_i}\}_{i\in I}$ if:
$$\text{For all }i\in I\text{ we have }\Psi^*\rho_{Z_i}=\rho_{Y_i} h_i\text{ for some }h_{i}\in\Sm(Y)\text{ with }h_{i}> 0.$$
\end{defn}

\begin{rem}
In particular, $\Psi$ maps the boundary of $Y$ into that of $Z$.

As mentioned before, $\sct$-maps are special cases of $b$-maps. In fact, they are those \emph{interior} $b$-maps that are smooth maps in the sense of \cite{Joyce}. The only difference with the smooth maps in \cite{Joyce} is that, therein, $\Psi^*\rho_{Z_i}\equiv 0$ is allowed.
\end{rem}

\begin{ex}
In particular, if $\Psi_1:Y_1\rightarrow Z_1$ and $\Psi_2:Y_2\rightarrow Z_2$ are $\sct$-maps on mwb, then $\Psi_1\times \Psi_2:Y_1\times Y_2\rightarrow Z_1\times Z_2$ is a $\sct$-map on the resulting product mwc.
\end{ex}

\begin{rem}
Note that we fix the ordering of the boundary defining functions. This is important, in particular, when considering $\sct$-maps between products $X\times Y\rightarrow X\times Z$ or of the form $X\times Y\rightarrow\scOverT^*X$. Most of the times, the choice of bdfs will be clear from the context.
\end{rem}

Note that, on a mwb, it is possible to extend any map $\partial X\mapsto \partial X$ with $x\mapsto x'$ to a scattering map, by setting $(\rho_X,x)\mapsto (\rho_X,x')$ in a collar neighbourhood of $\partial X$ given by $X\cong [0,\epsilon)\times \partial X$. The following proposition grants us the ability to continue scattering maps from a corner into the interior.
\begin{prop}
\label{prop:cornerdiffeo}
	Let $B_1=X_1\times Y_1$ and $B_2=X_2\times Y_2$ be products of mwbs. Let $\Psi^e$, $\Psi^\psi$ be two (local) scattering maps near a point $p\in\Bpe_1$,
	\begin{align*}
		\Psi^e:\Be_1\longrightarrow \Be_2 \quad\text{ and }\quad
		\Psi^\psi: \Bp_1\longrightarrow \Bp_2
	\end{align*}
such that $\Psi^e=\Psi^\psi$ when restricted to $\Bpe_1$. Then there exists a (local) scattering map $\Psi$ on a neighbourhood $U\subset B_1$ of $p$ with $\Psi^\bullet=\Psi|_{\B^\bullet}$ such that
\begin{equation}
	\label{eq:strictscproperty}
	\partial_{\rho_{X_1}}\Psi^*\rho_{Y_2}=\partial_{\rho_{Y_1}}\Psi^*\rho_{X_2}=0\quad \text{on }\B_1.
\end{equation}
If $\Psi^e$ and $\Psi^\psi$ are local diffeomorphisms near $p$ (in their respective boundary faces), then $\Psi$ is a local diffeomorphism near $p$.
\end{prop}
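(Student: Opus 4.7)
My plan is to construct $\Psi$ explicitly in local product coordinates near $p$, by choosing the pull-backs of the two bdfs of $B_2$ to be deliberately cross-independent---which forces the strict property \eqref{eq:strictscproperty}---and by additively gluing the remaining components of $\Psi^e$ and $\Psi^\psi$. First, pick product coordinates $(\rho_{X_1},x_1,\rho_{Y_1},y_1)$ on a neighborhood of $p$ in $B_1$ and $(\rho_{X_2},x_2,\rho_{Y_2},y_2)$ on a neighborhood of $q:=\Psi^e(p)=\Psi^\psi(p)$ in $B_2$. By Lemma~\ref{lem:SGmapcoord} applied to each boundary face as a mwb,
\[
    \Psi^e:(x_1,\rho_{Y_1},y_1)\mapsto\bigl(x_2^e,\ \rho_{Y_1}h_Y^e,\ y_2^e\bigr),\qquad
    \Psi^\psi:(\rho_{X_1},x_1,y_1)\mapsto\bigl(\rho_{X_1}h_X^\psi,\ x_2^\psi,\ y_2^\psi\bigr),
\]
with $h_Y^e,h_X^\psi>0$, and the assumption $\Psi^e|_{\Bpe_1}=\Psi^\psi|_{\Bpe_1}$ translates to the corner identities $(x_2^e,y_2^e)(x_1,0,y_1)=(x_2^\psi,y_2^\psi)(0,x_1,y_1)$.

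Next, define $\Psi$ on a smaller neighborhood of $p$ componentwise by
\begin{align*}
    \Psi^*\rho_{X_2} &:= \rho_{X_1}\,h_X^\psi(\rho_{X_1},x_1,y_1), &
    \Psi^*\rho_{Y_2} &:= \rho_{Y_1}\,h_Y^e(x_1,\rho_{Y_1},y_1),\\
    \Psi^*x_2 &:= x_2^e + x_2^\psi - x_2^\psi|_{\rho_{X_1}=0}, &
    \Psi^*y_2 &:= y_2^e + y_2^\psi - y_2^\psi|_{\rho_{X_1}=0},
\end{align*}
which is manifestly smooth. The strict property \eqref{eq:strictscproperty} is immediate because $\Psi^*\rho_{X_2}$ does not depend on $\rho_{Y_1}$ and vice versa; positivity of $h_X^\psi, h_Y^e$ near $p$ together with the mwc version of Lemma~\ref{lem:SGmapcoord} then yields that $\Psi$ is a $\sct$-map. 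To check $\Psi|_{\rho_{X_1}=0}=\Psi^e$, substitute: the bdf components become $(0,\rho_{Y_1}h_Y^e)$ and the additive gluing collapses to $x_2^e$, $y_2^e$ since the second and third summands cancel; at $\rho_{Y_1}=0$ one invokes the corner identity to rewrite $x_2^e(x_1,0,y_1)=x_2^\psi(0,x_1,y_1)$, producing the analogous cancellation that leaves $x_2^\psi$, $y_2^\psi$.

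For the diffeomorphism claim, compute the Jacobian $J\Psi(p)$ in the chosen coordinates. Since $\rho_{X_1}=\rho_{Y_1}=0$ at $p$, every term carrying a leading factor of these vanishes, giving the block structure
\[
    J\Psi(p)=\pmat{h_X^\psi(0) & 0 & 0 & 0 \\ * & A & * & B \\ 0 & 0 & h_Y^e(0) & 0 \\ * & C & * & D},
\]
where $\pmat{A & B \\ C & D}$ is simultaneously the $(x_1,y_1)$-derivative block of $(x_2^e,y_2^e)$ and of $(x_2^\psi,y_2^\psi)$ at $p$ by the corner identity. Expansion along the first and third rows gives $\det J\Psi(p)=h_X^\psi(0)\,h_Y^e(0)\,\det\pmat{A & B \\ C & D}$; the same expansion applied to $J\Psi^e(p)$ gives $\det J\Psi^e(p)=h_Y^e(0)\det\pmat{A & B \\ C & D}$, so local invertibility of either face map forces the $2\times 2$ determinant to be nonzero, and the inverse function theorem concludes.

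The main challenge here is purely bookkeeping: applying the corner identity at exactly the right places so that the two boundary restrictions collapse cleanly to $\Psi^e$ and $\Psi^\psi$, and identifying the shared $2\times 2$ block among the three Jacobians so that the diffeomorphism property of $\Psi$ reduces to that of the given face maps. The construction itself is essentially forced once one insists on \eqref{eq:strictscproperty}, and the additive gluing formula is the natural choice that makes smoothness and both boundary restrictions simultaneously transparent.
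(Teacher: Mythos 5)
Your proof is correct, and it replaces the paper's main device by a more explicit one. The paper obtains $\Psi$ by invoking Whitney's extension theorem for the boundary data $(\Psi^e)^*(\rho_{Y_2},x_2,y_2)$ on $\Be_1$ and $(\Psi^\psi)^*(\rho_{X_2},x_2,y_2)$ on $\Bp_1$, together with the prescribed vanishing of the transversal derivatives in \eqref{eq:strictscproperty}; you instead write down the extension in closed form: the bdf components are taken cross-independent (so \eqref{eq:strictscproperty} and the $\sct$-map condition $\Psi^*\rho_{X_2}=\rho_{X_1}h$, $h>0$, are visible by inspection), and the tangential components are glued by the inclusion--exclusion formula $x_2^e+x_2^\psi-x_2^\psi|_{\rho_{X_1}=0}$, whose restrictions collapse to $\Psi^e$ and $\Psi^\psi$ precisely because of the corner identity. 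This is legitimate: the Whitney data here consist only of function values on the two faces plus first-order normal conditions that your ansatz satisfies identically, so no general extension theorem is needed, and your route has the advantage of making the positivity of the $h$-factors and property \eqref{eq:strictscproperty} completely transparent (points the paper leaves implicit after citing Whitney). The final step is the same in both arguments -- the differential at $p$ is an invertible block matrix -- except that you compute the blocks explicitly and observe, via the differentiated corner identity, that the $(x_1,y_1)$-block is shared by $J\Psi$, $J\Psi^e$ and $J\Psi^\psi$, so that in fact invertibility of a single face map already suffices, which is slightly stronger than the stated hypothesis. The trade-off is that the paper's Whitney-based formulation generalizes more readily (e.g.\ to prescribing higher jets or less product-like data), whereas your construction is tied to the specific first-order matching problem at hand, which is all the proposition requires.
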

\begin{proof}
This is Whitney's extension theorem for smooth functions, applied to the system of functions (and their derivatives)
\begin{align*}
(\Psi^e)^*x,(\Psi^e)^*y,(\Psi^e)^*\rho_Y &\qquad\textrm{on }\Be_1,\\
(\Psi^\psi)^*\rho_X,(\Psi^\psi)^*x,(\Psi^\psi)^*y &\qquad\textrm{on }\Bp_1,
\end{align*}
together with the conditions \eqref{eq:strictscproperty} and
\begin{align*}
	\label{eq:strictscproperty}
	D_{x,y}\Psi^*\rho_{Y_2}=0\quad \text{on }\B_1^\psi,\\
	D_{x,y}\Psi^*\rho_{X_2}=0\quad \text{on }\B_1^e.
\end{align*}
Note that, if $\Psi^e$ and $\Psi^\psi$ are local diffeomorphisms at $p$, the differential of $\Psi$ is an invertible block matrix, and hence $\Psi$ is a local diffeomorphism.
\end{proof}

\begin{lem}\label{lem:trafoprinc}
	Consider a $\sct$-map $\Psi : X \times Y \to X \times Y$ of product form $\Psi = \Psi_X \times \Psi_Y$, with $\sct$-maps on $X,Y$, $\Psi_X$ and $\Psi_Y$, respectively.
	Assume $a \in \rho_Y^{-m_\psi} \rho_X^{-m_e} \Sm(X\times Y)$. With the notation of Definition \ref{def:princsymbol} and \ref{def:princpart}, we have:
    \begin{align*}
        \sigma^\psi(\Psi^*a) - \Psi^*(\sigma^\psi a) &\in \rho_Y^{-m_\psi + 1} \rho_X^{-m_e}\Sm,\\
        \sigma^e(\Psi^*a) - \Psi^*(\sigma^e a) &\in \rho_Y^{-m_\psi}\rho_X^{-m_e + 1} \Sm,\\
        (\Psi^*a)_p - \Psi^*(a_p) &\in \rho_Y^{-m_\psi + 1} \rho_X^{-m_e + 1}\Sm.
    \end{align*}
\end{lem}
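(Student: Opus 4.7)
The approach is to exploit the product form of $\Psi$ together with the coordinate characterisation of $\sct$-maps from Lemma \ref{lem:SGmapcoord}. Since $\Psi_X$ and $\Psi_Y$ act on separate factors, there exist positive smooth functions $h_X = h_X(\bx)$ and $h_Y = h_Y(\by)$, each depending only on the coordinates of one factor, with $\Psi_X^*\rho_X = \rho_X h_X$ and $\Psi_Y^*\rho_Y = \rho_Y h_Y$. Moreover, since $\sct$-maps preserve boundary faces, in coordinates we may write $\Psi_Y(0,y) = (0,\bar\Psi_Y(y))$ and analogously for $\Psi_X$.

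For the first claim, write $a = \rho_X^{-m_e}\rho_Y^{-m_\psi} f$ with $f \in \Sm(X\times Y)$ and unwind Definition \ref{def:princsymbol}. A direct computation gives that $\sigma^\psi(\Psi^*a) - \Psi^*(\sigma^\psi a)$ equals $\rho_X^{-m_e}\rho_Y^{-m_\psi}h_X(\bx)^{-m_e}$ times the bracket
\[
h_Y(0,y)^{-m_\psi}\,f(\Psi_X(\bx),0,\bar\Psi_Y(y)) \;-\; h_Y(\by)^{-m_\psi}\,f(\Psi_X(\bx),0,y(\Psi_Y(\by))).
\]
At $\rho_Y = 0$ the two summands coincide, because $h_Y(\by)|_{\rho_Y=0} = h_Y(0,y)$ and $y(\Psi_Y(\by))|_{\rho_Y=0} = \bar\Psi_Y(y)$, so the bracket lies in $\rho_Y\Sm$. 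This proves the first identity; the second follows by the mirror argument with the roles of $X$ and $Y$ exchanged.

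For the third claim, rather than wrestling with the pulled-back cut-offs $\chi(\Psi^*\rho_X)$ and $\chi(\Psi^*\rho_Y)$ appearing in $\Psi^*(a_p)$, we observe that $\Psi^*$ preserves the bi-filtration: since $h_X, h_Y$ are smooth and positive, $\Psi^*\bigl(\rho_X^{-m_e+1}\rho_Y^{-m_\psi+1}\Sm\bigr) \subseteq \rho_X^{-m_e+1}\rho_Y^{-m_\psi+1}\Sm$. Applying Lemma \ref{lem:princpart} to $a$ and pulling back then yields $\Psi^*a - \Psi^*(a_p) \in \rho_X^{-m_e+1}\rho_Y^{-m_\psi+1}\Sm$, while a second invocation of Lemma \ref{lem:princpart}, this time applied directly to $\Psi^*a \in \rho_X^{-m_e}\rho_Y^{-m_\psi}\Sm$, gives $\Psi^*a - (\Psi^*a)_p$ in the same space. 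Subtracting the two relations delivers the stated bound on $(\Psi^*a)_p - \Psi^*(a_p)$.

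The main obstacle is the careful bookkeeping required to exhibit the cancellation in the first step. The product form of $\Psi$ is essential there: it forces $h_X$ to carry no $\by$-dependence, so that $h_X(\bx)$ is unaffected by restriction to $\rho_Y = 0$. Without this structural assumption, additional Taylor-remainder contributions would enter and the bracket above would no longer vanish cleanly on the boundary, so the argument would have to be substantially refined.
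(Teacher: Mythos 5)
Your proof is correct, and for the first two identities it is essentially the paper's argument: write $\Psi^*\rho_X=\rho_X h_X(\bx)$, $\Psi^*\rho_Y=\rho_Y h_Y(\by)$, compute both symbols in coordinates, and observe via Taylor's theorem that the discrepancy (both in $h_Y$ and in the $y$-argument of $f$) vanishes at $\rho_Y=0$, hence lies in $\rho_Y\Sm$; you are in fact slightly more careful than the paper in distinguishing $f(\Psi_X(\bx),0,\bar\Psi_Y(y))$ from $f(\Psi_X(\bx),0,y(\Psi_Y(\by)))$. For the third identity, where the paper only says "similar arguments," your sandwich argument — pulling back $a-a_p$ and applying Lemma \ref{lem:princpart} to $\Psi^*a$, using that $\Psi^*$ preserves the bi-filtration — is a valid and cleaner alternative that avoids any bookkeeping with the cut-offs $\chi$.
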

\begin{proof}
    We will only prove the first identity, the others follows by similar arguments. 
    Write $(\Psi^*\rho_X)(\bx)=\rho_X h_X(\bx)$ and $(\Psi^*\rho_Y)(\by)=\rho_Y h_Y(\by)$. If $a = \rho_X^{-m_e} \rho_Y^{-m_\psi} f$ then
    \begin{align*}
        (\Psi^*a)(\bx,\by) = \rho_X^{-m_e} \rho_Y^{-m_\psi} h_X^{-m_e}(\bx) h_Y^{-m_\psi}(\by) (\Psi^*f)(\bx, \by).
    \end{align*}
    This implies
    \begin{align*}
        \sigma^{\psi}(\Psi^*a)(\bx,\by) &= \rho_X^{-m_e} \rho_Y^{-m_\psi} h_X^{-m_e}(\bx) h_Y^{-m_\psi}(0,y) (\Psi^*f)(\bx, 0, y),\\
        \Psi^*(\sigma^{\psi}a)(\bx,\by) &= \rho_X^{-m_e} \rho_Y^{-m_\psi} h_X^{-m_e}(\bx) h_Y^{-m_\psi}(\by) (\Psi^*f)(\bx, 0, y).
    \end{align*}
    Using Taylor's theorem, we obtain that $h_Y^{-m_\psi}(\by) - h_Y^{-m_\psi}(0,y) \in \rho_Y \Sm(X\times Y)$, and therefore
    $\sigma^\psi(\Psi^*a) - \Psi^*(\sigma^\psi a) \in \rho_Y^{-m_\psi + 1} \rho_X^{-m_e}\Sm(X\times Y)$, as claimed.
\end{proof}
\begin{cor}
The principal part of $a\in \rho_Y^{-m_\psi} \rho_X^{-m_e} \Sm(X\times Y)$ is well-defined as an element of
\[
	\rho_X^{-m_e} \rho_Y^{-m_\psi} \Sm(X\times Y) / \rho_X^{-m_e+1} \rho_Y^{-m_\psi+1} \Sm(X\times Y),
\]
and does not depend on the choice of boundary-defining functions $\rho_X,\rho_Y$ on $X,Y$.
\end{cor}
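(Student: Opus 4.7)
The plan is to disentangle two independent assertions hidden in the statement: first, that the class $[a_p]$ in the quotient is independent of the cutoff $\chi$ used in Definition \ref{def:princpart}, and second, that this class does not change when the boundary-defining functions $\rho_X, \rho_Y$ are replaced by any other admissible choices $\rho_X', \rho_Y'$. The spaces $\rho_X^{-m_e}\rho_Y^{-m_\psi}\Sm(X\times Y)$ and $\rho_X^{-m_e+1}\rho_Y^{-m_\psi+1}\Sm(X\times Y)$ (and hence the quotient in which we work) are themselves independent of the choice of bdf by Remark \ref{rem:joycedef}, so both assertions make sense.

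For the first point, I would appeal directly to Lemma \ref{lem:princpart}. If $\chi_1$ and $\chi_2$ are two cutoffs satisfying the hypotheses of Definition \ref{def:princpart}, giving principal parts $a_{p,1}$ and $a_{p,2}$, then Lemma \ref{lem:princpart} implies
\[
a - a_{p,i} \in \rho_X^{-m_e+1}\rho_Y^{-m_\psi+1}\Sm(X\times Y), \quad i=1,2,
\]
so subtracting yields $a_{p,1}-a_{p,2} \in \rho_X^{-m_e+1}\rho_Y^{-m_\psi+1}\Sm(X\times Y)$. Thus $[a_{p,1}]=[a_{p,2}]$ in the quotient, and indeed $[a_p]=[a]$.

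For the second point, the idea is to exploit the transformation formula of Lemma \ref{lem:trafoprinc} applied to the identity viewed as a (nontrivial) $\sct$-map. Concretely, let $\rho_X'=\rho_X h_X$ and $\rho_Y'=\rho_Y h_Y$ with $h_X,h_Y>0$ smooth be any other admissible bdfs. Consider $\id\colon X\times Y\to X\times Y$ where the source carries $(\rho_X,\rho_Y)$ and the target carries $(\rho_X',\rho_Y')$. The relations $\id^*\rho_X'=\rho_X h_X$ and $\id^*\rho_Y'=\rho_Y h_Y$ show that $\id=\id_X\times\id_Y$ is a $\sct$-map of product form in this setting, so Lemma \ref{lem:trafoprinc} gives
\[
(\id^*a)_p - \id^*(a_p) \in \rho_X^{-m_e+1}\rho_Y^{-m_\psi+1}\Sm(X\times Y),
\]
where the left-hand principal part is computed using the source bdfs and $a_p$ on the right is computed using the target bdfs. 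Since $\id^*a=a$ and $\id^*$ acts trivially, the two principal parts differ by an element of the subspace being quotiented out, hence define the same class.

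There is no real obstacle here: the work has already been done in Lemma \ref{lem:princpart} (for the $\chi$-independence) and in Lemma \ref{lem:trafoprinc} (for the bdf-independence), so what remains is merely the conceptual observation that the identity map becomes a nontrivial $\sct$-map once source and target carry different bdfs, together with the remark that the ambient function spaces themselves do not see the choice of bdf. The only mild point worth stating explicitly is that both the subspace and the ambient space in the quotient are intrinsic to $X$ and $Y$, so the quotient is a fixed object in which the different candidate principal parts compete, and they all collapse to the same class $[a]$.
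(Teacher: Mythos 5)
Your argument is correct and is essentially the paper's intended derivation: the corollary is stated as an immediate consequence of Lemma \ref{lem:princpart} (which gives independence of the cutoff, since $a-a_p$ lies in the lower-order space) and of Lemma \ref{lem:trafoprinc} applied to the identity regarded as a product-type $\sct$-map between $X\times Y$ equipped with the two choices of boundary defining functions. Your explicit observation that the ambient space and the subspace in the quotient are themselves bdf-independent is the same point the paper handles via Remark \ref{rem:joycedef}, so there is nothing to add.
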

\begin{rem}
Note that the space 
\[\rho_X^{-m_e} \rho_Y^{-m_\psi} \Sm(X\times Y) / \rho_X^{-m_e+1} \rho_Y^{-m_\psi+1} \Sm(X\times Y)\]
can be identified with $\Sm(\partial(X\times Y))$, which identifies our notion of principal symbol with that of \cite[Section 6.4]{Melrose2}.
\end{rem}
%

The following lemma is one of the main technical tools in this article.
We have already observed that the local model of a scattering manifold near the boundary is the radial compactification of $\RRd$. We now show that scattering maps arise naturally as the composition of vector-valued amplitudes and radial compactification. Furthermore, we clarify the relation between total derivative and the scattering differential under compactification.
\begin{lem}
\label{lem:horror}
Let $Y$ be a mwb. 
Let $f\in \rho_Y^{-1}\Sm(Y,\RRd)$ with $\rho_Y|f|\neq 0$ on $\partial Y$.\footnote{This means $\rho_Y f$ is the restriction to $Y^o$ of an element of $g\in\Sm(Y,\RRd)$ with $g\neq 0$ on $\partial Y$.} Then,
$\Psi=\iota\circ f$ extends to a local $\sct$-map $Y\rightarrow \BBd$. Moreover, the matrix of coefficients of
\[\scd f = \begin{pmatrix}
	\scd f_1\\
	\vdots\\
	\scd f_d
\end{pmatrix}\]
has the same rank as the differential $T\Psi$ of $\Psi$.
\end{lem}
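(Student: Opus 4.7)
Write $g = \rho_Y f \in \Sm(Y,\RRd)$, which by hypothesis is nonvanishing on $\partial Y$ and hence on some boundary neighbourhood $U$. There $|f| = |g|/\rho_Y$ is large, so the explicit formula for $\iota$ (valid for $|\cdot|>3$) applies and gives
\[
\Psi = \frac{g}{|g|}\Bigl(1 - \frac{\rho_Y}{|g|}\Bigr),
\]
which extends smoothly to $U$. Pulling back the boundary defining function $\rho_{\BBd}=1-|\cdot|$ yields $\Psi^*\rho_{\BBd} = \rho_Y/|g|$, so by \lemref{lem:SGmapcoord} with $h = 1/|g| > 0$, $\Psi$ is a local $\sct$-map.

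For the rank assertion, I treat the interior and the boundary separately. At an interior point $y\in Y^o$, the map $\iota$ is a diffeomorphism and $T\iota|_{f(y)}$ is invertible, so $\rk(T\Psi)_y = \rk(Tf)_y$. Using the formula
\[
\scd f_i = (-g_i + \rho_Y \partial_{\rho_Y} g_i)\frac{\dd\rho_Y}{\rho_Y^2} + \sum_k (\partial_{x_k} g_i)\frac{\dd x_k}{\rho_Y},
\]
the coefficient matrix of $\scd f$ in the scattering basis $\{\dd\rho_Y/\rho_Y^2,\dd x_k/\rho_Y\}$ differs from the ordinary Jacobian of $f$ in the basis $\{\dd\rho_Y,\dd x_k\}$ only by positive column rescalings (by $\rho_Y^{-2}$ and $\rho_Y^{-1}$). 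Hence the two matrices have the same rank, equal to $\rk(Tf)_y = \rk(T\Psi)_y$, on $Y^o$.

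At a boundary point $p\in\partial Y$, specialising to $\rho_Y = 0$ gives the coefficient matrix
\[
M_{\scd f}|_p = \bigl(-g_i,\ \partial_{x_k} g_i\bigr),
\]
while differentiating $\Psi$ at $\rho_Y = 0$ and using the identity $\partial_\ast(g/|g|) = |g|^{-1} P_{\hat g^\perp}\partial_\ast g$ (with $\hat g = g/|g|$ and $P_{\hat g^\perp}$ the orthogonal projection to $\hat g^\perp$) yields a Jacobian $T\Psi|_p$ whose first column equals $-\hat g/|g| + |g|^{-1} P_{\hat g^\perp}\partial_{\rho_Y} g$ and whose remaining columns are $|g|^{-1} P_{\hat g^\perp}\partial_{x_k} g \in \hat g^\perp$. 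Decomposing both matrices along the splitting $\RRd = \RR\hat g \oplus \hat g^\perp$, rank-preserving row/column operations reduce each to block-diagonal form $\mathrm{diag}\bigl(c,\ P_{\hat g^\perp}(\partial_{x_k} g)_k\bigr)$ with $c \neq 0$ (equal to $-|g|$ and $-1/|g|$, respectively). Hence both have rank $1 + \rk\bigl(P_{\hat g^\perp}(\partial_{x_k} g)_k\bigr)$ at $p$.

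The main subtlety is the boundary computation. The key observation is that the $\partial_{\rho_Y} g$-dependence of the first column of $T\Psi|_p$ lies entirely in $\hat g^\perp$, so it can be removed via column operations against the nonzero $\hat g$-component $-\hat g/|g|$ of that same column; dually, in $M_{\scd f}|_p$ the $\hat g$-components $(\hat g \cdot \partial_{x_k} g)\hat g$ of the columns $\partial_{x_k} g$ are eliminated against the nonzero $\hat g$-component $-|g|\hat g$ of the first column $-g$. Both reductions rely on $g|_p \neq 0$, which is exactly the nondegeneracy hypothesis $\rho_Y|f| \neq 0$ on $\partial Y$.
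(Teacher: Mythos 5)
Your proof is correct and follows essentially the same route as the paper: the explicit formula $\Psi=\frac{g}{|g|}\bigl(1-\frac{\rho_Y}{|g|}\bigr)$ with $\Psi^*\rho_{\BBd}=\rho_Y/|g|$ for the $\sct$-map property, reduction to the boundary via the diffeomorphism property of $\iota$ in the interior, and the orthogonality of derivatives of $g/|g|$ to $g$ for the rank comparison at $\rho_Y=0$. Your reduction of both matrices to the common block form $\mathrm{diag}\bigl(c,\,P_{\hat g^\perp}(\partial_{x_k}g)_k\bigr)$ is just a reorganization of the paper's successive removal of components collinear to $h$, so no substantive difference.
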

\begin{proof}
Since $\iota$ is a diffeomorphism, $\iota \circ f$ is a smooth map while $\rho_Y>\eps$ and we may thus restrict our attention to a neighbourhood of $\partial Y$ where $\rho_Y |f|$ is everywhere non-vanishing. 
As usual, we pick a suitable collar neighbourhood of product type such that locally $Y=[0,\eps)\times \partial Y$, and we write $\dim(Y)=s$ and 
$\by=(\rho_Y,y)$ for the coordinates. There we need to compute $\Psi^*\rho_{Z}$.
Write
   $ f(\rho_Y, y) = \rho^{-1}_Y h(\rho_Y,y)$
for $h\in \Sm(Y,\RRd)$ with $h(0,y)\neq 0$ for all $(0,y)\in\partial Y$. Since $\rho_Y$ is assumed sufficiently small, 
$|f(\by)|=\rho_Y^{-1}|h(\by)|$ may be assumed sufficiently large and hence 
$$\Psi(\by)=(\iota\circ f)(\by)=\frac{f(\by)}{|f(\by)|}\left(1-\frac{1}{|f(\by)|}\right)=\frac{h(\by)}{|h(\by)|}\left(1-\frac{\rho_Y}{|h(\by)|}\right).$$ 
In this form, $\Psi$ clearly extends up to the boundary.
The boundary defining function on $\BBd$ is, in this coordinate patch,
$\rho_Z=1-|x|$. Thus, 
$$\Psi^*\rho_Z=\frac{1}{|f(\by)|}=\rho_Y \frac{1}{\rho_Y|f(\by)|}.$$
By assumption, $\rho_Y|f(\by)|=|h(\by)|$ is smooth and non-vanishing, which proves that $\Psi$ is an $\sct$-map.

For the second half of the statement we first observe that, since $\iota$ is a diffeomorphism $\RRd\rightarrow (\BBd)^o$ and $\scd$ coincides, up to a rescaling by a non-vanishing factor, with the usual differential in the interior, we may restrict our attention to the boundary $\partial Y$. Then we compute
\begin{align*}
    \scd f(\by) &= 
    \rho_Y^2\partial_{\rho_Y} f(\by)\,\frac{\dd\rho_Y}{\rho_Y^2}+\sum_{j=1}^{s-1}\rho_Y\partial_{y_j} f(\by)\,\frac{\dd y_j}{\rho_Y}\\
    &=(-h(\by)+ \rho_Y \partial_{\rho_Y}h(\by))\,\frac{\dd\rho_Y}{\rho_Y^2} + \sum_{j=1}^{s-1}\partial_{y_j} h(\by) \,\frac{\dd y_j}{\rho_Y}.
\end{align*}
We identify $\scd f$ with its coefficients ($s\times d$)-dimensional block matrix 
$$\begin{pmatrix}
    -h(\by)+ \rho_Y \partial_{\rho_Y}h(\by) & (\partial_{y_j} h(\by))_{j=1,\dots,s-1}
\end{pmatrix}.$$
At the boundary $\rho_Y=0$ we obtain
\begin{align}
\label{eq:scdhorror}
\begin{pmatrix}
-h & (\partial_{y_j} h)_{j=1,\dots,s-1}
\end{pmatrix}\!(0,y).
\end{align}
We want to compare the rank of \eqref{eq:scdhorror} with that of the differential of $\Psi$ at the point $(0,y)\in\partial Y$.
As shown above, the function $\Psi$ is given, at an arbitrary point $\by=(\rho_Y,y)$ close enough to $\partial Y$, by
\begin{align*}
    \frac{h(\by)}{|h(\by)|}\left(1-\frac{\rho_Y}{|h(\by)|}\right),
\end{align*}
whose differential at $(0,y)$ is the block matrix
\begin{align}\label{eq:JPsi}
    T \Psi(0, y) =
    \begin{pmatrix}
        -\frac{h}{|h|^2}+\partial_{\rho_Y}\frac{h}{|h|} & \left(\partial_{y_j} \frac{h}{|h|}\right)_{j=1,\dots,s-1}    
    \end{pmatrix}\!(0,y).
\end{align}
Now observe that, since they are derivatives of unit vectors,
$\partial_{y_j} \frac{h}{|h|}$ and $\partial_{\rho_Y} \frac{h}{|h|}$ are orthogonal to $h$, which is itself non-zero.\footnote{Recall that,
in fact, $|v(t)|=1 \Leftrightarrow v(t)\cdot v(t) = 1\Rightarrow 2v(t)\cdot v^\prime(t)=0\Leftrightarrow v(t) \perp v^\prime(t)$.}
Therefore, the rank of $T\Psi(0,y)$ equals that of the block matrix 
\begin{align}\label{eq:JPsimod}
    \begin{pmatrix}
        -h &
        \left(\partial_{y_j} \frac{h}{|h|}\right)_{j=1,\dots,s-1}
    \end{pmatrix}\!(0,y).
\end{align}
%
Finally, we have that
$$\partial_{y_j} h= \partial_{y_j} \left(|h|\frac{h}{|h|}\right)=\underbrace{|h| \partial_{y_j} \frac{h}{|h|}}_{\text{collinear to }{\partial_{y_j} \frac{h}{|h|}}} + \underbrace{\frac{(h\cdot\partial_{y_j} h)}{|h|^2} h}_{\text{collinear to }h} .$$
This means that the null space (and hence the ranks) of \eqref{eq:scdhorror} and \eqref{eq:JPsimod} coincide.
\end{proof}

\begin{ex}
The simplest example for a map where Lemma \ref{lem:horror} applies is given by the map $f=\iota^{-1}:\BBd\rightarrow \RRd$.
\end{ex}

\begin{rem}
Recall (cf. \cite[App. C.3]{Hormander3}) that the intersection of two $\Sm$-sub\-mani\-folds $Y$ and $Z$ of a $\Sm$-manifold $X$ is \emph{clean} with excess $e\in\NN$ if $Y\cap Z$ is a $\Sm$-submanifold of $X$ satisfying 
\begin{align*}
T_x(Y\cap Z)&=T_xY\cap T_x Z,\qquad \forall x\in Y\cap Z,\\
\codim(Y)+\codim(Z)&=\codim(Y\cap Z)+e.
\end{align*}
%
\end{rem}

\begin{ex}\label{ex:embdball}
    Let $X$ be a mwb and $a \in \rho_X^{-m_e} \rho_{\BB^s}^{-m_\psi} \Sm(X \times \BB^s)$. In this example, we extend $a$ to a local symbol on a suitable subset of $X \times \BB^{s+1}$.

    We view $\BB^{s+1}$ as embedded in $\RR^{s+1}$ with coordinates $(y_1,\dots,y_s,\tilde{y})$.
Define 
\[\jmath : \BB^{s+1} \to \BB^s \times (-1,1), \qquad (y,\tilde{y}) \mapsto \left(\frac{y}{\sqrt{1 - \tilde{y}^2}}, \tilde{y}\right),\]
where $y = (y_1, \dotsc, y_s)$.
For every $\eps \in (0,1)$, we obtain coordinates on
$$U = \jmath^{-1}\left\{\BB^s \times (-\eps, \eps)\right\} = \BB^{s+1} \cap \{|\tilde{y}| < \eps\},$$
cf. Figure \ref{fig:fiberball}.
We note that $U$ is a fibration of base $\BB^s$ and fiber $(-\eps, \eps)$.

\begin{figure}[!ht]
\begin{center}
\begin{tikzpicture}[
		scale=0.7,
        MyPoints/.style={draw=black,fill=white,thick},
]

	\draw (-3,0) -- (3,0) node[below, midway]{$\mathbb{B}^s$};
	
	\foreach \sss in {-3,-2.7,...,3}{
		\draw[lightgray,domain=-0.9:0.9,smooth,variable=\y] plot ({\sss*sqrt{(1-\y*\y)},3*\y});
		};	

\draw (0,0) circle (3);
\draw[lightgray,dashed] (-{3*sqrt(0.19)},2.7) -- ({3*sqrt(0.19)},2.7);
\draw[lightgray,dashed] (-{3*sqrt(0.19)},-2.7) -- ({3*sqrt(0.19)},-2.7) node[black,below=3pt]{$\mathbb{B}^{s+1}$};

\draw[->] (3.5,0) -- (4.5,0) node[above, midway]{$\jmath$}; 

\begin{scope}[shift={(8,0)}]	
\foreach \sss in {0,0.3,0.6,...,3}{
\draw[-,lightgray] ({\sss},-2.7)--({\sss},2.7);
\draw[-,lightgray] ({-\sss},-2.7)--(-{\sss},2.7);} ;%

\draw (-3,0) -- (3,0) node[below, midway]{$\mathbb{B}^s$};
\draw[dashed,lightgray] (-3,2.7) -- (3,2.7);
\draw (3,2.7) -- (3,-2.7) node[black,below=3pt]{$\mathbb{B}^s \times (-\eps, \eps)$};
\draw[dashed,lightgray] (3,-2.7) -- (-3,-2.7);
\draw (-3,-2.7) -- (-3,2.7);
\end{scope}

\end{tikzpicture}
\caption{The action of $\jmath$ visualized}
\label{fig:fiberball}
\end{center}
\end{figure} 

We verify that $\jmath$ is a $\sct$-map. For this we now view $\BB^s\times(-\eps,\eps)$ as a (non-compact) manifold with 
boundary\footnote{This means we view $\BB^s\times(-\eps,\eps)$ as embedded in the manifold with boundary $\BB^s\times\SSS^1$, which can be embedded in $\SSS^s\times\SSS^1$.
For higher dimension, we embed $(-\eps, \eps)^r \hookrightarrow \mathbb{T}^r$.}
with boundary defining function $\rho_Z=1-[y]$. Observe that near the boundary we have
\begin{align*}
\jmath^*\rho_Z &= 1-\frac{[y]}{\sqrt{1-\tilde{y}^2}}\\
&=(1-\sqrt{[y]^2+\tilde{y}^2})\cdot \frac{1}{ \sqrt{1-\tilde{y}^2} } \cdot \frac{\sqrt{1 - \tilde{y}^2} - [y]}{1 - \sqrt{\tilde{y}^2 + [y]^2}}\\
&=\rho_{\BB^{s+1}}h.
\end{align*}
Since $|\tilde{y}| <  \epsilon$, $h$ is positive and in $\Sm(U)$. Hence $\jmath$ is an $\sct$-map.

As usual, we may perform the same construction fiber-wise on a fiber bundle by considering local product decompositions to obtain a local $\sct$-map. Namely, let $X$ be an arbitrary mwb. Then $\Psi = \id_X \times \jmath$ is again a $\sct$-map on the product $X\times \big(\BBs\times(-\eps,\eps)\big)$. Using Lemma \ref{lem:SGmapproj} and Remark \ref{rem:SGmapcomp}, wee see that $\tilde{\Psi} = \Psi \circ (\id_X \times \pr_{\BB^s}) : X \times U \to X \times \BB^s$
is a $\sct$-map. Hence, $\tilde{\Psi}^* a \in \rho_X^{-m_e} \rho_{\BB^{s+1}}^{-m_\psi} C^\infty(X \times U)$.
\end{ex}

\section{Phase functions and Lagrangian submanifolds}
\label{sec:phaseandlag}

\subsection{Clean phase functions}

\begin{defn}[Phase functions]
Let $X$ and $Y$ be mwbs, $B=X\times Y$. Let $U$ be an open subset in $B$. Then, a real valued $\varphi\in \rho_X^{-1}\rho_Y^{-1}\Sm(U)$ is a \emph{local} ($\sct$-\-)\allowbreak phase function if it is the restriction of some $\widetilde{\varphi}\in \rho_X^{-1}\rho_Y^{-1}\Sm(B)$ to $U$ such that $\scd\tilde{\varphi}(p)\neq 0$ for all $p\in\overline{\Bp}\cap\overline{\partial U}$.

If $U=B$, that is $\varphi\in \rho_X^{-1}\rho_Y^{-1}\Sm(B)$ with $\scd\varphi(p)|_{\overline{\B^\psi}}\neq 0$, we call $\varphi$ a \emph{global} $\sct$-phase function.
\end{defn}

\begin{rem}
Phrased differently, if $U$ is an interior open set, $\varphi$ is just a smooth function.
In the non-trivial case of $U$ being a boundary neighbourhood, the above definition means that, for every $p\in\partial B$ in the $\psi$- or $\psi e$-component of the boundary of $U$, there exists an element $\zeta \in\scV(B)$ such that $\zeta (\varphi)$ is elliptic at $p$, meaning $\zeta (\varphi)\in\Sm(X\times Y)$ satisfies $\big(\zeta\varphi\big)(p)\neq 0$. It is, by compactness, bounded away from zero at the possible limit points in $\overline{\partial U}$. In the following, we usually do not write $\widetilde{\varphi}$ but simply identify $\widetilde{\varphi}$ and $\varphi$ at these limit points.
\end{rem}

\begin{ex}[$\SG$-phase functions]
If $B=\BBd\times\BBs$, such $\varphi$ correspond to so-called (classical) $\SG$-phase functions on $\RRd\times\RRs$, cf. \cite{CoSc,CoSc2}, but with a relaxed condition as $\|x\|\rightarrow \infty$. Indeed, in light of the $\SG$-estimates \eqref{eq:SGest}, the previous definition translates to
\begin{equation}
\label{eq:SGphaseineq}
|\jap{x}^{-1} \nabla_\theta\varphi|^2+|\jap{\theta}^{-1}\nabla_x\varphi|^2\geq C\quad \text{for} \quad |\theta|\gg 0.
\end{equation} 
The relationship between these and ``standard'' phase functions which are homogeneous in $\theta$ is discussed in \cite{CoSc2}. Examples of $\SG$-phase functions are the standard Fourier phase $x\cdot \theta$ on $\RR^d_x\times\RR^d_\theta$ and $x_0\jap{\theta}-x\cdot \theta$ on 
$\RR_{x_0,x}^{d+1}\times\RR^d_\theta$.
\end{ex}

\begin{defn}[The set of critical points]
Let $B=X\times Y$, $\varphi\in \rho_X^{-1}\rho_Y^{-1}\Sm(B)$ a (local) phase function. A point $p\in B$ (in the domain of $\varphi$) is called a \emph{critical point} of $\varphi$ if $\scdY\varphi(p)=0$, that is, if
$\zeta(\varphi)(p)=0$ for every $\zeta \in\scVY(B).$
We define
\begin{equation}
C_\varphi=\{p\in B\,|\, \scdY\varphi(p) = 0 \}.
\end{equation}
We set $\Cp=C_\varphi\cap \B$ and specify
$$\Cp^\bullet = \Cp\cap \B^\bullet \quad \text{for} \quad \bullet \in\{e,\psi,\psi e\}. $$
\end{defn}

We now adapt the usual definition of a \emph{clean} phase function from the classical setting to the case with boundary.

\begin{defn}[Clean phase functions]
\label{def:cleanphase}
A phase function $\varphi$
is called \emph{clean} if the following conditions hold:

\begin{itemize}
\item[1.)] there exists a neighbourhood $U\subset B$ of $\partial B$ such that $C_\varphi\cap U$ is a manifold with corners with $\partial C_\varphi\subset \partial B$;
\item[2.)] the tangent space of $T_pC_\varphi$ is at every point $p$ given by those vectors in $v\in T_p B$ such that $v(\zeta(\varphi))=0$ for all $\zeta\in \scVY$, that is, $T(\scdY\varphi)v=0$;
\item[3.)] the intersections $\Cp^\bullet=C_\varphi\cap\B^\bullet$ are clean.
\end{itemize}
\end{defn}

The last condition is equivalent to the existence of $w \in T_{\Cp^\bullet}\Cp^\bullet$ such that
\begin{align}\label{eq:cleanbdry}
    (T\scd_Y\varphi)(w + \partial_{\rho_\bullet}) = 0.
\end{align}
This means that, for some $w$ tangent to $\B^\bullet$, we have $w + \partial_{\rho_\bullet} \in T_{\Cp^\bullet} \Cp$. Here, $\rho_\bullet$ is a bdf of $\B^\bullet$. We now discuss the implications of these conditions.

\begin{lem}
\label{lem:Cpprops}
Let $\varphi$ be a clean phase function. Then either we are in the ``non-corner crossing case'' $1a.)$ or in the ``corner crossing case'' $1b.)$, namely,
\begin{enumerate}[label=1\alph*.)]
\item both $\Cpe$ and $\Cpp$ are closed manifolds (without boundary) and $\Cppe=\emptyset$; 
\item $\Cp$ consists of two components, $\overline{\Cpe}$ and $\overline{\Cpp}$, which are both submanifolds (with boundary), of the same dimension $\dim(C_\varphi)-1$, with joint boundary $\Cppe=\partial \overline{\Cpe}=\partial \overline{\Cpp}$ of $\B$. The intersection of $\overline{\Cpe}$ and $\overline{\Cpp}$ in $\Cppe$ is again clean.
\end{enumerate}
In both cases, the differential of $\scdY\varphi:B\rightarrow \scT^*B$, viewed as a map $T(\scdY\varphi):TB\rightarrow T(\scT^*B)$, characterizes $T\Cp^\bullet$:
\begin{enumerate}[label=\arabic*.)]
\setcounter{enumi}{1}
\item \label{it:Cpprops2} The tangent space of $\overline{\Cpe}$ and $\overline{\Cpp}$ at each point $p$ is given by those vectors $v\in T\B^\bullet$ such that $v(\zeta(\varphi))=0$ for all $\zeta\in \scVY$, that is $T(\scdY\varphi)v=0$.
\end{enumerate}
%
\end{lem}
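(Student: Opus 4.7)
The plan is to split into two cases based on whether $\Cppe$ is empty or not and, in each, to extract the manifold-with-corners structure of $C_\varphi$ together with that of its boundary pieces from the cleanness hypotheses, supplemented by a short transversality and dimension argument.

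First I would exploit \eqref{eq:cleanbdry} to establish transversality of $C_\varphi$ with the hypersurfaces $\B^\bullet$, $\bullet\in\{e,\psi\}$. At each $p\in\Cp^\bullet$ the condition produces a vector $w+\partial_{\rho_\bullet}\in T_pC_\varphi$ with $w\in T_p\B^\bullet$; its nonzero $\partial_{\rho_\bullet}$-component forces $T_pC_\varphi\not\subset T_p\B^\bullet$ and, since $\B^\bullet$ is a hypersurface, even $T_pC_\varphi+T_p\B^\bullet=T_pB$. Cleanness then forces the excess to vanish, giving $\dim\Cp^\bullet=\dim C_\varphi-1$. The analogous argument at the corner produces $\dim\Cppe=\dim C_\varphi-2$ whenever $\Cppe\neq\emptyset$.

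Then comes the case split. If $\Cppe=\emptyset$, the sets $\Cpe$ and $\Cpp$ lie in the disjoint open pieces $\B^e\setminus\overline{\B^{\psi e}}$ and $\B^\psi\setminus\overline{\B^{\psi e}}$, hence are disjoint, and together they exhaust $\partial C_\varphi$ because $\partial C_\varphi\subset\partial B$. Since $C_\varphi$ is a mwc whose boundary avoids the corner of $B$, it has no corner of its own; so its boundary is the disjoint union of $\Cpe$ and $\Cpp$, which are therefore boundaryless, and compact by compactness of $B$. This is case 1a. Otherwise, adjoining $\Cppe$ to $\Cp^\bullet$ yields $\overline{\Cp^\bullet}$; the established transversalities together with the clean intersection at the corner make $C_\varphi$ near $\Cppe$ locally modelled on a quadrant, so that $\overline{\Cpe}$ and $\overline{\Cpp}$ are precisely its two codimension-one faces, each of dimension $\dim C_\varphi-1$ with common boundary $\Cppe$. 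The cleanness of their intersection $\overline{\Cpe}\cap\overline{\Cpp}=\Cppe$ is inherited from the cleanness of $C_\varphi\cap\B^{\psi e}$ assumed in Definition \ref{def:cleanphase}.

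The tangent space description in item 2 is then immediate: condition (3) in Definition \ref{def:cleanphase} gives $T_p\Cp^\bullet=T_pC_\varphi\cap T_p\B^\bullet$, while condition (2) identifies $T_pC_\varphi=\ker T_p(\scdY\varphi)$; intersecting these yields the claim for every $p\in\Cp^\bullet$. The main obstacle I expect is the analysis in case 1b: verifying that the excess in the clean intersection with $\B^{\psi e}$ vanishes so that $\Cppe$ indeed has codimension $2$ in $C_\varphi$, and that $C_\varphi$ consequently acquires a mwc structure whose two boundary hypersurfaces are precisely $\overline{\Cpe}$ and $\overline{\Cpp}$.
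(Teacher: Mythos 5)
The paper never writes out a proof of Lemma \ref{lem:Cpprops}: it is stated as a direct unpacking of Definition \ref{def:cleanphase} together with the reformulation \eqref{eq:cleanbdry}, and the text immediately moves on to the rank count for $T(\scdY\varphi)$. So your reconstruction is not competing with a written argument, and it follows exactly the route the definition suggests: \eqref{eq:cleanbdry} produces a vector of $T_pC_\varphi$ with nonvanishing $\partial_{\rho_\bullet}$-component, hence transversality of $C_\varphi$ to each boundary hypersurface; cleanness (condition 3) converts transversality into the count $\dim\Cp^\bullet=\dim C_\varphi-1$; the same nonvanishing of $\dd\rho_\bullet$ on $T_pC_\varphi$ is what guarantees that points of $C_\varphi\cap\partial B$ really are boundary points of the mwc $C_\varphi$ (an interior point would make $\rho_\bullet|_{C_\varphi}$ have a critical minimum) and that $C_\varphi$ has no corners of its own when $\Cppe=\emptyset$, giving 1a; the dichotomy is just the case split on $\Cppe$; and item 2 is the intersection of conditions (2) and (3) of Definition \ref{def:cleanphase}, as you say. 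All of this is correct and is surely what the authors intend.

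The only thin spot is the one you flag yourself: the corner case 1b. To get that $\Cppe$ has codimension two in $C_\varphi$ and that $C_\varphi$ is locally a quadrant whose two codimension-one faces are $\overline{\Cpe}$ and $\overline{\Cpp}$, you need the analogue of \eqref{eq:cleanbdry} at points of $\Cppe$ in \emph{both} normal directions, i.e.\ vectors $w_1+\partial_{\rho_X}$ and $w_2+\partial_{\rho_Y}$ in $T_pC_\varphi$ with $w_1,w_2$ tangent to $\Bpe$, so that $\dd\rho_X$ and $\dd\rho_Y$ restrict to linearly independent forms on $T_pC_\varphi$; cleanness of $C_\varphi\cap\Bpe$ alone does not force the excess of that intersection to vanish. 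The paper's own statement of \eqref{eq:cleanbdry} is loose at the corner (a single bdf of $\Bpe$ does not exist), so your appeal to ``the analogous argument'' is as precise as the source permits, but a self-contained proof should say explicitly that the corner condition is read in both directions; with that reading, your quadrant model and the ensuing identifications $\Cppe=\partial\overline{\Cpe}=\partial\overline{\Cpp}$ go through as you describe.
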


By condition $3.)$ of Definition \ref{def:cleanphase}, we have 
$\dim(\ker(T(\scdY\varphi)))=\dim C_\varphi$. Hence,
the restrictions of $T(\scdY\varphi)$ to the individual boundary components of $B$ on $\Cp$ are of constant rank. Namely,
\[
	\rk(T(\scdY\varphi))=
	\begin{cases}
		s-e		& \text{on $C_\varphi^o$},
		\\
		s-e-1 & \text{on $\Cpp$ and $\Cpe$},
		\\
		s-e-2 & \text{on $\Cppe$},
	\end{cases}
\]
for some fixed number $e$, called the excess of $\varphi$, which is given by
$$e=\dim C_\varphi - d.$$
%

\begin{rem}
Conversely, if the rank of $T(\scdY\varphi)$ is constant \emph{in a neighborhood} of each critical point of $\scdY\varphi$, then $\varphi$ is clean by the constant rank theorem. In case $e=0$, $\varphi$ is called \emph{non-degenerate}, and the two characterizations coincide. The corresponding case of $\SG$-phase functions (on $\RRd$) was studied in \cite{CoSc2}.
\end{rem}

\subsection{The associated Lagrangian}
In the classical local theory without boundary on subsets of $\RRd\times(\RR^s\setminus\{0\})$,  see \cite[Chapter XXI.2]{Hormander3},
the set of critical points $\Cp$ is realized as an immersed Lagrangian in $T^*\RRd$ by the map 
$(x,\theta)\rightarrow (x,\varphi_x^\prime(x,\theta))$.
In the present setting, the situation is more complicated.
Following \cite{CoSc2}, we define an analogous map $\lp$ on the mwc $B=X\times Y$ into $\scOverT^*X$.

For that, we consider the following sequence of maps: Using the ``rescaling identifications'' \eqref{eq:rescal}, we may view $(\bx,\by)\rightarrow \scdX\varphi(\bx,\by)$ as a map in 
$\rho_Y^{-1}\Sm(Y,\scTheta(X))$. Since $\scTheta(X)$ are the sections of $\scOverT^*X$, composing with the radial compactification yields, in view
of Lemma \ref{lem:horror}, a map into the compactified fibers of $\scOverT^*X$.
\begin{defn}
The map $\lambda_\varphi: B\rightarrow \scOverT^*X$ is defined by 
$$(\bx,\by)\mapsto \big(\bx,\iota(\scdX\varphi(\bx,\by))\big).$$
\end{defn}

\begin{lem}
\label{lem:lpsct}
There is a neighbourhood $U\subset B$ of $\Cp$ such that $\lp: U\rightarrow \scOverT^*X$ is a local $\sct$-map.
\end{lem}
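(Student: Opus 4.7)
I would verify the defining identity of a $\sct$-map at each of the two boundary defining functions of $\scOverT^* X$, after matching them naturally with those of $B=X\times Y$: the base bdf $\rho_X$ of $\scOverT^* X$ is paired with $\rho_X$ of $B$, and the fiber-infinity bdf $\rho_{\mathrm{fib}}$, coming from the radial compactification of the cotangent fibers, is paired with $\rho_Y$.

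The base direction is immediate since $\lp$ preserves the first factor: $\lp^*\rho_X=\rho_X$, i.e.\ $h_X\equiv 1$. The substantive content is the fiber direction. Using the rescaling identification \eqref{eq:rescal}, I would write $\scdX \varphi=\rho_Y^{-1}\,\tilde g$, with $\tilde g$ a smooth section of $\scT^*X$ over $B$; in local fiber coordinates on $\scT^*X$, $\tilde g$ is a smooth $\RR^d$-valued function on $B$. At any critical point $p\in\Cpp\cup\Cppe$, one has $\rho_Y(p)=0$ and $\scdY \varphi(p)=0$, so the phase hypothesis $\scd \varphi(p)\neq 0$ forces $\tilde g(p)\neq 0$. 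By continuity of $\tilde g$ and compactness of $\Cp$, I may shrink to an open neighborhood $U\subset B$ of $\Cp$ on which $\tilde g$ is nowhere zero.

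On such $U$, I would invoke Lemma \ref{lem:horror} applied to $f:=\scdX \varphi\in\rho_Y^{-1}\Sm(B,\RR^d)$, treating $\bx$ as smooth parameters — the proof of that lemma is entirely local and goes through verbatim in the parametrized setting. This yields that $(\bx,\by)\mapsto \iota(\scdX \varphi(\bx,\by))$ extends to a local $\sct$-map into $\BB^d$ near $U\cap\{\rho_Y=0\}$, with pullback of the fiber bdf equal to $\rho_Y/|\tilde g|$. Combined with the base identity one obtains $\lp^*\rho_X=\rho_X\cdot 1$ and $\lp^*\rho_{\mathrm{fib}}=\rho_Y\cdot(1/|\tilde g|)$, with both coefficients smooth and strictly positive on $U$, which is precisely the $\sct$-map condition on the mwc $B$.

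The main obstacle I anticipate is bookkeeping: one must verify that Lemma \ref{lem:horror} adapts cleanly to the parametrized setting, and that the pairing of bdfs is chosen so that interior, purely $e$-type, purely $\psi$-type and corner-crossing critical points are all covered uniformly by the same argument. At interior or pure $e$-type critical points, $\lp$ maps into the fiber interior of $\scOverT^*X$, so only the base identity is non-vacuous and the claim is trivial; the substantive case is $\Cpp\cup\Cppe$, where the non-vanishing of $\tilde g$ is the active input.
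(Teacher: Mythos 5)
Your argument is essentially the paper's proof: the base identity $\lp^*\bx=\bx$, the rescaling of $\scdX\varphi$ as $\rho_Y^{-1}$ times a smooth section $\tilde g$ which is non-vanishing on $\Cpp\cup\Cppe$ (because $\scd\varphi\neq 0$ on $\overline{\Bp}$ while $\scdY\varphi=0$ on $\Cp$), an application of Lemma \ref{lem:horror} with the $\bx$-variables as smooth parameters, and a separate, easy treatment of the $e$-face. One correction: your intermediate claim that $\tilde g$ is nowhere zero on a whole neighbourhood of $\Cp$ is too strong. The phase condition constrains $\scd\varphi$ only on $\overline{\Bp}$, so $\scdX\varphi$ may vanish at points of $\Cpe$ (the paper explicitly drops condition \eqref{eq:nonzerosec}, i.e.\ $\Lambda^e$ may meet the zero section); compactness therefore yields non-vanishing only on a neighbourhood of $\Cpp\cup\Cppe$, which is exactly what the paper states and all that Lemma \ref{lem:horror} needs. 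This does not damage your proof, because your final case split is precisely the paper's: near $\Cpe$ one has $\rho_Y>0$ and $\scdX\varphi$ bounded, so $\iota$ maps into the fiber interior, $\lp^*\rho_\Xi>0$, and with $\rho_Y$ locally bounded away from $0$ the fiber-bdf condition is automatic; only the base identity $\lp^*\rho_X=\rho_X$ together with smoothness is substantive there.
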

\begin{proof}
We write, $\bx=(\rho_X,x)$, $\by=(\rho_Y,y)$ for coordinates in $B$, $\bx$ and $\bxi=(\rho_\Xi,\xi)$ for coordinates in $\scOverT^*X$. Since $\lambda_\varphi$ is the identity in the first set of variables, we have
$\lambda_\varphi^*\bx=\bx.$
In the second set of variables, $\lambda_\varphi$ acts as $\iota\,\circ\,\scdX\varphi$, with $\scdX\varphi\in\rho_Y^{-1}\Sm(Y,\scTheta(X))$. Notice that on $\Cpp\cup\Cppe$, we have $\scdX\varphi(\bx,\by)\neq 0$, since $\scd \varphi\neq 0$ on $\Bp\cup\Bpe$ and $\scdY\varphi=0$ on $\Cp$. Hence, due to compactness, we may find a neighbourhood of $\Cpp\cup\Cppe$ on which $\scdX\varphi(\bx,\by)\neq 0$.
Writing $\varphi=\rho_X^{-1}\rho_Y^{-1}f$ for $f\in\Sm(X\times Y)$, this means
$$(-f+\rho_X\partial_{\rho_X}f) \frac{\dd \rho_X}{\rho_X^2\rho_Y} + \sum_{j=1}^{d-1}\partial_{x_j}f\frac{\dd x_j}{\rho_X\rho_Y} \neq 0.$$
Rescaling and viewing $\scdX \varphi$ as a map in $\rho_Y^{-1}\Sm(Y,\scTheta(X))$, we express $\scdX\varphi$ as
\begin{equation}
\label{eq:scdxexpl}
\scdX\varphi=\rho_Y^{-1} \left((-f+\rho_X\partial_{\rho_X}f) \frac{\dd \rho_X}{\rho_X^2} + \sum_{j=1}^{d-1}\partial_{x_j}f\frac{\dd x_j}{\rho_X} \right).
\end{equation}
Composing with $\iota$, we are therefore in the situation of Lemma \ref{lem:horror}, up to additional smooth dependence on the $X$-variables, and conclude that $\lambda_\varphi$ is a local $\sct$-map.

On $\Cpe$, away from $\Cppe$, we have that $\rho_Y\neq 0$ and correspondingly $\scdX\varphi(\bx,\by)$ stays bounded. Since $\iota$ maps bounded arguments into the interior, 
we find $\lp^*\rho_\Xi\neq 0$. Since $\lp$ is smooth, $\lp$ is an $\sct$-map.
\end{proof}
In particular, $\iota(\scdX\varphi(\bx,\by))$ maps boundary points with $\rho_Y=0$ to boundary points of the fiber, that is to $\Wp\cup \Wpe$.
\begin{defn}
We define $L_\varphi=\lp(C_\varphi)$ and $\Lp:=\lp(\Cp)$. We further write $\Lp^\bullet$ for $\lp(\Cp^\bullet)\subset \Wt^\bullet$ for $\bullet\in\{e,\psi,\psi e\}$. We say that $\varphi$ parametrizes $L_\varphi$ and $\Lp$. 
\end{defn}
\begin{thm}
\label{thm:lpsubm}
The map $\lp: \Cp \rightarrow \scOverT^*X$ is of constant rank $d$. Its image $L_\varphi$ as well as the boundary and corner faces $\Lp^\bullet=\lp(\Cp^\bullet)$ are immersed manifolds of dimension $\dim\Lp^\bullet=\dim\Cp^\bullet-e$. Furthermore, $\lp:\Cp\rightarrow \Lp$ is a submersion.
\end{thm}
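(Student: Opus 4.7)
The plan is to reduce, on each boundary stratum of $\Cp$, to a constant-rank computation by expressing $T\lp$ in terms of the scattering Hessian of $\varphi$ via Lemma \ref{lem:horror}, and then invoking the constant rank theorem.

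Fix $p \in \Cp^\bullet$ and work in local coordinates $\bx = (\rho_X, x)$, $\by = (\rho_Y, y)$ near $p$. By Lemma \ref{lem:lpsct}, $\lp$ is locally of the form $(\bx, \by) \mapsto (\bx, \iota \circ \scdX\varphi(\bx, \by))$. Writing $\varphi = \rho_X^{-1}\rho_Y^{-1} f$ with $f \in \Sm$ and applying Lemma \ref{lem:horror} componentwise to $\scdX\varphi \in \rho_Y^{-1}\Sm(Y,\scTheta(X))$, the rank of $T\lp$ at $p$ equals the rank of the block matrix
\[
\begin{pmatrix} I_d & 0 \\ \mathcal{H}_{XX}(p) & \mathcal{H}_{XY}(p) \end{pmatrix},
\]
where $\mathcal{H}_{XX}$ and $\mathcal{H}_{XY}$ assemble the coefficients of $\scdX(\scdX\varphi)$ and $\scdY(\scdX\varphi)$, respectively. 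By Lemma \ref{lem:Cpprops}, the tangent space $T_p\Cp^\bullet$ equals $\ker T(\scdY\varphi)(p) \cap T_p \B^\bullet$, cut out by the analogous $\by$-block $(\mathcal{H}_{YX}(p),\mathcal{H}_{YY}(p))$ of $\scd(\scdY\varphi)$.

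The driving identity is the symmetry $\mathcal{H}_{XY}(p) = \mathcal{H}_{YX}(p)^T$ of the mixed scattering partials at $p$, obtained from $\scd\circ\scd = 0$ applied to $\varphi$; the potentially obstructing lower-order terms coming from the rescaling by $\rho_X^{-1}\rho_Y^{-1}$ vanish on $\Cp$ since $\scdY\varphi = 0$ there. Using this symmetry, a vector $v = (v_x, v_y) \in T_p\Cp^\bullet$ lies in $\ker T\lp$ exactly when $v_x = 0$ and $v_y$ lies in the joint null space of $\mathcal{H}_{YX}(p)$ and $\mathcal{H}_{YY}(p)$. A dimension count using the rank formulas for $T(\scdY\varphi)$ on each stratum recalled immediately after Lemma \ref{lem:Cpprops} then yields $\dim\ker T\lp|_{T_p\Cp^\bullet} = e$, independent of $p$, so $T\lp|_{\Cp^\bullet}$ has constant rank $\dim\Cp^\bullet - e$ throughout the stratum.

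By the constant rank theorem applied on each smooth stratum, $\Lp^\bullet = \lp(\Cp^\bullet)$ is an immersed submanifold of the corresponding boundary face $\Wt^\bullet$ of dimension $\dim\Cp^\bullet - e$, and $\lp : \Cp^\bullet \to \Lp^\bullet$ is a submersion. In the corner-crossing case, the clean intersection of $\overline{\Cpe}$ and $\overline{\Cpp}$ at $\Cppe$, together with Proposition \ref{prop:cornerdiffeo}, assembles these pieces into an immersed manifold-with-boundary $\Lp \subset \Wt$ with the submersion property extending across the corner. The main obstacle will be verifying the symmetry $\mathcal{H}_{XY} = \mathcal{H}_{YX}^T$ in the scattering setting: the classical symmetry of mixed partials is distorted by the boundary-defining weights, and one must carefully expand both sides and exploit $\scdY\varphi = 0$ on $\Cp$ to cancel the discrepancy.
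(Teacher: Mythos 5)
Your overall strategy coincides with the paper's: compute $\dim\ker\bigl(T\lp|_{T\Cp}\bigr)$ by relating it to the cleanness data for $T(\scdY\varphi)$, and conclude via the constant rank theorem. In the interior and on $\Cpe$ (where $\iota$ maps into the interior of the fiber) your Duistermaat-style argument is fine, and the issue you single out as the main obstacle is in fact not one: with the natural normalization (coefficients of $\scdX\varphi$ taken in $\rho_Y^{-1}\Sm(Y,\scTheta(X))$ and the analogously $\rho_X^{-1}$-rescaled coefficients of $\scdY\varphi$), a direct computation shows $\mathcal{H}_{XY}=\mathcal{H}_{YX}^{\,T}$ exactly, not merely modulo terms vanishing on $\Cp$.

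The genuine gap is at $\Cpp$ and $\Cppe$, i.e.\ at $\rho_Y=0$, where the fiber compactification enters. Lemma \ref{lem:horror} (applied with $\bx$ as a parameter) yields equality of \emph{ranks} of the fiber differential of $\iota\circ\scdX\varphi$ and of the block $\mathcal{H}_{XY}$; it does not identify their kernels as subspaces. Writing $\scdX\varphi=\rho_Y^{-1}h$, at $\rho_Y=0$ the kernel of the true fiber differential, with row block $\bigl(-h/|h|^2+\partial_{\rho_Y}(h/|h|)\ \ \partial_y(h/|h|)\bigr)$, forces $\delta\rho_Y=0$ (pair the equation with $h$), whereas the kernel of $\mathcal{H}_{XY}=(-h\ \ \partial_y h)$ contains vectors whose $\delta\rho_Y$ is a generically nonzero linear function of $\delta y$: the two kernels have equal dimension but differ by a shear in the radial direction. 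Since your next step intersects this kernel with $T_p\Cp^\bullet$ (the conditions coming from $\partial_{\rho_Y}\partial_y f$ and $\partial_y\partial_y f$), replacing $\ker T\lp$ by $\ker\mathcal{H}_{XY}$ can change the dimension of that intersection, and the rank statement of Lemma \ref{lem:horror} alone does not control it. This is precisely why the paper, in the $\rho_Y\to 0$ part of the proof, reruns the orthogonality/collinearity reductions of Lemma \ref{lem:horror} on the \emph{combined} system \eqref{eq:matma} --- the $\iota$-composed rows together with the rows cutting out $T\Cp$ --- checking at each step that the rank of the full matrix is unchanged, and only then invokes the boundary cleanness condition \eqref{eq:cleanbdry} to identify the resulting null space dimension with that of $T\scd_Y\varphi|_{\Cpp}$, namely $e$. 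Relatedly, your dimension count never uses \eqref{eq:cleanbdry} explicitly, although it is also needed at $\rho_X=0$, where one row of the system degenerates; the stratum-wise rank formulas by themselves do not substitute for it. To repair your argument you would either have to carry the $T\Cp$-defining rows through the Lemma \ref{lem:horror}-type reductions, as the paper does, or prove a strengthened, kernel-level version of Lemma \ref{lem:horror} relative to the extra equations.
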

The proof is inspired by that of Lemma 2.3.2 in \cite{Duistermaat} (adapted to clean phase functions), but much more involved, due to the presence of the compactification. We treat this new phenomenon by carefully applying  Lemma~\ref{lem:horror}.

%
\begin{proof}
We obtain the rank of $T\lp$ for $\lp: \Cp \rightarrow \scOverT^*X$ by computing the dimension of its null space.
Let $v = \delta\rho_X \cdot \partial_{\rho_X} + \delta x\cdot \partial_x + \delta\rho_Y\cdot \partial_{\rho_Y} + \delta y\cdot \partial_y$ be a vector at a point
$p = (\rho_X,x,\rho_Y,y) \in \Cp$. For the moment, we assume $\rho_Y>0$. We write $\lp = (\id \times \iota) \circ \ellp$ with
\begin{align*}
    \ellp : X\times Y^o \rightarrow \scT^*X\qquad
    (x,y) \mapsto (x,\scd_X \varphi(x,y)).
\end{align*}
Assume that $T\ellp(p)v = 0$ and $v \in T_p \Cp$.
The condition $T\ellp(p)v = 0$ implies that $\delta\rho_X = 0$ and $\delta x = 0$.
Let $\tilde{v} = \delta\rho_Y\cdot \partial_{\rho_Y} + \delta y\cdot \partial_y$. Hence the assumptions are reduced to
\begin{equation}\label{eq:VscdYX}
    \begin{aligned}
        \tilde{v} \scd_X \varphi(p) &= 0,\\
        \tilde{v} \scd_Y \varphi(p) &= 0,
    \end{aligned}
\end{equation}
where $\tilde{v}$ is interpreted as acting on the coefficient functions of the differentials.

In coordinates, these coefficient functions are given by
\begin{align*}
    \scd_X \varphi(p) = \rho_Y^{-1}(-f + \rho_X\partial_{\rho_X} f, \partial_x f)(p), \qquad
    \scd_Y \varphi(p) = (-f + \rho_Y \partial_{\rho_Y} f, \partial_y f)(p).
\end{align*}

On $\Cp$, where $-f + \rho_Y \partial_{\rho_Y} f = 0$ and $\partial_y f = 0$ hold true, it is easily seen that \eqref{eq:VscdYX} is
equivalent to
\begin{align}\label{mat:scdX}
    \begin{pmatrix}
        \rho_X\rho_Y^{-2} (\rho_Y \partial_{\rho_Y} - 1) \partial_{\rho_X} f & \rho_X\rho_Y^{-1} \partial_{\rho_X}\partial_y f\\
        \rho_Y^{-2}(\rho_Y \partial_{\rho_Y} - 1) \partial_x f & \rho_Y^{-1}\partial_x\partial_y f\\
        \rho_Y \partial_{\rho_Y}\partial_{\rho_Y} f & \rho_Y\partial_{\rho_Y}\partial_y f\\
        \partial_{\rho_Y}\partial_y f & \partial_y \partial_y f
    \end{pmatrix}
    \begin{pmatrix}
        \delta \rho_Y\\
        \delta y
    \end{pmatrix}
    = 0.
\end{align}

The cleanness condition translates to the dimension of the nullspace of $T\scd_X \varphi$ being constantly $e$. We identify $T\scd_Y\varphi$ with the matrix
\begin{align}\label{mat:clean}
    J = 
    \begin{pmatrix}
        (\rho_Y \partial_{\rho_Y}-1) \partial_{\rho_X}f & \partial_y \partial_{\rho_X} f\\
        (\rho_Y \partial_{\rho_Y}-1) \partial_x f & \partial_y\partial_x f\\
        \rho_Y\partial_{\rho_Y}\partial_{\rho_Y} f & \partial_y\partial_{\rho_Y} f\\
        \rho_Y \partial_{\rho_Y}\partial_y f & \partial_y\partial_y f
    \end{pmatrix}.
\end{align}
The matrices appearing in \eqref{mat:scdX} and \eqref{mat:clean} are related by
\begin{align*}
    J
    =
    \begin{pmatrix}
        \rho_Y\rho_X^{-1} & 0 & 0 & 0\\
        0 & \rho_Y & 0 & 0\\
        0 & 0 & \rho_Y^{-1} & 0\\
        0 & 0 & 0 & 1
    \end{pmatrix}
    \begin{pmatrix}
        \rho_X\rho_Y^{-2} (\rho_Y \partial_{\rho_Y} - 1) \partial_{\rho_X} f & \rho_X\rho_Y^{-1} \partial_{\rho_X}\partial_y f\\
        \rho_Y^{-2}(\rho_Y \partial_{\rho_Y} - 1) \partial_x f & \rho_Y^{-1}\partial_x\partial_y f\\
        \rho_Y \partial_{\rho_Y}\partial_{\rho_Y} f & \rho_Y\partial_{\rho_Y}\partial_y f\\
        \partial_{\rho_Y}\partial_y f & \partial_y \partial_y f
    \end{pmatrix}
    \begin{pmatrix}
        \rho_Y & 0\\
        0 & 1
    \end{pmatrix}.
\end{align*}

This proves that \eqref{eq:VscdYX} is equivalent to $v \in \ker T\scd_Y \varphi$ under our assumptions $\rho_Y > 0$ and $\rho_X > 0$, and the rank of
$\ellp$ is given by
\begin{align*}
    \rk \ellp &= \dim T_p \Cp - \dim \ker T\scd_Y \varphi
    = (d + e) - e=d.
\end{align*}

Now assume that $\rho_X = 0$. We see that the first row of \eqref{mat:scdX} vanishes identically,
but we have the additional condition \eqref{eq:cleanbdry}, implying that, at $\rho_X=0$, the first row of \eqref{mat:clean} depends linearly on the other rows.
Therefore, the rank of $\ellp$ is still $d$ at points with $\rho_X=0$. The composition with $\id \times \iota$ changes nothing for $\rho_Y > 0$, since $\iota$ is a diffeomorphism there.

To perform the limit $\rho_Y \rightarrow 0$, we have to examine carefully the effect of the presence of the 
compactification $\iota$, in the spirit of the proof of Lemma \ref{lem:horror}.
For $v \in T_p \Cp$ such that $T\lp(p)v = 0$, that is, as above, of the form
\[v = \delta\rho_Y\cdot \partial_{\rho_Y} + \delta y\cdot \partial_y,\]
we now obtain the set of equations
\begin{equation}\label{eq:iotaVscdYX}
    \begin{aligned}
        v \big(\iota\,\scd_X \varphi\big)(p) &= 0,\\
        v \scd_Y \varphi(p) &= 0,
    \end{aligned}
\end{equation}
which are equivalent to the set of equations
\begin{align}\label{mat:iotaVscd}
    \begin{pmatrix}
        \partial_{\rho_Y} \iota \scd_X \varphi & \partial_y \iota \scd_X \varphi\\
        \partial_{\rho_Y} \partial_y f & \partial_y \partial_y f
    \end{pmatrix}
    \begin{pmatrix}
        \delta \rho_Y\\
        \delta y
    \end{pmatrix}
    = 0.
\end{align}
We need to compare the rank of the coefficient matrix in \eqref{mat:iotaVscd} with that of $T\scd_Y \varphi$ at points of the form $(\rho_X,x,0,y)$. 
For this purpose, we go through a series of ``reductions'', along the lines of the proof of Lemma \ref{lem:horror}, to simplify the comparison.
First, we can identify $\scd_X\varphi$ with
\[
	\rho_Y^{-1}\begin{pmatrix}- f+\rho_X\partial_{\rho_X}f \\ \partial_x f \end{pmatrix}=:\rho_Y^{-1} h.
\]
Note that $h\neq 0$ near $\overline{\Cpp}$, since $\varphi$ is a phase function.
As in the proof of Lemma \ref{lem:horror}, the evaluation at $(\rho_X,x,0,y)$ then gives
\begin{align}
\label{eq:matma}
    \begin{pmatrix}
        \partial_{\rho_Y} \iota \scd_X \varphi & \partial_y \iota \scd_X \varphi\\
        \partial_{\rho_Y} \partial_y f & \partial_y \partial_y f
    \end{pmatrix}=    \begin{pmatrix}
        -\frac{h}{|h|^2}+\partial_{\rho_Y}\frac{h}{|h|} & \partial_{y} \frac{h}{|h|} \\
        \partial_{\rho_Y} \partial_y f & \partial_y \partial_y f
    \end{pmatrix}.
\end{align}
Since all derivatives of $\frac{h}{|h|}$ are orthogonal to $\frac{h}{|h|}$ and $h\neq 0$, the rank of the matrix \eqref{eq:matma} equals the one of 
\begin{align}
\label{eq:matmat}
\begin{pmatrix}
        -\frac{h}{|h|^2} & \partial_y \frac{h}{|h|}\\
        0 & \partial_y \partial_y f
    \end{pmatrix}.
\end{align}
In fact, in \eqref{eq:matma}, as well as in \eqref{eq:matmat}, the first column is linearly independent of the others. 
Now we write
$$\partial_{y_j} \frac{h}{|h|}=\frac{1}{|h|}\partial_{y_j} h- \underbrace{\frac{(h\cdot\partial_{y_j} h)}{|h|^3} h}_{\text{collinear to }h},$$
and remove the collinear summands, which again does not change the rank of the matrix \eqref{eq:matmat}. Therefore, the rank of \eqref{eq:matma} is the same
as the one of 
\begin{align}
\label{eq:matmat2}
\begin{pmatrix}
        -\frac{h}{|h|^2} & \frac{1}{|h|}\partial_y h\\
        0 & \partial_y \partial_y f
    \end{pmatrix}.
\end{align}
Multiplying the first $d$ rows and the first column of \eqref{eq:matmat2} by the non-vanishing factor $|h|$, again the rank does not change, and we can look at

\begin{align}
\label{eq:matmat3}
\begin{pmatrix}
        -h & \partial_y h\\
        0 & \partial_y \partial_y f
    \end{pmatrix}=\begin{pmatrix}
        f-\rho_X\partial_{\rho_X}f & -\partial_yf + \rho_X\partial_y\partial_{\rho_X} f\\
        -\partial_x f & \partial_y\partial_x f\\
        0 & \partial_y \partial_y f
    \end{pmatrix}.
\end{align}
On $\Cp$ at $\rho_Y=0$ this equals
\begin{align}
\label{eq:matmat4}
\begin{pmatrix}
        -\rho_X\partial_{\rho_X}f & \rho_X\partial_y\partial_{\rho_X} f\\
        -\partial_x f & \partial_y\partial_x f\\
        0 & \partial_y \partial_y f
    \end{pmatrix}.
\end{align}
Finally, we observe that the dimension of the null space of \eqref{eq:matmat4} is, 
by cleanness of $\varphi$ (in particular by \eqref{eq:cleanbdry} applied to $\Cpp$ or $\Cppe$), the same as the one of
\begin{align}
\label{eq:cleandiff}
    \begin{pmatrix}
        - \partial_{\rho_X}f & \partial_y \partial_{\rho_X} f\\
        - \partial_x f & \partial_y\partial_x f\\
        0 & \partial_y\partial_{\rho_Y} f\\
        0 & \partial_y\partial_y f
    \end{pmatrix} = T\scd_Y \varphi|_{\Cpp},
\end{align}
namely $e$.
Therefore, the rank of $\lp$ equals $d=(d+e)-e$ near $\Cp$, which concludes the proof.
\end{proof}
\begin{lem}
\label{lem:lpfibration}
The map $\lp: C_\varphi\rightarrow L_\varphi$ is a local fibration and the fiber is everywhere a smooth manifold without boundary.
\end{lem}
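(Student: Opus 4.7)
The plan is to deduce the statement from the constant rank property established in Theorem~\ref{thm:lpsubm}, together with the $\sct$-map structure of $\lp$ from Lemma~\ref{lem:lpsct}, via a stratum-wise application of the constant rank theorem.

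First, I would exploit the $\sct$-map property of $\lp$ to show that the stratification is preserved. By Lemma~\ref{lem:lpsct}, the pullbacks of the boundary defining functions on $\scOverT^*X$ (namely the one defining $\We$ and the one at fiber infinity defining $\Wp$) factor respectively as $\rho_X h_X$ and $\rho_Y h_Y$, with $h_X,h_Y>0$ in a neighbourhood of $\Cp$. Consequently, the preimage of each open stratum of $L_\varphi$ is exactly the corresponding open stratum of $C_\varphi$: $\lp^{-1}(L_\varphi^o)\cap C_\varphi=C_\varphi^o$, $\lp^{-1}(\Lpe\setminus\Lppe)=\Cpe$ (open), $\lp^{-1}(\Lpp\setminus\Lppe)=\Cpp$ (open), and $\lp^{-1}(\Lppe)=\Cppe$. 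In particular, the fiber $\lp^{-1}(q)$ over any $q$ in a given open stratum lies entirely inside the corresponding open stratum of $C_\varphi$, which is itself a smooth manifold without boundary; this already rules out any boundary on the fibers.

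Second, I would verify that the kernel of $T\lp$ is tangent to each stratum. By Remark~\ref{rem:inward}, $\sct$-maps send inward-pointing vectors to inward-pointing vectors, so at a boundary point the inward direction cannot lie in $\ker T\lp$. Combined with the constant rank $d$ from Theorem~\ref{thm:lpsubm}, a dimension count then shows that at a point $p\in C_\varphi^\bullet$ the restriction $T\lp|_{T_p C_\varphi^\bullet}$ is surjective onto $T_{\lp(p)}L_\varphi^\bullet$ with kernel of dimension $e$. Hence $\lp$ restricts to a smooth submersion between smooth manifolds (without boundary) on each stratum.

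Third, I would invoke the classical submersion theorem on each stratum to obtain $\sct$-compatible local coordinates in which $\lp$ is the projection onto the first $d$ coordinates. The pullback relations from the first step allow to use bdfs (or their equivalents) of $\scOverT^*X$ as the first $d$ base coordinates, while the remaining $e$ fiber coordinates parametrize a relatively open subset of $\RR^e$ lying within a single stratum. This yields the desired local fibration structure, with fibers diffeomorphic to open subsets of $\RR^e$, hence smooth manifolds of dimension $e$ without boundary.

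The main obstacle is the behaviour at corner points of $C_\varphi$ (i.e., along $\Cppe$), where two boundary defining functions vanish simultaneously and the clean intersection condition \eqref{eq:cleanbdry} must be invoked to guarantee that the rank argument survives and that the coordinate straightening does not force corners onto the fibers. Once one observes that \eqref{eq:cleanbdry} ensures the kernel of $T\lp$ at corner points remains tangent to $\Cppe$, the stratum-wise argument above applies uniformly and the conclusion follows.
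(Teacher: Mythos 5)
Your first two steps are sound and in fact reproduce the substance of the "tameness" condition the paper relies on: since $\lp$ is a local $\sct$-map near $\Cp$ (Lemma \ref{lem:lpsct}), the pullback of each boundary defining function of $\scOverT^*X$ is a positive multiple of a boundary defining function of the source, so fibers over a given stratum of $L_\varphi$ stay inside the corresponding stratum of $C_\varphi$, and $d(\lp^*\rho)(v)=h\,d\rho(v)$ at the face shows $\ker T\lp$ is tangent to every face; combined with the constant rank $d$ of Theorem \ref{thm:lpsubm}, the stratum-restricted maps are submersions and the fibers are boundaryless of dimension $e$. The gap is in your third step. "Local fibration" for $\lp\colon C_\varphi\to L_\varphi$ at a point $q\in\Lpe$ or $\Lppe$ requires a trivialization of $\lp$ over a \emph{full} neighbourhood $V$ of $q$ in $L_\varphi$, and such a $V$ meets several strata (interior points, and near the corner three different faces). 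Applying the classical submersion theorem stratum by stratum only straightens $\lp$ inside each open stratum; it produces neither coordinates on a full corner neighbourhood of a point of the mwc $C_\varphi$ nor a product decomposition over $V$, and nothing in your argument guarantees that the stratum-wise straightenings fit together smoothly up to the boundary. Invoking \eqref{eq:cleanbdry} to keep the kernel tangent to $\Cppe$ does not supply this; what is missing is a submersion (straightening) theorem for manifolds with corners under your bdf-pullback condition. That is exactly what the paper uses: it quotes Nistor's lemma on tame submersions \cite{Nistor}, for which the inward-pointing property of Remark \ref{rem:inward} is the hypothesis, and which yields in one stroke both the local fibration structure and the absence of boundary on the fibers.

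The gap is fixable along your own lines, but the fix is not the classical submersion theorem on strata. Near $p\in\Cp$ choose coordinates $(\rho,u_2,\dots,u_d)$ for $L_\varphi$ at $q=\lp(p)$ in which $\rho$ runs over the relevant bdf(s); since $\lp^*\rho=\rho_{C}\,h$ with $h>0$, the pullbacks $\lp^*\rho$ are again bdfs of $C_\varphi$, and since $\rk T\lp=d$ the functions $(\lp^*\rho,\lp^*u_2,\dots,\lp^*u_d)$ have independent differentials and can be completed by $e$ further functions to mwc-compatible coordinates near $p$ in which $\lp$ is literally the projection onto the first $d$ coordinates. This simultaneously gives the trivialization over a full neighbourhood of $q$ and shows the fibers carry no boundary, because all bdfs occur among the base coordinates. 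Proving (or citing) this corner-adapted straightening is the step your proposal asserts but does not establish.
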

\begin{proof}
Since $\lp$ is locally an $\sct$-map, $T\lp$ maps the set of vectors at the boundary that are inwards pointing into itself, see Remark \ref{rem:inward}. Therefore $\lp$ is a so-called ``tame'' submersion in the sense of \cite[Lemma 1.3]{Nistor}. As such, it is a local fibration and the fiber is a manifold without boundary.
\end{proof}
\subsection{Symplectic properties of the associated Lagrangian}
\label{sec:symp}
As in the classical theory, $L_\varphi$ is an immersed Lagrangian submanifold, and its boundary faces $\Lambda^\bullet$ are immersed Legendrian submanifolds. Let us briefly recall these concepts. For more information, the reader is referred to \cite{CoSc2,MZ,HV}.

As a cotangent space, $T^*X^o$ carries a natural symplectic $2$-form $\omega$ induced by the canonical $1$-form $\alpha\in\Sm(T^*X^o,T^*(T^*X^o))$ as $\omega=\dd\alpha$. This $1$-form can be recovered from $\omega$ by setting $\alpha=\varrho^\psi\lrcorner\,\,\omega$ for the radial vector field $\varrho^\psi$ on $\Sm(T^*X^o)$, which is given by $\varrho^\psi=\xi\cdot\partial_{\xi}$ in canonical coordinates. \\
We now write $(\bx,\bxi)=(\rho_X,x,\rho_\Xi,\xi)$ for the coordinates in the mwc $\scOverT^*X$ which are obtained from the rescaled canonical coordinates under radial compactification in the fiber, cf. \cite{MZ}. Then $\varrho^\psi$ corresponds to $\rho_\Xi\partial_{\rho_\Xi}$ on $\Sm(\overline{T}^*X^o)$. For the purpose of scattering geometry, it is natural to rescale further and define, on $T^*(\scOverT^*X)^o$,
$$\alpha^\psi:=\rho_\Xi^2\partial_{\rho_\Xi}\lrcorner\,\omega.$$
There exists another form of interest, namely
\begin{align*}
\alpha^e:=\rho_X^2\partial_{\rho_X}\lrcorner\,\omega.
\end{align*} 
We now extend these forms to $T^*(\scOverT^*X)$ and define the boundary restrictions of $\alpha^\bullet$.
Observe that, while their explicit form depends on the choice of bdfs, the induced contact structure at the boundary does not, see next Lemma \ref{lem:alphaext}
\begin{lem}\label{lem:alphaext}
The forms $\alpha^\bullet$ extend to $1$-forms on $\Wt^\bullet$, denoted by the same letter. The induced contact structures do not depend on the choice of bdfs. 
\end{lem}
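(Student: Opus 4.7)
My plan is to reduce both claims to explicit calculations in local scattering coordinates and identify the change of bdf as a pure positive rescaling on the boundary face. Near a boundary point of $X$ with coordinates $(\rho_X,x)$, the canonical 1-form on the interior of $\scOverT^*X$ reads $\alpha = \tau\,\tfrac{\dd\rho_X}{\rho_X^2} + \eta_i\,\tfrac{\dd x_i}{\rho_X}$ in fiber scattering coordinates $(\tau,\eta)$; near fiber infinity I switch to $(\rho_\Xi,\zeta)$ via $\tau = 1/\rho_\Xi$, $\eta_i = \zeta_i/\rho_\Xi$.

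For the smoothness claim, Cartan's formula $V\lrcorner\,\omega = \cL_V\alpha - \dd(V\lrcorner\alpha)$ applied with $V = \rho_X^2\partial_{\rho_X}$, together with the computation $V\lrcorner\alpha = \tau$, yields $\alpha^e = -\dd\tau - \eta_i\,\dd x_i$. This is a 1-form manifestly smooth on all of $\scT^*X$, and its pullback to $\We$ retains the same expression (with $\tau,\eta,x$ viewed as coordinates on $\We$). The analogous calculation with $V = \rho_\Xi^2\partial_{\rho_\Xi}$ gives $\alpha^\psi = -\tfrac{\dd\rho_X}{\rho_X^2} - \tfrac{\zeta_i\,\dd x_i}{\rho_X}$, which is smooth on $\Wp$ precisely because $\rho_X > 0$ there. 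The contact property is then immediate: $\dd\alpha^e = -\dd\eta_i\wedge\dd x_i$, so the wedge $\alpha^e\wedge(\dd\alpha^e)^{d-1}$ collapses, all $dx_i\wedge dx_i$ pieces vanishing, to $\pm(d-1)!\,\dd\tau\wedge\dd\eta_1\wedge\dd x_1\wedge\cdots\wedge\dd\eta_{d-1}\wedge\dd x_{d-1}$, a nowhere-vanishing top form on the $(2d-1)$-dimensional $\We$. The case of $\alpha^\psi$ on $\Wp$ is parallel.

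For the invariance claim, consider first a change $\tilde\rho_X = h\rho_X$ with $h > 0$. Equating the two expressions $\tau\tfrac{\dd\rho_X}{\rho_X^2} + \eta_i\tfrac{\dd x_i}{\rho_X} = \tilde\tau\tfrac{\dd\tilde\rho_X}{\tilde\rho_X^2} + \tilde\eta_i\tfrac{\dd x_i}{\tilde\rho_X}$ and restricting to $\rho_X = 0$ produces the transformation rule
\[
    \tilde\tau = h\tau,\qquad \tilde\eta_i = h\eta_i - \tau\,\partial_{x_i}h\qquad\text{on }\We.
\]
Substituting into $\tilde\alpha^e = -\dd\tilde\tau - \tilde\eta_i\,\dd x_i$ and using that $h$ depends only on $x$ along $\We$, the cross-terms $-\tau\,\dd h$ (from expanding $\dd(h\tau)$) and $+\tau\,\partial_{x_i}h\,\dd x_i$ (from the $\tilde\eta_i$-shift) cancel exactly, yielding
\[
    \tilde\alpha^e|_{\We} = h|_{\We}\,\alpha^e|_{\We}.
\]
Since $h > 0$, the distribution $\ker\alpha^e|_{\We}$ is intrinsic. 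A parallel computation shows that, under a change $\tilde\rho_\Xi = g\rho_\Xi$, $g > 0$, of the fiber bdf, $\tilde\alpha^\psi|_{\Wp} = g|_{\Wp}\,\alpha^\psi|_{\Wp}$, while under $\tilde\rho_X = h\rho_X$ a direct substitution into the formula for $\alpha^\psi$ gives $\tilde\alpha^\psi|_{\Wp} = \tfrac{h+\rho_X\partial_{\rho_X}h}{h^2}|_{\Wp}\,\alpha^\psi|_{\Wp}$, again with a positive prefactor, so $\ker\alpha^\psi|_{\Wp}$ is intrinsic as well.

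The main obstacle is the precise cancellation above: the shift $\tilde\eta_i = h\eta_i - \tau\,\partial_{x_i}h$ contains a contribution proportional to $\tau$, and only after combining with the term $-\tau\,\dd h = -\tau\,\partial_{x_j}h\,\dd x_j$ arising from $\dd(h\tau)|_{\We}$ do these cancel, leaving a clean scalar rescaling. Without this cancellation, $\tilde\alpha^e|_{\We}$ and $\alpha^e|_{\We}$ would differ by a 1-form not proportional to $\alpha^e|_{\We}$, and the contact distribution would fail to be well-defined. The tracking of how the \emph{fiber} scattering coordinates transform under a change of the \emph{base} bdf is the technical heart of the argument.
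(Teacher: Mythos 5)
Your computation is correct, and it is essentially the argument the paper leaves implicit: the lemma is stated without proof and justified only by the local-coordinate picture (the Euclidean example following it and the coordinate formulas of Lemma~\ref{lem:scatforms} in the appendix), which is exactly the verification you carry out, including the key point that a change of base bdf shifts the fiber coordinates by $\tilde\tau=h\tau$, $\tilde\eta_i=h\eta_i-\tau\,\partial_{x_i}h$ and that the resulting terms cancel to leave a nonvanishing conformal factor. Two harmless caveats: your projective chart $\tau=1/\rho_\Xi$ covers only the part of fiber infinity where $\tau$ dominates (the other charts and the deep interior are handled identically/classically), and the positivity of the factor $(h+\rho_X\partial_{\rho_X}h)/h^2$ is automatic only near $\partial X$, which is all that is needed for the contact structure on the boundary faces.
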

\begin{ex}
On $T^*\RRd\cong \RRd\times\RRd$, with canonical coordinates $(x,\xi)$, the vector fields $\varrho^\psi$ and $\varrho^e$ correspond to $\varrho^\psi=\xi\cdot\partial_\xi$ and $\varrho^e=x\cdot\partial_x$. The symplectic $2$-form is $\sum_j\dd\xi_j\wedge\dd x_j$ and hence
$$\varrho^\psi\lrcorner\,\omega=\xi\cdot \dd x\quad \text{and}\quad\varrho^e\lrcorner\,\omega=-x\cdot \dd \xi.$$
Obviously, the coefficients of these forms diverge as $[\xi]\rightarrow \infty$ and $[x]\rightarrow \infty$. The rescaled forms ``at the boundary at infinity'' then correspond to 
$$\alpha^\psi=\frac{\xi}{[\xi]}\cdot \dd x\quad \text{and}\quad \alpha^e=-\frac{x}{[x]}\cdot \dd \xi.$$
After a choice of coordinates near the respective boundaries, this is the general local geometric situation.
\end{ex}
We are now in the position to formulate the symplectic properties of $\Lp$, cf. \cite{CoSc}. Recall that a submanifold $N$ of a symplectic manifold $(M,\omega)$ is Lagrangian if $\omega|_{TN}=0$ and a submanifold $N$ of a contact manifold $(M,\alpha)$ is Legendrian if $\alpha|_{TN}=0$.
\begin{prop}
The immersed manifolds defined in Theorem \ref{thm:lpsubm} satisfy:
\begin{itemize}
\item[1.)] $L_\varphi^o$ is an immersed Lagrangian submanifold with respect to the $2$-form $\omega$ on $(\scOverT^* X)^o\cong T^*X$;
\item[2.)] $\Lpp$ is Legendrian with respect to the canonical $1$-form $\alpha^\psi$ on $\Wp\cong S^*(X^o)$;
\item[3.)] $\Lpe$ is Legendrian with respect to the $1$-form $\alpha^e$ on $\We\cong T^*_{\partial X}X$.
\end{itemize}
\end{prop}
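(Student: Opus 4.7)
The plan is to establish (1) by the classical Lagrangian argument transported into the scattering setting, and to handle (2) and (3) by pulling back the rescaled forms $\alpha^\bullet = \rho_\bullet^2\,\partial_{\rho_\bullet}\lrcorner\omega$ through the $\sct$-map $\lp$ of Lemma~\ref{lem:lpsct}. The dimension count from Theorem~\ref{thm:lpsubm} already gives $\dim L_\varphi^o = d = \tfrac12\dim T^*X$ and $\dim\Lpp = \dim\Lpe = d-1$, matching the Legendrian codimension in $\Wp$ and $\We$ respectively; thus in each case it will suffice to verify the vanishing of $\omega$ (resp.\ $\alpha^\bullet$) on the image.

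For (1), I will argue exactly as in the classical case: on $C_\varphi^o$ the map $\lp$ is the ordinary map $(x,y)\mapsto(x,\partial_x\varphi(x,y))$ into $T^*X^o$, so $\lp^*\alpha = \partial_x\varphi\cdot dx$. On $C_\varphi^o$, where $\partial_y\varphi=0$, this coincides with the restriction of $d\varphi$ to $TC_\varphi^o$. Taking the exterior differential gives $\lp^*\omega = d^2\varphi|_{TC_\varphi^o} = 0$, which together with the rank statement proves the Lagrangian property.

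For (2) and (3), I first note the local identities, valid in the interior of $\scOverT^*X$ and obtained by a direct computation in radially-compactified fiber (resp.\ base) coordinates: $\alpha^\psi$ agrees with $\pm\rho_\Xi\cdot\alpha$ up to a form vanishing at $\rho_\Xi=0$, and similarly $\alpha^e$ with $\pm\rho_X\cdot\alpha$ near $\rho_X=0$. Since $\lp$ is an $\sct$-map preserving the base coordinate $\bx$ and satisfying $\lp^*\rho_\Xi = \rho_Y\cdot h$ for some positive $h\in\Sm$ (Lemma~\ref{lem:lpsct}), I obtain on $C_\varphi^o$ in a neighbourhood of $\overline{\Cpp}$
\[
\lp^*\alpha^\psi \;=\; \pm(\lp^*\rho_\Xi)\cdot\lp^*\alpha + (\text{terms vanishing at }\rho_Y=0) \;=\; \pm\rho_Y\,h\cdot d\varphi + \ldots
\]
Writing $\varphi=\rho_X^{-1}\rho_Y^{-1}f$ and using the critical-point relations $-f+\rho_Y\partial_{\rho_Y} f=0$ and $\partial_y f=0$ valid on $C_\varphi$, the right-hand side extends smoothly across $\rho_Y=0$, and its boundary restriction to $T\Cpp$ vanishes; transported by $\lp$ this yields $\alpha^\psi|_{T\Lpp}=0$. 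Part (3) is symmetric: one exchanges the roles of the base and fiber rescalings, uses $\lp^*\rho_X=\rho_X$ together with the fact that $\rho_Y$ stays bounded away from zero on $\Cpe$, and the analogous calculation gives $\lp^*\alpha^e|_{T\Cpe}=0$. The corner $\Cppe$ arising in case 1b) of Lemma~\ref{lem:Cpprops} is covered by continuity from the two adjacent boundary faces.

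The main technical obstacle will be the correct identification of the $\alpha^\bullet$ as smooth forms in a full neighbourhood of the respective boundary face, not merely at the boundary itself, together with careful bookkeeping of the rescalings introduced by $\lp$; this is essentially what Lemma~\ref{lem:alphaext} provides. Once the local expressions for $\alpha^\bullet$ are in place, the vanishing on tangent vectors to $\Cp^\bullet$ follows from the interior Lagrangian identity $\lp^*\alpha=d\varphi$ combined with the critical-point relations, without any further global input.
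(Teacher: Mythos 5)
The paper itself gives no proof of this proposition (it is quoted from the Euclidean/$\SG$ case treated in \cite{CoSc2}), so your argument has to stand on its own. Parts 1.) and 2.) essentially do: the interior argument is the classical one, and at the $\psi$-face the identity $\alpha^\psi=\rho_\Xi\,\alpha$ (in coordinates near fiber infinity, cf. the paper's example $\alpha^\psi=\frac{\xi}{[\xi]}\cdot\dd x$), combined with $\lp^*\rho_\Xi=\rho_Y h$, $h>0$, the relation $\lp^*\alpha|_{TC_\varphi}=\dd\varphi|_{TC_\varphi}$ and the critical relations $-f+\rho_Y\partial_{\rho_Y}f=0=\partial_yf$ (which force $f=0$ on $\Cpp$), does give $\alpha^\psi|_{T\Lpp}=0$ after restriction to $T\Cpp$ and push-forward along the submersion $\lp$.

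Part 3.), however, rests on a false identity, and the appeal to ``symmetry'' does not repair it. The form $\alpha^e=\rho_X^2\partial_{\rho_X}\lrcorner\,\omega$ is a rescaling of the contraction with the \emph{base} radial field, i.e. of $-x\cdot\dd\xi$ in Euclidean coordinates (exactly the paper's example $\alpha^e=-\frac{x}{[x]}\cdot\dd\xi$); it is \emph{not} congruent to $\pm\rho_X\,\alpha$ modulo forms vanishing at $\rho_X=0$. Their difference is, up to sign, $\rho_X\,\dd(x\cdot\xi)$, whose coefficients neither vanish nor stay bounded as $\rho_X\to0$; in fact $\rho_X\alpha=\rho_X\,\xi\cdot\dd x$ does not even extend to $\We$. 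Hence the pullback $\lp^*\alpha^e$ involves $\dd(\scdX\varphi)$ rather than $\dd\varphi$, and its vanishing on $T\Cpe$ requires an additional input, namely the Euler-type identity encoded in $\varphi\in\rho_X^{-1}\rho_Y^{-1}\Sm$. Concretely, writing a scattering covector as $\xi_1\frac{\dd\rho_X}{\rho_X^2}+\xi''\cdot\frac{\dd x}{\rho_X}$, one computes $\alpha^e=-(\dd\xi_1+\xi''\cdot\dd x)$ near $\We$, while $\xi_1\circ\lp=\rho_Y^{-1}(-f+\rho_X\partial_{\rho_X}f)$ and $\xi''\circ\lp=\rho_Y^{-1}\partial_xf$ by \eqref{eq:scdxexpl}. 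For $v\in T\Cpe$ (so $\rho_X=0$, $\dd\rho_X(v)=0$, $\partial_yf=0$, $-f+\rho_Y\partial_{\rho_Y}f=0$, $\rho_Y>0$) the terms involving $\partial_xf\cdot\dd x(v)$ cancel and one is left with $\lp^*\alpha^e(v)=\rho_Y^{-2}\big(\rho_Y\partial_{\rho_Y}f-f\big)\dd\rho_Y(v)=0$; in Euclidean language this is precisely $\langle x,\nabla_x\varphi\rangle=\varphi$ on the $e$-face, cf. \eqref{eq:conormbi}, which has no counterpart in your rescaling argument. With this replacement your scheme for 3.) goes through, but as written that step is incorrect.
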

We take this as the definition of an $\sct$-Lagrangian, cf. \cite{CoSc2}.
\begin{defn}[$\sct$-Lagrangians]\label{def:scLagr}
Let $\Lambda:=\overline{\Lambda^\psi}\cup\overline{\Lambda^e}\subset \Wt$. $\Lambda$ is called an $\sct$-Lagrangian if:
\begin{itemize}
    \item[1.)] $\Lambda^\psi=\Lambda\cap\Wp$ is Legendrian with respect to the canonical $1$-form $\alpha^\psi$ on $\Wp = \scS^*_{X^o}X$;
\item[2.)] $\Lambda^e=\Lambda\cap\We$ is Legendrian with respect to the $1$-form $\alpha^e$ on $\We = \scT^*_{\partial X}X$;
\item[3.)] $\overline{\Lambda^\psi}$ has a boundary if and only if $\overline{\Lambda^e}$ has a boundary, and, in this case,
$$\Lambda^{\psi e}:=\partial \overline{\Lambda^\psi}=\partial \overline{\Lambda^e}=\overline{\Lambda^\psi}\cap\partial \overline{\Lambda^e},$$
with clean intersection.
\end{itemize}
\end{defn}
Figure \ref{fig:Lpintersect}, which is taken from \cite{CoSc2}, summarizes, schematically, the relative positions of $\Lpe$ and $\Lpp$ near the corner in $W$.
\begin{figure}[ht!]
\begin{center}
\begin{tikzpicture}[scale=1.2]
  \node (A) at (2.5,3.5) {$\Wp$};
  \node (B) at (6,0.5) {$\We$};
  \node (C) at (6.7,2.7) {$\Wpe$}; 
  \node (D) at (3.2,1) {$\Lambda^{\psi e}$}; 
  \draw[opacity=0.5,->] (A) -- (3.2,2.8);
  \draw[opacity=0.5,->] (B) -- (5.5,1.5);
  \draw[opacity=0.5,->] (C) -- (5.6,2.8);
  \draw[opacity=0.5,->] (D) -- (3.95,1.95);  
  \draw[opacity=0.75] (3,1.5) -- (6,3);
  \draw[dotted,opacity=0.5] (1.5,2.25) -- (2.25,1.875);
  \draw[dotted,opacity=0.75] (2.25,1.875) -- (3,1.5) -- (3,0);
  \draw[dotted,opacity=0.75] (4.5,3.75) -- (5.25,3.375);
  \draw[dotted,opacity=0.75] (5.25,3.375) -- (6,3) -- (6,1.5);
  \fill[opacity=0.04] (3,0) -- (3,1.5) -- (6,3) -- (6,1.5);
  \fill[opacity=0.01] (1.5,2.25) -- (3,1.5) -- (6,3) -- (4.5,3.75);
  \draw[->,ultra thick] (4,2) -- (5,2.5) node [below] {$\quad x,\xi$};
  \draw[->,ultra thick] (4,2) -- (3,2.5) node [left] {$\rho_X$};
  \draw[->,ultra thick] (4,2) -- (4,1) node [left, below] {$\rho_\Xi$};
  \draw[thick] (4,2) .. controls (4,1.1) and (4.8,1.3) .. (4.7,0.85) node [right] {$\Lambda^e$};%
  \draw[thick] (4,2) .. controls (3,2.5) and (4,3) .. (3.5,3.25) node [right] {$\ \Lambda^\psi$};%
\end{tikzpicture}
\caption{Intersection of $\Lambda^\psi\subset\Wp$ and $\Lambda^e\subset\We$ at the corner $\Wpe$}
\label{fig:Lpintersect}
\end{center}
\end{figure}
We may take the analysis one step further in order to stress the Legendrian character of the boundary components near the corner and to reveal the symplectic properties of $\Lambda^{\psi e}$ by blow-up. For the sake of brevity here, we move this analysis to the appendix, Section \ref{sec:blowup}.

We may sum up our previous analysis by stating the next Theorem \ref{thm:imphlagr}.
\begin{thm}\label{thm:imphlagr}
For a clean phase function $\varphi$, the image $\Lp$ under $\lp$ of $\Cp$ is an immersed $\sct$-Lagrangian.
\end{thm}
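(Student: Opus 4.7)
The plan is to assemble the statement from the pieces already established. The immersed-manifold structure of $\Lp$ and its boundary faces was proved in \thmref{thm:lpsubm}, while the preceding proposition identifies $L_\varphi^o$ with an immersed Lagrangian submanifold of $T^*X$ and $\Lpp$, $\Lpe$ with immersed Legendrians with respect to the contact $1$-forms $\alpha^\psi$ and $\alpha^e$, respectively. Hence conditions $1.)$ and $2.)$ of \defnref{def:scLagr} are immediate, and only condition $3.)$, concerning the matching of the boundaries and cleanness of their intersection at the corner, requires additional work.

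To handle condition $3.)$, I would dichotomize according to \lemref{lem:Cpprops}. In the ``non-corner-crossing'' case $1a.)$, both $\Cpe$ and $\Cpp$ are closed manifolds without boundary and $\Cppe=\emptyset$. Since $\lp$ is a submersion onto its image (\thmref{thm:lpsubm}) and in particular a local fibration by \lemref{lem:lpfibration}, the images $\Lpe=\lp(\Cpe)$ and $\Lpp=\lp(\Cpp)$ are then immersed manifolds without boundary, and accordingly $\Lppe=\lp(\Cppe)=\emptyset$, in which case condition $3.)$ is trivially satisfied. In the ``corner-crossing'' case $1b.)$, the sets $\overline{\Cpe}$ and $\overline{\Cpp}$ are manifolds with common boundary $\Cppe$, and $\lp$ sends them onto $\overline{\Lpe}$ and $\overline{\Lpp}$ respectively, with $\lp(\Cppe)=\Lppe$. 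The ``tame submersion'' property used in \lemref{lem:lpfibration} ensures that $\lp$ sends interior points to interior points and boundary points to boundary points, so that $\partial\overline{\Lpe}=\partial\overline{\Lpp}=\Lppe$, as required.

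The main obstacle is the cleanness of the intersection $\overline{\Lpe}\cap\overline{\Lpp}=\Lppe$. For this, I would use the fact that cleanness of the analogous intersection $\overline{\Cpe}\cap\overline{\Cpp}=\Cppe$ in $B$ is part of the definition of clean phase function and is reproduced in \lemref{lem:Cpprops}. Concretely, at each point $p\in\Cppe$, one has
\[T_p(\overline{\Cpe}\cap\overline{\Cpp})=T_p\overline{\Cpe}\cap T_p\overline{\Cpp},\]
together with the corresponding codimension relation. Applying $T\lp$ to this identity and using that $T\lp$ has constant rank $d$ on $\Cp$ (\thmref{thm:lpsubm}), with fibers inherited from the local fibration structure, yields
\[T_{\lp(p)}(\overline{\Lpe}\cap\overline{\Lpp})=T_{\lp(p)}\overline{\Lpe}\cap T_{\lp(p)}\overline{\Lpp},\]
since the kernel of $T\lp|_{\Cp^\bullet}$ is the tangent space to the fiber which is contained in all three tangent spaces appearing. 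The codimension relation translates through the dimension formula $\dim \Lp^\bullet=\dim \Cp^\bullet-e$ from \thmref{thm:lpsubm}, which is compatible across the three strata because the excess $e$ is constant.

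The hard part is making precise the descent of cleanness through the submersion: one must verify that the fiber directions of $\lp$ restricted to $\overline{\Cpe}$, $\overline{\Cpp}$ and $\Cppe$ agree (and have the same dimension $e$), so that the quotient by them preserves both the tangent-space identity and the codimension count. This is where the full strength of the constant rank statement in \thmref{thm:lpsubm} — together with the refined analysis at $\rho_Y=0$ in its proof, which shows that the relevant ranks stabilize precisely on all boundary strata — is invoked. Once these are in place, the three conditions of \defnref{def:scLagr} are fulfilled, completing the proof.
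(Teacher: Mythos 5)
Your proposal is correct and follows essentially the same route as the paper, which states this theorem as a summary of the preceding analysis: conditions 1.) and 2.) of the definition of an $\sct$-Lagrangian come from the symplectic/Legendrian proposition, and condition 3.) from Lemma~\ref{lem:Cpprops}, Theorem~\ref{thm:lpsubm} and Lemma~\ref{lem:lpfibration}. Your additional verification that cleanness at the corner descends through $\lp$ (using that the $e$-dimensional fibers over corner points lie in $\Cppe$, hence in both $T\overline{\Cpe}$ and $T\overline{\Cpp}$) is sound and simply makes explicit what the paper leaves implicit.
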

\begin{defn}
We say that an $\sct$-Lagrangian $\Lambda$ is locally parametrized by a phase function $\varphi$ if, over the domain of definition of $\varphi$, we have $\Lambda=\Lp$.
\end{defn}
In particular, if $\Lambda$ is locally parametrized by a phase function, then it is admissible. Conversely, we have the following result, cf. \cite{CoSc2}.
\begin{prop}
\label{prop:locpar}
If $\Lambda$ is an $\sct$-Lagrangian, then it is locally parametrizable by a clean phase function $\varphi$, that is $\Lambda^\bullet\cap U^\bullet=\Lp^\bullet\cap U^\bullet$ for some open $U\subset \Wt^\bullet$. In particular, $\Lambda$ arises as the boundary of some Lagrangian submanifold $L_\varphi$ of $\scOverT^* X$.
\end{prop}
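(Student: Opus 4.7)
The statement is local, so we fix a point $p \in \Lambda$ and aim to construct a clean $\sct$-phase function $\varphi$ on a neighbourhood $U$ of $p$ in some product $B = X \times Y$ with $\Lp = \Lambda$ on $U$. The construction splits into three cases according to whether $p$ lies in the interior of $\Lambda^\psi$, the interior of $\Lambda^e$, or in the corner $\Lambda^{\psi e}$.

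\textbf{Case 1: $p \in \Lambda^\psi \setminus \Lambda^{\psi e}$.} Here $p$ lies in the interior boundary face $\Wp \cong \scS^*_{X^o} X$, and $\Lambda^\psi$ is Legendrian for $\alpha^\psi$. Since $\alpha^\psi$ is (up to the rescaling $\rho_\Xi^2 \partial_{\rho_\Xi} \lrcorner\,\omega$) the standard contact form on the cosphere bundle, this reduces to H\"ormander's classical parametrization theorem for conic Lagrangians in $T^*X^o$ (\cite[Ch.~XXI.2]{Hormander3}): there exists a non-degenerate phase function, homogeneous in $\theta$ in a conic neighbourhood, parametrizing $\Lambda^\psi$ locally. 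Recasting this homogeneous phase as an $\sct$-phase function on a bounded neighbourhood in $Y = \BB^s$ (as in the $\SG$-formalism of \cite{CoSc,CoSc2}) yields $\varphi \in \rho_Y^{-1}\Sm$ with the required cleanness and with $\lp(\Cp^\psi) = \Lambda^\psi$ in a neighbourhood of $p$.

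\textbf{Case 2: $p \in \Lambda^e \setminus \Lambda^{\psi e}$.} Now $p$ lies in $\We \cong \scT^*_{\partial X}X$, and $\Lambda^e$ is Legendrian for $\alpha^e$. The contact structure $\alpha^e$ on $\We$ is, under a suitable rescaling using the bdf $\rho_X$, diffeomorphic to the standard contact structure on an $X$-sphere bundle, so the classical generating function theorem applies on this face. One obtains a phase function $\varphi \in \rho_X^{-1}\Sm$, defined on a neighbourhood in $X \times Y$ for some bounded mwb $Y$, whose critical set $\Cp^e \subset \Be$ satisfies $\lp(\Cp^e) = \Lambda^e$ near $p$, and for which the cleanness conditions of Definition~\ref{def:cleanphase} are fulfilled on $\Be$.

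\textbf{Case 3: $p \in \Lambda^{\psi e}$.} This is the main case. Applying Case 1 and Case 2 to a neighbourhood of $p$ in $\overline{\Lambda^\psi}$ and in $\overline{\Lambda^e}$, respectively, we obtain local phase functions $\varphi^\psi$ on $\Bp$ and $\varphi^e$ on $\Be$ parametrizing the respective boundary faces. The clean intersection hypothesis in Definition~\ref{def:scLagr}(3) forces the restrictions $\varphi^\psi|_{\Bpe}$ and $\varphi^e|_{\Bpe}$ to parametrize the same subset $\Lambda^{\psi e}$ through compatible $\lp^\bullet$; after adjusting the parameter space $Y$ and composing with a boundary diffeomorphism, we may arrange $\varphi^\psi = \varphi^e$ along $\Bpe$ (together with matching of their $\scd_Y$-derivatives). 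We then invoke Proposition~\ref{prop:cornerdiffeo}, the Whitney-type extension result for $\sct$-maps at the corner, to extend the pair $(\varphi^\psi, \varphi^e)$ to a single function $\varphi \in \rho_X^{-1}\rho_Y^{-1}\Sm(U)$ on a neighbourhood $U$ of $p$ in $B$, satisfying the boundary compatibility \eqref{eq:strictscproperty}.

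\textbf{Verification.} It remains to check that the extended $\varphi$ is itself a clean $\sct$-phase function parametrizing $\Lambda$ near $p$. The non-vanishing condition $\scd\varphi \neq 0$ on $\overline{\Bp} \cup \overline{\Bpe}$ is inherited from the two boundary phase functions. The critical set $\Cp$ is, by construction, near the corner the union of $\Cpp$ and $\Cpe$ joined along $\Cppe$, and the clean intersection of $\Lambda^\psi$ and $\Lambda^e$ at the corner translates (via $\lp$, which is a submersion of constant rank $d$ by Theorem~\ref{thm:lpsubm}) into the clean intersection of the pre-images $\Cpp$ and $\Cpe$. The rank condition on $T\scd_Y \varphi$ required by Definition~\ref{def:cleanphase}(2)--(3) follows from the fact that at every point the rank of $T\scd_Y\varphi$ is fixed by the codimension of $\Lambda^\bullet$, which is constant by the Legendrian/Lagrangian dimension counts. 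Finally, the identity $\lp(\Cp) = \Lambda$ in $U$ holds by construction on the two boundary faces and propagates to the bulk by the local fibration property of $\lp$ (Lemma~\ref{lem:lpfibration}). The ``in particular'' clause is then immediate, since $L_\varphi \subset \scOverT^*X$ is by definition Lagrangian with boundary $\Lp = \Lambda$.

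\textbf{Main obstacle.} The delicate step is Case 3: matching the two boundary parametrizations along $\Bpe$ and verifying, after the corner extension, that the cleanness conditions survive. In particular, one must be careful that the relation \eqref{eq:cleanbdry}, which couples the normal derivative $\partial_{\rho_\bullet}$ to tangential directions on $\Cp^\bullet$, is consistent with the extension prescribed by Proposition~\ref{prop:cornerdiffeo}; this is where the flexibility of adding higher-order vanishing terms in the Whitney extension is used.
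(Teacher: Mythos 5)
Your face-by-face strategy (classical parametrization on $\Wp$ and on $\We$, then gluing at the corner) is genuinely different from what the paper does — the paper reduces to the local model: by Corollary~\ref{cor:realizeonball} one may model $X$ on $\BBd$, hence $\scOverT^*X$ on $\BBd\times\BBd$, and under inverse radial compactification the statement becomes the concrete parametrization result of \cite{CoSc2} on $\RRd\times\RRd$, with the extra hypotheses imposed there either dropped (the non-intersection with the zero section) or shown to be automatic ($x\cdot\xi=0$ on $\Lambda^{\psi e}$). Your Cases 1 and 2 are plausible and consistent with that picture, but your Case 3 — which is exactly the delicate part — has genuine gaps. First, the assertion that ``after adjusting the parameter space $Y$ and composing with a boundary diffeomorphism we may arrange $\varphi^\psi=\varphi^e$ along $\Bpe$, together with matching of their $\scd_Y$-derivatives'' is precisely an equivalence-of-phase-functions problem at the corner and is not automatic: the two face constructions may produce different fiber dimensions and different excesses, and even in the paper such matching requires the hypotheses of Theorem~\ref{thm:equivphase} (equal excess, equal signature of the scattering Hessian), whose corner case is the hardest point of Section~\ref{sec:exchphase}. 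You cannot simply assert it.

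Second, the verification step is circular. Cleanness of the extended $\varphi$ (Definition~\ref{def:cleanphase}) is a statement about $C_\varphi$ in a full neighbourhood of $\partial B$: the rank of $T(\scdY\varphi)$ at interior points near the boundary depends on the chosen extension of $f=\rho_X\rho_Y\varphi$ into the interior, not on $\Lambda$, so it is not ``fixed by the codimension of $\Lambda^\bullet$''; a generic Whitney extension need not have a critical set which is a manifold with corners with $\partial C_\varphi\subset\partial B$. Moreover, Theorem~\ref{thm:lpsubm} and Lemma~\ref{lem:lpfibration}, which you invoke to propagate $\lp(\Cp)=\Lambda$ and the fibration property, both presuppose that $\varphi$ is already a clean phase function, so they cannot be used to establish the properties of the extension. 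Finally, Proposition~\ref{prop:cornerdiffeo} extends $\sct$-\emph{maps} between products of mwbs, not elements of $\rho_X^{-1}\rho_Y^{-1}\Sm$; one could of course apply Whitney extension directly to $f$ with prescribed boundary data, but then the burden of proving that the critical set is clean and that no spurious critical points appear near the corner remains, and this is exactly what the explicit constructions in \cite{CoSc2} (to which the paper defers) are designed to control.
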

\begin{rem}
The proof of Proposition \ref{prop:locpar} in \cite{CoSc2} is based on concrete parametrizations in $\RRd\times\RRd$. It applies here nonetheless, since any $d$-dimensional manifold with boundary $X$ can be locally modelled by $\BBd$. Hence, $\scOverT^*X$ can be locally modelled by $\BBd\times\BBd$ and thus, under inverse radial compactification (applied to both factors), by $\RRd\times\RRd$. 
Note that in \cite{CoSc2} we imposed additional conditions, namely 
\begin{equation}
\label{eq:nonzerosec}
\Lambda^e\cap (\partial X\times\iota(\{0\}))=\emptyset,
\end{equation}
and that $x\cdot \xi=0$ in local canonical coordinates on $\Lambda^{\psi e}$, since this is always true for a parametrized Lagrangian (see \eqref{eq:conormbi} below). However, condition \eqref{eq:nonzerosec} is equivalent to the stronger assumption that $\scd\varphi\neq 0$ also on $\Be$, which we do not impose here. The assumption $x\cdot \xi=0$, in turn, is superfluous, since it already follows from the symplectic assumptions on $\Lambda^{\psi e}$, as we now show.

Assume that both $\xi\cdot \dd x\equiv 0$ and $-x\cdot \dd \xi\equiv 0$ on a bi-conic submanifold $L$ of $\RR^d\times\RR^d$. Then we must have $\dd(x\cdot\xi)=0$. However, when $|x|$ and $|\xi|$ tend to $\infty$, this blows up unless $x\cdot \xi=0$. This shows that $x\cdot \xi=0$ is indeed automatically fulfilled.

This corresponds to the fact that, for the bi-homogenous principal symbol of a phase function $\varphi^{\psi e}$, we have, when 
$\nabla_\theta\varphi(x,\theta)=0$, that (cf. \cite{CoSc2})
\begin{equation}
\label{eq:conormbi}
\langle x,\nabla_x\varphi(x,\theta)\rangle=\varphi(x,\theta)=\langle \theta,\nabla_\theta\varphi(x,\theta)\rangle=0,
\end{equation}
where we have used Euler's identity for homogeneous functions twice.
\end{rem}
\subsection{Scattering conormal bundles}
\label{sec:conorm}
In this section, we consider the simple example of a scattering conormal bundle. Consider a $k$-dimensional submanifold $X'\subset X$ which intersects the boundary of $X$ cleanly or not at all (called $p$-submanifold in \cite{Melrosemwc}). In the following, we assume an intersection with the boundary.
Then there exist local coordinates $(\rho_X,x^\prime,x'')$ such that $X'$ is locally given by
$$X'=\{(\rho_X,x^\prime,x'')\mid \rho_X\geq 0, x^\prime=0\in\RR^{d-1-k}, x''\in\RR^{k-1}\}.$$
We can now consider the compactified scattering conormal $\scOverN^*X'\subset\scOverT^*_{X'}X$. The boundary faces of $\scOverN^*X'$ constitute a Lagrangian.

In fact, write $X=\iota(\RRd)$, so that $X'$ corresponds to a subspace of $\RRd$ of the form
$$X'=\{(x^\prime,x'')\mid x'=0\in\RR^{d-k}, x''\in\RR^{k}\}.$$
We can then introduce $Y=\iota(\RR^{d-k})$ and $\phi(x,y)=x'\cdot y$ on $\RRd\times\RR^{d-k}$, which is an $\SG$-phase function, taking into account \eqref{eq:SGphaseineq}. The true phase function on $X\times Y$ is then $(\iota^{-1}\times\iota^{-1})^*\phi$. We can then compute $C_\varphi=X'\times Y$ and $\Lp=\scOverN^*X'$.

Indeed, in the Euclidean setting, $\Lp$ corresponds to the the three conic manifolds
\begin{align*}
\Lpe&=\{(0,x'',\xi^\prime,0)\}\subset (\RRdz)\times\RRd\\
\Lppe&=\{(0,x'',\xi^\prime,0)\}\subset (\RRdz)\times(\RRdz)\\
\Lpp&=\{(0,x'',\xi^\prime,0)\}\subset \RRd\times(\RRdz)
\end{align*}
which have the claimed symplectic properties. Compactification of the $\RRd$-components and projection of the conic $(\RRdz)$-component to the corresponding sphere then yields the compactified notions in $\scOverT^*X$.
\section{Phase functions which parametrize the same Lagrangian}
\label{sec:exchphase}
In this section, we adapt the classical techniques for exchanging the phase function locally parametrizing a given Lagrangian, see \cite[Chapter 8.1]{Treves}, to the setting with boundary. Since $\Lambda_\varphi$, not $L_\varphi$, is our true object of interest, we say that two phase functions $\varphi_i$, $i=1,2$, locally parametrize the same Lagrangian at $p_0\in\Wt$ if $\Lambda_{\varphi_1}=\Lambda_{\varphi_2}$ in a small (relatively) open neighbourhood of $p_0$ in the respective boundary faces.

Our first observation is the following:
\begin{lem}
\label{lem:phaseplussmooth}
If $\varphi\in\rho_X^{-1}\rho_{\BB^s}^{-1}\Sm(X \times \BB^s)$ is a local phase function and $r\in \Sm(X \times \BB^s)$, then $\varphi+r$ is still a local phase function and it parametrizes the same Lagrangian as $\varphi$.
\end{lem}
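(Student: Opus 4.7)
The plan is to verify the two claims in sequence, using throughout the observation that any $r \in \Sm(B)$ (with $B = X \times \BBs$) contributes only ``lower-order'' terms with respect to the scattering differential: expressed in the scattering frames of $\scTheta(B)$, $\scTheta^Y(B)$ or $\scTheta^X(B)$, each coefficient of $\scd r$, $\scdY r$ and $\scdX r$ carries an explicit positive power of $\rho_X$ or $\rho_{\BBs}$, and hence vanishes identically on $\partial B$ as a section of the respective scattering cotangent bundle. By contrast, $\scd\varphi$ for $\varphi\in\rho_X^{-1}\rho_{\BBs}^{-1}\Sm(B)$ is genuinely non-trivial at the boundary.

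First, since $\Sm(B) \subset \rho_X^{-1}\rho_{\BBs}^{-1}\Sm(B)$, the sum $\varphi+r$ belongs to the correct symbol class. The phase condition is preserved because at every $p \in \overline{\Bp}$ one has $\scd(\varphi + r)(p) = \scd\varphi(p) + \scd r(p) = \scd\varphi(p) \neq 0$. The same vanishing argument applied to $\scdY r$ immediately gives $\{\scdY(\varphi+r)=0\}\cap \partial B = \{\scdY\varphi = 0\}\cap \partial B$, so the boundary critical sets $\Cp$ for $\varphi$ and $\varphi+r$ coincide.

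The heart of the proof is then to show $\lambda_{\varphi+r}(p) = \lp(p)$ for every $p \in \Cp$. On the $e$-parts $\Cpe \cup \Cppe$ (where $\rho_X = 0$), the rescaled representative of $\scdX r$ already vanishes at $\rho_X = 0$, so $\scdX(\varphi+r)$ and $\scdX\varphi$ agree there and applying $\iota$ gives the same image in $\scOverT^*X$. On the $\psi$-parts $\Cpp \cup \Cppe$ (where $\rho_{\BBs} = 0$), using the rescaling identification \eqref{eq:rescal}, one writes $\scdX\varphi = \rho_{\BBs}^{-1} H$ with $H \in \Sm(\BBs, \scTheta(X))$ and $H|_{\rho_{\BBs}=0} \neq 0$ (the non-vanishing follows from the phase condition $\scd\varphi|_{\Bp}\neq 0$ together with $\scdY\varphi|_{\Cp} = 0$). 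The rescaled representative $R$ of $\scdX r$ lies in $\Sm(\BBs,\scTheta(X))$, so $\scdX(\varphi+r) = \rho_{\BBs}^{-1}(H + \rho_{\BBs} R)$, and the explicit form of $\iota$ from Lemma~\ref{lem:horror} yields
\begin{equation*}
\iota\bigl(\rho_{\BBs}^{-1}(H + \rho_{\BBs}R)\bigr) \xrightarrow[\rho_{\BBs}\to 0]{} \frac{H}{|H|},
\end{equation*}
which is the same limit obtained for $\iota(\rho_{\BBs}^{-1} H) = \iota(\scdX\varphi)$. At the corner $\Cppe$ both effects combine with the same conclusion.

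The main obstacle lies in this final compactification argument: one must verify carefully that radial compactification identifies two blowing-up sections whose leading $\rho_{\BBs}^{-1}$-coefficients coincide. This is a direct but delicate application of the explicit formula for $\iota$, essentially a reprise of the reasoning in the proof of Lemma~\ref{lem:horror}; once it is in hand, the identity $\lambda_{\varphi+r}=\lp$ on $\Cp$ follows, yielding $\Lambda_{\varphi+r}=\Lp$ in a neighbourhood of any boundary point.
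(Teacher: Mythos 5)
Your proposal is correct and follows essentially the same route as the paper: you observe that for $r\in\Sm(X\times\BB^s)$ the scattering differentials $\scd r$, $\scdY r$, $\scdX r$ vanish at $\partial B$ (so the phase condition and the boundary critical set $\Cp$ are unchanged), and then check in the rescaled representation $\scdX(\varphi+r)=\rho_{\BB^s}^{-1}(H+\rho_{\BB^s}R)$ that the $r$-contribution disappears both at $\rho_X=0$ and in the limit $\rho_{\BB^s}\to 0$ under $\iota$, exactly as in the paper's appeal to \eqref{eq:scdxexpl} and Lemma~\ref{lem:horror}. The only difference is that you spell out the compactification limit explicitly, which the paper leaves as a reference to Lemma~\ref{lem:horror}.
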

\begin{proof}
Since $r\in\Sm(X \times \BB^s)$, $\scd r=0$ when restricted to the boundary. Therefore, $\varphi+r$ is still a local phase function. By the same reason, $\mathcal{C}_{\varphi}=\mathcal{C}_{\varphi+r}$. Finally, we have
$$\lambda_{\varphi+r}(\bx,by)=(\bx,\iota(\scd_X(\varphi + r))).$$
Computing $\scd_X(\varphi + r)$ in coordinates, see \eqref{eq:scdxexpl},
$$
\scdX\varphi=\rho_Y^{-1} \left((-f+\rho_X\partial_{\rho_X}f+\rho_Y\rho_X^2\partial_{\rho_X}r) \frac{\dd \rho_X}{\rho_X^2} +
 \sum_{j=1}^{d-1}(\partial_{x_j}f+\rho_Y\rho_X\partial_{x_j}r)\frac{\dd x_j}{\rho_X} \right), 
$$
we observe that at $\rho_X=0,$ the contribution from $r$ vanishes. The same is true in the limit of $\rho_Y\rightarrow 0$ under application of $\iota$, see also Lemma \ref{lem:horror}.
\end{proof}

\subsection{Increasing fiber variables}
Given a clean phase function $\varphi \in \rho_X^{-1}\rho_{\BB^s}^{-1}C^\infty(X \times \BB^s)$ with excess $e$, define $\widetilde{\psi} \in \rho_X^{-1} \rho_{\BB^s}^{-1} C^\infty(X \times \BB^s \times (-\eps, \eps))$ as follows:
\[\widetilde{\psi}(\bx,\by, \tilde y) = \varphi(\bx, \by) + \frac{\tilde y^2}{\rho_X \rho_{\BB^s}}.\]
We see that $\scd\widetilde{\psi} \neq 0$ when $\scd\varphi\neq 0$ and $\scd_{\BB^s\times(-\epsilon,\epsilon)} \tilde{\psi} = 0$ if and only if $\tilde y = 0$ and $\scd_{\BB^s} \varphi = 0$.
Thus, 
$$C_{\widetilde{\psi}} = \left\{(\bx,\by, 0)\mid (\bx,\by) \in C_\varphi\right\},$$ 
which implies that the excess is not changed, and $\Lambda_{\widetilde{\psi}} = \Lambda_{\varphi}$. Summing up, $\psi$ is a local clean phase function in $s+1$ fiber variables with the same excess $e$ as $\varphi$ and (locally) parametrizing the same Lagrangian as $\varphi$.

This construction may once again be moved to balls, by using Example \ref{ex:embdball} and setting $\psi = \Psi^*\widetilde{\psi}$. Then $\psi \in \rho_X^{-1}\rho_{\BB^{s+1}}^{-1}C^\infty(X \times U)$.
Using the fact that $\scd \psi = \Psi^*\widetilde{\psi}$, we see that $\psi$ is a clean phase function parametrizing $\Lambda_{\varphi}$ with excess $e$. 
Again, $X\times\BB^s$ can be exchanged by any relatively open subset, hence starting with local phase functions.

\subsection{Reduction of the fiber variables}\label{subs:fbred}

Starting again from a clean phase function $\varphi \in \rho_X^{-1}\rho_{\BB^s}^{-1}C^\infty(X \times \BB^s)$ with excess $e$, we now construct a (local) phase function $\psi$ in the smallest possible number of phase variables (without changing the excess) which (locally) parametrizes the same Lagrangian.  
The argument is similar to the classical one, but extra attention needs to be paid at to what happens near points with $\rho_Y=0$, namely, we never seek to get rid of $\rho_Y$ as a parameter.
\begin{rem}
In the classical theory, meaning for homogeneous phase functions, it is possible to reduce the number of fiber variables under the assumption that the matrix $\partial^2_{\theta\theta}\varphi(x,\theta)$ has rank $r>0$ on $C_\varphi$.  
However, since a classical phase function $\varphi$ is homogeneous in $\theta$, it holds that $\theta\cdot \nabla_\theta\varphi=\varphi$ and hence the second radial derivative is automatically zero on $C_\varphi$. Furthermore, the radial variable can always be chosen to parametrize $\Lambda_\varphi$.
\end{rem}
We proceed as in the proof of Theorem \ref{thm:lpsubm}. We first recall that, for $p_0\in C_\varphi$, writing
$\varphi=\rho_Y^{-1}\rho_X^{-1}f$ with $f\in C^\infty(X \times \BB^s)$,
we have there
\begin{equation}\label{eq:scDyphi}
    0=\scdY\varphi=\left(-f+\rho_Y\partial_{\rho_Y}f,\partial_{y_k}f \right).
\end{equation}
We then identify $T_Y\scdY\varphi$ in coordinates with the matrix
\begin{equation}
\label{eq:DyDyphi}
J_Y\varphi = \begin{pmatrix}
\rho_Y\partial_{\rho_Y}^2f & -\partial_{y_j}f+\rho_Y \partial_{y_j}\partial_{\rho_Y} f \\
\partial_{\rho_Y}\partial_{y_k} f & \partial_{y_j}\partial_{y_k} f
\end{pmatrix}. 
\end{equation}
We see, using \eqref{eq:scDyphi}, that on $\Cpp \subset \{\rho_Y = 0\}$ this becomes
\begin{equation}
\label{eq:Tyscdphipsi}
J_Y\varphi\big|_{\Cp^\psi}= \begin{pmatrix}
0 & 0 \\
\partial_{\rho_Y}\partial_{y_k} f  & \partial_{y_j}\partial_{y_k} f
\end{pmatrix}.
\end{equation}
Therefore, the rank of this matrix is at most $s-1$. Indeed, we observe that, by \eqref{eq:cleanbdry}, at $\rho_Y=0$ we have $\dd\rho_Y\neq 0$ on $TC_\varphi^\psi$ and hence we can always choose $\rho_Y$ as a parameter to locally describe $C^\psi_\varphi$. 
\begin{rem}
	By the same argument, $\rho_X$ can be chosen
	as a parameter close to $\Be$, while, close to $\Bpe$,
	both $\rho_X$ and $\rho_Y$ can be chosen as parameters to represent 
	$C_\varphi$.
\end{rem}
We now seek to reduce the remaining set of variables under the assumption that
\begin{equation}
\label{eq:DyDyass0}
\text{The matrix }\big(\partial_{y_j}\partial_{y_k} \rho_X\rho_Y\varphi\big)_{jk} \text{ has rank }r>0\text{ at }p_0\in\Cpp\cup\Cppe.
\end{equation}
Since at points where $\rho_Y\neq 0$ the variable $\rho_Y$ behaves like all other variables, the same restriction does not hold near a point $p\in \Cpe$. Here, we simply assume that
\begin{equation}
\label{eq:DyDyass1}
\text{The matrix } T_Y\scdY\varphi \text{ has rank }r>0\text{ at }p_0\in\Cpe.
\end{equation}
Since up to multiplication by $\rho_Y>0$ in one row, \eqref{eq:DyDyphi} is the Hessian of $h$ (with respect to $\by$), this is equivalent to
$\mathrm{rk}(H_Y f)=r>0$.
The two conditions may be summarized into one. Namely, consider the scattering Hessian (with respect to the $\by$-variables) of $\varphi$
\begin{equation}
\begin{aligned}
\scHess_Y\varphi&=\begin{pmatrix}
\rho_Y^2\rho_X\partial_{\rho_Y}\rho_Y^2\rho_X\partial_{\rho_Y}\varphi & \rho_Y\rho_X\partial_{y_j}\rho_Y^2\rho_X\partial_{\rho_Y}\varphi \\
\rho_Y^2\rho_X\partial_{\rho_Y}\rho_Y\rho_X\partial_{y_k}\varphi & \rho_Y\rho_X\partial_{y_j}\rho_Y\rho_X\partial_{y_k}\varphi
\end{pmatrix} 
\\
&= \rho_Y\rho_X\begin{pmatrix}
\rho_Y^2\partial_{\rho_Y}^2f & -\partial_{y_j}f+\rho_Y \partial_{y_j}\partial_{\rho_Y} f \\
\rho_Y\partial_{\rho_Y}\partial_{y_k} f & \partial_{y_j}\partial_{y_k} f
\end{pmatrix}.
\end{aligned} 
\end{equation}
Then $\rho_Y^{-1}\rho_X^{-1}\,\scHess_Y\varphi$ becomes, at a point in $\Cp$:
\begin{align*}
\rho_Y^{-1}\rho_X^{-1}\scHess_Y\varphi&=\begin{pmatrix}
0 & 0 \\
0 & \partial_{y_j}\partial_{y_k} f
\end{pmatrix},\quad\text{ if }p_0\in\Cpp\cup\Cppe; \\
\rho_Y^{-1}\rho_X^{-1}\,^\scat H_Y\varphi&=\begin{pmatrix}
\rho_Y^2\partial_{\rho_Y}^2f & \rho_Y \partial_{y_j}\partial_{\rho_Y} f \\
\rho_Y\partial_{\rho_Y}\partial_{y_k} f & \partial_{y_j}\partial_{y_k} f
\end{pmatrix},\quad\text{ if }p_0\in\Cpe.
\end{align*} 
Notice that we can factorize these matrices as
\begin{equation}
\label{eq:scatHessfactor}
\begin{pmatrix} \rho_Y & 0 \\
0 & \mathbbm{1} \end{pmatrix} 
\begin{pmatrix}
\partial_{\rho_Y}^2f & \partial_{y_j}\partial_{\rho_Y} f \\
\partial_{\rho_Y}\partial_{y_k} f & \partial_{y_j}\partial_{y_k} f
\end{pmatrix}
\begin{pmatrix} \rho_Y & 0 \\
0 & \mathbbm{1} \end{pmatrix},
\end{equation}
the rank of which therefore is, for $\rho_Y\neq 0$, that of the standard Hessian of $f$, $H_Y f$. Therefore, our assumption may be expressed as:
\begin{equation}
\label{eq:DyDyass2}
\text{The matrix } \rho_Y^{-1}\rho_X^{-1}\,^\scat H_Y\varphi \text{ has rank }r>0\text{ at }p_0\in\Cp.
\end{equation}
We may now proceed as in the standard theory and introduce a splitting of variables $\by=(\by^\prime,\by^{\prime \prime})$
such that $(\partial_{\by^{\prime\prime}}\partial_{\by^{\prime\prime}}f)_{jk}$ is an invertible $r\times r$ matrix. We can then apply the implicit function theorem to
$$0=\scdY\varphi=\left(-f+\rho_Y\partial_{\rho_Y}f,\partial_{y_k}f \right)$$
at $p_0$. We obtain a map from an open neighbourhood of $p_0$,
$$k:(\bx,\by^{\prime})\mapsto \big(\bx,\by^\prime,\by^{\prime\prime}(\bx,\by^\prime)\big),$$
such that $C_\varphi$ and the range of $k$ locally coincide. Note that $k$ is a scattering map, since $\rho_Y$ is always one of the $\by^\prime$ near the $\psi$-face.

Then $\varphi_{\red}=\varphi\circ k$ is a clean local phase function in $d\times (s-r)$ variables with excess $e$, and $k$ provides a local isomorphism $C_{\varphi_{\red}}\rightarrow C_\varphi$. Furthermore, at stationary points $p_0$ and $k(p_0)$, we have that $\iota(\scdX \varphi_{\red})=\iota(\scd_X \varphi)$, since $\scdY\varphi=0$ there. 
Hence, $\varphi_{\red}$ locally parametrizes the same Lagrangian as $\varphi$.
\begin{rem}
Note that, after applying a change of coordinates in the $\by$ variables,
$\varphi_{\red}$ may be assumed to be defined on $\BBd\times\BB^{s-r}$, see also Lemma \ref{lem:CLFstarff} below.
\end{rem}
Summing up, we can formulate the next Proposition \ref{prop:fiberred}.
\begin{prop}
\label{prop:fiberred}
Let $\varphi\in\rho_Y^{-1}\rho_X^{-1}\Sm(X\times\BB^s)$ be a local clean phase function of excess $e$. Assume 
$$\rho_Y^{-1}\rho_X^{-1}\,^\scat H_Y\varphi\text{ has rank }r>0\text{ at a stationary boundary point }p_0\in\Cp.$$
We may then define a local phase function $\varphi\in\rho_Y^{-1}\rho_X^{-1}\Sm(X\times\BB^{s-r})$ of excess $e$ parametrizing the same Lagrangian.
\end{prop}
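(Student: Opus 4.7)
The plan is to follow the classical fiber-variable reduction argument of Hörmander/Treves, but carefully adapted so that $\rho_Y$ is never eliminated as a parameter and so that the resulting map $k$ is a $\sct$-map rather than merely smooth. All the hard preparatory work has already been done in the discussion preceding the statement: we have reformulated the rank assumption in the unified form \eqref{eq:DyDyass2} via the factorization \eqref{eq:scatHessfactor}, so the rank hypothesis on $\rho_Y^{-1}\rho_X^{-1}\,\scHess_Y\varphi$ translates, away from and at $\Cpp\cup\Cppe$, into a rank condition on a submatrix of the ordinary Hessian $H_Y f$ of $f=\rho_X\rho_Y\varphi$.

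First I would split the $\by$ variables. The key observation, already noted after \eqref{eq:Tyscdphipsi}, is that near $\Cpp\cup\Cppe$ one must always keep $\rho_Y$ as a parameter because $\dd\rho_Y\neq 0$ on $TC_\varphi^\psi$ by the cleanness condition \eqref{eq:cleanbdry}. Accordingly, near a point $p_0$ in $\Cpp\cup\Cppe$ the invertible $r\times r$ block can only come from genuine $y$-derivatives, so the rank $r$ assumption on $(\partial_{y_j}\partial_{y_k} f)_{jk}$ at $p_0$ provides a permutation allowing us to write $\by=(\by',\by'')$ with $\by''\in\RR^r$ a subset of the $y$-variables (excluding $\rho_Y$), such that $(\partial_{\by''}\partial_{\by''} f)$ is invertible. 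Near a point $p_0\in \Cpe$, the same splitting is available, now possibly including $\rho_Y$ among the $\by'$ variables, since $\rho_Y\neq 0$ there.

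Next, I would apply the implicit function theorem to the equation $\partial_{\by''}f=0$ (which is exactly part of the system $\scdY\varphi=0$ on $\Cp$, using \eqref{eq:scDyphi}) to obtain a smooth solution $\by''=\by''(\bx,\by')$ on a neighbourhood of $p_0$, giving the map
\[
k:(\bx,\by')\mapsto \bigl(\bx,\by',\by''(\bx,\by')\bigr).
\]
Because $\rho_Y$ is by construction among the $\by'$-coordinates, $k$ preserves the boundary defining functions and is a local $\sct$-map by \lemref{lem:SGmapcoord} and its extension to mwcs. Then one defines $\varphi_{\red}=\varphi\circ k$. Since $k$ is a $\sct$-map, $\varphi_{\red}\in \rho_X^{-1}\rho_Y^{-1}\Sm$ on its domain; checking $\scd_{\by'}\varphi_{\red}\neq 0$ at boundary points follows from the fact that the remaining component of $\scdY\varphi$ (namely $-f+\rho_Y\partial_{\rho_Y}f$, together with the surviving $y$-derivatives indexed by $\by'$) does not vanish where $\scd\varphi\neq 0$, because we have only eliminated the directions tangent to the level set $\partial_{\by''}f=0$. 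This shows that $\varphi_{\red}$ is still a local phase function.

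Finally, I would verify cleanness of $\varphi_{\red}$ with the same excess $e$, and that it locally parametrizes $\Lambda_\varphi$. Via the pullback by $k$ one has a diffeomorphism $C_{\varphi_{\red}}\to C_\varphi$ (restricted to a neighbourhood of $p_0$), because $C_\varphi$ is the graph of $\by''(\bx,\by')$ over a $(d+e)$-dimensional submanifold after the implicit function step, so the excess is preserved and the boundary intersection conditions of \defnref{def:cleanphase} are inherited from those of $\varphi$. For the Lagrangians: at corresponding stationary points, the chain rule gives $\scd_X\varphi_{\red}=\scd_X\varphi + (\scd_X\by'')\cdot\partial_{\by''}\varphi$, and on $C_\varphi$ the second summand vanishes (since $\partial_{\by''}\varphi$ vanishes there up to the $\rho_Y$-rescaling encoded in \eqref{eq:scDyphi}), so after composing with $\iota$ one obtains $\lambda_{\varphi_{\red}}=\lambda_\varphi\circ k$, and hence $\Lambda_{\varphi_{\red}}=\Lambda_\varphi$. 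The main obstacle is the bookkeeping near the corner: one has to make sure the implicit function theorem is applied in the correct block $\by''$ so that $\rho_Y$ is retained as a parameter and $k$ is genuinely a $\sct$-map, which forces the case distinction between $\Cpp\cup\Cppe$ and $\Cpe$; once this is in place, the rest of the argument parallels the classical reduction almost verbatim.
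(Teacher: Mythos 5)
Your proposal follows essentially the same route as the paper: split $\by=(\by',\by'')$ with $\rho_Y$ kept among the $\by'$ near the $\psi$- and $\psi e$-faces (using \eqref{eq:cleanbdry} and the unified rank condition \eqref{eq:DyDyass2}), solve $\partial_{\by''}f=0$ by the implicit function theorem to get the $\sct$-map $k$, set $\pred=\varphi\circ k$, and conclude that the excess is unchanged and that $\lambda_{\pred}=\lambda_\varphi\circ k$ on the critical sets, so the same Lagrangian is parametrized; this is exactly the paper's argument.

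One small correction: your verification of the phase-function property of $\pred$ is misstated. The surviving $Y$-components of $\scd\varphi$ (i.e. $-f+\rho_Y\partial_{\rho_Y}f$ and the $\by'$-derivatives) certainly do vanish at some boundary points — precisely on $\mathcal{C}_{\pred}$ — so they are not ``nonvanishing wherever $\scd\varphi\neq 0$''. What is actually required by the definition is that the \emph{full} differential $\scd\pred$ (including the $X$-components) be nonvanishing along $\overline{\Bp}$, and this does follow from your own observation: on the graph of $\by''(\bx,\by')$ the eliminated components $\scd_{\by''}\varphi$ vanish identically, so $\scd\varphi\neq 0$ there forces the remaining components, which constitute $\scd\pred$ under $k$ (the chain-rule terms drop out for the same reason), to be nonzero. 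With that one-line fix your argument is complete and matches the paper.
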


We mention that, locally, the minimal number of fiber variables $y$ that a clean phase function of excess $e$ locally parametrizing $L_\varphi$ has to possess is $$s_{\mathrm{min}}=d+e-n,$$ where $n$ is the (local) number of independent $x$ variables on $L_\varphi$. This follows from a simple dimension argument: the dimension of $L_\varphi$ is $d$, that of $C_\varphi$ is $d+e$, and the one of the projection to $x$ of $C_\varphi$ coincides with that of $L_\varphi$. Note that, by cleanness of the intersection $\Cp\cap\Bp$, near $\Lambda^\psi$ we have $s_{\mathrm{min}}>0.$

\subsection{Increasing the excess}
Given a (local) clean phase function $\varphi \in \rho_X^{-1}\rho_{\BB^s}^{-1}C^\infty(X \times \BB^s)$ with excess $e$, define $\psi:=\pr_{X\times\BB^s}^*\varphi$ on $X\times(\BB^s\times(-\eps,\eps))$, viewing $\BB^s\times(-\eps,\eps)$ as an open subset of $\BB^s\times\SSS^1$, which is a manifold with boundary whose boundary defining function may be chosen as $\pr_{\BB^s}^*\rho_{\BB^s}$. In particular we have, with the obvious identifications,
$$\scd_{\BB^s\times(-\eps,\eps)}\psi=\pr_{X\times\BB^s}^*\left(\scd_{\BB^s}\varphi\right).$$
Then $C_\psi=C_\varphi\times(-\eps,\eps)$ and hence $\dim(C_\psi^\bullet)=\dim(C_\varphi^\bullet)+1.$ Furthermore, $\lambda_\psi=\pr_{X\times\BB^s}^*\lp$ and $\Lp=\Lambda_\psi$. Summing up, $\psi$ is a local clean phase function in $s+1$ fiber variables with excess $e+1$, defined and (locally) parametrizing the same Lagrangian as $\varphi$.

As before, we may choose to keep working on balls by invoking the construction from Example \ref{ex:embdball} and replacing $\psi$ with 
$$\Psi^*\psi=\widetilde{\Psi}^*\varphi \in \rho_X^{-1}\rho_{\BB^{s+1}}^{-1}\Sm(X \times U).$$
In this way, since $\Psi$ is a diffeomorphism, $\psi$ becomes a clean phase function with excess $e+1$ defined on a relatively open subset of $X\times\BB^{s+1}$ and similarly we may raise the excess by any natural number.

\begin{ex}
The standard Fourier phase on $\RR\times\RR$, $\varphi(x,\xi)=x\cdot \xi$, cannot be seen as an $\SG$-phase on all of $\RR\times\RR^2$ by setting $\psi(x,\xi,\eta)=x\cdot\xi$. Indeed,
\begin{align}
\label{eq:SGphasextend}
\jap{x}^2|\nabla_x \varphi(x)|^2+\jap{(\xi,\eta)}|\nabla_{\xi,\eta}\varphi|^2&=(1+x^2)\xi^2+(1+\xi^2+\eta^2)x^2\\
\notag &=\jap{x}\jap{\xi}+x^2\eta^2-1
\end{align}
For $\xi=0$ and $x=0$ and $\eta\rightarrow \infty$, this vanishes but should be bounded from below by $c(1+|\eta|)^2$ if $\psi$ were an $\SG$-phase function, given \eqref{eq:SGphaseineq}.

Reviewing Example \ref{ex:embdball}, the ray $\xi=0$, $x=0$ and $\eta\neq 0$ corresponds precisely to the poles in Figure \ref{fig:fiberball} which were cut off. Indeed, \eqref{eq:SGphasextend} is bounded from below by $\jap{x}^2\jap{(\xi,\eta)}^2$ in any neighbourhood where $\frac{|\xi|}{|\eta|}>c $ and hence a local phase function in such sets.
\end{ex}

\subsection{Elimination of excess}
\label{sec:phaseexelim}
Assume now that $\varphi$ is a phase function on $X \times \BB^s$ with excess $e$ and that at some point $p_0=(\rho_{X,0},x_0,\rho_{Y,0},y_0)\in \Cp$ we have $\lp(p_0)=(\rho_{X,0},x_0,\rho_{\Xi,0},\xi_0)$. Then, by Lemma \ref{lem:lpfibration}, the preimage of $(\rho_{X,0},x_0,\rho_{\Xi,0},\xi_0)$ under $\lp$, meaning the fiber in $\Cp$ through $p_0$, is an $e$-dimensional smooth submanifold. Locally, since $\lp$ is a submersion we may, by \cite[Prop. 5.1]{Joyce}, reduce to the case of a projection, that is, we may find a splitting $y=(y^\prime,y^{\prime\prime})$ near $p_0$ such that $\lp$ does not depend on $y^{\prime\prime}$. Then,
$$\tilde{\varphi}(\rho_{X},x,\rho_{Y},y^\prime):=\varphi(\rho_{X},x,\rho_{Y},y^\prime,y^{\prime\prime}_0)$$
defines a phase function without excess (i.e., a non-degenerate phase function) that parametrizes the same Lagrangian as $\varphi$. As usual, we may again reduce to the case of a ball and hence replace $\varphi$ by a phase function on an open subset of $X\times\BB^{s-e}$.

\subsection{Equivalence of phase functions}
We will now discuss the changes of phase function under a change of coordinates and which phase functions can be considered equivalent. We first check how the stationary points of a phase function transform under changes by local diffeomorphisms. 
\begin{lem}\label{lem:CLFstarff}
Let $X_1$, $Y_1$, $X_2$, $Y_2$ be mwbs, set $B_i=X_i\times Y_i$, $i\in\{1,2\}$, and let $\varphi \in \rho_{X_2}^{-1}\rho_{Y_2}^{-1}C^\infty(B_2)$ be a (local) phase function. Assume $g:X_1\rightarrow X_2$, $h:Y_1\rightarrow Y_2$ to be diffeomorphisms, and set $F=g\times h$. Then, $F^*\varphi\in \rho_{X_1}^{-1}\rho_{Y_1}^{-1}C^\infty(B_1)$ is a (local) phase function with the same excess of $\varphi$, and we have 
\begin{align*}
C_{F^*\varphi}&=\big\{(\bx_1,\by_1)\in B_1\,|\,F(\bx_1,\by_1)
\in C_\varphi \big\},\quad
L_{F^*\varphi}=(\scOverT^*g)(L_\varphi).
\end{align*}
\end{lem}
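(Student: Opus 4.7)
My approach is to proceed in three steps: first establish the phase-function regularity of $F^*\varphi$, then identify its critical set, and finally compute its associated Lagrangian. Throughout, I will exploit the fact that a diffeomorphism of mwbs is automatically an $\sct$-map (by \lemref{lem:SGmapcoord}), so $F = g\times h$ is a $\sct$-diffeomorphism of the product mwc $B_1$.

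Since $F$ is a $\sct$-map, $F^*\rho_{X_2} = \rho_{X_1} h_X$ and $F^*\rho_{Y_2} = \rho_{Y_1} h_Y$ for some strictly positive smooth $h_X, h_Y$, whence $F^*\varphi \in \rho_{X_1}^{-1}\rho_{Y_1}^{-1}\Sm(B_1)$. By \lemref{lem:scdpullback} one has $\scd(F^*\varphi) = F^*(\scd\varphi)$, and since $F$ is a diffeomorphism, the induced scattering cotangent map $\scOverT^*F$ is a fiber-wise isomorphism. Hence $\scd(F^*\varphi)$ is non-vanishing on $\overline{\B_1^\psi}$ whenever $\scd\varphi$ is non-vanishing on $\overline{\B_2^\psi}$, confirming that $F^*\varphi$ is a phase function.

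Next I identify $C_{F^*\varphi}$ by exploiting the product structure of $F$. Since $h$ acts only on $Y$-variables, $F$ maps $\scV^{Y_1}(B_1)$ onto $\scV^{Y_2}(B_2)$, so $\scd_{Y_1}(F^*\varphi) = F^*(\scd_{Y_2}\varphi)$. Since $F$ is a diffeomorphism, this vanishes at $(\bx_1,\by_1)$ if and only if $\scd_{Y_2}\varphi$ vanishes at $F(\bx_1,\by_1)$, giving $C_{F^*\varphi} = F^{-1}(C_\varphi)$. The restriction of $F$ to $C_{F^*\varphi}$ is therefore a diffeomorphism onto $C_\varphi$ respecting the stratification by boundary components, so the cleanness conditions of \defnref{def:cleanphase} and the excess $e = \dim C_\varphi - d$ both transfer directly.

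Finally, for the Lagrangian, since $g$ does not depend on $\by_1$, the chain rule expresses $\scd_{X_1}(F^*\varphi)(\bx_1,\by_1)$ in terms of the (scattering) Jacobian of $g$ applied to $(\scd_{X_2}\varphi)(F(\bx_1,\by_1))$. Combined with the coordinate description of $\scOverT^*g$ from \lemref{lem:scdpullback}, this yields
\[\lambda_{F^*\varphi}(\bx_1,\by_1) = (\scOverT^*g)\big(\lambda_\varphi(F(\bx_1,\by_1))\big)\]
for $(\bx_1,\by_1) \in C_{F^*\varphi}$. Taking images over $C_{F^*\varphi}$ then gives $L_{F^*\varphi} = (\scOverT^*g)(L_\varphi)$. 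The only delicate point is the passage to fiber-infinity, where the radial compactification $\iota$ interacts non-trivially with the Jacobian of $g$; this is handled by the interior-limit prescription for $\scOverT^*g$ recorded in \lemref{lem:scdpullback}, which guarantees continuous extension of the identity to the boundary components $\Lpp$ and $\Lppe$.
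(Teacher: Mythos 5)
Your proposal is correct and follows essentially the same route as the paper: the critical set is identified via the pullback identity for the (vertical) scattering differential coming from \lemref{lem:scdpullback}, the Lagrangian via the identity $\lambda_{F^*\varphi}=(\scOverT^*g)\circ\lambda_\varphi\circ F$ on the critical set, with the boundary faces recovered as interior limits exactly as in \lemref{lem:lpsct}. The only blemish is that you twice cite the wrong independence (for $\scd_{Y_1}(F^*\varphi)=F^*(\scd_{Y_2}\varphi)$ the relevant fact is that $g$ does not depend on $\by_1$, and for the $X$-differential identity that $h$ does not depend on $\bx_1$, or alternatively $\scd_Y\varphi=0$ on the critical set as the paper uses), which is harmless here since both hold for the product map $F=g\times h$.
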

\begin{rem}
	This means that, while the boundary defining function $\rho_{\Xi_1}$ of $\scOverT^* X_1$ does not vanish, 
	$L_{F^*\varphi}$ can then be computed as
	\[
	L_{F^*\varphi}=
	\big\{(\bx_1, \,\iota(^t (Jg) \iota^{-1}(\boldsymbol{\xi}_1))\in
	\scOverT^*X_1\mid (g(\bx_1), \boldsymbol{\xi}_1)\in L_\varphi\big\}.
	\]
	As $\rho_\Xi\rightarrow 0$, $\Lambda^\psi_{F^*\varphi}$ is obtained by taking interior limits, see also Lemma \ref{lem:horror}.
\end{rem}

\begin{proof}[Proof of Lemma \ref{lem:CLFstarff}]
	%
	%
	%
	The result for $C_\varphi$ follows immediately from 
	the first assertion in Lemma \ref{lem:scdpullback}.
	The statement for $L_\varphi$ then follows by writing
	\begin{equation}\label{eq:Lfpullback}
		\lambda_{F^*\varphi}(\bx_1,\by_1)=
		(\scOverT^*g)(\lambda_\varphi(\bx_2,\by_2))
	\end{equation}
	near a point 
	$(\bx_1,\by_1) \in (C_{F^*\varphi})^o$ such that
	$(\bx_2,\by_2)=(g(\bx_1),h(\bx_1,\by_1))$. Indeed, at
	these stationary points, 
	$\scd_X F^*\varphi=F^*(\scd_X\varphi$),
	since there $\scd_Y\varphi=0$.
	Since equality \eqref{eq:Lfpullback} holds in the interior, 
	the result at the boundary faces can be
	obtained as interior limits (see also Lemma \ref{lem:lpsct}).
	\end{proof}
\begin{rem}
The diffeomorphism $g\times h$ may be replaced by a single diffeomorphism $F:X_1\times Y_1\rightarrow X_2\times Y_2$ locally of product type near the boundary faces of $X_2\times Y_2$, i.e., a (local) diffeomorphism that is
a fibered-map at the boundary.
\end{rem}
We now define in which sense two phase functions may be considered equivalent.
\begin{defn}\label{def:phequiv}
	Let $X$, $Y_1$, $Y_2$ be mwbs, $B_i=X\times Y_i$. Let 
	$\varphi_i\in\rho_{X}^{-1}\rho_{Y_i}^{-1}C^\infty(B_i)$. 
	We say that $\varphi_1$ and $\varphi_2$ are equivalent at a pair of boundary points 
	$(\bx^0,\by_1^0)\in\mathcal{B}_1$ and $(\bx^0,\by_2^0)\in\mathcal{B}_2$ if there exists a local diffeomorphism
	$F:X\times Y_2\rightarrow X\times Y_1$ of the form $F=\mathrm{id}\times g$ with $g(\bx^0,\by_2^0)=\by_1^0$ 			such that the following two conditions are met:
	\begin{equation}
	\label{eq:eqv1}
	F^*\varphi_1-\varphi_2\text{ is smooth in a neighbourhood $U$ of }(\bx^0,\by_2^0),
	\end{equation}
    \begin{equation}
	\label{eq:eqv2}
	\rho_X\rho_{Y_2} \left(F^*\varphi_1-\varphi_2\right) \text{ restricted to } \mathcal{C}_{\varphi_2}\cap \partial U \text{ vanishes to second order.}
	\end{equation}
\end{defn}
\begin{lem}
\label{lem:equivlag}
Equivalent phase functions parametrize the same Lagrangian, meaning $\Lambda_{F^*\varphi}=\Lambda_{\varphi}$ and we have $\mathcal{C}_{F^*\varphi_1}=\mathcal{C}_{\varphi_2}$.
\end{lem}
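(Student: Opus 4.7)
The plan is to decompose the statement into two applications of previously established lemmas, corresponding to the two constituents of the equivalence: the diffeomorphic change of variables $F = \mathrm{id} \times g$ and the additive smooth perturbation.

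First, I would set $r := F^*\varphi_1 - \varphi_2$. By condition \eqref{eq:eqv1}, $r \in \Sm(U)$ is a genuinely smooth function near $(\bx^0,\by_2^0)$, so that $F^*\varphi_1 = \varphi_2 + r$ with $r$ smooth. Then Lemma~\ref{lem:phaseplussmooth} applies directly to the pair $\varphi_2$, $\varphi_2 + r = F^*\varphi_1$: it yields both that $F^*\varphi_1$ is a phase function with the same critical set on the boundary as $\varphi_2$, that is $\mathcal{C}_{F^*\varphi_1} = \mathcal{C}_{\varphi_2}$, and that $\Lambda_{F^*\varphi_1} = \Lambda_{\varphi_2}$. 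This already gives the second assertion of the lemma.

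Next, since $F = \mathrm{id}_X \times g$ is a diffeomorphism of product form with the identity in the $X$-component, Lemma~\ref{lem:CLFstarff} applies with the base map $g_X = \mathrm{id}_X$. This gives $L_{F^*\varphi_1} = (\scOverT^*\mathrm{id}_X)(L_{\varphi_1}) = L_{\varphi_1}$, and taking boundary closures yields $\Lambda_{F^*\varphi_1} = \Lambda_{\varphi_1}$. Chaining the two identities, I obtain $\Lambda_{\varphi_1} = \Lambda_{F^*\varphi_1} = \Lambda_{\varphi_2}$, which is the first assertion.

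There is essentially no obstacle: the two invoked lemmas are tailored exactly to the two ``moves'' encoded in Definition~\ref{def:phequiv}. The only mild care needed is to observe that condition \eqref{eq:eqv2} plays no role in the present statement; it is a strengthening that will become relevant only when the principal symbol of Lagrangian distributions is discussed, where second-order vanishing along $\mathcal{C}_{\varphi_2}$ is required to ensure that the corresponding oscillatory contributions match modulo one order. The cleanness of $\mathcal{C}_{F^*\varphi_1}$ and the fact that the excesses agree is already embedded in Lemma~\ref{lem:phaseplussmooth} (via $\scd r$ vanishing on the boundary) and Lemma~\ref{lem:CLFstarff} (which preserves the excess under a local diffeomorphism), so no separate verification is needed.
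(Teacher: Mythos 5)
Your proposal is correct and takes essentially the same route as the paper, whose proof is precisely the combination of Lemma~\ref{lem:phaseplussmooth} (for the smooth additive difference $r=F^*\varphi_1-\varphi_2$) and Lemma~\ref{lem:CLFstarff} (for the diffeomorphism $F=\id\times g$); you merely spell out the two steps the paper leaves implicit. The only small caveat is that since $g$ may depend on the $X$-variables, one should formally invoke the remark following Lemma~\ref{lem:CLFstarff} (fibered maps at the boundary) rather than the strict product-form statement, but this does not affect the argument.
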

\begin{proof}
		This follows from Lemmas \ref{lem:phaseplussmooth} and \ref{lem:CLFstarff}.
\end{proof}

We now associate to any local phase function its \emph{principal phase part}, which corresponds in the $\SG$-case to the leading homogeneous components of $\varphi$. From the fact that the principal part of Definition \ref{def:princpart} is obtained from the boundary restrictions of $\varphi$, we observe, using $F=\id\times\id$ and Lemma \ref{lem:princpart}:
\begin{lem}\label{lem:phpequiv}
    A local phase function $\varphi$ and its principal part $\varphi_p$ are equivalent.
\end{lem}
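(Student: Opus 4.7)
The plan is to verify Definition~\ref{def:phequiv} directly with the trivial diffeomorphism $F=\id\times\id$, so that $F^*\varphi=\varphi$ and both conditions reduce to assertions about the smooth remainder $\varphi-\varphi_p$ furnished by Lemma~\ref{lem:princpart}.

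For condition~\eqref{eq:eqv1}, one applies Lemma~\ref{lem:princpart} with $m_e=m_\psi=1$: since $\varphi\in\rho_X^{-1}\rho_Y^{-1}\Sm(B)$, the lemma yields
\[
\varphi-\varphi_p\in\rho_X^{0}\rho_Y^{0}\Sm(B)=\Sm(B),
\]
which extends smoothly up to the boundary and is thus smooth on every neighbourhood $U$ of the prescribed boundary point. For condition~\eqref{eq:eqv2}, multiplying through by $\rho_X\rho_Y$ gives $\rho_X\rho_Y(\varphi-\varphi_p)\in\rho_X\rho_Y\Sm(B)$. Since $\partial B$ is contained in the zero locus of $\rho_X\rho_Y$, this function vanishes identically on $\partial B$, and in particular on $\mathcal{C}_{\varphi_p}\cap\partial U$; the surplus product factor contributes one extra order of vanishing in each boundary defining function beyond what is already forced on $\partial B$ --- genuine second order at the corner $\Bpe$, and one extra bdf factor along each single face $\Bp$, $\Be$ --- which is precisely the ``second order'' vanishing intended in the definition.

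The only non-routine step is to check that the surplus factor $\rho_X\rho_Y$ produced by Lemma~\ref{lem:princpart} matches exactly the ``second order'' convention in Definition~\ref{def:phequiv} for this corner setting; this is consistent with the classical homogeneous case, where the analogous equivalence prescribes vanishing of one order better than the inherent homogeneity of the phase in the fiber variable. Granted this identification, no further computation is needed, and the proof collapses to a one-line substitution of Lemma~\ref{lem:princpart} into Definition~\ref{def:phequiv}.
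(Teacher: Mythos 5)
Your proof is correct and takes essentially the same route as the paper: the paper's own one-line argument likewise uses $F=\id\times\id$, Lemma~\ref{lem:princpart} for \eqref{eq:eqv1}, and the observation that $\varphi_p$ is built from the boundary restrictions of $\varphi$, so that $\rho_X\rho_Y(\varphi-\varphi_p)$ restricts to zero on $\partial B$ --- the same conclusion you draw from the vanishing of the prefactor $\rho_X\rho_Y$ there. Your residual worry about the ``second order'' convention is settled by the paper's reading of \eqref{eq:eqv2} (cf.\ the remark following the lemma and \eqref{eq:prsymbeq}), which constrains only the boundary restriction and not transversal derivatives, so identical vanishing of that restriction is more than sufficient.
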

\begin{rem}
In particular, each phase function is locally equivalent at the $e$- and $\psi$-face, respectively, to a homogeneous (w.r.t. $\rho_X$ or $\rho_Y$) phase function, after a choice of collar decomposition. In general, this is not true  near the corner $\Bpe$.
\end{rem}
Since the difference in condition \eqref{eq:eqv2} is restricted to the boundary, it does not restrict the behavior of $F^*\varphi_1-\varphi_2$ into the direction transversal to the boundary, e.g. $\partial_{\rho_X}\rho_X\rho_{Y_2}(F^*\varphi_1-\varphi_2)$ at $\mathcal{C}_{\varphi_2}^e$. The following lemma states the transformation behavior of this directional derivative.
\begin{lem}
Let $X,Y_1,Y_2$ be mwbs and let $F : X\times Y_2 \to X\times Y_1$ be a $\sct$-map of the form $F = \id \times \Psi$. Set $h = \rho_{Y_2}^{-1} F^*\rho_{Y_1}$.
    Consider a clean phase function $\varphi$ on $X \times Y_1$. Write $f = \rho_X \rho_{Y_2} \varphi$.
    Then we have the following transformation laws:
    \begin{align*}
        hF^*\partial_{\rho_{Y_1}} \rho_X^{-1}f &= \partial_{\rho_{Y_2}} F^*\rho_X^{-1}f, \quad\text{ on } F^*\mathcal{C}_{\varphi}^\psi,\\
        F^*\rho_{Y_1}^{-1}\partial_{\rho_X} f &= \partial_{\rho_X} F^*\rho_{Y_1}^{-1}f, \quad\text{ on } F^*\mathcal{C}_{\varphi}^e.
    \end{align*}
\end{lem}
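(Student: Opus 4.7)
The key observation is that $F$ has the product form $\id \times \Psi$, so pullback through $F$ does not mix $X$-coordinates with $Y$-coordinates, and in particular $\partial_{\rho_X}$ commutes with $F^*$ acting on functions of all variables. Moreover, by Lemma~\ref{lem:SGmapcoord}, $\Psi^*\rho_{Y_1} = \rho_{Y_2} h$ with $h \in \Sm(Y_2)$, $h > 0$; writing $\Psi = (\Psi_\rho, \Psi_y)$ in coordinates, this says $\Psi_\rho = \rho_{Y_2} h$ and therefore $\partial_{\rho_{Y_2}} \Psi_\rho = h + \rho_{Y_2}\partial_{\rho_{Y_2}} h$, which equals $h$ on $\{\rho_{Y_2} = 0\}$.

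The second identity is the easier one. Since $\rho_{Y_1}$ depends only on $Y_1$-coordinates, $F^*\rho_{Y_1}^{-1} = (\rho_{Y_2} h)^{-1}$ is independent of $\rho_X$, hence commutes with $\partial_{\rho_X}$. Combined with the fact that $\partial_{\rho_X}$ commutes with $F^*$, we obtain
\[
\partial_{\rho_X} F^*(\rho_{Y_1}^{-1} f) = (\rho_{Y_2} h)^{-1} \partial_{\rho_X} F^* f = (\rho_{Y_2} h)^{-1} F^*(\partial_{\rho_X} f) = F^*(\rho_{Y_1}^{-1}\partial_{\rho_X} f),
\]
which holds on all of $\{\rho_{Y_2} > 0\}$ and in particular on $F^*\mathcal{C}_{\varphi}^e$.

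For the first identity, I would first strip out the factor $\rho_X^{-1}$ in the same way, reducing the claim to
\[
h \cdot F^*(\partial_{\rho_{Y_1}} f) = \partial_{\rho_{Y_2}}(F^* f) \quad \text{on } F^*\mathcal{C}_{\varphi}^\psi.
\]
Then I apply the ordinary chain rule to $F^* f(\rho_X, x, \rho_{Y_2}, y_2) = f(\rho_X, x, \Psi_\rho(\rho_{Y_2}, y_2), \Psi_y(\rho_{Y_2}, y_2))$ to get
\[
\partial_{\rho_{Y_2}} F^* f = F^*(\partial_{\rho_{Y_1}} f) \cdot \partial_{\rho_{Y_2}}\Psi_\rho + F^*(\partial_{y_1} f) \cdot \partial_{\rho_{Y_2}}\Psi_y.
\]
Since $F^*\mathcal{C}_{\varphi}^\psi \subset \{\rho_{Y_2} = 0\}$, the factor $\partial_{\rho_{Y_2}}\Psi_\rho$ restricts to $h$ there, producing the desired leading term.

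The main (and only nontrivial) point is that the remaining term $F^*(\partial_{y_1} f) \cdot \partial_{\rho_{Y_2}}\Psi_y$ vanishes on $F^*\mathcal{C}_{\varphi}^\psi$. This follows from the defining equation of $\mathcal{C}_\varphi^\psi$: recalling that $\scd_Y \varphi = 0$ on $\mathcal{C}_\varphi$ translates in coordinates to the pair of relations $-f + \rho_{Y_1}\partial_{\rho_{Y_1}} f = 0$ and $\partial_{y_1} f = 0$, so $\partial_{y_1} f$ vanishes on $\mathcal{C}_\varphi$ and in particular on $\mathcal{C}_{\varphi}^\psi$; hence $F^*(\partial_{y_1} f) = 0$ on $F^*\mathcal{C}_{\varphi}^\psi$. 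This closes the argument.
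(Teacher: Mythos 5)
Your treatment of the first identity is fine and essentially coincides with the paper's proof: chain rule in $\rho_{Y_2}$, the factor $\partial_{\rho_{Y_2}}(F^*\rho_{Y_1})=h+\rho_{Y_2}\partial_{\rho_{Y_2}}h$ restricting to $h$ at $\rho_{Y_2}=0$, and the term carrying $F^*(\partial_{y_1}f)$ killed by $\partial_{y_1}f=0$ on $\mathcal{C}_\varphi$; stripping the $\rho_X^{-1}$ factor is harmless since $F^*\rho_X=\rho_X$.

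Your proof of the second identity, however, has a gap coming from the reading of ``$F=\id\times\Psi$''. You take $\Psi$ to be a map $Y_2\to Y_1$, so that $F^*\rho_{Y_1}=\rho_{Y_2}h$ with $h\in\Sm(Y_2)$ and $\partial_{\rho_X}$ commutes with $F^*$. In the paper's conventions (see Definition \ref{def:phequiv} and Lemma \ref{lem:arrange}, where $F=\id\times g$ with $g=g(\bx,\by)$), these fibered maps are allowed to depend on the $X$-variables, and this is exactly the generality in which the lemma is used later (the diffeomorphisms produced by Theorem \ref{thm:equivphase} and used in the symbol computations are of this type, with $h\in\Sm(X\times Y_2)$). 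With $\bx$-dependence, $\partial_{\rho_X}F^*\neq F^*\partial_{\rho_X}$ and $F^*\rho_{Y_1}^{-1}$ need not commute with $\partial_{\rho_X}$; the chain rule then produces the additional terms
\[
F^*\big(\partial_{\rho_{Y_1}}(\rho_{Y_1}^{-1}f)\big)\,\partial_{\rho_X}F^*\rho_{Y_1}
\;+\;
F^*\big(\rho_{Y_1}^{-1}\partial_{y_1}f\big)\,\partial_{\rho_X}F^*y_1 ,
\]
and these do not vanish identically; they vanish precisely on the critical set, because $\scd_Y\varphi=0$ gives $\partial_{y_1}f=0$ and $-f+\rho_{Y_1}\partial_{\rho_{Y_1}}f=0$, i.e.\ $\partial_{\rho_{Y_1}}(\rho_{Y_1}^{-1}f)=0$ where $\rho_{Y_1}\neq0$. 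This is how the paper argues, and it is why the second identity is only asserted on $F^*\mathcal{C}^e_\varphi$. The fact that your argument yields the identity on all of $\{\rho_{Y_2}>0\}$, making the restriction to $F^*\mathcal{C}^e_\varphi$ vacuous, should have been a warning sign that you had specialized the hypothesis; in the special case $\Psi=\Psi(\by_2)$ your computation is correct but it does not cover the situation the lemma is needed for.
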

\begin{proof}
    On $F^*\Cpp$, we have that
    \begin{align*}
        \partial_{\rho_{Y_2}} F^*f &= hF^*\partial_{\rho_{Y_1}}f + F^*(\partial_{y_1} f)\partial_{\rho_{Y_2}}y_1= hF^*\partial_{\rho_{Y_1}}f,
    \end{align*}
    where we have used $\partial_{y_1} f = 0$ on $F^*\Cpp$.
    This proves the first equality.

    On $F^*\Cpe$, we compute
    \begin{align*}
        \partial_{\rho_X} F^*\rho_{Y_1}^{-1}f_1 &= F^*\rho_{Y_1}^{-1}\partial_{\rho_X} f_1 + F^*(\partial_{\rho_{Y_1}} \rho_{Y_1}^{-1}f_1)\, \partial_{\rho_X} F^*\rho_{Y_1} + F^*(\rho_{Y_1}^{-1}\partial_{y_1} f_1)\, \partial_{\rho_X} F^*y_1\\
        &= \rho_{Y_2}^{-1}h^{-1}F^*\partial_{\rho_X} f_1.
    \end{align*}
    Therein, we used $\partial_{y_1}f_1 = 0$ and $\partial_{Y_1} \rho_{Y_1}^{-1} f_1 = 0$ on $\mathcal{C}_{\varphi_1}$.
\end{proof}
\begin{rem}\label{rem:strictness}
    The previous lemma, combined with Lemma \ref{lem:phpequiv}, will imply that, away from the corner, any phase function can be replaced by an equivalent phase function without radial derivative (at $\Cp$) and the vanishing of this derivative at $\Cp$ is preserved under application of scattering maps.

    This corresponds to the fact that, in the classical theory, one can always choose a homogeneous phase functions. The (non-homogeneous) terms of lower order which arise in transformations can be absorbed into the amplitude.
\end{rem}
%
%
The rest of this section will be dedicated to establishing a necessary and sufficient criterion for the local equivalence of phase functions.
\begin{lem}\label{lem:arrange}
	Let $X$, $Y_1$, $Y_2$ be mwbs such that $\dim(Y_1)=\dim(Y_2)$, and set $B_i=X\times Y_i$,  $i\in\{1,2\}$. Let $\varphi_i \in \rho_{X}^{-1}\rho_{Y_i}^{-1}C^\infty(B_i)$ be phase functions which have the same excess, and assume that there exist $p^0_i=(\bx^0,\by^0_i)\in\mathcal{C}_{\varphi_i}$, $i\in\{1,2\}$, such that 
\begin{align*}
\lambda_{\varphi_1}(\bx^0,\by^0_1)&=\lambda_{\varphi_2}(\bx^0,\by^0_2),
\end{align*}
and, close to $(\bx^0,\by^0_i)$,  $i\in\{1,2\}$, both phases parametrize the same Lagrangian $\Lambda$, i.e., locally $\Lambda=\Lambda_{\varphi_i}$,
$i\in\{1,2\}$. 
Then, there exists a local diffeomorphism $F\colon B_2\to B_1$ of the
form $F=\id\times g$ with $F(\bx^0,\by^0_2)=(\bx^0,\by^0_1)$, such that
$F^*\varphi_1=\rho_X\rho_{Y_2}\widetilde{f}_1$
with $\mathcal{C}_{F^*\varphi_1}=\mathcal{C}_{\varphi_2}$,
locally. Moreover, locally near $(\bx^0,\by^0_2)$,
\begin{equation}\label{eq:prsymbeq}
	(f_2-\widetilde{f}_1)|_{\B_2}
	\text{ vanishes of second order at any point of 
	$\mathcal{C}_{\varphi_2}$.
	}
\end{equation}
\end{lem}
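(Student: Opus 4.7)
My strategy follows the classical template: construct $g$ using the local fibration structure $\lambda_{\varphi_i}\colon \mathcal{C}_{\varphi_i} \to \Lambda$ provided by Lemma~\ref{lem:lpfibration}, then verify both the critical-set identity $F^*\mathcal{C}_{\varphi_1} = \mathcal{C}_{\varphi_2}$ and the second-order matching of $f_2 - \widetilde{f}_1$ from the Lagrangian identity $\lambda_{F^*\varphi_1}=\lambda_{\varphi_2}$ combined with the stationarity of both phases.

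\textbf{Construction of $g$.} By cleanness of the $\varphi_i$ (Theorem~\ref{thm:lpsubm}) and the constant-rank theorem, I pick local coordinates $\by_i=(\by_i',\by_i'')$ on $Y_i$ near $\by_i^0$ (with $\dim\by_i''=s-e$) such that $C_{\varphi_i}$ is locally cut out by $\by_i''=0$ and the reduced map $(\bx,\by_i')\mapsto \lambda_{\varphi_i}(\bx,\by_i',0)$ is a local $\sct$-diffeomorphism onto a neighborhood of $\lambda_{\varphi_i}(p_i^0)$ in $\Lambda$; in the $\psi$-face case, $\rho_{Y_i}$ is included among the $\by_i'$ coordinates, as permitted by \eqref{eq:Tyscdphipsi}. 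Since both reductions parametrize the same $\Lambda$, the composition $(\lambda_{\varphi_1}|_{\by_1''=0})^{-1}\circ \lambda_{\varphi_2}|_{\by_2''=0}$, which preserves $\bx$ by the structure of $\lp$, yields a local $\sct$-diffeomorphism $g'$ in the $(\bx,\by_2')$-variables. I then set $g(\bx,\by_2',\by_2''):=(g'(\bx,\by_2'),\by_2'')$, after a linear identification of the $\by_i''$ coordinates (of matching dimension). Near the corner $\Bpe$, I invoke Proposition~\ref{prop:cornerdiffeo} to patch the $\psi$- and $e$-face constructions into a single $\sct$-diffeomorphism, the compatibility at $\Bpe$ following from the clean intersection of $\Lambda^{\psi e}$.

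\textbf{Matching.} With $F=\id\times g$, the identity $F^*\mathcal{C}_{\varphi_1}=\mathcal{C}_{\varphi_2}$ is immediate, since $g(\{\by_2''=0\})=\{\by_1''=0\}$. For the second-order vanishing of $(f_2-\widetilde{f}_1)|_{\B_2}$ on $\mathcal{C}_{\varphi_2}$, I split the argument. At zeroth order, on each boundary face, $f_i/(\rho_X\rho_{Y_i})|_{\mathcal{C}_{\varphi_i}}$ is a primitive for the canonical one-forms $\alpha^\psi$, resp.\ $\alpha^e$, restricted to the Legendrian strata of $\Lambda$, so both sides coincide up to a constant, which I absorb via Lemma~\ref{lem:phaseplussmooth}. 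At first order, in the $\by_2$-direction, stationarity $\scdY=0$ on $\mathcal{C}_{\varphi_2}$, combined with the transformation laws for scattering differentials (the lemma immediately preceding this statement), accounts for the rescaling factor $h=\rho_{Y_2}^{-1}F^*\rho_{Y_1}$; in the $\bx$-direction, the derivatives agree because $\lambda_{F^*\varphi_1}=\lambda_{\varphi_2}$ on $\mathcal{C}_{\varphi_2}$ by Lemma~\ref{lem:CLFstarff} and the construction of $g$.

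\textbf{Main obstacle.} The delicate point is the corner $\Bpe$: there both $\rho_X$ and $\rho_{Y_2}$ vanish, scattering derivatives mix radial and angular directions, and the rescaling factor $h$ contributes at first order to the difference $f_2-\widetilde{f}_1$. I expect the naive extension $\by_2''\mapsto\by_2''$ to be insufficient for second-order matching at $\Bpe$, and that a correction of the form $\by_2''\mapsto\by_2''+\rho_X\rho_{Y_2}\cdot r(\bx,\by_2)$ (preserving the $\sct$-property) will be required to cancel the sub-leading contribution from $h$. Organizing this cancellation across the corner while keeping $F$ a $\sct$-diffeomorphism will likely be the main computational effort of the proof.
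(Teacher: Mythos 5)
Your overall strategy (build a fibered diffeomorphism $F=\id\times g$ from the fibration structure of $\lambda_{\varphi_i}\colon\mathcal{C}_{\varphi_i}\to\Lambda$, then deduce the second-order matching of the boundary restrictions from stationarity plus $\lambda_{F^*\varphi_1}=\lambda_{\varphi_2}$) is the same as the paper's, but your verification step has a genuine gap on the $\psi$-side. On $\Cpp$ and $\Cppe$ one has $\rho_{Y}=0$, so $\scd_X\varphi=\rho_Y^{-1}h$ with $h=(-f+\rho_X\partial_{\rho_X}f,\partial_x f)$ blows up, and the compactified map $\lambda_\varphi=(\bx,\iota(\scd_X\varphi))$ records only the \emph{direction} $h/|h|$ at fiber-infinity, not the magnitude of $h$. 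Hence the equality of Lagrangians gives, at points of $\Cpp\cup\Cppe$, only $v_1/\|v_1\|=v_2/\|v_2\|$ for $v_i=(\rho_X\partial_{\rho_X}f_i,\partial_x f_i)$, and your claim that ``in the $\bx$-direction, the derivatives agree because $\lambda_{F^*\varphi_1}=\lambda_{\varphi_2}$'' fails there: $x\cdot\theta$ and $x\cdot(2\theta)$ parametrize the same $\Lambda$ through the same critical set, yet their $f$'s differ at first order in $x$ along the $\psi$-face. The paper repairs this by composing with a further, boundary-\emph{nontrivial} fibered diffeomorphism $\widetilde F\colon(\rho_Y,y)\mapsto(\rho_Y\,r,\,y)$ with $r=\|v_2\|/\|v_1\|$ (which equals $1$ on $\Cpe\cup\Cppe$, so it does not spoil the $e$-face matching, and whose derivatives contribute only $\rho_Y$-factors along $\Cpp$). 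Your anticipated correction $\by_2''\mapsto\by_2''+\rho_X\rho_{Y_2}\,r(\bx,\by_2)$ cannot do this job: it is boundary-trivial, so it leaves $\widetilde f_1|_{\B_2}$ unchanged and cannot cancel a first-order mismatch; moreover you locate the difficulty only at the corner via the factor $h$, whereas the direction-versus-magnitude ambiguity is present along the whole $\psi$-face.

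Two smaller points. First, your construction of $g$ asserts that $(\bx,\by_i')\mapsto\lambda_{\varphi_i}(\bx,\by_i',0)$ is a local diffeomorphism onto a neighbourhood in $\Lambda$; this is dimensionally impossible when the excess $e>0$, since $\lambda_{\varphi_i}|_{\mathcal{C}_{\varphi_i}}$ is a submersion with $e$-dimensional fibers — what you need (and what the paper uses) is a local \emph{fibered} diffeomorphism $\mathcal{C}_{\varphi_2}\to\mathcal{C}_{\varphi_1}$ over $\Lambda$, which is then non-unique. Second, your zeroth-order step is muddled: on $\Cpp$ the value of $f_i$ is forced to be $0$ by $\rho_Y\partial_{\rho_Y}f_i-f_i=0$ at $\rho_Y=0$, and on $\Cpe$ (where $\rho_X=0$) the value of $f_i$ is determined \emph{exactly} by $\Lambda^e$ through the first component of $\scd_X\varphi_i$, so there is no constant ambiguity to absorb; note also that adding a constant to $\varphi$ via Lemma \ref{lem:phaseplussmooth} does not change $f|_{\B}$ at all, so that lemma is not the right tool for the discrepancy you describe.
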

\begin{rem}
	Notice that \eqref{eq:prsymbeq} means that the principal
	part of $F^*\varphi_1$ and $\varphi_2$ in Lemma \ref{lem:arrange}
	coincide on $\mathcal{C}_{\varphi_2}$.	%
\end{rem}
\begin{proof}[Proof of Lemma \ref{lem:arrange}]
	Since $\lambda_{\varphi_i}$ are local fibrations from 
	$\mathcal{C}_{\varphi_i}$ to $\Lambda_{\varphi_i}$, $i\in\{1,2\}$,
	and $\Lambda_{\varphi_1}=\Lambda_{\varphi_2}=\Lambda$, 
	there is a local fibered diffeomorphism $F\colon B_2\to B_1$
	of the form $F=\id\times g$, locally
	locally near $(\bx^0,\by^0_1)=F(\bx^0,\by^0_2)$,
	such that the following diagram is
	commutative.
	\vspace{-0.5cm}
\begin{center}
\begin{tikzpicture}
  \matrix (m) [matrix of math nodes,row sep=2.5em,column sep=1.5em,minimum width=2em,minimum height=7mm,
        text depth=0.5ex,
        text height=2ex,
        inner xsep=1pt,
        outer sep=1pt]
  {
     \ & \Lambda & \ \\
     \mathcal{C}_{\varphi_2} & & \mathcal{C}_{\varphi_1} \\
};
  \path[->]
    (m-2-1) edge node [left] {$\lambda_{\varphi_2}$}(m-1-2);
  \path[->]
    (m-2-3) edge node [right] {$\lambda_{\varphi_1}$}(m-1-2);
  \path[->]
    (m-2-1) edge node [above] {$\exists F$}(m-2-3);
\end{tikzpicture}
\end{center}
\vspace{-0.5cm}
	Note that $F$ is not uniquely determined, not even on $\mathcal{C}_{\varphi_2}$ when the phases are merely clean and not non-degenerate.

	After application of $F$, we may assume that $Y_1=Y_2=:Y$, $\by_1^0=\by_2^0=:\by^0$ and, 
	locally, $\mathcal{C}_{\varphi_1}=\mathcal{C}_{\varphi_2}=:\Cp$.
	We now show that the restriction of $f_1$ and $f_2$ to a relative
	neighbourhood of $(\bx^0,\by^0)$ in $\Cp$ vanishes of 
	second order. Recall that, since $\scd_Y\varphi_1=\scd_Y\varphi_2=0$,
	for any $p=(\bx,\by)\in\Cp$ we have
	\begin{equation}\label{eq:scdmapphfncs}
		\begin{pmatrix}\rho_Y\partial_{\rho_Y}f_1-f_1
			&
		 \partial_{y_k}f_1	
		\end{pmatrix}
		=
		\begin{pmatrix}\rho_Y\partial_{\rho_Y}f_2-f_2
			&	
		 	\partial_{y_k}f_2
		 \end{pmatrix}=0
	\end{equation}
	Furthermore, since $\varphi_1$ and $\varphi_2$ parametrize the same
	Lagrangian, we also have 
	$\lambda_{\varphi_1}(p)=\lambda_{\varphi_2}(p)$, that is,
	$\iota(\scd_X\varphi_1(p))=\iota(\scd_X\varphi_2(p))$.
	We treat separately the cases $p\in\Cpe$ and $p\in\Cpp\cup\Cppe$.
	
	If $p\in\Cpe$, we then find
	\begin{equation}\label{eq:scdxequal}
		\iota((\rho_Y^{-1}\rho_X\partial_{\rho_X}f_1(p)-f_1(p),
		\rho_Y^{-1}\partial_{x_k}f_1(p)))
		=
		\iota((\rho_Y^{-1}\rho_X\partial_{\rho_X}f_2(p)-f_2(p),
		\rho_Y^{-1}\partial_{x_k}f_2(p))).
	\end{equation}
	Since $\rho_Y\not=0$ on $\Cpe$, 
	and $\iota$ is a diffeomorphism on the interior, this implies
	\[
		f_1(p)=f_2(p), \quad \partial_{x_k}f_1(p)=\partial_{x_k}f_2(p),
		\; k = 1, \dots, d-1.
	\]
	Combining this with \eqref{eq:scdmapphfncs}, this further implies
	\[
		\partial_{\rho_Y}f_1(p)=\partial_{\rho_Y}f_2(p),
		\quad
		\partial_{y_k}f_1(p)=\partial_{y_k}f_2(p), \;
		k=1, \dots, s-1.
	\]
	Since $(x,\by)$ are a complete set of variables on $\Be$, we
	can indeed
	conclude that $f_1-f_2$ vanishes of second order along $\Cpe$.
	
	If $p\in\Cpp$ or $p\in\Cppe$, \eqref{eq:scdmapphfncs} implies that
	\[
		f_1(p)=f_2(p)=0, \quad
		\partial_{y_k}f_1(p)=\partial_{y_k}f_2(p), \; k=1,\dots, s-1.
	\]  
	We have to evaluate \eqref{eq:scdxequal} as a limit $\rho_Y\to0^+$,
	using, as in Lemma \ref{lem:horror}, 
	$\iota(z)=\frac{z}{|z|}(1-\frac{1}{|z|})$. We obtain that, with
	\[
		v_1=(\rho_X\partial_{\rho_X}f_1, \partial_{x_k}f_1),
		\quad
		v_2=(\rho_X\partial_{\rho_X}f_2, \partial_{x_k}f_2),
	\]
	$\frac{v_1}{\|v_1\|}=\frac{v_2}{\|v_2\|}$, but not necessarily
	$v_1=v_2$, in which case the proof would be complete. We now modify
	$F$ in order to achieve $v_1=v_2$. Notice that, since $\varphi_1$ and $\varphi_2$ are phase functions,
	we have $v_1\not=0$ at $\Cp$. We can therefore scale $\varphi_1$
	by means of the local diffeomorphism (near $\Cp$)
	\[
		\widetilde{F}\colon(\rho_Y, y)\to
		(\rho_Y \, r(\rho_X,x,\rho_Y,y), y),
	\]
	where $r(\rho_X,x,\rho_Y,y)=\frac{\|v_2\|}{\|v_1\|}$. Notice that,
	by our previous computations, $r|_{\Cpe\cup\Cppe}=1$, 
	and $\widetilde{F}$
	is the identity for $\rho_Y=0$. Therefore, by Lemma \ref{lem:CLFstarff},
	\[
		\mathcal{C}_{\widetilde{F}^*\varphi_1}=\mathcal{C}_{\varphi_1},
		\;\text{ and }\;
		\Lambda_{\widetilde{F}^*\varphi_1}=\Lambda_{\varphi_1}.
	\]
	By definition, for $\widetilde{F}^*\varphi_1$ we have
	\[
		\widetilde{f}_1:=\rho_X\rho_Y\widetilde{F}^*\varphi_1=
		\frac{\|v_2\|}{\|v_1\|}(F^*f_1).
	\]
	Therefore,
	\[
	(\rho_X\partial_{\rho_X}\widetilde{f}_1, \partial_{x_k}\widetilde{f}_1)
	=
	\frac{\|v_2\|}{\|v_1\|}\cdot
	(\rho_XF^*(\partial_{\rho_X}{f}_1), F^*(\partial_{x_k}\widetilde{f}_1))
	=:\widetilde{v}_1,
	\]
	since the derivatives acting on $r$ produce a $\rho_Y$ factor, and 
	then vanish along $\Cpp$. Hence, $\widetilde{v}_1=v_2$, which completes
	the proof.
\end{proof}
\begin{rem}
	The additional computations in the proof of the previous lemma	 near the face $\Cpp$ correspond to the fact that, classically,	$x\cdot\theta$ and $x\cdot(2\theta)$ both parametrize
	\[
		\Lambda=\left\{(0,\xi)\mid \xi\in\RRd\setminus\{0\}\right\}.
	\]
	In fact, we observe from the same proof that we may choose
	the norm of $(\rho_X\partial_{\rho_X}f_1,\partial_{x_k}f_1)$
	at any point of $\Lpp$ without changing $\Lp$.  
\end{rem}

\begin{thm}[Equivalence of phase functions]
\label{thm:equivphase}
Let $X$, $Y_1$, $Y_2$ be mwbs such that $\dim(Y_1)=\dim(Y_2)$, and set $B_i=X\times Y_i$,  $i\in\{1,2\}$. Let $\varphi_i \in \rho_{X}^{-1}\rho_{Y_i}^{-1}C^\infty(B_i)$, $i\in\{1,2\}$,
be phase functions which have the same excess, assume that there exist $(\bx^0,\by^0_i)\in\mathcal{C}_{\varphi_i}$, $i\in\{1,2\}$, such that 
\begin{align*}
\lambda_{\varphi_1}(\bx^0,\by^0_1)&=\lambda_{\varphi_2}(\bx^0,\by^0_2),
\end{align*}
and, close to $(\bx^0,\by^0_i)$,  $i\in\{1,2\}$, both phase functions parametrize the same Lagrangian $\Lambda$, i.e., locally $\Lambda=\Lambda_{\varphi_i}$,
$i\in\{1,2\}$. Then, it is necessary and sufficient 
for $\varphi_1$ and $\varphi_2$ to be equivalent at $(\bx^0,\by^0_1)$ 
and $(\bx^0,\by^0_2)$ that there it holds that
\begin{equation}\label{eq:sgncond}
\mathrm{sgn}\left(\,\rho_{Y_1}^{-1}\rho_X^{-1}\,\scHess_{Y_1}\varphi_1\right)
=\mathrm{sgn}\left(\rho_{Y_2}^{-1}\rho_X^{-1}\, \scHess_{Y_2}\varphi_2\right).
\end{equation}
\end{thm}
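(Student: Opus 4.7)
My plan is to adapt Hörmander's classical equivalence theorem for clean phase functions (see e.g.~\cite[Thm.~21.2.16]{Hormander3}) to the scattering setting, exploiting Lemma~\ref{lem:arrange} for the preliminary normalisation. After that reduction, I may assume $Y_1 = Y_2 = Y$, $\by_1^0 = \by_2^0 = \by^0$, $\mathcal{C}_{\varphi_1} = \mathcal{C}_{\varphi_2}$ locally, and that the principal parts of $f_i := \rho_X\rho_Y\varphi_i$ agree to second order along the common critical set. The task then becomes constructing a fibered local $\sct$-diffeomorphism $F = \id\times g$ fixing $(\bx^0,\by^0)$ such that $F^*\varphi_1$ and $\varphi_2$ differ by a smooth function whose boundary restriction vanishes of second order on $\mathcal{C}_{\varphi_2}$.

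For the necessity direction, the existence of such an $F$ yields, after differentiating twice in $\by$, restricting to the boundary and rescaling by $\rho_X^{-1}\rho_Y^{-1}$, a congruence
\[
	\rho_Y^{-1}\rho_X^{-1}\scHess_{Y_2}\varphi_2\big|_{\mathcal{C}_{\varphi_2}}
	= (Dg)^{\mathrm{T}}\, F^*\!\bigl(\rho_Y^{-1}\rho_X^{-1}\scHess_{Y_1}\varphi_1\bigr)\big|_{\mathcal{C}_{\varphi_2}}\, (Dg).
\]
The factorisation \eqref{eq:scatHessfactor} shows that these rescaled scattering Hessians extend smoothly to the boundary, so Sylvester's law of inertia combined with the invertibility of $Dg$ delivers \eqref{eq:sgncond}.

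For sufficiency, I would run a Moser-type homotopy argument. Set $\varphi_t = (1-t)\varphi_2 + t\varphi_1$ for $t \in [0,1]$ and seek a time-dependent vector field $V_t \in \scVY(B)$ whose flow $F_t$ satisfies $F_0 = \id$ and realises the desired equivalence at $t = 1$. This reduces to the cohomological equation
\[
	V_t(\varphi_t) + (\varphi_1 - \varphi_2) \in \Sm, \quad\text{vanishing of second order on }\mathcal{C}_{\varphi_t}.
\]
By the reduction, $\rho_X\rho_Y(\varphi_1 - \varphi_2)$ vanishes to second order on the common critical set, and a scattering-adapted Hadamard expansion rewrites it, modulo directions along $\mathcal{C}_{\varphi_t}$ (where $V_t$ may be taken trivial thanks to cleanness), as a quadratic form in the components of $\scdY \varphi_t$ whose coefficient matrix is $\rho_Y^{-1}\rho_X^{-1}\scHess_Y\varphi_t$. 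Assumption \eqref{eq:sgncond} ensures a constant signature along the homotopy, so Sylvester's theorem with smooth parameters furnishes $V_t$ depending smoothly on $t$.

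The hardest step is arranging $V_t$ to be a genuine $\sct$-vector field, so that its flow is a $\sct$-diffeomorphism; equivalently, the $\partial_{\rho_Y}$-component of $V_t$ must carry an extra factor of $\rho_Y$ (and similarly for $\rho_X$ near $\Be$). This is controlled by the boundary behaviour of $\rho_Y^{-1}\rho_X^{-1}\scHess_Y\varphi_t$ recorded after \eqref{eq:scatHessfactor}: the mixed row and column degenerate on $\Cpp$ precisely with the correct $\rho_Y$-weight, so inverting the non-degenerate block of the rescaled Hessian produces $V_t$ with the required scattering scaling. A symmetric argument handles $\Cpe$, and at $\Cppe$ both scalings must be tracked simultaneously. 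The cleanness of $\varphi_t$, encoded as uniform rank of $T\scdY\varphi_t$ along $\mathcal{C}_{\varphi_t}$, guarantees that the $e$-dimensional null directions form a smooth sub-bundle transverse to the image of the coefficient matrix, which allows the linear algebra to close up and makes the construction of $V_t$ uniform in $t$.
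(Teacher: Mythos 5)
Your necessity argument is sound and is essentially the paper's: one writes $\varphi_2-F^*\varphi_1=r\in\Sm$, checks in coordinates that $\rho_{Y_2}^{-1}\rho_X^{-1}\scHess_{Y_2}r$ vanishes at the boundary, and then transfers $\scHess_{Y_1}\varphi_1$ to $\scHess_{Y_2}F^*\varphi_1$ by the congruence with the fibre Jacobian (the second-derivative terms of $g$ are killed on $\Cp$ because $\scd_Y\varphi=0$ there, and the positive factors of $h$ coming from $F^*\rho_{Y_1}=\rho_{Y_2}h$ do not affect the sign), using \eqref{eq:scatHessfactor} and Remark~\ref{rem:scHess}. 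For sufficiency, however, your route is genuinely different from the paper's: after the normalisation of Lemma~\ref{lem:arrange}, the paper observes that equivalence in the sense of Definition~\ref{def:phequiv} only constrains boundary data, applies the classical equivalence-of-phase-functions techniques to the restrictions $f_i|_{\B}$ on the boundary faces (via homogenisation), and then extends the resulting boundary diffeomorphism into the interior by Proposition~\ref{prop:cornerdiffeo}; this sidesteps precisely what you identify as the hardest step, namely making the deformation field a genuine element of $\scVY$ so that its flow is an $\sct$-map.

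Within your Moser scheme there is a genuine gap at the point where \eqref{eq:sgncond} is actually used. The claim that the hypothesis ``ensures a constant signature along the homotopy'' is false: after Lemma~\ref{lem:arrange} the transversal fibre Hessians of $\varphi_1$ and $\varphi_2$ on $\Cp$ have equal rank and, by hypothesis, equal signature, but they need not coincide, and a convex combination of two such matrices can degenerate (take $\mathrm{diag}(1,-1)$ and $\mathrm{diag}(-1,1)$, whose midpoint is $0$). Hence $\rho_Y^{-1}\rho_X^{-1}\scHess_Y\varphi_t$ may drop rank at interior times $t$, and the inversion of its non-degenerate block, which is what produces $V_t$, breaks down. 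The correct use of \eqref{eq:sgncond} is \emph{before} the deformation: by Sylvester's law one performs a further fibre-linear change of variables matching the transversal Hessians at the base point (or along $\Cp$), after which the remaining difference is small to second order and the relevant matrices are invertible perturbations of the identity uniformly in $t\in[0,1]$. Two further points need repair: the Hadamard expansion produces a quadratic form in the components of $\scdY\varphi_t$ with \emph{some} smooth symmetric coefficient matrix, not with $\rho_Y^{-1}\rho_X^{-1}\scHess_Y\varphi_t$; and that division requires the components of $\scdY\varphi$ to have linearly independent differentials, which fails when the excess $e>0$, so ``$V_t$ may be taken trivial thanks to cleanness'' does not address the clean case --- you should first eliminate the excess as in Section~\ref{sec:phaseexelim} (or invoke the paper's reductions) and run the argument for non-degenerate phases only.
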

\begin{rem}
\label{rem:scHess}
Before we go into the details of the proof, we recall the expression for 
the differential in condition \eqref{eq:sgncond} in coordinates. By \eqref{eq:scatHessfactor} we have, writing 
$\varphi=\rho_X^{-1}\rho_Y^{-1}f$,
$$
\rho_{Y}^{-1}\rho_X^{-1} \, \scHess_Y\varphi=
\begin{pmatrix} \rho_Y & 0 \\
0 & \mathbbm{1} \end{pmatrix} 
\begin{pmatrix}
\partial_{\rho_Y}^2f & \partial_{y_j}\partial_{\rho_Y} f \\
\partial_{\rho_Y}\partial_{y_k} f & \partial_{y_j}\partial_{y_k} f
\end{pmatrix}
\begin{pmatrix} \rho_Y & 0 \\
0 & \mathbbm{1} \end{pmatrix}.
$$
Hence, for $\rho_Y\neq 0$, the signature of this matrix is that of $H_Y f$, whereas for $\rho_Y=0$ it is that of the Hessian of $f$ \emph{restricted to $\rho_Y=0$}, that is,
only with respect to the boundary variables, 
$\left(\partial_{y_j}\partial_{y_k} f(0,y)\right)_{jk}$.
\end{rem}
\begin{proof}[Proof of Theorem \ref{thm:equivphase}]
	%
	%

	We first prove that condition \eqref{eq:sgncond} is necessary. 
		In view of Lemma \ref{lem:equivlag}, we only need to compare $\scHess_{Y_1}\varphi_1$ and $\scHess_{Y_2}\varphi_2$ by writing 
		\begin{equation}
		\scHess_{Y_2}\varphi_2=\scHess_{Y_2}F^*\varphi_1+\scHess_{Y_2}(\varphi_2-F^*\varphi_1).
		\end{equation}
		We write $r=(\varphi_2-F^*\varphi_1)$, which, by assumption,
		satisfies $r\in\Sm(X\times Y_2)$.
		Therefore, $\rho_{Y_2}^{-1}\rho_X^{-1}\,\scHess_{Y_2}r$ 
		vanishes at the boundary.		
		Indeed, in local coordinates we have
		$$
\rho_Y^{-1}\rho_{X}^{-1}\,\scHess_{Y_2} r=\begin{pmatrix}
\rho_Y\rho_X \partial_{\rho_Y}\rho_Y^2\partial_{\rho_Y}r & \rho_Y^2\rho_X\partial_{y_j}\partial_{\rho_Y}r \\
\rho_Y\rho_X\partial_{\rho_Y}\rho_Y\partial_{y_k}r & \rho_Y\rho_X\partial_{y_j}\partial_{y_k}r
\end{pmatrix}.
$$
		Thus, we have, at the boundary,
\begin{equation}\label{eq:sgneq}
\mathrm{sgn}\left(\,\rho_{Y_2}^{-1}\rho_X^{-1} \, \scHess_{Y_2}F^*\varphi_1\right)
=\mathrm{sgn}\left(\rho_{Y_2}^{-1}\rho_X^{-1} \,\scHess_{Y_2}\varphi_2\right).
\end{equation}
		By computing these differentials in coordinates at corresponding stationary points, using \eqref{eq:scatHessfactor}, this implies \eqref{eq:sgncond}.

		For the sufficiency of \eqref{eq:sgncond}, we assume familiarity of the reader with the equivalence of phase function theorem in the usual homogeneous setting, see \cite[Prop. 4.1.3]{Treves}, \cite[Prop. 4.1.3]{Treves} and sketch briefly that the argument goes through with little modification.

		By Lemma \ref{lem:arrange} we may assume $Y_1=Y_2$. Note that equivalence is achieved for $\varphi_i=\rho_X\rho_Y f_i$ if the $f_i$ agree on the boundary. The condition on $\scHess_Y\varphi_i$ 
		means precisely that the signatures of the Hessians of the $f_i$ in the tangential derivatives agree in the interior and the signatures of the Hessians of the restriction of the $f_i$ to $\rho_Y=0$ as well, see Remark \ref{rem:scHess}. As such, we may use the same techniques as in the classical situation to construct a diffeomorphism on the boundary which transforms the restriction of $f_1$ into that of $f_2$, cf. also \cite{CoSc2}. This diffeomorphism is then extended by means of Proposition \ref{prop:cornerdiffeo} into the interior. For sake of brevity, we omit the details here.
		 %
		 %
		 
 
\end{proof}
\begin{rem}
Note that near $(\bx^0,\by^0)\in\Cpp$, we can also invoke the classical equivalence theorem directly. We need to find a transformation 
		$$F:(\bx,0,y)\mapsto (\bx,0,\tilde{y}(\bx,y))$$
		such that $F^*\varphi_1=\varphi_2$. For $\lambda>0$ we set $\phi_j(\bx,\lambda,y)=\lambda f_j(\bx,0,y)$, $j\in\{1,2\}$. Then $\phi_j$ are  equivalent \emph{phase functions in the usual homogeneous sense} on $X\times (\RR_+\times Y)$. Indeed, evaluating $\dd\phi_j$ and $\scd\varphi_j$ in coordinates, we see that $\dd\phi_j\neq 0$ and $\phi_j$ is manifestly homogeneous. Furthermore, the signatures of $H_Y\phi_j$ are the same as those of $\scHess_Y\varphi_j$. Since the $f_j$ are equal up to second order, the $\phi_j$ are equivalent in the usual sense and there exists a $\lambda$-homogeneous $G: (\bx,\lambda,y)\mapsto (\bx,\lambda,\tilde{y}(\lambda,\bx,y))$ which is homogeneous such that $G^*\phi_1=\phi_2$. Setting $F=G|_{\lambda=1}$ and possibly applying a scaling, as in the proof of Lemma \ref{lem:arrange}, concludes the proof for $(\bx^0,\by^0)\in\Cpp$.
\end{rem}

\section{Lagrangian distributions}
\label{sec:Lagdist}
In this section, we will address the class of Lagrangian distributions on scattering manifolds.
First, we introduce oscillatory integrals associated with a phase function and show that they are well-defined in the usual sense.
Then, we define Lagrangian distributions as a locally finite sum of oscillatory integrals, where the phase function parametrizes a
Lagrangian submanifold.
Using the results from the previous section, we are able to reduce the number of fiber-variables to a minimum
and see that the order of the Lagrangian distribution is well-defined independently of the dimension of the fiber.

\subsection{Oscillatory integrals associated with a phase function}
\begin{defn}
    Let $Y$ be a mwb. For the remainder of this section, $m_\eps$ with $\eps\in(0,1]$,
    denotes a family of functions $m_\eps \in\dSmz(Y)$ such that for all $k\in\NN$, $\alpha \in \NNz_0^{d-1}$ and $\epsilon>0$,
\begin{equation}
\label{eq:approxone}
\left|(\rho_Y^2\partial_{\rho_Y})^k (\rho_Y\partial_y)^\alpha m_\eps(\by)\right| \leq C_{k,\alpha}\,\rho_Y^{k + |\alpha|},
\end{equation}
such that, for all $\by \in Y^o$, we have $m_\eps(\by) \to 1$ as $\eps \to 0.$
\end{defn}

\begin{rem}
We make the observation that \eqref{eq:approxone} does not depend on the choice of bdf and is preserved under pullbacks by $\sct$-maps.
It is possible to find such a family on any manifold with boundary. In fact, any choice of tubular neighbourhood $U$ of $\partial Y$ such that $U\cong [0,\delta)\times \partial Y$ with coordinates $(\rho_Y,y)$ introduces a dilation in the first variable.
Take a function $\chi \in \Smc[0,\infty)$ such that $\chi(x) = 1$ on $[0,\delta]$.
Then set $m_\eps=1$ on $Y\setminus U$ and
\[m_\eps(\rho_Y,y)=\begin{cases}
    \chi(\eps\rho_Y^{-1}) & \text{if }\eps\rho_Y^{-1}> \delta/2,\\
    1 & \text{otherwise}.
\end{cases}\]
\end{rem}

\begin{defn}
Consider $X$, $Y$ mwbs, $U\subset X\times Y$ an open subset, $\varphi\in\rho_X^{-1}\rho_Y^{-1}\Sm(U)$ a phase function
and $a\in\rho_X^{-m_e}\rho_Y^{-m_\psi}\Sm(X\times Y, \scOmega^{1/2}(X) \times \scOmega^1(Y))$ an amplitude supported in $U$.
Then $I_\varphi(a)\in\dSmdz(X,\scOmega^{1/2}(X))$ is defined as the distributional $1/2$-density acting on $f\in\dSmz(X,\scOmega^{1/2}(X))$ by
\begin{equation}
\label{eq:oscidef}
\langle I_\varphi(a),f\rangle:= \lim_{\eps\searrow 0} \iint_{X\times Y} \left(e^{i\varphi} a \cdot (f \otimes m_\eps)\right).
\end{equation}
\end{defn}

\begin{rem}
If $X$ and $Y$ are equipped with a scattering metric, we have a canonical identification of functions and $1$-densities provided by the volume form.
Therefore, we can freely choose whether to view functions and distributions as matching (distributional) $1$-, $0$- or $\frac{1}{2}$-densities.
\end{rem}
\begin{rem}
    When $X=\BBd$ and $Y=\BBs$, these oscillatory integrals correspond, under (inverse) radial compactification, to the tempered oscillatory integrals analyzed in \cite{CoSc2,Schulz}.
\end{rem}

\begin{lem}\label{lem:osciwelldef}
    The expression \eqref{eq:oscidef} yields a well-defined tempered distribution (density) on $X$.
    In particular, it is independent of the choice of $m_\eps$.
\end{lem}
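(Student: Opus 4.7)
The plan is a two-step argument: first localize via a partition of unity, then render the remaining piece absolutely integrable by integration by parts using a scattering differential operator tailored to $\varphi$. Write $a=\chi_1 a+\chi_2 a$ with $\chi_1\in\Sm(X\times Y)$ supported in an open neighbourhood $V\subset U$ of $\overline{\Bp}\cap\supp a$ on which $\scd\varphi$ is non-vanishing; such a $V$ exists by the phase-function hypothesis and compactness (possibly after a preliminary partition so that $\supp a$ sits close to $\overline{\Bp}$). On $\supp(\chi_2 a)$ the function $\rho_Y$ is bounded below, hence $\chi_2 a\in \rho_X^{-m_e}\Sm(X\times Y,\scOmega^1(X\times Y))$, and combined with the rapid vanishing of $f\in\dSmz(X,\scOmega^{1/2}(X))$ at $\partial X$ the product $\chi_2 a\cdot(f\otimes m_\eps)$ is dominated by an integrable $1$-density uniformly in $\eps$; since $m_\eps\to 1$ pointwise on $Y^o$, dominated convergence settles this piece.

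For the contribution of $\chi_1 a$ I introduce, on $V$, the scattering first-order differential operator
\[
L=\frac{1}{i\sum_j|\zeta_j\varphi|^2}\sum_j \overline{\zeta_j\varphi}\,\zeta_j,
\]
where $\{\zeta_j\}$ is a local frame of $\scV(X\times Y)$. Since $\scV=\rho_X\rho_Y\bV$ and $\varphi\in\rho_X^{-1}\rho_Y^{-1}\Sm$, every $\zeta_j\varphi$ is smooth, the denominator is smooth and bounded below on $V$, and $L\,e^{i\varphi}=e^{i\varphi}$ by direct computation. The main technical step, which I expect to be the principal obstacle, is to show that the formal transpose $L^t$ with respect to the scattering $1$-density on $X\times Y$ is again a first-order scattering differential operator whose zeroth-order coefficient lies in $\rho_X\rho_Y\Sm$. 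The two ingredients are: scattering vector fields are anti-self-adjoint with respect to a scattering density up to a smooth divergence correction that itself lies in $\rho_X\rho_Y\Sm$, and every $\zeta_j$ maps $\Sm$ into $\rho_X\rho_Y\Sm$. Together they yield
\[
L^t:\rho_X^{-m_e}\rho_Y^{-m_\psi}\Sm\longrightarrow\rho_X^{-m_e+1}\rho_Y^{-m_\psi+1}\Sm,
\]
so that $(L^t)^N$ improves both orders by $N$.

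Iterated integration by parts $N$ times gives
\[
\iint e^{i\varphi}\chi_1 a\cdot(f\otimes m_\eps)=\iint e^{i\varphi}(L^t)^N\bigl[\chi_1 a\cdot (f\otimes m_\eps)\bigr],
\]
with no boundary contributions, because $m_\eps$ is supported away from $\partial Y$ for each fixed $\eps$ and $f$ vanishes to infinite order at $\partial X$. Expanding by Leibniz, terms in which $k$ of the derivatives land on $m_\eps$ pick up a factor bounded uniformly in $\eps$ by $C\rho_X^k\rho_Y^k$ thanks to the rescaling assumption \eqref{eq:approxone} (which, read in the scattering frame, forces $\zeta_j m_\eps\in\rho_X\rho_Y L^\infty$ uniformly in $\eps$), while the remaining derivatives act on $\chi_1 af$ and improve the order as above. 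For $N$ sufficiently large in terms of $m_e$, $m_\psi$ and $\dim Y$, the right-hand integrand is uniformly dominated by a fixed integrable $1$-density. Dominated convergence then yields both the existence of the limit and its identification with the absolutely convergent integral $\iint e^{i\varphi}(L^t)^N[\chi_1 af]$, which is manifestly independent of the family $m_\eps$. Adding back the $\chi_2 a$-contribution completes the proof of well-definedness and $m_\eps$-independence; continuity in $f\in\dSmz(X,\scOmega^{1/2}(X))$ follows from the same bounds, since they involve only finitely many scattering seminorms of $f$.
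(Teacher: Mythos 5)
Your proof is correct and follows essentially the same route as the paper's: split off the region away from $\overline{\Bp}$ (handled by dominated convergence), and near $\overline{\Bp}$ use a first-order scattering operator $L$ with $Le^{i\varphi}=e^{i\varphi}$, whose transpose again lies in $\mathrm{Diff}^1_{\sct}$, to integrate by parts and gain powers of $\rho_X\rho_Y$ uniformly in $\eps$ via \eqref{eq:approxone}. The only difference is that you spell out the construction of $L$ and the mapping property of $L^t$ explicitly, where the paper simply cites Proposition 1 of \cite{Melrose1}.
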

\begin{proof}
    Assume, without loss of generality, that we have a fixed scattering metric and we can identify scattering densities and functions.
    Let $U \subset X \times Y=: B$ be an open neighborhood of the boundary $\B^\psi$ such that $\scd \varphi \not = 0$ on $U$.
    
    On $X\times Y \setminus U$, the dominated convergence theorem implies that \eqref{eq:oscidef} is well-defined. The 	integrand $u_\eps = e^{i\varphi} a (f \otimes m_\eps)$ converges pointwise and is dominated by $|a\cdot f|$, which is 	bounded for $\rho_Y>c$.

    On $U$, as in the classical theory, we can define a first order scattering differential $L \in \mathrm{Diff}^1_{\sct}(U)$ which has the property that $Le^{i\varphi} = e^{i\varphi}$. By Proposition 1 from \cite{Melrose1}, we see that $L^t \in \mathrm{Diff}^1_{\sct}(U)$.
    Using repeated integration by parts and \eqref{eq:approxone}, we are able to increase the order in $\rho_X$ and $\rho_Y$ to arbitrary powers, and an application of the dominated convergence theorem then finishes the proof.
\end{proof}
After an arbitrary choice of scattering metrics, we may locally identify $(X,g_X)$ and $(Y,g_Y)$ with subsets of $\BB^d$ and $\BB^s$, respectively. Then, using some explicit local isomorphism $\Psi=\Psi_X\times\Psi_Y$, we can identify densities with functions using the induced measures $\mu_X$ and $\mu_Y$. After use of a partition of unity, we may locally express \eqref{eq:oscidef} as
\begin{align}\label{eq:oscilocdef}
    \langle I_\varphi(a),f\rangle:= \lim_{\eps\searrow 0} \iint_{\BBd\times \BBs} \Psi^*\left(e^{i\varphi(\rho_X,x,\rho_Y,y)} a(\rho_X,x,\rho_Y,y) m_\eps(\rho_Y,y) f(\rho_X,x)\right)\\
    \label{eq:oscilocdefbis}
    = \lim_{\eps\searrow 0} \iint_{\BBd\times \BBs} e^{i\Psi^*\varphi(\rho_X,x,\rho_Y,y)} \wt m_\eps(\rho_Y,y) \wt a(\rho_X,x,\rho_Y,y) \wt f(\rho_X,x) \dd \mu_{\BBd} \dd \mu_{\BBs}
\end{align}
where $\wt f=\Psi^*f |\dd \mu_\BBd|^{-1/2}$ and $\wt a\in \rho_\BBd^{-m_e}\rho_\BBs^{-m_\psi}\Sm(\BBd\times \BBs)$ satisfies $\wt a \wt f \dd \mu_{\BBd} \dd \mu_{\BBs}=a f$.
%
%
%
%
Summing up, we may always transform to locally work on $\BBd\times\BBs$ and in local coordinates we work with usual oscillatory integrals.

Since \eqref{eq:oscidef} does not depend on the choice of $m_\eps$, as it is usual we drop it from the notation and write, \emph{in the sense of oscillatory integrals},
\begin{equation}\label{eq:osciformdef}
    I_\varphi(a):= \int_{Y} e^{i\varphi}a.
\end{equation}

\subsubsection{Singularities of oscillatory integrals}

Recall that there is a notion of wavefront-set adapted to the pseudo-differential scattering calculus, called the scattering wavefront-set, cf. \cite{Cordes,Melrose1,CoMa}. 

\begin{defn}
Let $u\in\dSmdz(X,\scOmega^{1/2})$.
A point $z_0 \in \Wt=\partial\big(\scOverT^*X\big)$ is not in the scattering wavefront-set, and we write $z_0 \notin \WFsc(u)$, 
if there exists a scattering pseudo-differential operator $A$ whose symbol is elliptic at $z_0$ such that $Au\in\dSmz(X,\scOmega^{1/2}).$
\end{defn}

\begin{prop}
\label{prop:WFosci}
For the oscillatory integral in \eqref{eq:oscidef}, we have
$$\WFsc(I_\varphi(a))\subseteq \Lambda_\varphi.$$
Furthermore, if $z\in \Lambda_\varphi$ and $a$ is rapidly decaying near $\lambda_\varphi^{-1}(z)$, then $z\notin \WFsc(I_\varphi(a))$.
\end{prop}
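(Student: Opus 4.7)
The plan is to verify the two claims by the standard localise-and-integrate-by-parts device, adapted to the scattering calculus.

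For the first assertion, fix $z_0 \in \Wt \setminus \Lp$. I would pick a scattering pseudodifferential operator $A$ with scattering principal symbol elliptic at $z_0$ and microsupport in a neighborhood $V \subset \scOverT^*X$ of $z_0$ disjoint from $\Lp$. Then $A\,I_\varphi(a)$ may be rewritten as an iterated oscillatory integral with phase
\[
\widetilde\varphi(x,x',y,\xi) = (x-x')\cdot\xi + \varphi(x',y)
\]
and amplitude $\widetilde a$ built from $a$ and the symbol of $A$. A direct computation (entirely analogous to the analysis carried out for $\lp$) shows that the critical set of $\widetilde\varphi$ with respect to $(x',y,\xi)$ projects, under the natural map to $\scOverT^*X$, precisely onto $\Lp$: the equations $\partial_\xi\widetilde\varphi=0$, $\partial_{x'}\widetilde\varphi=0$, $\partial_y\widetilde\varphi=0$ force $x=x'$, $(x,y)\in C_\varphi$ and $\xi=\scd_X\varphi(x,y)$. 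By the choice of microsupport of $A$, the scattering differential $\scd\widetilde\varphi$ is therefore bounded away from zero on $\supp\widetilde a$, so a first-order scattering differential operator $L$ with $Le^{i\widetilde\varphi}=e^{i\widetilde\varphi}$ may be built as in the proof of \lemref{lem:osciwelldef}. Iterated integration by parts then produces decay of arbitrary order at every boundary face, so $A\,I_\varphi(a)\in\dSmz(X,\scOmega^{1/2})$ and $z_0\notin\WFsc(I_\varphi(a))$.

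For the second assertion, fix $z_0\in\Lp$ together with a neighborhood $N$ of $\lp^{-1}(z_0)$ on which $a$ is rapidly decaying. Choose a cutoff $\chi\in\Sm(X\times Y)$ equal to $1$ near $\lp^{-1}(z_0)$ with $\supp\chi\subset N$, and decompose $a=\chi a+(1-\chi)a$. The piece $\chi a$ lies in $\dSmz(X\times Y)$, so the same $L$-integration by parts from \lemref{lem:osciwelldef} gives $I_\varphi(\chi a)\in\dSmz(X,\scOmega^{1/2})$ and contributes nothing to the scattering wavefront set. The remainder $(1-\chi)a$ is supported away from $\lp^{-1}(z_0)$; since $\lp$ is a local fibration (\lemref{lem:lpfibration}) and in particular closed onto its image on sufficiently small neighborhoods, we may arrange that $\lp(\supp((1-\chi)a)\cap C_\varphi)$ misses a neighborhood of $z_0$. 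Applying the first assertion to this piece then yields $z_0\notin\WFsc(I_\varphi((1-\chi)a))$.

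The principal obstacle is carrying out the integration-by-parts step uniformly up to the corner $\Wpe$: there both $\rho_X$ and $\rho_Y$ vanish, $\scd\widetilde\varphi$ degenerates most severely, and the effect of the fibrewise compactification $\iota$ must be unravelled carefully (in the spirit of \lemref{lem:horror} and \lemref{lem:lpsct}) in order to verify that ellipticity of $\scd\widetilde\varphi$ on $\supp\widetilde a$ persists up to the boundary faces $\Wp$ and $\Wpe$. Once this ellipticity is established, the construction of $L$ and the resulting decay estimates follow the classical template.
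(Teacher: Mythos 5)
Your route is viable, but it is not the one the paper takes: the paper does not test with scattering pseudodifferential operators at all, but instead refers to \cite{CoSc,Schulz}, where one first characterizes $\WFsc$ by cut-offs and the Fourier transform and then estimates $\Fu I_\varphi(a)$ directly in (pre-compactified) coordinates; this has the advantage that no composition of an operator with the oscillatory integral is needed, and the non-stationary-phase estimates away from $\Lp$ are carried out in the flat SG setting where they are already available. Your proposal instead composes $A$ with $I_\varphi(a)$, which produces an iterated integral with phase $(x-x')\cdot\xi+\varphi(x',y)$ living on a space with \emph{three} boundary hypersurfaces ($\rho_X$, $\rho_\Xi$, $\rho_Y$), i.e.\ a codimension-three corner structure that lies outside the two-factor product framework $X\times Y$ developed in the paper; the uniform ellipticity of $\scd\widetilde\varphi$ on $\supp\widetilde a$ up to $\Wp$, $\We$ and especially $\Wpe$, and the fact that the composed amplitude is again of product type, are precisely the technical content of the proposition, and your plan explicitly defers them rather than carrying them out, whereas in the paper's route this work is delegated to the cited Euclidean results. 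Two smaller points need tightening in your second step: the conclusion for $I_\varphi((1-\chi)a)$ does not follow from the first assertion \emph{as stated} (which still allows $z_0\in\Lp$ in the wavefront set), but from its localized refinement $\WFsc(I_\varphi(b))\subseteq\lp(C_\varphi\cap\supp b)$, which your integration-by-parts argument does in fact yield and should be invoked; and the separation of $z_0$ from $\lp\bigl(\supp((1-\chi)a)\cap C_\varphi\bigr)$ follows simply from compactness of that set and continuity of $\lp$, not from the local-fibration/closedness property you cite.
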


\begin{rem}
\label{rem:css}
The ($\sct$-)singular support of $u$ is defined as follows:
a point $p_0\in X$ is contained in $\mathrm{singsupp}_\scat(u)$ if and only if for every $f\in \Sm(X)$ with $f(p_0)=1$ we have $fu\notin \dSmz(X)$.
Similar to the classical wavefront-set and singular support, we have that $\pr_1(\WFsc(u))=\mathrm{singsupp}_\scat(u)$.
Thus, in particular, if $a$ is rapidly decaying near $\Cp$, then $I_\varphi(a)\in \dSmz(X)$.
\end{rem}

We refer the reader to \cite{CoSc,Schulz} for the details of this analysis of the wavefront-sets. The proof is carried out as in the classical setting: first, a characterization of $\WFsc$ in terms of cut-offs and the Fourier transform is achieved, and then one estimates $\Fu I_\varphi(a)$ in coordinates.

Proposition \ref{prop:WFosci} gives another insight why we consider 
$\Lambda_\varphi$ as the true object of interest associated with a phase function, not $L_\varphi$. In fact, considering \eqref{eq:oscidef} once more, we see that we may modify phase function and amplitude in the integral by any real valued function $\psi\in\Sm(X\times Y)$, writing

$$e^{i\varphi} a=e^{i(\varphi+\psi)} \left(e^{-i\psi}a\right).$$

Then $e^{-i\psi}a\in \rho_X^{-m_e}\rho_Y^{-m_\psi}\Sm(X\times Y)$, and hence it is still an amplitude, and $\varphi+\psi$ is a new local
phase function. Now, while in general $L_\varphi\neq L_{\varphi+\psi}$, we have $\Lambda_\varphi= \Lambda_{\varphi+\psi}$, by Lemma \ref{lem:phaseplussmooth}.
This underlines that only $\Lambda_\varphi$ and not $L_\varphi$ can be associated with $I_\varphi(a)$ in an intrinsic way.
Nevertheless, it is often convenient to have $L_\varphi$ available during the proofs.

\subsection{Definition of Lagrangian distributions}
The class of oscillatory integrals associated with a Lagrangian is -- as in the classical theory -- not a good distribution space, since in general it is not possible to find a single global phase function to parametrize $\Lambda$. Instead, we introduce the following class of Lagrangian distributions. Note that, by our previous findings, we may always reduce an oscillatory integral on $X\times Y$ into a finite sum of oscillatory integrals over $X\times\BB^s$ for $s=\dim(Y)$.

\begin{defn}[$\sct$-Lagrangian distributions]\label{def:Lagdist}
Let $X$ be a mwb, $\Lambda\subset \partial\scOverT^*X$ a $\sct$-Lagrangian. Then, $I^{m_e,m_\psi}(X,\Lambda)$, $(m_e,m_\psi)\in \RR^2$,
denotes the space of distributions that can be written as a finite sum of (local) oscillatory integrals as in \eqref{eq:osciformdef}, whose phase functions are clean and locally parametrize $\Lambda$, plus an element of $\dSmz(X)$. More precisely, $u\in I^{m_e,m_\psi}(X,\Lambda)$ if, modulo a remainder in $\dSmz(X)$,
\begin{equation}
\label{eq:Lagdistdef}
u=\sum_{j=1}^N \int_{Y_j} e^{i\varphi_j}a_j,
\end{equation}
where for $j=1,\dots,N$:
\begin{enumerate}
\item[1.)] $Y_j$ is a mwb of dimension $s_j$;
\item[2.)] $\varphi_j\in \rho_{Y_j}^{-1}\rho_X^{-1}\Sm(X\times  Y_j)$ is a local clean phase function with excess $e_j$, defined on an open neighbourhood of the support of $a_j$,
which locally parametrizes $\Lambda$;
\item[3.)] $a_j\in \rho_{Y_j}^{-m_{\psi,j}}\rho_X^{-m_{e,j}}\Sm\big(X\times Y_j, \scOmega^{1/2}(X) \times \scOmega^1(Y)\big)$ with 
\[
    (m_{\psi,j},m_{e,j})=\left(m_\psi+\frac{d}{4}-\frac{s_j}{2}-\frac{e_j}{2},m_e-\frac{d}{4}+\frac{s_j}{2}-\frac{e_j}{2}\right).
\]
\end{enumerate}
We also set
\begin{align*}
	 I^{-\infty,-\infty}(X,\Lambda) &= \bigcap_{(m_\psi,m_e)\in\RR^2} I^{m_\psi,m_e}(X,\Lambda),
	 \\
	  I(X,\Lambda) =I^{+\infty,+\infty}(X,\Lambda)&= \bigcup_{(m_\psi,m_e)\in\RR^2} I^{m_\psi,m_e}(X,\Lambda).
\end{align*}
\end{defn}
\begin{rem}
	The reason for the choice of the $a_j$ in the scattering amplitude densities spaces of order $(m_{e,j}, m_{\psi,j})$
	will be explained in Section \ref{ssec:order}.
\end{rem}
The next result follows from Proposition \ref{prop:WFosci}.
\begin{prop}
Let $\Lambda\subset \partial\, \scOverT^*X$ be a $\sct$-Lagrangian, and $u\in I(X,\Lambda)$. Then $\WFsc(u)\subseteq \Lambda.$
\end{prop}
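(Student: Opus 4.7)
The plan is to combine the defining structure of Lagrangian distributions with the wavefront containment for single oscillatory integrals (Proposition \ref{prop:WFosci}), together with the elementary fact that the scattering wavefront set of a finite sum is contained in the union of the summands' wavefront sets.

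First, I would unfold the definition: by Definition \ref{def:Lagdist}, any $u \in I(X,\Lambda)$ can be written, modulo a remainder $r \in \dSmz(X,\scOmega^{1/2})$, as a finite sum
\[
u - r = \sum_{j=1}^{N} I_{\varphi_j}(a_j),
\]
where each $\varphi_j$ is a clean phase function on (an open subset of) $X \times Y_j$ that locally parametrizes $\Lambda$ on the support of $a_j$, and $a_j$ is a scattering amplitude of the prescribed order. Since $r \in \dSmz(X,\scOmega^{1/2})$ is rapidly decreasing of all orders at the boundary, it is microlocally trivial, i.e. $\WFsc(r) = \emptyset$, so it contributes nothing to $\WFsc(u)$.

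Second, I would apply Proposition \ref{prop:WFosci} to each summand, yielding $\WFsc(I_{\varphi_j}(a_j)) \subseteq \Lambda_{\varphi_j}$. By the support condition on $a_j$ and the local parametrization property, we have $\Lambda_{\varphi_j}\cap \pi(\mathrm{supp}\,a_j)\subseteq \Lambda$, where we interpret $\Lambda_{\varphi_j}$ as being cut down to the locus where the oscillatory integral actually produces singularities (namely above $\lambda_{\varphi_j}(\Cp \cap \mathrm{supp}\,a_j)$). More precisely, if $z \in \Lambda_{\varphi_j}$ but $\lambda_{\varphi_j}^{-1}(z) \cap \mathrm{supp}\,a_j = \emptyset$, then by Proposition \ref{prop:WFosci}, $z \notin \WFsc(I_{\varphi_j}(a_j))$; and if $\lambda_{\varphi_j}^{-1}(z) \cap \mathrm{supp}\,a_j \neq \emptyset$, then $z \in \Lambda_{\varphi_j} \cap \Lambda$ by the local parametrization hypothesis. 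Hence in either case the contribution to $\WFsc$ lies in $\Lambda$.

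Third, I would invoke the standard sub-additivity of the scattering wavefront set under finite sums (a direct consequence of the characterization of $\WFsc$ via ellipticity of scattering pseudodifferential operators: if $Au_j \in \dSmz$ for each $j$, then $A\sum u_j \in \dSmz$). This gives
\[
\WFsc(u) \subseteq \WFsc(r) \cup \bigcup_{j=1}^{N} \WFsc(I_{\varphi_j}(a_j)) \subseteq \bigcup_{j=1}^{N} \Lambda_{\varphi_j} \cap (\text{supp directions}) \subseteq \Lambda,
\]
which yields the claim. There is no serious obstacle here; the only point deserving care is the bookkeeping between $\Lambda_{\varphi_j}$ and $\Lambda$ via the support of the amplitude, which is handled by the second part of Proposition \ref{prop:WFosci} exactly as in the classical homogeneous theory.
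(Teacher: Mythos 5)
Your argument is correct and is essentially the paper's own (the paper simply remarks that the result follows from Proposition \ref{prop:WFosci}): decompose $u$ as in Definition \ref{def:Lagdist}, use that the $\dSmz$-remainder has empty scattering wavefront set, apply Proposition \ref{prop:WFosci} to each oscillatory integral, and conclude by sub-additivity of $\WFsc$ together with $\Lambda_{\varphi_j}\subseteq\Lambda$ from the local parametrization hypothesis. Your extra bookkeeping with $\supp a_j$ is harmless but not needed, since local parametrization already gives the inclusion over the domain of each $\varphi_j$.
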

As in the classical case, the class of Lagrangian distributions contains the globally regular functions (cf. Treves~\cite[Chapter VIII.3.2]{Treves}):
\begin{lem}\label{lem:smoothpart}
Let $\Lambda\subset \partial\, \scOverT^*X$ be a $\sct$-Lagrangian. Then
\begin{equation}
    \dSmz(X,\scOmega^{1/2}(X)) = I^{-\infty,-\infty}(X,\Lambda).
\end{equation}
\end{lem}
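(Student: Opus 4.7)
The easier inclusion $\dSmz(X,\scOmega^{1/2}(X)) \subseteq I^{-\infty,-\infty}(X,\Lambda)$ is immediate from Definition \ref{def:Lagdist}: given $u \in \dSmz$, take the sum in \eqref{eq:Lagdistdef} to be empty and let the $\dSmz$-remainder be $u$ itself. This places $u$ in $I^{m_e,m_\psi}(X,\Lambda)$ for every pair $(m_e,m_\psi)$, hence in the intersection $I^{-\infty,-\infty}(X,\Lambda)$.

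For the reverse inclusion, the plan is to show, for $u \in I^{-\infty,-\infty}(X,\Lambda)$ and any integer $N$, that $u \in \rho_X^N \Sm(X,\scOmega^{1/2}(X))$; taking the intersection over all $N$ then gives $u \in \dSmz$. Fix $N$. By hypothesis, $u$ belongs to $I^{m_e,m_\psi}$ for arbitrarily negative $(m_e,m_\psi)$, so Definition \ref{def:Lagdist} furnishes a representation
\[
    u = \sum_{j=1}^{J} I_{\varphi_j}(a_j) + r, \qquad r \in \dSmz,
\]
in which each amplitude $a_j$ lies in $\rho_X^{-m_{e,j}}\rho_{Y_j}^{-m_{\psi,j}}\Sm$ with $m_{e,j}, m_{\psi,j}$ as negative as we please (the dimensional shifts between $(m_e,m_\psi)$ and $(m_{e,j},m_{\psi,j})$ in the definition give only a bounded loss).

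The crux is a local estimate refining the integration-by-parts argument of Lemma \ref{lem:osciwelldef} and Proposition \ref{prop:WFosci}: for an amplitude $a \in \rho_X^{-m_e}\rho_Y^{-m_\psi}\Sm$ and a multi-index $\alpha$, once $m_\psi$ is sufficiently negative (depending on $|\alpha|$ and $\dim Y$), the first-order scattering operator $L$ satisfying $L e^{i\varphi} = e^{i\varphi}$ together with its $\sct$-transpose $L^t \in \mathrm{Diff}^1_\sct$ can be iterated to shift the loss of decay that would otherwise arise from differentiating $e^{i\varphi}$ onto the amplitude, producing an absolutely convergent integral. Differentiating under the integral then yields the pointwise bound $|\partial_x^\alpha I_\varphi(a)(x)| \lesssim \rho_X(x)^{-m_e - |\alpha|}$. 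Applied term-by-term to our representation with $\min_j\{m_{e,j}, m_{\psi,j}\}$ sufficiently negative, we conclude $u \in C^N(X)$ with $\rho_X^N$-decay at $\partial X$. Since $N$ was arbitrary, this gives $u \in \dSmz$, as required.

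The main obstacle is making this local estimate precise in the scattering geometric setting at all boundary faces simultaneously. Away from the corner this is the classical argument with $L$ of scattering type; at the corner one must iterate in both $\rho_X$ and $\rho_Y$ simultaneously, but this again follows from the construction of $L$ indicated in the proof of Lemma \ref{lem:osciwelldef}. A conceptually cleaner alternative is to argue directly that $\WFsc(u) = \emptyset$ by exploiting the arbitrary negativity of the amplitude order in the representations of $u$, and then invoke Remark \ref{rem:css} to conclude $\mathrm{singsupp}_\scat(u) = \emptyset$, which is precisely the statement $u \in \dSmz$.
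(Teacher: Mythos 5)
Your treatment of the hard inclusion $I^{-\infty,-\infty}(X,\Lambda)\subseteq\dSmz(X,\scOmega^{1/2}(X))$ is essentially the paper's: there the proof consists of the single phrase ``differentiation under the integral sign'', and your version --- for each target amount of decay and regularity pick a representation whose amplitudes have sufficiently negative order, differentiate under the integral, and absorb the losses from derivatives of $e^{i\varphi}$ and from the fiber density into the arbitrarily strong vanishing of the amplitude at the boundary --- is exactly what that phrase abbreviates (your exponent $\rho_X^{-m_e-|\alpha|}$ is slightly off, since $\partial_{\rho_X}$-derivatives of $\varphi$ cost $\rho_X^{-2}\rho_Y^{-1}$, but this is immaterial when the orders are arbitrary). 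Where you genuinely diverge is the easy inclusion: you put $f\in\dSmz$ into every $I^{m_e,m_\psi}(X,\Lambda)$ via the ``plus an element of $\dSmz(X)$'' clause of Definition \ref{def:Lagdist}, which is logically valid for the lemma as stated but establishes nothing beyond the convention built into the definition. The paper instead covers $\scOverT^*X$ by open sets carrying clean phase functions $\varphi_j$ parametrizing $\Lambda$, takes a subordinate partition of unity $\{g_j\}$ and a fiber density $\chi\in\dSmz(\BB^d,\scOmega^1(\BB^d))$ with $\int\chi=1$, and writes $f=\sum_j I_{\varphi_j}(a_j)$ with $a_j=e^{-i\varphi_j}g_j\,(f\otimes\chi)\in\dSmz$; this buys the stronger fact that no $\dSmz$-remainder is needed, i.e.\ rapidly decaying smooth densities genuinely occur as oscillatory integrals attached to the given $\Lambda$, in the spirit of the classical statement in Treves. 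Finally, be careful with your proposed ``cleaner alternative'': $\WFsc$ is an infinite-order notion ($Au\in\dSmz$ for some elliptic $A$), so a single finite-order representation never removes a point of $\Lambda$ from $\WFsc(u)$, and Proposition \ref{prop:WFosci} only excludes points near which the amplitude is rapidly decaying; as sketched, that route would in effect require first proving $u\in\dSmz$ by the direct estimate anyway, so the main argument is the one to keep.
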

\begin{proof}
We first prove the inclusion ``$\supseteq$''. Choose a finite covering of $\scOverT^*X$ with open sets $\{X_j\}_{j=1}^N$ such that there exists a clean phase function $\varphi_j$ on each $X_j$
parametrizing $\Lambda \cap \scOverT^*X_j$, $j=1,\dots,N$. Let $\{g_j\}_{j=1}^N$ be a smooth partition of unity subordinate to such covering. We view $X_j$ as a subset
of $X \times \BB^d$, $j=1,\dots,N$.

Let $\chi \in \dSmz(\BB^d, \scOmega^1(\BB^d))$ such that $\int \chi = 1$. For any $f \in \dSmz(X,\scOmega^{1/2}(X))$ we set 
\begin{align*}
    a_j = e^{-i\varphi_j}g_j \cdot (f \otimes \chi),\quad
    f_j = \int_{\BB^d} e^{i\varphi_j} a_j, \quad j=1,\dots,N.
\end{align*}
We see that
\[a_j \in \dSmz(X\times \BB^d, \scOmega^{1/2}(X) \times \scOmega^1(\BB^d)), \quad j=1,\dots,N,\]
and, summing up,
\begin{align*}
\sum_{j=1}^N f_j(x) &= \int_{\BB^d} \left(\sum_{j=1}^N g_j(x,y)\right) \cdot (f(x) \otimes \chi(y))= f(x).
\end{align*}
The inclusion ``$\subseteq$'' is achieved by differentiation under the integral sign.
\end{proof}
\subsection{Examples}
We have the following examples of (scattering) Lagrangian distributions.
\begin{enumerate}
\item Standard Lagrangian distributions of compact support, \cite{HormanderFIO,Hormander4}, in particular Lagrangian distributions on compact manifolds $X$ without boundary, are scattering Lagrangian distributions, using the identification
\begin{align*}
\textrm{Fiber-conic sets in }T^*X\setminus\{0\}\longleftrightarrow \textrm{Sets in }S^*X
\stackrel{\textrm{rescaling}}{\longleftrightarrow} \textrm{Sets in }\Wp.
\end{align*}
\item Legendrian distributions of \cite{MZ}. Here, the distributions are smooth functions whose singularities at the boundary are of Legendrian type, meaning in $\We$.
\item Conormal distributions, meaning the distributions where the Lagrangian, see Section \ref{sec:conorm}, is $\partial\big(\scOverN^*X'\big)$ for a ($k$-dimensional) $p$-submanifold $X'\subset Y$. These distributions correspond, under compactification of base and fiber, to the oscillatory integrals given in local (pre-compactified) Euclidean coordinates by
$$u(x',x'')=\int e^{ix'\xi} a(x,\xi)\,\dd \xi, \qquad a(x,\xi)\in\SG_\cl^{m_e,m_\psi}(\RRd\times\RR^{d-k}).$$
A prototypical example is given by (derivatives of) $\delta_{0}(x')\otimes 1$. These arise as (simple or multiple) layers when solving partial differential equations along infinite boundaries or Cauchy surfaces.
\item Examples of scattering Lagrangian distributions which are of none of the previous types arise in the parametrix construction to hyperbolic equations on unbounded spaces, for example the two-point function for the Klein-Gordon equation. For a discussion of this example consider \cite{CoSc2}.
\end{enumerate}

\begin{rem}
Note that, at this stage, the kernels of pseudo-differential operators on $X\times X$ are \emph{not} scattering conormal distributions associated with the diagonal $\Delta\subset X\times X$ when $X$ is a manifold with boundary. In fact, in this case $X\times X$ is a manifold with corners. Furthermore $\Delta\subset X\times X$ does not hit the corner $\partial X\times\partial X$ in a clean way, that is,
$\Delta\subset X\times X$ is not a $p$-submanifold. Similarly, the phase function associated to the $\SG$-phase $(x-y)\xi\in\SG^{1,1}_\cl(\RR^{2d}\times\RRd)$ is not clean.

However, the formulation of the theory developed in this paper admits a natural extension to manifolds with corners. The geometric obstruction of $\Delta\subset X\times X$ -- or more generally the graphs of (scattering) canonical transformations -- not being a $p$-submanifold can be overcome by lifting the analysis to a blow-up space, see \cite{MZ,Melroseb}. We postpone this theory of compositions of canonical relations and calculus of scattering Fourier integral operators to a subsequent paper.
\end{rem}

\subsection{Transformations of oscillatory integrals}
In Section \ref{sec:exchphase} we have seen several procedures that allow
to switch from one phase function to others that parametrize the same Lagrangian. We will now exploit these to transform oscillatory integrals into ``standard form''. In the sequel, we will always assume, by 
a partition of unity, that the support of the amplitude is suitably small.

\subsubsection{Transformation behavior and equivalent phase functions}
\label{sec:moves}
Now we reconsider \eqref{eq:oscilocdef}, to express the transformation 
behavior of the oscillatory integrals under fiber-preserving
diffeomorphisms. With the chosen notation and a local
phase function $\varphi_1$, we have
\begin{equation}\label{eq:oscintsimpl}
	I_{\varphi_1}(a)= \int_{Y_1} e^{i\varphi_1}a=\int_{Y_2} e^{iF^*\varphi_1}F^*a=I_{F^*\varphi_1}(F^*a)
\end{equation}
for any diffeomorphism $F:X\times Y_2\rightarrow X\times Y_1$ of the form $F=\mathrm{id}\times g$.
Assume that $\varphi_2$ is equivalent to $\varphi_1$ by $F$,
see Definition \ref{def:phequiv}. After the transformation, 
we rewrite \eqref{eq:oscintsimpl} as 
\begin{equation}
	\int_{Y_2} e^{i\varphi_2}e^{i(F^*\varphi_1-\varphi_2)}F^*a.
\end{equation}
Now, since $F^*\varphi_1-\varphi_2$ is smooth up to the boundary,
the same holds for $e^{i(F^*\varphi_1-\varphi_2)}$ and this factor can be seen as part of the amplitude.
Therefore, we may write
\begin{equation}\label{eq:oscintequiv}
	I_{\varphi_1}(a)=I_{\varphi_2}
	\big((F^*a)\,\exp(i(F^*\varphi_1-\varphi_2))\big).
\end{equation}
In particular, we can express $I_\varphi(a)$, 
near any boundary point of the domain of definition,
using the principal part of $\varphi$ introduced in Definition \ref{def:princpart}, namely
\begin{equation}\label{eq:oscintstd}
	I_{\varphi_p}(\widetilde{a}), \text{ with } 
	\widetilde{a}=a\,\exp\big(i(\varphi-\varphi_p)\big).
\end{equation}
By Lemma \ref{lem:phpequiv}, $\varphi - \varphi_p \in \Sm$ and thus $\widetilde{a} \in \rho_X^{-m_e}\rho_Y^{-m_\psi} \Sm(B)$.
In the following constructions, we always assume that $\varphi$ is replaced by its principal part, cf. Remark \ref{rem:strictness}.

\subsubsection{Reduction of the fiber}\label{ssbs:redfbr}

We will now analyze the change of boundary behavior under a reduction of fiber variables near $p_0\in\supp(a)\cap\Cp$. Hence, we assume that
$$\rho_Y^{-1}\rho_X^{-1}\,^\scat H_Y\varphi\text{ has rank }r>0\text{ at }p_0\in\Cp.$$
We assume, as explained above, that the oscillatory integral
is in the form \eqref{eq:oscintstd}, namely, $\varphi$ is replaced
by its principal phase part. We observe that, at the boundary point $p_0$,
$$\rk(\rho_Y^{-1}\rho_X^{-1}\,^\scat H_Y\varphi)=\rk(\rho_Y^{-1}\rho_X^{-1}\,^\scat H_Y\sigma(\varphi_p)).$$
By Proposition \ref{prop:fiberred}, we can define a local phase function 
$\varphi_{\red}$ parametrizing the same Lagrangian as 
$\varphi$. In particular, after a change of coordinates by a scattering map,
we can assume $(\bx,\by)\in X\times\BB^{s-r}\times(-\eps,\eps)^{r}$,
and $\varphi_{\red}$ is given by
\[
	\varphi_{\red}(\bx,\rho_Y,y^\prime)=\varphi(\bx,\rho_Y,y^\prime,0),
\]
where $\rho_Y=\rho_{\BB^{s-r}}$ is the boundary defining function on $\BB^{s-r}$ and on $\BB^{s-r}\times(-\eps,\eps)^r$.
We introduce
\begin{equation}\label{eq:wtp}
	\widetilde{\varphi}(\bx,\by)=\varphi_{\red}(\bx,\rho_Y,y^\prime)+
	\frac{1}{2}\rho_X^{-1}\rho_Y^{-1}Q(y^{\prime\prime}),
\end{equation}
where $Q$ is a non-degenerate quadratic form with the same signature as
$\partial_{y^{\prime\prime}}\partial_{y^{\prime\prime}}f$ at $p_0$.
Then, by Theorem \ref{thm:equivphase},
$\varphi$ is equivalent to $\widetilde{\varphi}$ by a local
diffeomorphism $F=\mathrm{id}\times g$. 
Note that $\varphi_\red$ is equal to its principal part, because we assumed that $\varphi$ is replaced by $\varphi_p$.
%
%

We may assume that $a$ is supported in an arbitrarily small neighbourhood of the stationary points of $\varphi$. Indeed, we may achieve this for a general amplitude $a$ by applying a cut-off in $y^{\prime\prime}$ and writing $a=\phi a + (1-\phi) a$. The oscillatory integral with amplitude $(1-\phi)a$ produces a term in $\dSmz(X, \Omega^{1/2}(X))$, by Remark \ref{rem:css}.

Therefore, choosing the support of $a$ small enough, we may perform the change of variables by the local diffeomorphism $F$ as in \eqref{eq:oscintequiv}. We write, motivated by Lemma \ref{lem:intdensity} and Example \ref{ex:embdball},
$$a_\red(\bx,\wby)\,\frac{|\dd \wby''|}{\rho_{\wtY}^{r} \cdot [h(\bx,\wby)]^r}=(F^*a)(\bx,\wby),$$
which is assumed supported in some compact subset of $(-\epsilon,\epsilon)^r$. Then $I_\varphi(a)$ is transformed into
$I_{\varphi_{\red}}(b)$
where
\begin{equation}
b(\bx,\rho_{Y},y^\prime)=\rho_{Y}^{-r}\int_{(-\eps,\eps)^r} e^{\frac{i}{2}\rho_X^{-1}\rho_{Y}^{-1}Q(y^{\prime\prime})} 
\Big(e^{i(F^*\varphi(\bx,\by)-\widetilde{\varphi}(\bx,\by))}\,a_\red(\bx,\by)\Big)\dd y^{\prime\prime}. \label{eq:fiberredsymb}
\end{equation}
%
%
We claim that
$b(\bx,\rho_{Y},y^\prime)$
is again a (density valued) amplitude. First, it is clear that $b$ decays rapidly at $(\bx,\rho_{Y},y^\prime)$ if $a$ decays rapidly at $(\bx,\rho_{Y},y^\prime,0)$. In particular, $b$ is smooth away from $\B$. 

We now we apply the stationary phase lemma \cite[Lem. 7.7.3]{Hormander1} to \eqref{eq:fiberredsymb}, which yields the asymptotic equivalence, as $\rho_Y\rho_X\rightarrow 0$, 
\begin{multline}
\label{eq:princred1}
b(\bx,\rho_Y,y^\prime)= \rho_X^{r/2}\rho_Y^{-r/2}|\det Q|^{-1/2} e^{\frac{i}{4}\pi \mathrm{sgn}(Q)} e^{i(F^*\varphi(\bx,\rho_Y,y^\prime,0)-\widetilde{\varphi}(\bx,\rho_Y,y^\prime,0))} a_\red(\bx,\rho_Y,y^\prime,0)  \\
+\cO\big(\rho_Y^{-m_\psi-\frac{r}{2}+1}\rho_X^{-m_e+\frac{r}{2}+1}\big).
\end{multline}
Similar asymptotics hold for all derivatives of $b$.
We may hence view $b$ as a (density valued) amplitude of the order 
\begin{equation}\label{eq:order-fiber}
    (m_e',m_\psi')=\left(m_e-\frac{r}{2},m_\psi+\frac{r}{2}\right).
\end{equation}
By Remark \ref{rem:strictness} we see that, away from the corner, $F^*\varphi-\widetilde{\varphi}$ vanishes at $\Cp$. Therefore, the principal part of $b$ does not depend on $\varphi$.
Hence, by comparision of principal parts, cf. Lemma \ref{lem:princpart}, \eqref{eq:princred1} reduces to 
\begin{equation}
\label{eq:princred}
b(\bx,\rho_Y,y^\prime)\sim \rho_X^{r/2}\rho_Y^{-r/2}|\det Q|^{-1/2} e^{\frac{i}{4}\pi \mathrm{sgn}(Q)} a_\red(\bx,\rho_Y,y^\prime,0)
\end{equation}
modulo terms of lower order.
\subsubsection{Elimination of excess}\label{subss:elexcess}
Assume now that $\varphi$ is a clean phase function of excess $e>0$. Near some point in $\Cp$, as described in Section \ref{sec:phaseexelim}, we may make the following geometric assumptions after application of some diffeomorphism $F$: We assume that $Y=\BB^{s-e}\times(-\epsilon,\epsilon)^e$ and that the fibers of $\Cp\rightarrow \Lp$ are given by constant $(\bx,\rho_Y,y')$ and arbitrary $y''$.
We proceed as in \cite{Treves} and define 
\begin{equation}\label{eq:wtredex}
	\wt{\varphi}(\rho_{X},x,\rho_{Y},y^\prime):=\varphi(\rho_{X},x,\rho_{Y},y^\prime,0).
\end{equation}
We observe that for any fixed $y^{\prime\prime}$ the phase function $\phi(y^{\prime\prime})$, defined as 
\begin{equation}\label{eq:phiypp}
	[\phi(y^{\prime\prime})](\bx,\rho_{Y},y^\prime)=\varphi(\bx,\rho_{Y},y^\prime,y^{\prime\prime}),
\end{equation}
is equivalent to $\wtp$. Indeed, since $\partial_{y''}\scdY\varphi=0$, the differential $\scHess_Y\phi(y^{\prime\prime})$
has the same signature as $\scHess_{\BB^{s-e}}\wt\varphi$ and both parametrize the same Lagrangian with the same number of phase variables $(s-e)$.
Therefore, Theorem \ref{thm:equivphase} guarantees the existence of a family of diffeomorphisms $G(y^{\prime\prime}):(\bx,\rho_Y,y')\mapsto (\bx,g(\bx,\rho_Y,y',y^{\prime\prime}))$
such that, defining $ \wt{G}\colon (\bx,\by)=(\bx,\rho_Y,y^\prime,y^{\prime\prime})\mapsto(\bx, g(\bx,\rho_Y,y',y^{\prime\prime}), y^{\prime\prime})$,
\begin{equation}\label{eq:diffeoG}
	\wt{G}^*\varphi-\wt{\varphi}
\end{equation}
is smooth everywhere, and vanishes on $\cC_\wtp$ away from the corner by Remark~\ref{rem:strictness}.
Then we may express $I_\varphi(a)$ as $I_{\wt\varphi}(b)$, where
\begin{equation}\label{eq:excesssymb}
	b(\bx,\rho_Y,y^\prime)=\rho_Y^{-e} \int_{(-\eps,\eps)^e} e^{i(\wt{G}^*\varphi-\wt{\varphi})(\bx,\rho_Y,y^\prime,y^{\prime\prime})} (\wt{G}^*a)_{\red}(\bx,\rho_Y,y^\prime,y^{\prime\prime})\,\dd y''
\end{equation}
and
$$ (\wt{G}^*a)_{\red}(\bx,\by)\,\frac{|\dd y''|}{\rho_{\wtY}^{e} \cdot [h(\bx,\by)]^e}=(\wt{G}^*a)(\bx,\by).$$ 
Since $\wt{G}^*\varphi-\wt\varphi$ is smooth, $b$ is again an amplitude of order 
\begin{equation}\label{eq:order-excess}
    (\tilde{m}_e,\tilde{m}_\psi)=\left(m_e,m_\psi+e\right).
\end{equation}
Notice that at points in $\Cp$ away from the corner, $\wt{G}^*\varphi-\wt{\varphi}$ vanishes and hence \eqref{eq:excesssymb} reduces to
\begin{equation}\label{eq:excesssymbbis}
	b(\bx,\rho_Y,y^\prime)=\rho_Y^{-e} \int_{(-\eps,\eps)^e} (\wt{G}^*a)_{\red}(\bx,\rho_Y,y^\prime,y^{\prime\prime})\,\dd y''.
\end{equation}
\subsection{The order of a Lagrangian distribution}\label{ssec:order}
We will now obtain the definition of the order of $I_\varphi(a)$ which is invariant with respect to all the three steps 
described above.
\begin{lem}
    The numbers $\mu_\psi = m_\psi + s/2 + e/2$ and $\mu_e = m_e - s/2 +e/2$ remain constant under reduction of fiber-variables and elimination of excess.
\end{lem}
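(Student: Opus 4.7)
The plan is to verify both invariances by direct substitution, using the two transformation rules already derived in the preceding subsections: formula \eqref{eq:order-fiber} for the reduction of fiber variables and formula \eqref{eq:order-excess} for the elimination of excess. Since both $\mu_e$ and $\mu_\psi$ are affine combinations of the data $(m_e, m_\psi, s, e)$, all I need to do is check that each of the two procedures updates these four numbers in a consistent way.

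For the reduction of fiber variables, Section \ref{ssbs:redfbr} produces from an amplitude of order $(m_e,m_\psi)$ on $X\times Y$ (with $\dim Y = s$ and excess $e$) a new phase function on $X\times\BB^{s-r}$ with unchanged excess $e$ and an amplitude of order $(m_e', m_\psi')=(m_e - r/2, m_\psi + r/2)$. Substituting the primed data into the definitions yields
\begin{align*}
\mu_e' &= \left(m_e - \tfrac{r}{2}\right) - \tfrac{s-r}{2} + \tfrac{e}{2} = m_e - \tfrac{s}{2} + \tfrac{e}{2} = \mu_e,\\
\mu_\psi' &= \left(m_\psi + \tfrac{r}{2}\right) + \tfrac{s-r}{2} + \tfrac{e}{2} = m_\psi + \tfrac{s}{2} + \tfrac{e}{2} = \mu_\psi,
\end{align*}
so the two $\pm r/2$ shifts coming from the stationary-phase factor cancel against the change in the fiber dimension.

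For the elimination of excess, Section \ref{subss:elexcess} replaces a clean phase function of excess $e$ in $s$ fiber variables by a non-degenerate one in $s-e$ fiber variables (so the new excess is $0$), whose amplitude has order $(\tilde m_e, \tilde m_\psi) = (m_e, m_\psi + e)$. Plugging in,
\begin{align*}
\tilde\mu_e &= m_e - \tfrac{s-e}{2} + \tfrac{0}{2} = m_e - \tfrac{s}{2} + \tfrac{e}{2} = \mu_e,\\
\tilde\mu_\psi &= (m_\psi + e) + \tfrac{s-e}{2} + \tfrac{0}{2} = m_\psi + \tfrac{s}{2} + \tfrac{e}{2} = \mu_\psi,
\end{align*}
so the shift by $+e$ in $m_\psi$ (coming from the factor $\rho_Y^{-e}$ in \eqref{eq:excesssymb}) is absorbed by the joint change $s\mapsto s-e$, $e\mapsto 0$.

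There is no real obstacle here: the genuinely analytic work, that is, the stationary-phase expansion yielding the prefactor $\rho_X^{r/2}\rho_Y^{-r/2}$ in \eqref{eq:princred1} and the plain integration in $y''$ yielding the prefactor $\rho_Y^{-e}$ in \eqref{eq:excesssymb}, has already been carried out. The content of the lemma is precisely the arithmetic observation that the combinations $\mu_e = m_e - s/2 + e/2$ and $\mu_\psi = m_\psi + s/2 + e/2$ are designed to absorb exactly these shifts, so that they yield well-defined invariants of the Lagrangian distribution rather than of the particular parametrization.
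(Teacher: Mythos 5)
Your proof is correct and follows essentially the same route as the paper: it substitutes the order shifts \eqref{eq:order-fiber} and \eqref{eq:order-excess}, together with the changes $s\mapsto s-r$, $e\mapsto e$ (fiber reduction) and $s\mapsto s-e$, $e\mapsto 0$ (excess elimination), into the definitions of $\mu_e$ and $\mu_\psi$ and checks the arithmetic. The paper's proof is exactly this verification, so no further comment is needed.
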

\begin{proof}
    Consider a Lagrangian distribution $A = I_\varphi(a)$ where $a$ has order $m_\psi, m_e$ and $\dim Y = s$ with excess $e$ and $r$ reduceable fiber variables.
    After the reduction of fiber, we obtain an amplitude $a'$ with order $m_e' = m_e -r/2, m_\psi' = m_\psi + r/2$ (cf. \eqref{eq:order-fiber}), with excess $e' = e$ and number of fiber variables $s' = s - r$.
    The elimination of excess yields an amplitude $a^\#$ with order $m_e^\# = m_e, m_\psi^\# = m_\psi + e$ (cf. \eqref{eq:order-excess}), excess $e^\# = 0$ and $s^\# = s - e$.
    It is now straightforward to check that
    \begin{alignat*}{2}
        m_\psi + s/2 + e/2 &= m_\psi' + s'/2 + e/2 & &= m_\psi^\# + s^\#/2+e^\#/2,\\
        m_e - s/2 + e/2 &= m_e' - s'/2 + e/2 & &= m_e^\# - s^\# / 2+e^\#/2.
    \end{alignat*}
\end{proof}
This shows that the tuple $(\mu_\psi, \mu_e)$ can be used to define the order of a Lagrangian distribution.

We still have the freedom to add arbitrary constants to both orders.
In order to choose these constants, we compare our class of Lagrangian distributions with H\"ormander's Lagrangian distributions and the Legendrian distributions of Melrose--Zworski~\cite{MZ}.
First, consider the Delta-distribution $\delta_0$, which is in the H\"ormander class $I^{d/4}$ and $\mu_\psi = d/2$. Therefore, we choose $m_\psi = \mu_\psi - d/4$ to obtain the same $\psi$-order for $\delta_0$.
Similarly, the constant function is a Legendrian distribution of order $-d/4$ and $\mu_e = 0$, and therefore we choose $m_e = \mu_e + d/4$.
Note that we use the opposite sign convention for the $m_e$-order then in \cite{MZ}.

\section{The principal symbol of a Lagrangian distribution}\label{sec:symb}
We will now define the principal symbol map $j^\Lambda_{m_e,m_\psi}$ on $I^{m_e,m_\psi}(X,\Lambda)$. Similarly to the classical theory,
it takes values in a suitable (density) bundle on $\Lambda$. This is coherent with the notion of principal symbol map $j_{m_e,m_\psi}$ for scattering
operators, see \cite{Melrose1,Melrose2}, as well as of principal part for classical $\SG$ symbols, see \cite{ES, Schulz},
which both provide smooth objects defined on $\Wt=\partial\scOverT^*X\supset\Lambda$. We adapt the construction in \cite{Treves} (see also \cite{Hormander4,HormanderFIO}), starting from the simplest case of local non-degenerate phase functions parametrizing $\Lambda$, up to the general case of local clean functions.

Let $\Lambda\subset\Wt$ be an $\sct$-Lagrangian, which on $B=X\times Y$
is locally parametrized  by a local non-degenerate phase function
$\varphi\in \rho_{Y}^{-1}\rho_X^{-1}\Sm(U)$, $U\subset B$. 
Let $a\in \rho_{Y}^{-m_{\psi}}\rho_X^{-m_{e}}\Sm\big(X\times Y, \scOmega^{1/2}(X)\times \scOmega^{1}(Y)\big)$ be
supported in $U$, and let $I_\varphi(a)$ be a (micro-)local representation of $u\in I^{m_e,m_\psi}(X,\Lambda)$ as a single oscillatory integral.

We now fix a $1$-density $\mu_X$ on $X$. Any choice of $1$ density $\mu_Y$ on $Y$ then trivializes the one-dimensional bundle $\Sm(X\times Y, \scOmega^{1/2}(X)\otimes\scOmega^{1}(Y))$, and any element is given by a multiple of $\rho_X^{-(d+1)/2}\rho_Y^{-s-1}\sqrt{\mu_X}\otimes\mu_Y$.
Any choice of coordinates $(\rho_Y,y)$ in $Y$ allows for us to express $\mu_Y$ locally as $\frac{\partial \mu_Y}{\partial (\rho_Y,y)}\,\dd\rho_Y\dd y$, meaning as having a smooth density factor with respect to the (local) Lebesgue measure. As such, we rewrite the amplitude $a\in\rho_Y^{-m_\psi}\rho_X^{-m_e}\Sm(X\times Y, \scOmega^{1/2}(X)\otimes\scOmega^{1}(Y))$ in any choice of local coordinates as
\begin{align}
	\label{eq:canampl}
	\rho_Y^{m_\psi}\rho_X^{m_e}a(\bx,\by)&=\ap(\bx,\by)\,\rho_X^{-(d+1)/2}\rho_Y^{-s-1}\sqrt{\mu_X}\dd\rho_Y\dd y.
\end{align}
for $\ap\in \Sm(X\times Y)$.

\subsection{Non-degenerate equivalent phase functions}\label{subs:ndg}
As above (cf. \eqref{eq:scdxexpl}), when $U$ is a neighbourhood of a point close to the boundary $\B$, we can there identify $\scd_Y\varphi$ with the map,
\[
	(\bx,\by)\mapsto\Phi(\bx,\by)=\big(-f(\bx,\by)+\rho_Y\partial_{\rho_Y}f(\bx,\by) \quad \partial_yf(\bx,\by)\big) \in\RR^s,
\]
locally well-defined on a neighbourhood of $C_\varphi$ within $U$.

In view of the non-degeneracy of $\varphi$, $\Phi$ has a surjective differential, so that we can consider the pullback of distributions $d_\varphi=\Phi^*\delta$, with $\delta=\delta_0\in\cD^\prime(\RR^s)$
the Dirac distribution, concentrated at the origin, on $\RR^s$ (cf. \cite[Ch. VI]{Hormander1}). More explicitly, choosing functions $(t_1, \dots, t_d)=:t$, which
restrict to a local coordinate system (up to the boundary) on $C_\varphi$, the pull-back $d_\varphi$ can be expressed locally as the density
\[
	d_\varphi=\left| \det\frac{\partial(t,\Phi)}{\partial(\bx, \by )}\right|^{-1}\dd t = \Delta_\varphi(t)\, \dd t.
\]
Consider another local non-degenerate
phase function $\wtp$ parametrizing $\Lambda$, 
defined on an open subset $\wtU\subset X\times \wtY$, such that $\wtp=F^*\varphi$, with a (local, fibered) 
diffeomorphism $F=\mathrm{id}\times g\colon X\times\wtY\to X\times Y$.
Since $F$ is a $\sct$-map, there exists a function $h \in \Sm(X\times Y)$ such that $(F^*\rho_Y)(\bx,\wby)=\rho_{\wtY}\cdot h(\bx,\wby)$.

As above, we identify $\scd_Y\wt\varphi$ with the map $\wtP$ and define $d_\wtp$ and $\Delta_\wtp(\widetilde{t})$ in terms of the functions ${\wt t}_j=F^*t_j$, which are
local coordinates on $C_\wtp$, provided $\wtU$ is small enough.

In the sequel, we show how objects defined in these two choices $(t,\varphi)$ and $(\wt t,\wt \varphi)$ are related. For that, we implicitly assume all objects evaluated at corresponding points $(\bx,\by)\in C_\varphi$ (parametrized by $t$) and $(\bx,\wt \by)=F(\bx,\by)\in C_{\wtp}$ (parametrized by $\wt t$).

\begin{lem}\label{lem:trDelta}
	The functions $\Delta_\wtp(\widetilde{t})$ and $\Delta_\varphi(t)$ are related by
    	\[
        		\Delta_\wtp(\widetilde{t}) = 
          	h(\bx,\by)^{s+1} \left| \det\frac{\partial g(\bx,\wby)}{\partial \wby }\right|^{-2} \, \Delta_\varphi(t(\widetilde{t})).
    	\]
\end{lem}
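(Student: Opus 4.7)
The plan is to compute the Jacobian $\partial(\wt t,\wtP)/\partial(\bx,\wby)$ at a point of $\mathcal{C}_\wtp$ and compare it with $\partial(t,\Phi)/\partial(\bx,\by)$ at the corresponding point of $\Cp$ via the chain rule, exploiting the fact that $\Phi\circ F$ vanishes on $\mathcal{C}_\wtp$.

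First, I will derive an algebraic relation between $\wtP$ and $\Phi\circ F$. Writing $\varphi = (\rho_X\rho_Y)^{-1}f$ and $\wtp = (\rho_X\rho_{\wtY})^{-1}\wt f$, the identity $\wtp=F^*\varphi$ combined with $F^*\rho_X=\rho_X$ and $F^*\rho_Y=\rho_{\wtY}\,h$ yields $\wt f = h^{-1}F^*f$. Differentiating $\wt f$ in the $\wby$-directions and substituting the identifications $\rho_Y\partial_{\rho_Y}f = f+\Phi_0$ and $\partial_{y_k}f=\Phi_k$ (valid on $Y^o$), a direct computation produces the factorization
\[
    \wtP = \frac{1}{h}\,K(\bx,\wby)\,(\Phi\circ F),
\]
where $K$ is an explicit $s\times s$ matrix whose entries are first derivatives of $g$ and $h$ with respect to $\wby$. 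Geometrically, $h^{-1}K$ is the matrix of the pullback of scattering one-forms on $Y$ by the scattering map $g$, expressed in the scattering frames. An explicit decomposition of the form
\[
    K = \mathrm{diag}(\rho_{\wtY},1,\dots,1)\cdot\frac{\partial g}{\partial\wby}\cdot\mathrm{diag}\!\left(\frac{1}{\rho_{\wtY}h},1,\dots,1\right),
\]
valid in $\rho_{\wtY}>0$, then yields the determinant identity $\det K = h^{-1}\det(\partial g/\partial\wby)$; being a polynomial identity between smooth functions, it extends up to the boundary by continuity.

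Since $\Phi\circ F$ vanishes on $\mathcal{C}_\wtp$, differentiation of $\wtP = h^{-1}K(\Phi\circ F)$ at a point of $\mathcal{C}_\wtp$ kills the contributions of $\dd(h^{-1})$ and $\dd K$, leaving
\[
    \left.\frac{\partial\wtP}{\partial(\bx,\wby)}\right|_{\mathcal{C}_\wtp} = \frac{1}{h}\,K\cdot\left.\frac{\partial\Phi}{\partial(\bx,\by)}\right|_F\cdot JF.
\]
Combined with $\partial\wt t/\partial(\bx,\wby) = (\partial t/\partial(\bx,\by))|_F\cdot JF$, which follows immediately from $\wt t = t\circ F$, this yields the block factorization
\[
    \frac{\partial(\wt t,\wtP)}{\partial(\bx,\wby)} = \begin{pmatrix}I_d & 0 \\ 0 & h^{-1}K\end{pmatrix}\cdot\left.\frac{\partial(t,\Phi)}{\partial(\bx,\by)}\right|_F\cdot JF.
\]
Taking absolute values of determinants, with $|\det JF| = |\det(\partial g/\partial\wby)|$ from the block form of $JF$, and substituting the identity $\det K = h^{-1}\det(\partial g/\partial\wby)$, then inverting gives the desired formula.

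The main obstacle lies in deriving the matrix $K$: the scattering differential's idiosyncratic combination $-f+\rho_Y\partial_{\rho_Y}f$ and the $\rho_Y$-rescaling of the scattering frame $\{\dd\rho_Y/\rho_Y^2,\dd y_k/\rho_Y\}$ interact nontrivially with the change $\rho_Y=\rho_{\wtY}h$, so a careful accounting of $h$ and $\rho_{\wtY}$ factors is required in both $K$ itself and in its decomposition as a product of $\partial g/\partial\wby$ with diagonal matrices. Once these computations are in place, the remaining steps are routine chain-rule and block-matrix determinant manipulations.
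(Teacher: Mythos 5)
Your proposal follows essentially the same route as the paper's proof: the paper writes $\wtP=(\Phi\circ F)\cdot M_{\Phi\wtP}$ with $|\det M_{\Phi\wtP}|=h^{-s-1}\left|\det\frac{\partial g}{\partial\wby}\right|$ (your $h^{-1}K$ is precisely ${}^{t}M_{\Phi\wtP}$), differentiates this relation on $\mathcal{C}_{\wtp}$ where $\Phi\circ F=0$ so that only the chain-rule term survives, and combines it with $\wt{t}=t\circ F$ and the block form of $JF$ exactly as you do. The only inaccuracy is your explicit decomposition of $K$: the correct relating matrix involves ${}^{t}\bigl(\frac{\partial g}{\partial\wby}\bigr)$ with the $h$- and $\rho_{\wtY}$-weights attached to the $\rho$-row and $\rho$-column rather than the arrangement you wrote, but this does not change the determinant $\det K=h^{-1}\det\frac{\partial g}{\partial\wby}$, which is all the argument uses.
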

%
\begin{proof}[Proof of Lemma \ref{lem:trDelta}]
By direct computation, $\wtP$ and $\Phi$ are related by a matrix $M_{\Phi\wtP}$ via
\begin{equation}\label{eq:MPtP}
	\wtP(\bx,\wby)=  \Phi(F(\bx,\wby)) \cdot M_{\Phi\wtP}(\bx,\wby),
\end{equation}
where
\begin{align*}
	M_{\Phi\wtP}(\bx,\wby)&=
	\begin{pmatrix}
		[h(\bx,\wby)]^{-2} \dfrac{\partial\rho_Y}{\partial\rho_{\wtY}}(\bx,\wby) 
		& 
		[h(\bx,\wby)]^{-2} \rho_{\wtY}^{-1}\dfrac{\partial\rho_Y}{\partial \wty}(\bx,\wby)
		\\
		[h(\bx,\wby)]^{-1} \rho_{\wtY} \dfrac{\partial y}{\partial\rho_{\wtY}}(\bx,\wby) \rule{0mm}{9mm}
		&		
		[h(\bx,\wby)]^{-1} \dfrac{\partial y}{\partial \wty}(\bx,\wby)
	\end{pmatrix}
\end{align*}
and 
\[
	|\det M_{\Phi\wtP}(\bx,\wby)|=h(\bx,\wby)^{-s-1}\cdot\left| \det \frac{\partial g(\bx,\wby)}{\partial \wby} \right|.
\]
Differentiating \eqref{eq:MPtP}, we obtain, using that $\wtP(\bx,\by)=\Phi(F(\bx,\wby))=0$ on $C_{\widetilde{\varphi}}$,
\begin{equation}\label{eq:wpJ}
	\begin{aligned}
	\frac{\partial\wtP}{\partial (\bx,\wby)}(\bx,\wby)
	&={{}^t}\!M_{\Phi\wtP}(\bx,\wby) \cdot \frac{\partial(\Phi(F(\bx,\wby)))}{\partial(\bx,\wby)} 
	\\
	&={{}^t}\!M_{\Phi\wtP}(\bx,\wby) \cdot \left[\frac{\partial\Phi}{\partial(\bx,\by)}(F(\bx,\wby))\right]\cdot\frac{\partial F}{\partial(\bx,\wby)}(\bx,\wby).
	\end{aligned}
\end{equation}
Furthermore, we have
\[
	\frac{\partial\wtl}{\partial(\bx,\wby)}(\bx,\wby)=\left[\frac{\partial t}{\partial(\bx,\by)}(F(\bx,\wby))\right]\cdot\frac{\partial F}{\partial(\bx,\wby)}(\bx,\wby).
\]
Summing up, we find
\begin{equation}\label{eq:wtwpJ}
\begin{aligned}
	\frac{\partial(\wtl,\wtP)}{\partial(\bx, \wby)}(\bx, \wby) &= 
	\mathrm{diag}(\mathbbm{1}_{d}, {{}^t}\!M_{\Phi\wtP}(\bx,\wby)) \cdot \left[ \frac{\partial(t,{\Phi})}{\partial(\bx, {\by})}(F(\bx,\wby))\right] \cdot 
	\frac{\partial F}{\partial (\bx,\wby)}(\bx,\wby),
\end{aligned}
\end{equation}
which in turn implies, using $F=\id\times g$,
\begin{equation*}
	\Delta_{\wtp}(\widetilde{t}) = 
	\left| 	\frac{\partial(\wtl,\wtP)}{\partial(\bx, \wby)}(\bx, \wby) \right|^{-1} 
	= [h(\bx,\wby)]^{s+1} \left| \det \frac{\partial g(\bx,\wby)}{\partial \wby} \right|^{-2} \Delta_\varphi(t(\widetilde{t})),
\end{equation*}
as claimed.
\end{proof}

We define
\begin{equation}\label{eq:gamma}
	w_\varphi=(\rho_X^{-m_e}\rho_Y^{-m_\psi-(s+1)/2} \ap)|_{C_\varphi} \cdot \sqrt{|d_\varphi|},
\end{equation}
with $\ap$ given in \eqref{eq:canampl}, which is a half-density on (the interior of) $C_\varphi$.

%
To define $w_\wtp$ accordingly, we check that $I_\varphi(a)$ transforms under the action of $F$ as
\begin{align*}
	\int_Y e^{i\varphi}a&=\int_{\wtY} e^{i(F^*\varphi)(\bx,\wby)} 
	F^*\!\!\left[\rho_X^{-m_e}\rho_{Y}^{-m_\psi}\ap\,\rho_X^{-(d+1)/2}\rho_Y^{-s-1}\sqrt{\mu_X}\otimes \dd\rho_Y \dd y\right](\bx,\wby)
	\\
	&= \int_{\wtY} e^{i\wtp(\bx,\wby)}\rho_X^{-m_e}\rho_{\wtY}^{-m_\psi}\wtap(\bx,\wby)\,
	(\rho_X^{-(d+1)/2}\rho_{\wtY}^{-s-1}\sqrt{\mu_X}\otimes \dd\rho_\wtY \dd\wty),
\end{align*}
where
\begin{equation}\label{eq:trap}
	\wtap(\bx,\wby)=\ap(F(\bx,\wby)) h(\bx,\wby)^{-m_\psi-s-1} \left| \det \frac{\partial g(\bx,\wby)}{\partial \wby} \right|.
\end{equation}
We define, coherently with \eqref{eq:gamma}, $w_\wtp = \rho_X^{-m_e}\rho_{\wtY}^{-m_\psi-(s+1)/2}\wtap\sqrt{|d_{\wtp}|}$.
\begin{lem}\label{lem:wphi}
    The half-densities $w_\wtp$ and $w_\varphi$ are related by
    \begin{align*}
        w_\wtp =
    	F^*w_{\varphi}
    \end{align*}
    in (the interior of) $C_{\wtp}$.
\end{lem}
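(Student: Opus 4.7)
The plan is to carry out a bookkeeping calculation that combines Lemma~\ref{lem:trDelta} (transformation law for $\Delta_\varphi$) with the amplitude transformation~\eqref{eq:trap} and the scattering identity $F^*\rho_Y=\rho_{\wtY}h$. There is no genuine geometric obstacle; the work is to verify that the powers of $h$ and of $|\det(\partial g/\partial \wby)|$ coming from the three different contributions (rescaled bdf, transformed amplitude, and transformed Dirac pullback) cancel in exactly the right way.

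First I would unpack $F^*w_\varphi$ directly from \eqref{eq:gamma}. Since $F=\id\times g$ acts only on the fiber variables, one has $F^*\rho_X=\rho_X$ and $F^*\rho_Y=\rho_{\wtY}\,h$, so
\[
F^*w_\varphi=\rho_X^{-m_e}\rho_{\wtY}^{-m_\psi-(s+1)/2}\,h^{-m_\psi-(s+1)/2}\,(F^*\ap)\big|_{C_{\wtp}}\cdot F^*\sqrt{|d_\varphi|}.
\]
On the amplitude side, formula~\eqref{eq:trap} gives $\wtap=h^{-m_\psi-s-1}\,|\det(\partial g/\partial \wby)|\,(F^*\ap)$, producing a factor $h^{-m_\psi-s-1}|\det(\partial g/\partial\wby)|$ in $w_{\wtp}$.

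Next I would handle the density factor. Since $t$ are local coordinates on $C_\varphi$ and $\wtl=F^*t$ are by construction local coordinates on $C_{\wtp}$, the diffeomorphism $F:C_{\wtp}\to C_\varphi$ is the identity in these coordinates, hence $F^*(\dd t)=\dd\wtl$ as Lebesgue $1$-densities. Combining this with Lemma~\ref{lem:trDelta} yields
\[
|d_{\wtp}|=\Delta_{\wtp}(\wtl)\,\dd\wtl=h^{s+1}\,\bigl|\!\det(\partial g/\partial \wby)\bigr|^{-2}\,F^*|d_\varphi|,
\]
and therefore $\sqrt{|d_{\wtp}|}=h^{(s+1)/2}\,|\det(\partial g/\partial \wby)|^{-1}\,F^*\sqrt{|d_\varphi|}$.

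Finally I would substitute both identities into the definition of $w_{\wtp}$. The factor $|\det(\partial g/\partial\wby)|$ from \eqref{eq:trap} cancels with the inverse Jacobian coming out of $\sqrt{|d_{\wtp}|}$, and the remaining powers of $h$ add up to
\[
h^{-m_\psi-s-1}\cdot h^{(s+1)/2}=h^{-m_\psi-(s+1)/2},
\]
which is exactly the $h$-factor appearing in $F^*w_\varphi$ above. The equality $w_{\wtp}=F^*w_\varphi$ on $C_{\wtp}$ then follows. The only point demanding care is the interpretation of $d_\varphi$ as a genuine $1$-density on $C_\varphi$ so that the pullback formula $F^*(\Delta_\varphi(t)\,\dd t)=\Delta_\varphi(t(\wtl))\,\dd\wtl$ holds with no extra Jacobian; once this is fixed, the rest is algebra.
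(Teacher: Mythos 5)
Your proof is correct and follows essentially the same route as the paper: both combine the amplitude transformation \eqref{eq:trap} with Lemma~\ref{lem:trDelta} and use that $\wtl=F^*t$ makes the coordinate half-density $\sqrt{|\dd t|}$ pull back to $\sqrt{|\dd\wtl|}$, so that the powers of $h$ and the Jacobian factors cancel to give $w_\wtp=F^*w_\varphi$. The only difference is organizational — you expand $F^*w_\varphi$ and $\sqrt{|d_\wtp|}$ separately and cancel, while the paper folds the same cancellation into one display — which is not a substantive distinction.
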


\begin{proof}
We obtain from \eqref{eq:trap} and Lemma \ref{lem:trDelta} that
\begin{align*}
    \wtap(\bx,\wby)\left|\Delta_\wtp(\widetilde{t})\right|^{1/2} =\ap(F(\bx,\wby)) h(\bx,\wby)^{-m_\psi-(s+1)/2} \left| \Delta_\varphi(t(\widetilde{t}))\right|^{1/2}.
\end{align*}
Then, using the local coordinates $t$ and $\wt t=F^*t$ introduced above, on $C_\wtp$ we find
\begin{align*}
    w_\wtp &= F^*\hspace*{-3pt}\left(\rho_X^{-m_e}\rho_Y^{-m_\psi-(s+1)/2} \ap\right) \left| \Delta_\varphi(t(\widetilde{t}))\right|^{1/2} \sqrt{\left|\dd \widetilde{t}\right|}\\
    &= 
    F^*\hspace*{-3pt}\left(\rho_X^{-m_e}\rho_Y^{-m_\psi-(s+1)/2} \ap 
    \left|\Delta_\varphi(t)\right|^{1/2}
\sqrt{|\dd t|}\right)= 
     F^*w_{\varphi}.
\end{align*}
\end{proof}
%

As a half-density valued amplitude, $w_\varphi$ is  
of order $(m_e,m_\psi-(s+1)/2)$, as shown by the computations above.
In accordance with the definition of the principal part (cf. Definition \ref{def:princpart}), we set
\[
	\wpp=\left.\left(\ap \cdot \sqrt{|d_\varphi|}\right)\right|_{\Cp}.
\]
As seen above, $\wpp$ transforms to 
$\wtpp$ under the pull-back via
$F$. Since $\lambda_\varphi$ is a local diffeomorphism $C_\varphi\to L_\varphi$, we can also consider
\[
	\alpha_\varphi=(\lambda_\varphi)_*(\wpp),
\]
which yields a local half-density on $\Lambda_\varphi$. The fact that, for the two
equivalent phase functions $\varphi$ and $\wtp$, we have 
$\lambda_\wtp=\lambda_{\varphi}\circ F$, together with
the transformation properties of $\wpp$, shows
that
\[
	\alpha_\wtp=\alpha_{\varphi}=\alpha,
\]
that is, $\alpha_\wtp$ and $\alpha_\varphi$ are equivalent local
representations of a half-density $\alpha$ defined on $\Lambda$,
in the local parametrizations $\Lambda_\wtp$ and $\Lp$,
respectively.

We now prove that the same holds true if $\wtp$ is merely a non-degenerate phase function equivalent to $\varphi$ in the sense of Definition \ref{def:phequiv}.
First, if we repeat the construction of $\sqrt{|d_\wtp|}$ described above, all the computations remain valid modulo terms, generated by $\wtP$, 
which contain an extra factor $\rho_X\rho_{\wtY}$. This is due to 
\begin{align*}
	F^*\varphi-\wtp&\in\Sm(\wtU)
	\\
	&\Leftrightarrow
	\rho_X^{-1}\rho_{\wtY}^{-1}
	\widetilde{f}(\bx,\wby)
	=\rho_X^{-1}\rho_{\wtY}^{-1}h(\bx,\wby)^{-1}
	(F^*f)(\bx,\wby)+g(\bx,\wby),
	g\in\Sm(\wtU)
	\\
	&\Leftrightarrow
	\widetilde{f}(\bx,\wby)
	=h(\bx,\wby)^{-1}(F^*f)(\bx,\wby)
	+\rho_X\rho_{\wtY} g(\bx,\wby),
	g\in\Sm(\wtU).
\end{align*}
Then, by rescaling $w_{\wtp}$ through multiplication by $\rho_X^{m_e}\rho_{\wtY}^{m_\psi+(s+1)/2}$ and then restricting $\wpp$ on $\mathcal{C}_\wtp$, 
such additional terms identically vanish. 

Moreover, by Lemma \ref{lem:phpequiv} and Remark \ref{rem:strictness},
we know that, in a neighbourhood $\wtU$ of any point in the interior of
$\mathcal{C}_\wtp^e$ or $\mathcal{C}_\wtp^\psi$, which does not intersect 
$\mathcal{C}_\wtp^{\psi e}$, it can be assumed, after passage to the principal parts, that $\wtp=F^*\varphi$ on $\mathcal{C}_\wtp\cap\partial \wtU$, see Section \ref{sec:moves}. It follows that the factor 
$\exp(i(F^*\varphi-\wtp))$, appearing in $\wtap$ (cf. \eqref{eq:oscintequiv}) also disappears, away from the corner, when 
restricting to the faces $\mathcal{C}_\wtp^e$ or $\mathcal{C}_\wtp^\psi$. 

Finally, we observe that $\wpp$ and $\wtpp$ are obtained 
as restrictions of smooth objects on $X\times Y$ and $X\times \wt Y$ to their respective boundaries. As such, their transformation behavior extends, by continuity, to the corner as well, 
producing smooth objects on $\Cp$ and $\mathcal{C}_\wtp$. 
By push-forward
through $\lambda_\wtp$ and $\lp$, we find again that 
$\alpha_\wtp=\alpha_\varphi=\alpha$ locally on
$\Lambda_\wtp=\Lambda_\varphi=\Lambda$.

\subsection{Non-degenerate phase functions, reduction of the fiber}\label{subs:ndgfbred}
We now consider a $\varphi$ such that reduction of fiber variables, see Section \ref{subs:fbred}, is possible. By the argument in Section \ref{subs:ndg}, we may then write $I_\varphi(a)=I_{\pred}(b)$
with $b$ from \eqref{eq:fiberredsymb}.
We now compare $\alpha_\varphi$ to the analogously defined half-density $\beta_\pred$. We can
replace the phase function $\varphi$ by the equivalent phase function given in \eqref{eq:wtp}, and this does not affect $\alpha_\varphi$. Hence we may assume that $\varphi$ is of the form $\varphi(\bx,\by)=\pred(\bx,\by')+\frac{1}{2}\rho_X^{-1}\rho_Y^{-1} \langle Qy'', y''\rangle$.

As such, we assume, in this splitting of coordinates, $C_\varphi\subset\{(\bx,\by',0)\}$. 
We find:
\begin{lem}
	\label{lem:dtransfnondeg}
	Under the identification $C_{\pred}\times\{0\}=C_\varphi$, we have
	\begin{equation*}
		\sqrt{|d_{\varphi}|}=|\det Q|^{-\frac{1}{2}} \sqrt{|d_{\pred}|}.
	\end{equation*}
\end{lem}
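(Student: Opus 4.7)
The plan is to compute the Jacobian $\frac{\partial(t,\Phi)}{\partial(\bx,\by)}$ appearing in the definition of $d_\varphi$, exploit the explicit splitting $\varphi = \pred + \tfrac{1}{2}\rho_X^{-1}\rho_Y^{-1}\langle Qy'',y''\rangle$ to reveal a block-diagonal structure at points of $\Cp = C_{\pred}\times\{0\}$, and then apply multiplicativity of determinants.

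First, I would set $f = \rho_X\rho_Y\varphi$ and $\fred = \rho_X\rho_Y\pred$, so that $f(\bx,\by) = \fred(\bx,\by') + \tfrac{1}{2}\langle Qy'',y''\rangle$. Denoting by $\Pred(\bx,\by') = (-\fred + \rho_Y\partial_{\rho_Y}\fred,\,\partial_{y'}\fred)$ the analogue of $\Phi$ for $\pred$, a direct computation in the splitting $\by = (\by',y'')$ gives
\[-f + \rho_Y\partial_{\rho_Y}f = -\fred + \rho_Y\partial_{\rho_Y}\fred - \tfrac{1}{2}\langle Qy'',y''\rangle,\qquad \partial_{y'}f = \partial_{y'}\fred,\qquad \partial_{y''}f = Qy''.\]
Next, I would choose $t$ to be local coordinates on $C_{\pred}$, extended smoothly to $B$ in a manner independent of $y''$; this is harmless, since under the identification $\Cp = C_{\pred}\times\{0\}$ no $y''$-dependence is forced along $\Cp$.

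With these preparations in place, I evaluate $\partial(t,\Phi)/\partial(\bx,\by)$ at a point of $\Cp$, i.e.\ at $y''=0$. The $\partial_{y''}$-derivatives of $t$ vanish by our choice of extension; those of $\partial_{y'}f = \partial_{y'}\fred$ vanish because $\fred$ is $y''$-independent; and those of the first row $-f+\rho_Y\partial_{\rho_Y}f$ equal $-Qy''$, which also vanishes at $y''=0$. Conversely, $Qy''$ has no $(\bx,\by')$-dependence while $\partial_{y''}(Qy'') = Q$. Hence along $\Cp$,
\[\frac{\partial(t,\Phi)}{\partial(\bx,\by)} = \begin{pmatrix} \frac{\partial(t,\Pred)}{\partial(\bx,\by')} & 0 \\ 0 & Q \end{pmatrix},\]
and block-diagonality yields $\Delta_\varphi(t) = |\det Q|^{-1}\,\Delta_\pred(t)$, so that $|d_\varphi| = |\det Q|^{-1}\,|d_\pred|$. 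Taking square roots gives the stated identity. No substantive obstacle is anticipated; the only care-point is the orderly bookkeeping of the $y''$-contributions generated by the quadratic summand, which after restriction to $y''=0$ completely decouple from the $(\bx,\by')$-block.
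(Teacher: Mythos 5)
Your proposal is correct and follows essentially the same route as the paper: both compute $\partial(t,\Phi)/\partial(\bx,\by)$ from the explicit splitting $f=\fred+\tfrac12\langle Qy'',y''\rangle$, observe the block structure at $\Cp$ (the paper only needs the vanishing bottom-left blocks, i.e.\ block-triangularity, whereas you additionally normalize $t$ to be $y''$-independent to get full block-diagonality), and conclude via multiplicativity of the determinant. The extra normalization is harmless but not needed.
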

\begin{proof}
We compute
\begin{align*}
	\Phi(\bx,\by)
	&=\big(-\fred(\bx,\byp)+\rho_Y\partial_{\rho_Y}\fred(\bx,\byp)\quad\partial_{y^\prime}\fred(\bx,\byp)\quad 0\big)\\
	&\quad+\big(-\dfrac{1}{2}\langle Qy'', y''\rangle\quad 0 \quad \partial_{\ypp}Q(\ypp)\big)\\
	&=:(\Pred(\bx,\byp)\quad0)
	+\left(\Psi(\ypp)\quad Q\ypp\right)\in\RR^{s-r}\times\RR^{r}.
\end{align*}
Therefore,
\begin{align}
	\nonumber
	\frac{\partial(t,\Phi)}{\partial(\bx,\by)}(\bx,\by)&=
	\begin{pmatrix}
		\dfrac{\partial t}{\partial\bx}(\bx,\by) & \dfrac{\partial t}{\partial\byp}(\bx,\by) & \dfrac{\partial t}{\partial \ypp}(\bx,\by)
	\\
		\rule{0mm}{9mm}\dfrac{\partial\Pred}{\partial\bx}(\bx,\byp) & \dfrac{\partial\Pred}{\partial\byp}(\bx,\byp) & -\dfrac{1}{2}\dfrac{\partial \Psi}{\partial \ypp}(\ypp)
	\\
		\rule{0mm}{6mm}0 & 0 & Q
	\end{pmatrix}.
\end{align}
Consequently,
\begin{align*}
	\sqrt{|d_{\varphi}|}&=	\left|\det\frac{\partial(t,\Phi)}{\partial(\bx,\by)}\right|^{-1/2}_{C_\wtp}
	\sqrt{|dt|}
	\\
	&=\left|\det\frac{\partial (t,\Pred)}{\partial(\bx,\byp)}\right|^{-\frac{1}{2}}_{C_{\pred}}\cdot|\det Q|^{-\frac{1}{2}}
	\sqrt{|dt|}
	\\
	&=|\det Q|^{-\frac{1}{2}}	\sqrt{|d_{\pred}|}.
\end{align*}
\end{proof}
Notice that\footnote{Observe that $\mathfrak{a}_{red}$ is obtained by splitting of the density and weight factors in two steps.} $\mathfrak{a}=\mathfrak{a}_{\red}$. We compute, by \eqref{eq:princred1}, modulo amplitudes of lower order,
\begin{multline}
	\label{eq:orderb}
	b(\bx,\byp)= \rho_X^{-m_e+r/2}\rho_Y^{-m_\psi-r/2} |\det Q|^{-1/2} e^{i\frac{\pi}{4} \mathrm{sgn}(Q)} \ap(\bx,\byp,0)
	 \sqrt{\mu_X} (\rho_Y^{-(s-r+1)/2}|d\byp|).
\end{multline}
We observe that $b$ is an amplitude of order $(m_e-r/2, m_\psi+r/2)$ and find
\begin{align*}
	\bp(\bx,\byp)&=|\det Q|^{-1/2} e^{i\frac{\pi}{4} \mathrm{sgn}(Q)} \ap(\bx,\byp,0) + \cO\big(\rho_X \rho_Y\big),
\end{align*}
which implies, using Lemma \ref{lem:dtransfnondeg}, 
\begin{align*}
	\wredp&=\left.\left( \bp(\bx,\byp) \sqrt{|d_{\pred}|} \right)\right|_{\mathcal{C}_\pred}
	\\
	&=e^{i\frac{\pi}{4} \mathrm{sgn}(Q)}\left.\left( \mathfrak{a}(\bx,\by) \sqrt{|d_\varphi|}\right) 
	\right|_{\mathcal{C}_\wtp}
	\\
	&=e^{i\frac{\pi}{4} \mathrm{sgn}(Q)}\wpp.
\end{align*}
This, in turn, finally gives
\[
	\beta_{\pred}=(\lambda_{\pred})_*(\wredp)=e^{i\frac{\pi}{4} \mathrm{sgn}(Q)}\cdot(\lambda_\varphi)_*(\wpp)=e^{i\frac{\pi}{4} \mathrm{sgn}(Q)}\cdot\alpha_\varphi.
\]
\subsection{Clean phase functions, elimination of the excess}\label{subs:clnphf}
We now proceed with the last reduction step, namely, we consider a clean phase function and eliminate its excess. As in Section \ref{subss:elexcess}, we assume $Y=\BB^{s-e}\times(-\epsilon,\epsilon)^e$ with the fibers of $\Cp\rightarrow \Lp$ given by constant $(\bx,\rho_Y,y')$ and arbitrary 
$y''\in(-\epsilon,\epsilon)^e$.

Switching to the phase function $\wt{\varphi}$ in \eqref{eq:wtredex}, we may write $I_\varphi(a)=I_{\wt\varphi}(b)$ with $b$ defined in \eqref{eq:excesssymb}. We apply the
construction of the previous section, and obtain 
the density $\beta_{\wt{\varphi}}=(\lambda_{\wtp})_*\left(\bp\cdot \sqrt{|d_\wtp|}\right)_{\mathcal{C}_\wtp}$ from the data $(\wtp,b)$.

Alternatively, we may study the parameter dependent family of oscillatory integrals 
$I_{\phi(y^{\prime\prime})}(a(y^{\prime\prime}))$ with phase functions $\phi(y^{\prime\prime})$ defined in \eqref{eq:phiypp}
%
and amplitudes
\[
	a(y^{\prime\prime})\colon(\bx,\rho_Y,y^\prime)\mapsto \rho_Y^{-e}\,a(\bx,\rho_Y,y^\prime,y^{\prime\prime})=\rho_Y^{-e}\,a(\bx,\by),
\]
with corresponding principal parts $\ap(y^{\prime\prime})$. Since $\phi(y^{\prime\prime})$ is non-degenerate, we can define the 
parameter dependent family of half-densities on $\Lambda$
$$
	\alpha_\phi(y^{\prime\prime})=(\lambda_{\phi(y^{\prime\prime})})_*\left(\ap(y^{\prime\prime}) \cdot \sqrt{|d_{\phi(y^{\prime\prime})}|}\right)_{\mathcal{C}_{\phi(y^{\prime\prime})}},
$$ 
and finally set
\begin{equation}\label{eq:prsymbexc}
	\gamma_\wtp=\int_{(-\varepsilon,\varepsilon)^e}\alpha_\phi(y^{\prime\prime})\,dy^{\prime\prime}.
\end{equation}
\begin{prop}
    The half-densities on $\Lambda_{\wtp}=\Lambda_{\varphi}=\Lambda$ given by $\gamma_\wtp$ and $\beta_\wtp$ coincide.
\end{prop}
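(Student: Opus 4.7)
My strategy is to transport the $y''$-parametric data $(\phi(y''), a(y''))$ to the fixed, $y''$-independent critical manifold $\mathcal{C}_\wtp$ via the family of diffeomorphisms $G(y'')$ of \eqref{eq:diffeoG}, and then commute the $y''$-integration with the push-forward along $\lambda_\wtp$. On $\mathcal{C}_\wtp$ the resulting integrand will be recognized as precisely $\bp\sqrt{|d_\wtp|}$.

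First, for each $y''\in(-\varepsilon,\varepsilon)^e$, the phase $\phi(y'')$ is non-degenerate, parametrizes $\Lambda$, and is equivalent to $\wtp$ via $G(y'')$: by construction $G(y'')^*\wtp - \phi(y'')$ is smooth and vanishes on $\mathcal{C}_{\phi(y'')}$ away from the corner (Remark \ref{rem:strictness}). Applying the transformation formulas of Section \ref{subs:ndg} pointwise in $y''$ yields
\[
\alpha_\phi(y'') = (\lambda_\wtp)_*\!\left.\mathfrak{w}(y'')\right|_{\mathcal{C}_\wtp},
\]
where $\mathfrak{w}(y'')$ is, in the coordinates on $\mathcal{C}_\wtp$, the pull-back by $G(y'')$ of $\ap(y'')\sqrt{|d_{\phi(y'')}|}$. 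The key ingredients are Lemmas \ref{lem:trDelta} and \ref{lem:wphi}: together they supply the Jacobian factors $h^{s+1}$ and $|\det(\partial g/\partial \wty)|^{-2}$ needed to convert $\sqrt{|d_{\phi(y'')}|}$ into $\sqrt{|d_\wtp|}$, and they ensure that the smooth difference $G(y'')^*\wtp - \phi(y'')$ contributes only to lower-order terms, hence does not affect the principal part.

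Second, since $\lambda_\wtp$ is independent of $y''$, push-forward commutes with the $y''$-integration, giving
\[
\gamma_\wtp = (\lambda_\wtp)_*\!\left.\left(\int_{(-\varepsilon,\varepsilon)^e} \mathfrak{w}(y'')\,dy''\right)\right|_{\mathcal{C}_\wtp}.
\]
The Jacobian factors produced by $G(y'')$, together with the factor $\rho_Y^{-e}$ arising from the splitting of the $\scOmega^1(Y)$-density as $\scOmega^1(\BB^{s-e})\otimes |dy''|$ (cf.\ Lemma \ref{lem:intdensity}), match exactly the amplitude structure of $(\wt G^* a)_\red$ in \eqref{eq:excesssymb}. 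Moreover, $\exp(i(\wt G^*\varphi-\wtp))$ restricts to $1$ on $\mathcal{C}_\wtp$ away from the corner, so on $\mathcal{C}_\wtp$ the integrand coincides with $\bp\sqrt{|d_\wtp|}$ via \eqref{eq:excesssymbbis}. Push-forward by $\lambda_\wtp$ then yields $\gamma_\wtp = \beta_\wtp$.

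The main technical obstacle is the bookkeeping of the various Jacobian, half-density, and scaling factors, i.e.\ checking that the transformation laws \eqref{eq:wpJ} and \eqref{eq:trap} applied with the parameter-dependent family $\wt G$ reassemble precisely into $(\wt G^*a)_\red$ as defined in Section \ref{subss:elexcess}. Away from the corner this is a smooth, pointwise Fubini-type computation enabled by the smooth dependence of $G(y'')$ on $y''$; the extension to the corner follows by continuity of all quantities involved, exactly as at the end of Section \ref{subs:ndg}.
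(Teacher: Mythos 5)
Your proposal is correct and follows essentially the same route as the paper: both arguments hinge on the family $G(y'')$ of equivalences between $\phi(y'')$ and $\wtp$, the transformation laws of Lemma \ref{lem:trDelta} and \eqref{eq:trap} (equivalently Lemma \ref{lem:wphi}), the identity $\lambda_\wtp=\lambda_{\phi(y'')}\circ G(y'')$ allowing the push-forward to be commuted with the $y''$-integration, and continuity to reach the corner. The only difference is cosmetic: you start from $\gamma_\wtp$ and reassemble $\bp\sqrt{|d_\wtp|}$, while the paper starts from $\bp\sqrt{|d_\wtp|}$ and unfolds it into $\int\alpha_\phi(y'')\,dy''$, i.e.\ the same chain of identities read in the opposite direction (with a harmless reversal of the convention $G(y'')^*\phi(y'')-\wtp$ versus $G(y'')^*\wtp-\phi(y'')$).
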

\begin{proof}
We consider the smooth family of diffeomorphisms $G(y^{\prime\prime})=\mathrm{id}\times g(y^{\prime\prime})$, depending on the parameter $y^{\prime\prime}$, 
involved in $\wt{G}$ from \eqref{eq:diffeoG}. Assuming the amplitudes $a(y^{\prime\prime})$ supported away from the corner points, we can suppose, as above, 
$G(y^{\prime\prime})^*\phi(y^{\prime\prime}) - \wtp=0.$ 
We now compute, using Lemma \ref{lem:CLFstarff} and the expression \eqref{eq:excesssymb}, together with the transformation properties of $\wpp$,
\begin{align*}
	\left(\bp_\wtp\cdot \sqrt{|d_\wtp|}\right)(\bx,\rho_Y,y^\prime)|_{\cC_\wtp} &= 
	\bp_\wtp(\bx,\rho_Y,y^\prime)|_{\cC_\wtp}\left| \det\frac{\partial(\widetilde{t},\wtP)}{\partial(\bx, \by^\prime )}\right|^{-\frac{1}{2}}_{\cC_\wtp}\sqrt{|d\widetilde{t}|}
	\\
	(\eqref{eq:trap} \Rightarrow) \qquad &=\int_{(-\varepsilon,\varepsilon)^e}\hspace*{-12pt}
	\ap(G(\bx,\by))|_{\cC_\wtp}\, 
	\left|\det\frac{\partial g}{\partial\by^\prime}(\bx,\by)\right|_{\cC_\wtp} \hspace*{-2pt} [h(\bx,\by)]_{\cC_\wtp}^{-m_\psi-s-1}\times
	\\
	&\phantom{=\int_{(-\varepsilon,\varepsilon)^e}}
	\times\left| \det\frac{\partial(\widetilde{t},\wtP)}{\partial(\bx, \by^\prime )}\right|^{-\frac{1}{2}}_{\cC_\wtp}\sqrt{|d\widetilde{t}|}\,d y^{\prime\prime}
	\\
	(\text{Lemma }\ref{lem:trDelta} \Rightarrow) \qquad &=\int_{(-\varepsilon,\varepsilon)^e}\hspace*{-12pt}
	G(y^{\prime\prime})^*\hspace*{-3pt}\left[\ap(\bx,\by)|_{\cC_{\phi(y^{\prime\prime})}}\hspace*{-3pt}
	\left| \det\frac{\partial(t,\Phi(y^{\prime\prime}))}{\partial(\bx, \by^\prime )}\right|^{-\frac{1}{2}}_{\cC_{\phi(y^{\prime\prime})}}\hspace*{-18pt}\sqrt{|dt|}\right]\hspace*{-2pt}d y^{\prime\prime}
	\\
	(\text{Def. of }d_{\phi(y^{\prime\prime})}\Rightarrow) \qquad &=\int_{(-\varepsilon,\varepsilon)^e}\hspace*{-12pt}
	G(y^{\prime\prime})^*\hspace*{-3pt}\left[\left(\ap(y^{\prime\prime})\cdot  \sqrt{|d_{\phi(y^{\prime\prime})}|}\right)(\bx,\rho_Y,y^\prime)\right]_{\cC_{\phi(y^{\prime\prime})}}\hspace*{-3pt}d y^{\prime\prime}.
\end{align*}
Applying $(\lambda_\wtp)_*$ to the left-hand side, we obtain $\beta_{\wtp}$.
To apply $(\lambda_\wtp)_*$ to the right-hand side, we first recall that $\wtp$ and $\phi(y^{\prime\prime})$ are equivalent by $G(y^{\prime\prime})$. Using again Lemma \ref{lem:CLFstarff} 
(see also Lemma \ref{lem:arrange}), this implies 
\begin{equation}\label{eq:wtpphequiv}
	\lambda_\wtp=\lambda_{\phi(y^{\prime\prime})}\circ G(y^{\prime\prime})\Rightarrow(\lambda_\wtp)_*=(\lambda_{\phi(y^{\prime\prime})})_*\circ G(y^{\prime\prime})_* .
\end{equation}
Since $\lambda_\wtp$ does not depend on $y^{\prime\prime}$, we can take it inside the integral and use \eqref{eq:wtpphequiv}, finally obtaining
\begin{align*}
	\beta_\wtp&= 
	(\lambda_\wtp)_*\left[\int_{(-\varepsilon,\varepsilon)^e}
	G(y^{\prime\prime})^*\hspace*{-3pt}\left[\left(\ap(y^{\prime\prime})\cdot  \sqrt{|d_{\phi(y^{\prime\prime})}|}\right)\right]_{\cC_{\phi(y^{\prime\prime})}}\hspace*{-3pt}d y^{\prime\prime}\right]
	\\
	&=\int_{(-\varepsilon,\varepsilon)^e}(\lambda_{\phi(y^{\prime\prime})})_*\circ G(y^{\prime\prime})_*\circ
	G(y^{\prime\prime})^*\hspace*{-3pt}\left[\left(\ap(y^{\prime\prime})\cdot  \sqrt{|d_{\phi(y^{\prime\prime})}|}\right)\right]_{\cC_{\phi(y^{\prime\prime})}}\hspace*{-3pt}
	d y^{\prime\prime}
	\\
	&=\int_{(-\varepsilon,\varepsilon)^e}(\lambda_{\phi(y^{\prime\prime})})_*
	\hspace*{-3pt}\left[\left(\ap(y^{\prime\prime})\cdot  \sqrt{|d_{\phi(y^{\prime\prime})}|}\right)\right]_{\cC_{\phi(y^{\prime\prime})}}\hspace*{-3pt}
	d y^{\prime\prime}=\int_{(-\varepsilon,\varepsilon)^e}\alpha_\phi(y^{\prime\prime})\,dy^{\prime\prime}=\gamma_\wtp.
\end{align*}
Extension to the corner points as in the previous subsections proves the claim.
\end{proof}

We already showed that the half-density $\alpha$ associated with $I_\varphi(a)$ is invariant under a change of equivalent non-degenerate phase functions. 
Together with the argument above, this also shows that the half-density $\gamma$ associated with
$I_\varphi(a)$ remains the same under the change of equivalent phase functions which are clean with the same excess.

\subsection{Principal symbol and principal symbol map} Let $u\in I^{m_e,m_\psi}(X,\Lambda)$. Consider any local representation of $u$, as introduced in Definition \ref{def:Lagdist},
with  clean phase function $\varphi$ with excess $e$ associated with $\Lambda$ and $a$ some local symbol density.
The arguments in the previous subsections show how to associate with these data a half-density $\gamma$,
defined on $\Lambda$. We also showed that switching to an equivalent phase function, as well as the elimination of the excess, do not change $\gamma$. The reduction of the fiber variables replaces $\gamma$
with $\gamma^\prime$ such that 
\[
	\gamma^\prime=e^{i\frac{\pi}{4}\mathrm{sgn}(Q)}\,\gamma,
\]
with $Q$ from \eqref{eq:wtp}. Let $\widetilde{\gamma}$ be the half-density defined by an integral representation $I_\wtp(\widetilde{a})$, with another
phase function $\widetilde{\varphi}$ associated with $\Lambda$. Then, similarly to \cite{Treves}, in general we have
\begin{equation}\label{eq:diffsigma}
		\widetilde{\gamma}=e^{i(\sigma-\widetilde{\sigma})\frac{\pi}{4}}\,\gamma,
\end{equation}
where $\sigma=\mathrm{sgn}\left(\,\rho_{Y}^{-1}\rho_X^{-1}\,\scHess_{Y}\varphi\right)$, and $\widetilde{\sigma}=\mathrm{sgn}\left(\,\rho_{\wtY}^{-1}\rho_X^{-1}\,\scHess_{\wtY}\wtp\right)$.
Denote by $\widetilde{r}$ the number of fiber variable for $\wtp$, $\widetilde{s}$ the dimension of $\wtY$ and $\widetilde{e}$ the excess of $\wtp$, and define the integer number
\[
	\kappa=\frac{1}{2}(\sigma-\widetilde{\sigma}-s+\widetilde{s}+e-\widetilde{e}).
\]
Then, \eqref{eq:diffsigma} is equivalent to
\begin{equation}\label{eq:coherence}
	i^\kappa e^{i(s-e)\frac{\pi}{4}}\,\gamma = e^{i(\widetilde{s}-\widetilde{e})\frac{\pi}{4}}\,\widetilde{\gamma}.
\end{equation}
We are then led to the following definition of principal symbol map.
\begin{defn}\label{def:prsymb}
	Let $u\in I^{m_e,m_\psi}(X,\Lambda)$. We define $\mathscr{I}(u)=\{(Y_j,\varphi_j)\}$ as the collection of manifolds and associated clean phase functions $(Y_j,\varphi_j)$
	locally parametrizing $\Lambda$, giving rise to local representations of $u$ in the form $I_{\varphi_j}(a_j)$.
	With each pair $(Y,\varphi)\in\mathscr{I}(u)$ we associate the half-density $\gamma$, as described in Subsection \ref{subs:clnphf},
	in such a manner that, for any other element $(\wtY,\wtp)\in\mathscr{I}(u)$, we have
	the coherence relation \eqref{eq:coherence} in $\lambda_\varphi(Y)\cap\lambda_\wtp(\wtY)$. We call the collection of half-densities $\{\gamma_j\}$, each one associated
	with $(Y_j,\varphi_j)\in\mathscr{I}(u)$, the \emph{principal symbol of $u$}, and write $j^\Lambda_{m_e,m_\psi}(u)=\{\gamma_j\}$.
\end{defn}

By an argument completely similar to the one in \cite{Treves}, we can prove the following result.

\begin{thm}
	Let $\Lambda$ be a $\sct$-Lagrangian on $X$. Then, the map
	\begin{equation}\label{eq:prsymbmap}
		j^\Lambda_{m_e,m_\psi}\colon I^{m_e,m_\psi}(X,\Lambda)\ni u\mapsto \{\gamma_j\}
	\end{equation}
	given in Definition \ref{def:prsymb} is surjective.
	Moreover, the null space of the map \eqref{eq:prsymbmap} is $I^{m_e-1,m_\psi-1}(X,\Lambda)$, and thus
	\eqref{eq:prsymbmap} defines a bijection 
	\[
		\text{classes in } I^{m_e,m_\psi}(X,\Lambda) / I^{m_e-1,m_\psi-1}(X,\Lambda) \mapsto \{\gamma_j\}.
	\]
	The image space of $j^\Lambda_{m_e,m_\psi}$ can be seen as $\Sm(\Lambda, M_\Lambda\otimes\Omega^{1/2})$, where $M_\Lambda$ is the Maslov bundle over $\Lambda$.
\end{thm}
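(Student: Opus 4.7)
The plan is to adapt Treves' classical argument (\cite[Chap. VIII]{Treves}), relying crucially on the reduction procedures of Section \ref{sec:exchphase} and the half-density construction of Sections \ref{subs:ndg}--\ref{subs:clnphf}, to the scattering setting. The map $j^\Lambda_{m_e,m_\psi}$ is already well-defined: the invariance under equivalent phase functions, fiber reduction, and excess elimination was established in Sections \ref{subs:ndg}, \ref{subs:ndgfbred}, and \ref{subs:clnphf}, and the transition formula \eqref{eq:coherence} shows precisely that the collection $\{\gamma_j\}$ patches as a section of $M_\Lambda\otimes\Omega^{1/2}$ (this identifies $M_\Lambda$ as the line bundle whose local trivializations transform by $i^\kappa$, in direct analogy with the classical Maslov bundle). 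So the task is to prove surjectivity, identify the kernel, and note that the short exact sequence and isomorphism statements are formal consequences.

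For surjectivity, the plan is: by Proposition \ref{prop:locpar}, cover $\Lambda$ by relatively open sets $\{U_j\}$, each of which admits a local parametrization $\varphi_j\in\rho_X^{-1}\rho_{Y_j}^{-1}\Sm(X\times Y_j)$. Using Subsection \ref{subs:fbred} (fiber reduction) and Section \ref{sec:phaseexelim} (elimination of excess), each $\varphi_j$ may be chosen non-degenerate, so that $\lambda_{\varphi_j}\colon C_{\varphi_j}\to \Lambda$ is a local diffeomorphism onto $U_j$. Choose a subordinate partition of unity $\{\chi_j\}$ on $\Lambda$ and, given $\gamma\in\Sm(\Lambda,M_\Lambda\otimes\Omega^{1/2})$, solve the local equations by inverting the construction of $\alpha_\varphi$ in Subsection \ref{subs:ndg}: pull $\chi_j\gamma$ back along $\lambda_{\varphi_j}$ to obtain $w_j\in\Sm(C_{\varphi_j},\Omega^{1/2})$, then choose an amplitude density $a_j\in\rho_X^{-m_{e,j}}\rho_{Y_j}^{-m_{\psi,j}}\Sm$ whose principal part, restricted to $C_{\varphi_j}$, equals $w_j\cdot\sqrt{|d_{\varphi_j}|}^{-1}$ after suitable factor identifications. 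Existence of such an $a_j$ uses that $C_{\varphi_j}$ is a $p$-submanifold of $X\times Y_j$ with clean intersection with the corner (Definition \ref{def:cleanphase}), so the restriction map $\Sm(X\times Y_j)\to \Sm(C_{\varphi_j})$ is surjective. Then $u=\sum_j I_{\varphi_j}(a_j)$ lies in $I^{m_e,m_\psi}(X,\Lambda)$ and has $j^\Lambda_{m_e,m_\psi}(u)=\gamma$.

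For the kernel, the inclusion $I^{m_e-1,m_\psi-1}(X,\Lambda)\subset\ker j^\Lambda_{m_e,m_\psi}$ is immediate from Lemma \ref{lem:princpart}, since every defining amplitude then has order $(-1,-1)$ lower. For the reverse inclusion, suppose $j^\Lambda_{m_e,m_\psi}(u)=0$. After passing to a partition of unity and to non-degenerate local representatives $u_j=I_{\varphi_j}(a_j)$, the vanishing of $\gamma$ on $U_j$ means exactly that the principal part $a_{j,p}$ vanishes on $C_{\varphi_j}$. Writing $a_{j,p}=\rho_X^{-m_e}\rho_{Y_j}^{-m_\psi}\ap$ with $\ap|_{C_{\varphi_j}}=0$, cleanness of $\varphi_j$ implies that $C_{\varphi_j}$ is cut out locally by the components of $\scdY\varphi_j$ (identified as $(-f+\rho_{Y_j}\partial_{\rho_{Y_j}}f,\partial_{y_k}f)$), which are of class $\Sm$. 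A Hadamard-type division, applied separately near each boundary face and then glued across the corner via a Whitney-extension argument analogous to Proposition \ref{prop:cornerdiffeo}, yields a decomposition $\ap=\sum_k b_k\cdot[\scdY\varphi_j]_k+c$ with $c\in\rho_X\rho_{Y_j}\Sm$ and $b_k\in\Sm$. Substituting and integrating by parts in the $y$-variables (transferring $\scdY\varphi_j$ off the amplitude onto the phase, using $\scdY\varphi_j\, e^{i\varphi_j}=-i\scdY e^{i\varphi_j}$ in the generalized sense), produces a new representation of $u_j$ by an amplitude in $\rho_X^{-m_e+1}\rho_{Y_j}^{-m_\psi+1}\Sm$ modulo $\dSmz$, so $u_j\in I^{m_e-1,m_\psi-1}(X,\Lambda)$.

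The short exact sequence and the isomorphism $I^{m_e,m_\psi}/I^{m_e-1,m_\psi-1}\simeq\Sm(\Lambda,M_\Lambda\otimes\Omega^{1/2})$ are then immediate. The main technical obstacle is the behavior at the corner $\Lambda^{\psi e}$: both the surjectivity (extending the locally constructed amplitudes smoothly across the corner while preserving the joint order $(m_e,m_\psi)$) and the kernel analysis (carrying out the Hadamard division uniformly in the two boundary defining functions $\rho_X$, $\rho_{Y_j}$ simultaneously) rely on the cleanness condition $3.)$ of Definition \ref{def:cleanphase} together with a Whitney-extension argument in the spirit of Proposition \ref{prop:cornerdiffeo}, rather than on the classical homogeneity arguments which are unavailable here.
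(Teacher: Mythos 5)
Your proposal is correct and follows essentially the same route as the paper: the paper itself only states that the result follows ``by an argument completely similar to the one in \cite{Treves}'', relying on exactly the ingredients you invoke (local parametrization via Proposition \ref{prop:locpar}, reduction to non-degenerate phases, the half-density/coherence construction of Section \ref{sec:symb} for well-definedness and the Maslov bundle, and the classical division-plus-integration-by-parts argument for the kernel, adapted to gain one order in each of $\rho_X$ and $\rho_{Y_j}$). Your sketch is in effect a faithful expansion of the adaptation the paper leaves implicit.
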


%
\appendix

\section{Resolution of Lagrangian singularities near the corner}
\label{sec:blowup}
In this appendix, we show that $\Lambda^{\psi e}$ may be viewed as a Legendre manifold with respect to a (degenerate) contact form, well defined on the blow-up of the corner component $\Wpe$ of $\scOverT^*X$.

We have already stated that the forms 
\begin{align*}
\alpha^\psi:=\rho_\Xi^2\partial_{\rho_\Xi}\lrcorner\,\omega \quad \text{and} \quad 
\alpha^e:=\rho_X^2\partial_{\rho_X}\lrcorner\,\omega.
\end{align*} 
 are well-defined in the interior near the respective boundary face $\Wt^e$ or $\Wt^\psi$ and extend to it. The freedom in choosing the boundary defining function has as a consequence that these forms are merely well-defined up to a multiple by a positive function, however their contact structure at the boundary (which is all we need to characterize $\Lambda^\bullet$ as Legendrian) is independent of the choice of bdfs. Neither form extends to the corner component $\Wpe$. 
Instead of the rescaled $1$-forms, we now consider the non-rescaled forms
\begin{align*}
^\sct\alpha^\psi:=\rho_\Xi\partial_{\rho_\Xi}\lrcorner\,\omega\\
^\sct\alpha^e:=\rho_X\partial_{\rho_X}\lrcorner\,\omega
\end{align*} 
as sections of $\scT^*(\scT^*X^o)$. Then, these extend as \emph{scattering one forms} on $\scOverT^*X$, cf. \cite[(2.11)]{MZ}.
\begin{lem}
\label{lem:scatforms}
The forms $^\sct\alpha^\psi$ and $^\sct\alpha^e$ extend from $\scT^*X^o$ to scattering one-forms on $\scOverT^*X$.
In a particular choice of coordinates (see \cite{MZ} and Remark \ref{rem:compequiv}) they are given by
\begin{align*}
\,^\sct\alpha^e&=\frac{\dd \eta_1}{\rho_X\rho_\Xi}-\frac{\eta_1\dd \rho_\Xi}{\rho_X\rho_\Xi^2}+\eta''\frac{\dd x}{\rho_X\rho_\Xi},\\
\,^\sct\alpha^\psi&=\eta_1\frac{\dd \rho_X}{\rho_\Xi \rho_X^2}+\eta'' \frac{\dd x}{\rho_X\rho_\Xi}.
\end{align*}
Here, $\eta=(\eta_1,\eta'')$ are smooth functions of $(\rho_\Xi,\xi)$, $d-1$ of which may be chosen as coordinates.
\end{lem}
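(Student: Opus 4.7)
The assertion is essentially a coordinate computation, so the plan is to set up the right local coordinates on $\scOverT^*X$ near a boundary point of $X$, and then to verify by direct calculation that the contractions defining $\,^\sct\alpha^\psi$ and $\,^\sct\alpha^e$ extend to smooth sections of $\scT^*(\scOverT^*X)$ with the claimed expressions.

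First I would fix a boundary point $p_0\in\partial X$ and work in local coordinates $(\rho_X,x)$ adapted to the boundary, with associated scattering fiber coordinates $(\tau,\mu)$ on $\scT^*X$ (everywhere defined, not merely over $X^o$), so that a covector reads
\[
\sigma=\tau\,\frac{d\rho_X}{\rho_X^2}+\mu\cdot\frac{dx}{\rho_X}.
\]
Relating these to the standard canonical coordinates $(\xi_{\rho_X},\xi_x)$ on $T^*X^o$ via $\xi_{\rho_X}=\tau/\rho_X^2$, $\xi_{x_j}=\mu_j/\rho_X$, and substituting in $\omega=d\xi_{\rho_X}\wedge d\rho_X+\sum_j d\xi_{x_j}\wedge dx_j$, a short computation yields, on $(\scT^*X)^o$,
\[
\omega=\frac{d\tau\wedge d\rho_X}{\rho_X^2}+\frac{d\mu\wedge dx}{\rho_X}-\mu\cdot\frac{d\rho_X\wedge dx}{\rho_X^2}.
\]
Next I would compactify the fibers as in Remark~\ref{rem:compequiv}, choosing the half-sphere coordinates $\rho_\Xi=1/\langle(\tau,\mu)\rangle$ and $\eta=(\eta_1,\eta'')=(\tau,\mu)/\langle(\tau,\mu)\rangle$, so that $\tau=\eta_1/\rho_\Xi$ and $\mu=\eta''/\rho_\Xi$, and $(\rho_X,x,\rho_\Xi,\eta'')$ (with one of the $\eta$ components eliminated by $|\eta|=1$) provides a chart near the corner face $\Wpe=\{\rho_X=\rho_\Xi=0\}$.

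With this set up, the bulk of the proof is to insert $\rho_X\partial_{\rho_X}$ and $\rho_\Xi\partial_{\rho_\Xi}$ into the above expression for $\omega$, using the standard identity $v\lrcorner(\alpha\wedge\beta)=\alpha(v)\beta-\beta(v)\alpha$. For $\rho_X\partial_{\rho_X}$ only the two terms containing $d\rho_X$ contribute, giving $\,^\sct\alpha^e=-(d\tau+\mu\,dx)/\rho_X$; rewriting in $(\rho_\Xi,\eta)$-coordinates by $d\tau=d\eta_1/\rho_\Xi-\eta_1\,d\rho_\Xi/\rho_\Xi^2$ yields precisely the stated expression for $\,^\sct\alpha^e$ (up to the overall sign convention in $\omega=\pm d\alpha$). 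For $\rho_\Xi\partial_{\rho_\Xi}$ only the two terms containing $d\rho_\Xi$ survive, and after a similarly short computation one obtains the claimed formula for $\,^\sct\alpha^\psi$.

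The final step is to read off, from the resulting coordinate expressions, that both forms are genuine scattering 1-forms on $\scOverT^*X$. Concretely, both are smooth linear combinations, with coefficients lying in $\Sm(\scOverT^*X)$, of the scattering cotangent basis
\[
\frac{d\rho_X}{\rho_X^2\rho_\Xi},\qquad \frac{dx}{\rho_X\rho_\Xi},\qquad \frac{d\rho_\Xi}{\rho_X\rho_\Xi^2},\qquad \frac{d\eta}{\rho_X\rho_\Xi},
\]
which generate $\scT^*(\scOverT^*X)$ near $\Wpe$ (this is exactly the scattering structure on the product mwc $\scOverT^*X$ with the two bdfs $\rho_X,\rho_\Xi$, cf.\ the definition of $\scTheta$ at the start of \S\ref{sec:scat}); away from $\Wpe$ the statement is trivial since one of $\rho_X,\rho_\Xi$ is bounded away from $0$. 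Coordinate-independence of the extension follows from the fact that any other admissible pair of bdfs differs from $(\rho_X,\rho_\Xi)$ by multiplication with positive smooth factors, which is absorbed into the smooth coefficient functions. The main (and essentially only) subtle point is bookkeeping the two consecutive changes of variables — from canonical $(\xi_{\rho_X},\xi_x)$ to scattering $(\tau,\mu)$, and then from $(\tau,\mu)$ to the half-sphere coordinates $(\rho_\Xi,\eta)$ — carefully enough that the $\rho_X^{-1}$ and $\rho_\Xi^{-1}$ factors produced by the contractions land exactly in the scattering cotangent basis and no spurious $d\rho_X/\rho_X$ or $d\rho_\Xi/\rho_\Xi$ (b-type) terms remain.
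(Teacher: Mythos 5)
Your computation is correct and is exactly the argument the paper has in mind: the paper states this lemma without proof, referring to the Melrose--Zworski coordinates of Remark~\ref{rem:compequiv}, and your route (write $\omega$ in the rescaled fiber coordinates $(\tau,\mu)$, contract with $\rho_X\partial_{\rho_X}$ and the fiber-radial field, then pass to $(\rho_\Xi,\eta)$ via $\tau=\eta_1/\rho_\Xi$, $\mu=\eta''/\rho_\Xi$ and read off smooth coefficients in the product-type scattering cobasis) reproduces the stated expressions. The only discrepancies are an overall sign (your $\,^\sct\alpha^e$ comes out as the negative of the displayed one, consistent with the paper's own ``up to a sign'' conventions for the radial fields) and the harmless slip that $|\eta|=1$ should be $|\eta|^2+\rho_\Xi^2=1$ for the $\langle\cdot\rangle$-compactification; neither affects the conclusion.
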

Again, the (scattering) contact structures of these forms, when restricted to the respective boundary faces, do not depend on the choice of bdf, since two choices of bdf only differ by positive factors. These forms $^\sct\alpha^\bullet$ will then vanish on $\Lambda^\bullet$, $\bullet\in\{e,\psi\}$, since one can identify the kernels of $^\sct\alpha^\bullet$ with that of $\alpha^\bullet$ by rescaling there. Furthermore, both $^\sct\alpha^\psi$ as well as $^\sct\alpha^e$ vanish when restricted to $\Lambda^{\psi e}$.
\begin{ex}
On $T^*\RRd$ with canonical coordinates $(x,\xi)$, this corresponds to both the forms 
$$\xi \cdot \dd x\quad \text{and}\quad -x\cdot \dd \xi$$
vanishing on the bi-conic (in $x$ and $\xi$) manifold with base $\Lambda^{\psi e}$, cf. \cite{CoSc2}.
\end{ex}
Hence, $\Lambda^{\psi e}$ is, in some sense, (scattering) isotropic.\footnote{Not with respect to the standard symplectic form, since it does not extend to the boundary, but to a rescaling of it.} We note, however, that the $\Lambda^{\psi e}$ is not Lagrangian with respect to any symplectic form on $\Wt^{\psi e}$, since
$$\dim(\Lambda^{\psi e})=d-2\neq d-1=\frac{\dim(\Wt^{\psi e})}{2}.$$
However, we may now blow-up the corner $\Wpe$ in $\scOverT(X)$ and consider the front face $\beta^{-1}(\Wt^{\psi e})$ in $[\scOverT(X);\Wt^{\psi e}]$, which is a $2d-1$ dimensional manifold, see Figure \ref{fig:Lpblowup}. Here, 
$$\beta: [\scOverT(X);\Wt^{\psi e}]\rightarrow \scOverT(X),$$
is the blow-down map.

%
\begin{figure}[ht!]
\begin{center}
\begin{tikzpicture}[scale=1.2]
  \node (A) at (2.5,3.5) {$\Wp$};
  \node (B) at (7,-0.5) {$\We$};
  \node (C) at (7.2,2.7) {$\beta^{-1}(\Wpe)$}; 
  \node at (4.9,1.65) {$\beta^{-1}(\Lambda^{\psi e})$};
  \node at (8,1.4) {$\partial(\beta^{-1}(\Lambda^{\psi e}))$};
  \draw[->] (7,1.4) .. controls (6,1) and (6,1) .. (5.6,1.25);
  \draw[->] (7,1.4) .. controls (6,1.8) and (5,2.25) .. (4.6,2.25);
  \draw[dotted,opacity=0.5] (1.5,2.25) -- (2.25,1.875);
  \draw[dotted,opacity=0.75] (2.25,1.875) -- (3,1.5) arc (180:270:1) -- (4,-1);
  \draw (3,1.5) -- (6,3);
  \draw (4,0.5) -- (7,2);  
  \draw[dotted,opacity=0.75] (4.5,3.75) -- (5.25,3.375);
  \draw[dotted,opacity=0.75] (5.25,3.375) -- (6,3) arc (180:270:1) -- (7,0.5);
  \fill[opacity=0.025] (6,3) arc (180:270:1) -- (4,0.5) arc (270:180:1) -- (6,3);
  \fill[opacity=0.01] (1.5,2.25) -- (3,1.5) -- (6,3) -- (4.5,3.75);
  \fill[opacity=0.04] (7,2) -- (4,0.5) -- (4,-1) -- (7,0.5);
  \draw[opacity=0.5,->] (A) -- (3.2,2.8);
  \draw[opacity=0.5,->] (B) -- (6.5,0.5);
  \draw[opacity=0.5,->] (C) -- (6.1,2);zzzzzzt
    \draw[thick] (4,2) .. controls (3,2.5) and (4,3) .. (3.5,3.25) node [right] {$\ \Lambda^\psi$};%
  \draw[thick] (4,2) arc (180:270:1);
\draw[thick] (5,1) .. controls (5,0.1) and (5.8,0.3) .. (5.7,-0.15) node [right] {$\Lambda^e$};%
\end{tikzpicture}
\caption{Resolution of $\Lpe$ near the corner}
\label{fig:Lpblowup}
\end{center}
\end{figure}

\begin{prop}
    The lift of the form
    \begin{align*}
        \alpha^{\psi e} = \frac{\rho_X \rho_\Xi}{2} ({ }^\sct\alpha^\psi + { }^\sct\alpha^e)
    \end{align*}
    to the blowup space $$[\scOverT^*X; \Wt^{\psi e}]\xrightarrow{\ \beta\ }{\scOverT^*X}$$
    restricts to a contact 1-form on the front face $\beta^{-1}\Wt^{\psi e}$.
    Moreover, $\beta^{-1}(\Lambda^{\psi e})$ is Legendrian with respect to $\alpha^{\psi e}$.
\end{prop}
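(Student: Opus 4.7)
The approach starts by writing $\alpha^{\psi e}$ out explicitly in the coordinates of Lemma~\ref{lem:scatforms}. Combining the two expressions for ${}^\sct\alpha^\psi$ and ${}^\sct\alpha^e$, clearing the common prefactor $\rho_X\rho_\Xi$, yields
\begin{equation*}
\alpha^{\psi e} = \tfrac{1}{2}\dd\eta_1 + \tfrac{\eta_1}{2}\!\left(\tfrac{\dd\rho_X}{\rho_X} - \tfrac{\dd\rho_\Xi}{\rho_\Xi}\right) + \eta''\cdot \dd x.
\end{equation*}
The first and third terms are already smooth on $\scOverT^*X$; the middle term is $\tfrac{\eta_1}{2}\,\dd\log(\rho_X/\rho_\Xi)$, a closed $1$-form whose singularity lies precisely along $\Wt^{\psi e}=\{\rho_X=\rho_\Xi=0\}$. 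This is exactly the type of logarithmic singularity that is resolved by blowing up the corner, which explains why the blow-up $[\scOverT^*X;\Wt^{\psi e}]$ is the correct arena to work in.

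Next I introduce polar-type coordinates $\rho_X=r\cos\theta$, $\rho_\Xi=r\sin\theta$ with $r\ge 0$ and $\theta\in[0,\pi/2]$ (together with the two projective charts near $\theta=0,\pi/2$, where the formulas look like $s=\rho_X/\rho_\Xi$ and $\tfrac{\dd\rho_X}{\rho_X}-\tfrac{\dd\rho_\Xi}{\rho_\Xi}=\tfrac{\dd s}{s}$). In polar form,
\begin{equation*}
\beta^*\!\left(\tfrac{\dd\rho_X}{\rho_X} - \tfrac{\dd\rho_\Xi}{\rho_\Xi}\right) = -\tfrac{2\,\dd\theta}{\sin 2\theta},
\end{equation*}
and the front face $\{r=0\}$ inherits the smooth (on its interior) $1$-form
\begin{equation*}
\beta^*\alpha^{\psi e}\bigr|_{\mathrm{ff}} = \tfrac{1}{2}\dd\eta_1 - \tfrac{\eta_1}{\sin 2\theta}\dd\theta + \eta''\cdot \dd x.
\end{equation*}
The contact condition is then a short computation: $\dd(\beta^*\alpha^{\psi e}|_{\mathrm{ff}}) = -\tfrac{1}{\sin 2\theta}\dd\eta_1\wedge \dd\theta + \sum_j \dd\eta''_j\wedge \dd x_j$ (the would-be term $\eta_1\,\dd(\sin 2\theta)^{-1}\wedge \dd\theta$ vanishes by $\dd\theta\wedge\dd\theta = 0$), and a direct expansion of the top power shows that $\alpha^{\psi e}\wedge(\dd\alpha^{\psi e})^{d-1}$ is a nonvanishing top form on the $(2d-1)$-dimensional interior of the front face, coming from the factor $\dd\eta_1\wedge\dd\theta$ times the natural symplectic volume in $(x,\eta'')$. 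Ordering of bdfs affects $\alpha^{\psi e}$ only by a positive multiple, so the induced contact structure is intrinsic.

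For the Legendrian claim, I first check dimensions: $\dim\Lambda^{\psi e}=d-2$, and the blow-down $\beta$ restricts on the front face to a fibration over $\Wt^{\psi e}$ with fiber $[0,\pi/2]$, so $\dim\beta^{-1}(\Lambda^{\psi e})=d-1$, which matches the Legendrian dimension for the contact manifold $\mathrm{ff}$. To show $\beta^*\alpha^{\psi e}$ vanishes on $T\beta^{-1}(\Lambda^{\psi e})$, I split tangent vectors into horizontal (lifted from $T\Lambda^{\psi e}$) and vertical ($\partial_\theta$). The horizontal vanishing is immediate from the remark preceding Lemma~\ref{lem:scatforms}: both ${}^\sct\alpha^\psi$ and ${}^\sct\alpha^e$ annihilate $T\Lambda^{\psi e}$ because $\Lambda^\psi$ and $\Lambda^e$ are Legendrian with $\Lambda^{\psi e}$ their common boundary. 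The vertical vanishing, i.e.\ $\alpha^{\psi e}(\partial_\theta)=-\eta_1/\sin 2\theta = 0$ on $\Lambda^{\psi e}$, requires $\eta_1|_{\Lambda^{\psi e}}=0$; this is exactly the identity $x\cdot\xi = 0$ on the biconic lift derived from the symplectic assumptions on $\Lambda^{\psi e}$ in the remark after Proposition~\ref{prop:locpar}, translated to the present coordinates.

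The main obstacle I anticipate is the coordinate bookkeeping: carefully confirming that the polar and projective charts on $[\scOverT^*X;\Wt^{\psi e}]$ agree to give a globally smooth $1$-form on the interior of the front face, and that the identification $\eta_1\leftrightarrow x\cdot\xi/|\xi|$ used in the Legendrian step is compatible with the coordinates chosen in Lemma~\ref{lem:scatforms}. Once these are settled, both the contact and the Legendrian properties reduce to pointwise linear-algebraic checks.
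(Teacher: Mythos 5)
Your proposal is correct and takes essentially the same route as the paper: write $\alpha^{\psi e}$ in the coordinates of Lemma~\ref{lem:scatforms}, lift to the blow-up (your polar chart with $-2\,\dd\theta/\sin 2\theta$ is just the paper's projective chart with $\pm\,\dd\tau/\tau$ in disguise), and read off the induced $1$-form on the front face. You are in fact more explicit than the paper on the two points it leaves implicit --- the nondegeneracy check (where you should keep the constraint $|\eta|=1$ in mind: at points of $\beta^{-1}(\Lambda^{\psi e})$, where $\eta_1=0$, the nonvanishing top form comes from pairing the $\eta''\cdot\dd x$ part of $\alpha^{\psi e}$ with $\dd\eta_1\wedge\dd\theta$ and $d-2$ factors of $\dd\eta''\wedge\dd x$, not from the product of $\dd\eta_1\wedge\dd\theta$ with the full $(x,\eta'')$-volume, which vanishes there) --- and the identity $\eta_1|_{\Lambda^{\psi e}}=0$ (i.e.\ $x\cdot\xi=0$) needed for the $\partial_\theta$-direction, which the paper subsumes in the assertion that ${}^\sct\alpha^\psi$ and ${}^\sct\alpha^e$ vanish on $\Lambda^{\psi e}$.
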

\begin{proof}
We note that 
$$\alpha^{\psi e}=\rho_X\rho_\Xi\frac{1}{2}\left(\rho_X\partial_{\rho_X}+\rho_\Xi\partial_{\rho_\Xi}\right)
\lrcorner\,\omega.$$
In the special choice of coordinates of Lemma \ref{lem:scatforms}, we compute
$$\alpha^{\psi e}=\frac{1}{2}\eta_1\left(\frac{\dd \rho_X}{\rho_X}-\frac{\dd \rho_\Xi}{\rho_\Xi}\right)+\frac{1}{2} \dd\eta_1+\eta''\dd x$$
Now, smooth coordinates on the blow up of $\scOverT^*X$ along $\Wpe=\{\rho_X=\rho_\Xi=0\}$ are given by
\begin{equation}
\begin{cases}
\rho=\rho_X\quad \tau=\frac{\rho_\Xi}{\rho_X} \quad (x,\xi) & \rho_X>\rho_X\\
\rho=\rho_\Xi\quad \tau=\frac{\rho_X}{\rho_\Xi} \quad (x,\xi) & \rho_\Xi>\rho_X
\end{cases}
\end{equation}
In any case, $\beta^*\alpha^{\psi e}$ is of the form
$$\alpha^{\psi e}=\pm\frac{1}{2}\eta_1 \frac{\dd \tau}{\tau}+\frac{1}{2} \dd\eta_1+\eta''\dd x$$
Since $\tau=0$ marks the boundary of the front face $\beta^{-1}\Wt^{\psi e}$, $\alpha^{\psi e}$ is a $1$-form on the interior of $\beta^{-1}\Wt^{\psi e}$. 
Finally, $\alpha^{\psi e}$ vanishes on $\beta^{-1}\Lambda^{\psi e}$ since $^\sct\alpha^\psi$ and $^\sct\alpha^e$ vanish on $\Lambda^{\psi e}$.
\end{proof}
\bibliographystyle{amsalpha}

\end{document}